\tikzstyle{startstop} = [rectangle, rounded corners, minimum width=2cm, minimum height=1cm,text centered, draw=black]
\tikzstyle{arrow} = [thick,->,>=stealth]
\newtheorem{theorem}{Theorem}
\newtheorem{prop}[theorem]{Proposition}
\newtheorem{lemma}[theorem]{Lemma}
\newtheorem{cor}[theorem]{Corollary}
\theoremstyle{definition}
\newtheorem{definition}[theorem]{Definition}
\newtheorem{remark}[theorem]{Remark}
\newcommand{\ms}[1]{\mathscr{#1}}
\newcommand{\mc}[1]{\mathcal{#1}}
\newcommand{\ovl}[1]{\overline{#1}}
\newcommand{\varep}{ \varepsilon }
\newcommand{\sse} {\subseteq}
\newcommand{\Z}{\mathbb{Z}}
\newcommand{\R}{\mathbb{R}}
\newcommand{\C}{\mathbb{C}}
\newcommand{\E}{\mathbb{E}}
\newcommand{\N}{\mathbb{N}}
\newcommand{\icomplex}{\textup{i}}
\newcommand{\ra}{\rightarrow}
\newcommand{\toinf}{\ra \infty}
\newcommand{\beq}{\begin{equation}}
\newcommand{\eeq}{\end{equation}}
\newcommand{\mbf}[1]{\mathbf{#1}}
\newcommand{\mbb}[1]{\mathbb{#1}}
\newcommand{\p}{\mathbb{P}}
\numberwithin{equation}{section}
\numberwithin{theorem}{section}
\newcommand{\ind}{\mathbbm{1}}
\newcommand{\ptl}{\partial}
\DeclareMathOperator*{\esssup}{ess\,sup}
\DeclareMathOperator{\Tr}{Tr}
\newcommand{\torus}{\mathbb{T}}
\newcommand{\rade}{R\aa de}
\newcommand{\groupid}{\textup{id}}
\newcommand{\rconn}{\mathbf{A}}
\newcommand{\liegroup}{G}
\newcommand{\lalg}{\mathfrak{g}}
\newcommand{\const}{C}
\newcommand{\threetorus}{\torus^3}
\newcommand{\gaction}[2]{#1^{#2}}
\DeclareDocumentCommand{\toporbitspace}{o}{\ms{T}_{q\IfValueT{#1}{, #1}}}
\DeclareDocumentCommand{\ctoporbitspace}{o}{\ms{T}_{c\IfValueT{#1}{, #1}}}
\DeclareDocumentCommand{\borelorbitspace}{o}{\mc{B}_{q\IfValueT{#1}{, #1}}}
\DeclareDocumentCommand{\cborelorbitspace}{o}{\mc{B}_{c\IfValueT{#1}{, #1}}}
\newcommand{\wloop}{\ell}
\newcommand{\lalgdim}{d_\lalg}
\newcommand{\onbasislalg}{S}
\newcommand{\vectorspace}{V}
\newcommand{\oneform}{$1$-form}
\newcommand{\oneforms}{$1$-forms}
\newcommand{\nonlineardistspace}{\mc{X}}
\DeclareDocumentCommand{\nucutoff}{o o o}{\nu_{\IfValueT{#2}{#2} \IfValueT{#3}{, #3}; #1}}
\DeclareDocumentCommand{\mucutoff}{o o o}{\mu_{\IfValueT{#2}{#2} \IfValueT{#3}{, #3}; #1}}
\newcommand{\unitary}{\textup{U}}
\newcommand{\gt}{\sigma}
\newcommand{\metricspace}{d_{\torus^d}}
\newcommand{\curv}[1]{F_{#1}}
\newcommand{\dchain}{d}
\newcommand{\FT}{F}
\newcommand{\consttb}{{\const_{\textup{B}}}}
\newcommand{\exptb}{{\beta_{\textup{B}}}}
\newcommand{\constgf}{{\const_{\textup{D}}}}
\newcommand{\constfourp}{{\const_{\textup{E}}}}
\renewcommand{\tilde}{\widetilde}
\renewcommand{\hat}{\widehat}
\begin{document}

\title[YM heat flow with random distributional initial data]{The Yang--Mills heat flow with random distributional initial data}
\author{Sky Cao}
\author{Sourav Chatterjee}
\address{Department of Statistics, Stanford University, Sequoia Hall, 390 Jane Stanford Way, Stanford, CA 94305}
\email{skycao@stanford.edu}
\email{souravc@stanford.edu}
\thanks{S.~Cao was partially supported by NSF grant DMS RTG 1501767}
\thanks{S.~Chatterjee was partially supported by NSF grants DMS 1855484 and DMS 2113242}
\keywords{Yang--Mills heat equation, Yang--Mills theory, Gaussian free field.}
\subjclass[2020]{35R60, 35A01, 60G60, 81T13}

\begin{abstract}
We construct local solutions to the Yang--Mills heat flow (in the DeTurck gauge) for a certain class of random distributional initial data, which includes the 3D Gaussian free field. The main idea, which goes back to work of Bourgain as well as work of Da Prato--Debussche, is to decompose the solution into a rougher linear part and a smoother nonlinear part, and to control the latter by probabilistic arguments. In a companion work, we use the main results of this paper to propose a way towards the construction of  3D Yang--Mills measures. 
\end{abstract}

\maketitle

\tableofcontents

\section{Introduction}\label{section:introduction}

Take any dimension $d \geq 2$. Let $\liegroup$ be a compact Lie group and let $\lalg$ denote the Lie algebra of $\liegroup$. We assume that $\liegroup \sse \unitary(N)$ for some $N \geq 1$. A {\it connection} on the trivial principal $G$-bundle $\torus^d \times G$ is a function $A: \torus^d \ra \lalg^d$, that is, a $d$-tuple of functions $A = (A_1, \ldots, A_d)$, with $A_i : \torus^d \ra \lalg$ for $1 \leq i \leq d$. Note that $A$ can also be viewed as a $\lalg$-valued {\oneform} on $\torus^d$. Thus, in this paper, we will use `connection' and `{\oneform}' interchangeably. 

The main object of this paper is to prove local existence of solutions to the Yang--Mills heat flow with random distributional initial data. The Yang--Mills heat flow (also often called the Yang--Mills gradient flow, or Yang--Mills heat equation) is the following PDE on time-dependent connections $A(t)$ (in the following, we omit the time parameter $t$):
\beq\label{eq:YM}\tag{$\textup{YM}$} \begin{split}
\ptl_t A_i &= \Delta A_i + \sum_{j=1}^d \biggl(-\ptl_{ji} A_j + [A_j, 2\ptl_j A_i - \ptl_i A_j + [A_j, A_i]] \\
&\qquad \qquad \qquad \qquad \qquad + [\ptl_j A_j, A_i]\biggr), ~~ 1 \leq i \leq d. 
\end{split}\eeq
This equation can be obtained as the gradient flow of a certain action on the space of connections, analogous to how the heat equation can be obtained as the gradient flow of the Dirichlet energy. The Yang--Mills heat flow has played a central role in various areas of mathematics, starting with the paper by Atiyah and Bott \cite{AB1983}.  See \cite[Section 1]{Feehan2016} for a historical overview of this equation and its many applications in mathematics and physics, as well as for an encyclopedic account of existing results. See also \cite{CG2013, CG2015, CG2017, Gross2016, Gross2017, OT2017, Waldron2019} for some newer results.

Actually, in this paper, we will not directly work with \eqref{eq:YM}, but rather a certain well-known variant which is often treated as equivalent to \eqref{eq:YM}. This variant is the following PDE:
\beq\label{eq:ZDDS}\tag{$\textup{ZDDS}$} 
\ptl_t A_i = \Delta A_i + \sum_{j = 1}^d [A_j, 2\ptl_j A_i - \ptl_i A_j + [A_j, A_i]], ~~ 1 \leq i \leq d. \eeq
We will refer to this as the ZDDS equation, named after the authors associated with this equation --- Zwanziger \cite{Zwan1981}, DeTurck \cite{DeT1983}, Donaldson \cite{Don1985}, Sadun \cite{Sad1987}. For a discussion as to why \eqref{eq:ZDDS} is equivalent to \eqref{eq:YM}, see, e.g., \cite[Section 1]{CCHS2020}. The advantage of \eqref{eq:ZDDS} is that it is a parabolic equation, and thus local existence is often easier to establish for \eqref{eq:ZDDS} than \eqref{eq:YM}. Indeed, one of the main methods for showing local existence of \eqref{eq:YM} for various types of initial data is to first show it for \eqref{eq:ZDDS}, and then use a well-known procedure to obtain solutions to \eqref{eq:YM} out of solutions to \eqref{eq:ZDDS} (see, e.g., \cite[Section 1.3]{CG2013}).

By now, local existence of solutions to \eqref{eq:ZDDS} has been established for various classes of initial data --- again, see the survey \cite{Feehan2016}, as well as \cite{CG2013, Gross2016}. However, as far as we can tell, there are no results for distributional initial data. In particular, there are no results which consider random distributional initial data that is too rough to be handled purely by deterministic methods. The present paper seeks to address this case. Our motivation is twofold. For one, we think that this case is of intrinsic interest --- random initial data has been studied for a variety of PDEs, such as the nonlinear Schr\"{o}dinger equation \cite{Bourgain1996, Bourgain1999, DNY2019, LRS1988, OST2021}, the nonlinear wave equation \cite{BT2008a, BT2008b}, and the Navier--Stokes equations \cite{NPS2013}. (This list of references is woefully incomplete. See, e.g., \cite{DNY2019, OST2021} for more.) 

Second, we originally came upon this problem through the companion work \cite{CaoCh2021}. In that paper, we give a proposal for constructing 3D Euclidean Yang--Mills theories (following a suggestion of Charalambous and Gross \cite{CG2013}), and in particular, we construct and study a state space that may potentially support 3D Yang--Mills measures. As evidence of this possibility, in \cite{CaoCh2021} we apply the results of the present paper to give nontrivial elements of the state space, and additionally to give a road map for completing the program and actually constructing 3D Yang--Mills measures. See \cite{CaoCh2021} for more background and discussion.

\subsection{The main result}\label{section:main-result}

We begin to build towards the statement of the main result, Theorem \ref{thm:main}. In this paper, we will often deal with functions $A(t, x)$ of both $t \in [0, \infty)$ and $x \in \torus^d$. In an abuse of notation, for $t \in [0, \infty)$, we will write $A(t)$ to denote the function on $\torus^d$ given by $x \mapsto A(t, x)$.

\begin{definition}
Let $0 < T \leq \infty$. We say that $A$ is a solution to \eqref{eq:ZDDS} on $[0, T)$ if $A \in C^\infty([0, T) \times \torus^d, \lalg^d)$ and $A$ satisfies \eqref{eq:ZDDS} on $(0, T) \times \torus^d$. Similarly, we say that $A$ is a solution to \eqref{eq:ZDDS} on $(0, T)$ if $A \in C^\infty((0, T) \times \torus^d, \lalg^d)$ and $A$ satisfies \eqref{eq:ZDDS} on $(0, T) \times \torus^d$.
\end{definition}

We next state the following classical theorem and lemma, which give local existence and uniqueness for solutions to \eqref{eq:ZDDS} with smooth initial data. Since these results concern smooth initial data, they are not new and quite classical. For instance, the local existence and regularity results stated below can be obtained by combining the various general results of \cite[Sections 17.4, 17.5, and 20.1]{Feehan2016}.  We provide proofs in Appendix \ref{appendix:deterministic-pde-proofs} if the reader is interested.


\begin{theorem}\label{thm:zdds-existence}
Let $A^0$ be a smooth {\oneform}. There exists $T > 0$ and a solution $A \in C^\infty([0, T) \times \torus^d, \lalg^d)$ to \eqref{eq:ZDDS} on $[0, T)$ with initial data $A(0) = A^0$. 
\end{theorem}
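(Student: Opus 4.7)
The plan is to solve \eqref{eq:ZDDS} as a semilinear parabolic equation by a Duhamel / contraction-mapping argument, and then upgrade the resulting mild solution to $C^\infty$ by parabolic bootstrap.

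First I would rewrite \eqref{eq:ZDDS} in mild form via Duhamel's principle:
\[ A(t) = e^{t\Delta} A^0 + \int_0^t e^{(t-s)\Delta} N(A(s)) \, ds, \]
where the $i$-th component of $N(A)$ is $\sum_{j=1}^d [A_j, 2\partial_j A_i - \partial_i A_j + [A_j, A_i]]$, so $N$ consists of a cubic term in $A$ together with bilinear terms that are linear in $A$ and linear in $\nabla A$. Treating the right-hand side as a map $\Phi: A \mapsto \Phi(A)$, I would run a fixed-point argument in a Banach space such as $X_T := C([0, T], C^{1+\alpha}(\torus^d, \lalg^d))$ for some $\alpha \in (0,1)$, on a ball of radius $2\|A^0\|_{C^{1+\alpha}}$ and for $T > 0$ sufficiently small in terms of $\|A^0\|_{C^{1+\alpha}}$. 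The key analytic input is the heat-semigroup smoothing estimate $\|e^{t\Delta} \partial_j f\|_{C^{1+\alpha}} \lesssim t^{-1/2} \|f\|_{C^{1+\alpha}}$, which produces an integrable $s^{-1/2}$ singularity in the Duhamel integral. Combined with the Banach-algebra property of $C^{1+\alpha}$ on the torus (used to bound the commutator products), this yields the contraction and thus a unique fixed point $A \in X_T$.

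Having produced a solution at low regularity, I would then bootstrap to smoothness. Since $N(A(s))$ lies in $C^{\alpha}$ uniformly on $[0, T)$, classical parabolic Schauder estimates applied to the Duhamel representation improve $A$ to $C([0,T); C^{2+\alpha})$ with $\partial_t A \in C([0,T); C^{\alpha})$. Feeding this back into $N(A)$ raises the regularity of the forcing term by one derivative, and iterating gives $A \in C^{\infty}((0, T) \times \torus^d)$. To push smoothness all the way to $t = 0$, I would use that \eqref{eq:ZDDS} is spatially translation invariant, so each spatial derivative $D^\beta A$ satisfies a similar parabolic equation with smooth initial data $D^\beta A^0$; applying the mild-solution argument at every level of regularity (controlling $D^\beta A$ by $e^{t\Delta} D^\beta A^0$ plus a Duhamel term with lower-order data) yields joint space-time smoothness on $[0, T) \times \torus^d$. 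Time derivatives are then recovered by differentiating the equation and using the spatial regularity just obtained.

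The principal technical obstacle is the presence of a first spatial derivative inside $N$: the equation is just borderline in this respect, and the contraction scheme closes only because the heat semigroup gains a full derivative at the integrable-in-time cost $t^{-1/2}$. Choosing the function space so that it simultaneously supports (i) algebra estimates for $[A_j, A_i]$ and (ii) the one-derivative smoothing bound for $e^{t\Delta} \partial_j$ is the only point that really requires care; once this is set up, both the short-time fixed-point step and the subsequent regularity bootstrap follow from standard parabolic techniques.
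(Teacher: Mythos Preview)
Your proposal is correct and follows essentially the same route as the paper: Duhamel formulation, contraction mapping in a space of $C^1$-type paths, and a bootstrap using heat-semigroup smoothing to reach $C^\infty$. The only cosmetic differences are that the paper works in $\mc{P}_T^1 = C([0,T],C^1)$ rather than $C([0,T],C^{1+\alpha})$, and obtains the regularity gain (and hence smoothness up to $t=0$) by iterating the elementary estimate $\|\rho(A)\|_{\mc{P}_T^{r+1/4}} \lesssim T^{3/8}(\|A\|_{\mc{P}_T^r}^2 + \|A\|_{\mc{P}_T^r}^3)$ directly on the Duhamel integral (Lemma~\ref{lemma:contraction-estimates} and Corollary~\ref{cor:fixed-point-higher-regularity}), rather than invoking Schauder theory or the translation-invariance argument you sketch; this makes the step from mild solution to $A\in C^\infty([0,T)\times\torus^d)$ slightly more self-contained.
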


\begin{lemma}\label{lemma:zdds-uniqueness}
Let $T > 0$. Suppose that $A, \tilde{A} \in C^\infty([0, T) \times \torus^d, \lalg^d)$ are solutions to \eqref{eq:ZDDS} on $[0, T)$ such that $A(0) = \tilde{A}(0)$. Then $A = \tilde{A}$. 
\end{lemma}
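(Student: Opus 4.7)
The plan is to derive a linear parabolic PDE with bounded coefficients for the difference $B := A - \tilde{A}$, and then run a standard $L^2$ energy estimate together with Grönwall's inequality. There is no genuine obstacle here beyond unwinding the algebra; the argument is essentially uniqueness for a quasilinear heat equation with smooth coefficients on a compact manifold.

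First, fix any $T' \in (0, T)$. Since $A, \tilde{A} \in C^\infty([0, T'] \times \torus^d, \lalg^d)$ and $[0, T'] \times \torus^d$ is compact, there is a finite constant $M$ bounding $|A|, |\tilde A|, |\nabla A|, |\nabla \tilde A|$ pointwise on this set. Next, subtract the two ZDDS equations for $A$ and $\tilde A$. Using the bilinear identity $[X, Y] - [\tilde X, \tilde Y] = [X - \tilde X, Y] + [\tilde X, Y - \tilde Y]$ on each bracket, together with $[A_j, A_i] - [\tilde A_j, \tilde A_i] = [B_j, A_i] + [\tilde A_j, B_i]$ for the innermost bracket, one obtains a PDE of the schematic form
\beq\label{eq:difference-pde}
\ptl_t B_i = \Delta B_i + F_i(B, \nabla B),
\eeq
where each component $F_i$ is $\R$-linear in $(B, \nabla B)$ with coefficients that are polynomials in the entries of $A, \tilde A, \nabla A, \nabla \tilde A$ and hence are pointwise bounded by some constant $M'$ depending only on $M$ and the structure constants of $\lalg$.

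Now fix a positive definite inner product on $\lalg$ (for instance, the restriction of $-\Tr(XY)$ from $\mathfrak{u}(N)$) and let $E(t) := \tfrac{1}{2} \sum_{i=1}^d \int_{\torus^d} |B_i(t, x)|^2 \, dx$. Differentiating under the integral (permitted because $B$ is smooth and $\torus^d$ is compact), integrating by parts on the Laplacian term to produce $-\|\nabla B\|_{L^2}^2$, and estimating the remaining $\langle B, F(B, \nabla B) \rangle$ term by $M'(\|B\|_{L^2}^2 + \|B\|_{L^2} \|\nabla B\|_{L^2})$ followed by Young's inequality $ab \leq \tfrac{1}{2} a^2 \cdot \const + \tfrac{1}{2}\const^{-1} b^2$, one obtains an estimate of the form
\beq
\frac{d}{dt} E(t) \leq -\|\nabla B(t)\|_{L^2}^2 + \const_1 E(t) + \|\nabla B(t)\|_{L^2}^2 = \const_1 E(t),
\eeq
where $\const_1$ depends only on $M'$. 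Since $E(0) = 0$ by hypothesis, Grönwall's lemma forces $E(t) = 0$ on $[0, T']$, so $B \equiv 0$ there. As $T' < T$ was arbitrary, we conclude $A = \tilde A$ on $[0, T) \times \torus^d$, as desired.
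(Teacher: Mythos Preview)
Your proof is correct and takes a genuinely different route from the paper. The paper proves uniqueness by first showing (Lemma~\ref{lemma:classical-solution-is-mild-solution}) that any classical solution satisfies the Duhamel integral equation $A(t) = e^{t\Delta}A(0) + \rho(A)(t)$, i.e.\ is a fixed point of the map $W$ from \eqref{eq:W-def}, and then invoking the contraction-mapping uniqueness of such fixed points (Corollary~\ref{cor:fixed-point-uniqueness}, which iterates the local contraction of Lemma~\ref{lemma:W-is-a-contraction} along the time interval). Your argument instead stays at the level of the PDE: subtract, linearize the bracket terms around the two solutions, and close an $L^2$ energy estimate via integration by parts and Gr\"onwall. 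Your approach is more self-contained and uses nothing beyond smoothness and compactness of $\torus^d$; the paper's approach, while slightly more indirect, reuses the mild-solution machinery already built for local existence (Theorem~\ref{thm:zdds-existence}), so it comes essentially for free in their framework.
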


Because of Lemma \ref{lemma:zdds-uniqueness}, in circumstances where the smooth initial data has been specified, we will usually say ``the solution to \eqref{eq:ZDDS}" rather than ``a solution to \eqref{eq:ZDDS}". 

Before we proceed, we need some notation. For integers $n \geq 1$, let $[n] := \{1, \ldots, n\}$. For vectors $v \in \R^n$ for some $n$, we write $|v|$ for the Euclidean norm of $v$, and we write $|v|_\infty := \max_{1 \leq i \leq n} |v_i|$ for the $\ell^\infty$ norm of~$v$. Next, since we assumed that $\liegroup \sse \unitary(N)$, the Lie algebra $\lalg$ is a real finite-dimensional Hilbert space, with inner product given by $\langle S_1, S_2 \rangle = \Tr(S_1^* S_2) = -\Tr(S_1  S_2)$ (note that $S^* = -S$ for all $S \in \lalg$, because $\liegroup \sse \unitary(N)$).

\begin{definition}\label{def:lalg-and-X}
Let $\lalgdim$ be the dimension of $\lalg$. Throughout this paper, fix an orthonormal basis $(\onbasislalg^a, a \in [\lalgdim])$ of $\lalg$.
\end{definition} 

We may thus equivalently view a ($\lalg$-valued) {\oneform} $A : \torus^d \ra \lalg^d$ as a collection $(A^a_j, a \in [\lalgdim], j \in [d])$ of functions $A^a_j : \torus^d \ra \R$, satisfying the relation
\beq\label{eq:one-form-R-valued-functions-relation} A_j = \sum_{a \in [\lalgdim]} A^a_j \onbasislalg^a, ~~ j \in [d].\eeq
Next, we recall the notation for Fourier coefficients. Let $\{e_n\}_{n \in \Z^d}$ be the Fourier basis on $\torus^d$. Explicitly, if we identify functions on $\torus^d$ with 1-periodic functions on $\R^d$, then $e_n(x) = e^{\icomplex 2\pi n \cdot x}$. Given $f \in L^1(\torus^d, \R)$, define the Fourier coefficient
\[ \hat{f}(n) := \int_{\torus^d} f(x) \ovl{e_n(x)} dx \in \C, ~~ n \in \Z^d.  \]
Note (since $f$ is $\R$-valued) that $\hat{f}(-n) = \ovl{\hat{f}(n)}$ for all $n \in \Z^d$. 
Also, if $f \in C^\infty(\torus^d, \R)$, then for any $k \geq 1$,
\beq\label{eq:fourier-coefficients-rapid-decay-general-dim} \sup_{n \in \Z^d} (1 + |n|^2)^{k/2} |\hat{f}(n)| < \infty, \eeq
where $|n|$ is the Euclidean norm of $n$. (See, e.g., \cite[Chapter 3, equation (1.7)]{T2011a}.) 

We note that this all generalizes to the case where $f$ takes values in some finite-dimensional normed linear space $(V, |\cdot|)$, in which case $\hat{f}(n) \in V^\C$, where $V^\C := \{v_1 + \icomplex v_2 : v_1, v_2 \in V\}$ is the ``complexified" version of $V$, with norm $|v_1 + \icomplex v_2| := (|v_1|^2 + |v_2|^2)^{1/2}$. Moreover, defining $\ovl{v_1 + \icomplex v_2} := v_1 - \icomplex v_2$, we have that $\hat{f}(-n) = \ovl{\hat{f}(n)}$ for all $n \in \Z^d$. 

Throughout this paper, given a normed linear space $(V, |\cdot|_V)$, we will abuse notation and write $|v|$ instead of $|v|_V$ for the norm of $v \in V$. Similarly, when $(V, \langle \cdot, \cdot \rangle_V)$ is an inner product space, we will write $\langle v_1, v_2 \rangle$ instead of $\langle v_1, v_2 \rangle_V$. The main examples of this are when $V$ is one of $\lalg, \lalg^\C, \lalg^d, (\lalg^d)^\C$ (note that the inner product that we defined on $\lalg$ induces inner products on the latter three spaces). 

\begin{definition}\label{def:fourier-truncation-operator}
For $N \geq 0$, define the Fourier truncation operator $\FT_N$ on distributions as follows. Given a distribution $\phi$ on $\torus^d$, define $\FT_N \phi \in C^\infty(\torus^d)$ by
\[ \FT_N \phi := \sum_{\substack{n \in \Z^d \\ |n|_\infty \leq N}} \hat{\phi}(n) e_n, \]
where $|n|_\infty$ is the $\ell^\infty$ norm of $n$. (Here $\hat{\phi}(n) := (\phi, e_{-n})$ for $n \in \Z^d$.)
\end{definition}

\begin{definition}[Quadratic forms]\label{def:quadratic-forms}
Let $A^0$ be a smooth {\oneform}. Let $\mbb{I} := [\lalgdim] \times [d] \times \torus^d$. Let $\lambda$ be the measure on $\mbb{I}$ defined by taking the product of counting measure on $[\lalgdim]$, counting measure on $[d]$, and Lebesgue measure on $\torus^d$. We say that $K : \mbb{I}^2 \ra \R$ is a smooth function if for any $a_1, a_2 \in [\lalgdim]$, $j_1, j_2 \in [d]$, the function on $(\torus^d)^2$ defined by $(x, y) \mapsto K((a_1, j_1, x), (a_2, j_2, y))$ is smooth. In this case, we write $K \in C^\infty(\mbb{I}^2, \R)$. Given a smooth function $K \in C^\infty(\mbb{I}^2, \R)$, define 
\beq\label{eq:quadratic-form-def} (A^0, K A^0) := \int_{\mbb{I}} \int_{\mbb{I}}  A^{0, a_1}_{j_1}(x) K(i_1, i_2) A^{0, a_2}_{j_2}(y) d\lambda(i_1) d\lambda(i_2) \in \R, \eeq
where $i_1 = (a_1, j_1, x)$, $i_2 = (a_2, j_2, y)$. 
\end{definition}

We next begin to state the assumptions that are involved in the statement of our main result, Theorem \ref{thm:main}. First, we make some definitions.

\begin{definition}
By a random $\lalg^d$-valued distribution $\rconn^0$, we mean a stochastic process $((\rconn^0, \phi), \phi \in \C^\infty(\torus^d, \R))$ of $\lalg^d$-valued random variables such that for all $\phi_1, \phi_2 \in C^\infty(\torus^d, \R)$, $c_1, c_2 \in \R$, we have that 
\beq\label{eq:random-distribution-linear} (\rconn^0, c_1 \phi_1 + c_2 \phi_2) \stackrel{a.s.}{=} c_1 (\rconn^0, \phi_1) + c_2 (\rconn^0, \phi_2). \eeq
\end{definition}

\begin{remark}\label{remark:random-distribution-different-viewpoints}
There are several different ways one can view $\rconn^0$. By linearity, we may also view $\rconn^0$ as a $(\lalg^d)^\C$-valued stochastic process $((\rconn^0, \phi), \phi \in C^\infty(\torus^d, \C))$ indexed by $\C$-valued test functions, which satisfy \eqref{eq:random-distribution-linear} for $\phi_1, \phi_2 \in C^\infty(\torus^d, \C)$ and $c_1, c_2 \in \C$, and which also satisfy $\ovl{(\rconn^0, \phi)} = (\rconn^0, \bar{\phi})$ for all $\phi \in C^\infty(\torus^d, \C)$.

Also, as is the case with {\oneforms}, we may equivalently view $\rconn^0$ as a $\R$-valued process $((\rconn^{0, a}_j, \phi), \phi \in C^\infty(\torus^d, \R), a \in [\lalgdim], j \in [d])$. Then again by linearity, we may  view $\rconn^0$ as a $\C$-valued process $((\rconn^{0, a}_j, \phi), \phi \in C^\infty(\torus^d, \C), a \in [\lalgdim], j \in [d])$.

We will use these different viewpoints (i.e., $\lalg^d$-valued, $(\lalg^d)^\C$-valued, $\R$-valued, $\C$-valued) interchangeably. As much as possible, we will try to take the vector-valued (i.e. $\lalg^d$-valued or $(\lalg^d)^\C$-valued) viewpoint, but we will find it convenient later on (in particular in Section \ref{section:technical-proofs-nonlinear-part}) to take the scalar-valued viewpoint for certain arguments.
\end{remark}

In what follows, let $\rconn^0$ be a random $\lalg^d$-valued distribution.

\begin{definition}[Fourier truncations]\label{def:A-0-N}
Let $N \geq 0$ be a finite integer. Define the Fourier truncation $\rconn^0_N := \FT_N \rconn^0$, which is a $\lalg^d$-valued stochastic process with smooth sample paths.
\end{definition}

\begin{remark}
Since $\rconn^0_N$ is a random smooth {\oneform}, we may (recalling \eqref{eq:one-form-R-valued-functions-relation}) equivalently view $\rconn^0_N$ as a $\R$-valued stochastic process with smooth sample paths $\rconn^0_N = (\rconn^{0, a}_{N, j}(x), a \in [\lalgdim], j \in [d], x \in \torus^d)$.
\end{remark}

\begin{definition}[Fourier coefficients]\label{def:fourier-truncations}
Define the Fourier coefficients of $\rconn^0$ by
\[ \hat{\rconn}^{0}(n) := (\rconn^{0}, e_{-n}), ~~ a \in [\lalgdim], j \in [d], n \in \Z^d.\]
Note that $\ovl{\hat{\rconn}^0(n)} = \hat{\rconn}^0(-n)$ for all $n \in \Z^d$.
\end{definition}

\begin{definition}
For $\phi \in C^\infty(\torus^d, \R)$, define $\sigma_\phi := (\E[|(\rconn^0, \phi)|^2])^{1/2}$. For $K \in C^\infty(\mbb{I}^2, 
\R)$, let $\sigma_{N, K} := (\E[(\rconn^0_N, K \rconn^0_N)^2])^{1/2}$ (recall Definition \ref{def:quadratic-forms}).
\end{definition}


\begin{definition}\label{def:G-alpha}
Let $\alpha \in (0, d)$, and define the bivariate distribution
\[ G^\alpha(x, y) := \sum_{\substack{n \in \Z^d \\n \neq 0}} \frac{1}{|n|^\alpha} e_n(x - y). \]
Define $G^\alpha_0(x) := G^\alpha(0, x)$.
\end{definition}

\begin{remark}
Note that $G^\alpha$ is the Green's function for the fractional negative Laplacian $(-\Delta)^{\alpha / 2}$ on $\torus^d$. In particular, $G^2$ is the Green's function for $-\Delta$ (which is the covariance function of the GFF, to be introduced a bit later).
\end{remark}


We quote the following lemma giving properties of $G^\alpha$. See \cite[Theorem 2.17]{SteinWeiss1971} for a proof.

\begin{lemma}\label{lemma:fractional-greens-function-properties}
Let $\alpha \in (0, d)$. The distribution $G_0^\alpha$ is smooth on $\torus^d - \{0\}$, with the following properties.  
\begin{enumerate}
    \item $G_0^\alpha$ is bounded from below.
    \item As $x \ra 0$, we have that $G_0^\alpha(x) \sim \metricspace(0, x)^{-(d-\alpha)}$. Here ``$\sim$" means that the ratio tends to a positive constant.
\end{enumerate}
\end{lemma}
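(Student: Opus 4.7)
The plan is to reduce both claims to the classical asymptotics of the Riesz potential on $\R^d$ via a heat-kernel representation and Poisson summation. The starting point is the identity
\[ \frac{1}{|n|^\alpha} = \frac{1}{\Gamma(\alpha/2)} \int_0^\infty t^{\alpha/2 - 1} e^{-t |n|^2} dt,\]
valid for $n \neq 0$ and $\alpha > 0$. Plugging this into the definition of $G^\alpha_0$ and interchanging sum and integral (justified by absolute convergence after truncating $t \in [\varep, \infty)$ and passing to the limit $\varep \downarrow 0$ after the Poisson-summed representation is in hand) yields
\[ G^\alpha_0(x) = \frac{1}{\Gamma(\alpha/2)} \int_0^\infty t^{\alpha/2 - 1}\bigl(P_t(x) - 1\bigr) dt,\]
where $P_t(x) := \sum_{n \in \Z^d} e^{-t|n|^2} e_n(x)$.

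Next, apply Poisson summation to obtain the dual representation
\[ P_t(x) = \Bigl(\frac{\pi}{t}\Bigr)^{d/2} \sum_{k \in \Z^d} e^{-\pi^2 |x-k|^2 / t}.\]
Split the $t$-integral at $t = 1$. On $[1, \infty)$ the integrand $t^{\alpha/2-1}(P_t(x) - 1)$ decays exponentially in $t$ and is smooth in $x$, so this piece contributes a smooth, bounded function of $x$ on all of $\torus^d$. On $(0, 1]$, use the Poisson-summed form and separate the $k=0$ term from the rest. The $k \neq 0$ terms, which involve $e^{-\pi^2 |x-k|^2/t}$ with $|x - k|$ bounded away from $0$ for $x$ in a small neighborhood of the origin, again give a smooth, bounded contribution by dominated convergence. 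The leading singular behavior is therefore entirely captured by
\[ \frac{\pi^{d/2}}{\Gamma(\alpha/2)} \int_0^1 t^{\alpha/2 - 1 - d/2}\, e^{-\pi^2 |x|^2 / t}\, dt.\]

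The substitution $s = \pi^2 |x|^2 / t$ converts this into $c_{\alpha, d}\, \metricspace(0,x)^{-(d-\alpha)}$ times an incomplete Gamma factor that converges to $\Gamma((d-\alpha)/2)$ as $x \to 0$; this is where the hypothesis $\alpha < d$ is used, since otherwise the integral would diverge at $s = \infty$. Collecting everything establishes the claim that $G^\alpha_0(x) \sim \metricspace(0, x)^{-(d-\alpha)}$ as $x \to 0$, with an explicit positive constant. For the lower bound, note that the complementary contributions listed above are continuous (in fact smooth) on the compact torus and hence bounded, while the singular part tends to $+\infty$ as $x \to 0$; thus $G_0^\alpha$ is bounded below on all of $\torus^d$.

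The main obstacle is bookkeeping rather than conceptual: one must justify the interchange of sum and integral carefully (splitting off a neighborhood of $t = 0$, using the exponential decay of $P_t - 1$ for large $t$, and controlling the tail of the Poisson sum for small $t$), and then verify that the non-leading pieces really are smooth near the origin rather than merely bounded. Once these estimates are in place, the asymptotic and the lower bound are immediate consequences of the explicit Riesz-potential calculation on $\R^d$.
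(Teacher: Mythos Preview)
Your argument is correct. The paper does not actually prove this lemma; it simply quotes the result and refers the reader to \cite[Theorem 2.17]{SteinWeiss1971} for a proof. Your heat-kernel/Mellin representation combined with Poisson summation is precisely the standard route to these facts (and is, in essence, how the cited reference proceeds on $\R^d$; the Poisson summation step is what transfers the Euclidean asymptotics to the torus). One small point worth making explicit in your write-up: smoothness away from the origin follows because for any $x_0 \in \torus^d \setminus \{0\}$ and small enough neighborhood of $x_0$, \emph{every} term in the Poisson sum (including $k=0$) has $|x-k|$ bounded away from zero, so the entire $(0,1]$-integral is smooth there, not just the $k \neq 0$ piece.
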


\noindent We now make the following assumptions on $\rconn^0$. One should think of these assumptions as saying that $\rconn^0$ qualitatively behaves like a Gaussian field. Indeed, the assumptions were all abstracted from properties of the Gaussian free field, which will be introduced later.

\begin{enumerate}[label = (\Alph*), ref=(\Alph*)]
    \item\label{assumption:l2-regularity}($L^2$ regularity). For all $\phi \in C^\infty(\torus^d, \R)$, we have $\E[|(\rconn^0, \phi)|^2] < \infty$. Moreover, we have that as $N \toinf$,
    $\E[|(\rconn^{0}, \phi) - (\rconn^0_N, \phi)|^2] \ra 0$.
    \item\label{assumption:tail-bounds} (Tail bounds). There exist constants $\consttb > 0$ and $\exptb > 0$ such that the following hold. For any $\phi \in C^\infty(\torus^d, \R)$, we have that
    \[ \p(|(\rconn^{0}, \phi)| > u) \leq \consttb \exp(- (u / \sigma_\phi)^{\exptb} / \consttb), ~~ u \geq 0. \]
    \noindent Additionally, for any $N \geq 0$, and for any smooth function $K \in C^\infty(\mbb{I}^2, \R)$ such that $K((a, j_1, x), (a, j_2, y)) = 0$ for all $a \in [\lalgdim]$, $j_1, j_2 \in [d]$, $x, y \in \torus^d$, we have that
    \[ \p\big(|(\rconn^0_N, K \rconn^0_N)| > u\big) \leq \consttb \exp(-(u / \sigma_{N, K})^{\exptb} / \consttb), ~~ u \geq 0. \] 
    \item\label{assumption:translation-invariance}(Translation invariance of covariance function) There is an integrable function $\rho : (\torus^d)^2 \ra L(\lalg^d, \lalg^d)$ (here $L(\lalg^d, \lalg^d)$ is the space of linear maps $\lalg^d \ra \lalg^d$) such that for any test functions $\phi_1, \phi_2 \in C^\infty(\torus^d, \R)$, and any linear map $K : \lalg^d \ra \lalg^d$,
    \[ \E[\langle (\rconn^0, \phi_1), K (\rconn^0, \phi_2)\rangle] = \int_{\torus^d} \int_{\torus^d} \phi_1(x) \phi_2(y) \Tr(K \rho(x, y)^t) dx dy. \]
    Moreover, we assume that $\rho$ is translation invariant, i.e. $\rho(x, y) = \rho(x - y, 0) = \rho(0, y - x)$ for $x, y \in \torus^d$. Here, $\Tr(K \rho(x, y)^t)$ is (for instance) computed by representing $M, \rho(x, y)$ as matrices with respect to the basis of $\lalg^d$ induced by the orthonormal basis $(S^a, a \in [\lalgdim])$ of $\lalg$. Similarly, ``integrable" in this context can be taken to mean that all matrix entry functions of $\rho$ are integrable.
    \item\label{assumption:bounded-by-fractional-greens-function}(Covariance is only as singular as $G^\alpha$). For some $\alpha \in (0, d)$, there is some constant $\constgf$ such that for any $x, y \in \torus^d$, $x \neq y$,
    \[ |\Tr(\rho(x, y))| \leq \constgf (G^\alpha(x, y) + \constgf).\]
    We assume without loss of generality that $G^\alpha + \constgf \geq 1$ (this is possible since $G_0^\alpha$ is bounded from below, by Lemma \ref{lemma:fractional-greens-function-properties}). 
    \item\label{assumption:four-product-assumption} (Four product assumption). 
    There is some constant $\constfourp \geq 0$ such that the following holds. Let $a_1, a_2 \in [\lalgdim]$, $j_1, j_2 \in [d]$, $\phi_1, \phi_2, \phi_3, \phi_4 \in C^\infty(\torus^d, \C)$. Assume that $a_1 \neq a_2$. Let $Z_1 = (\rconn^{0, a_1}_{j_1}, \phi_1)$, $Z_2 = (\rconn^{0, a_2}_{j_2}, \phi_2)$, $Z_3 = (\rconn^{0, a_1}_{j_1}, \phi_3)$, $Z_4 = (\rconn^{0, a_2}_{j_2}, \phi_4)$. Then
    \[ |\E[Z_1 Z_2 \ovl{Z_3 Z_4}]| \leq \constfourp \big( |\E[Z_1 \ovl{Z_3}] \E[Z_2 \ovl{Z_4}]| + |\E[Z_1 \ovl{Z_4}] \E [Z_2 \ovl{Z_3}]| \big) .\]
\end{enumerate}

\begin{remark}
Assumption \ref{assumption:translation-invariance} is motivated by the following fact. Let $X, Y$ be random vectors in $\R^n$, and let $\Sigma := \E[X Y^t]$ (so $\Sigma$ is an $n \times n$ matrix). Then for any $n \times n$ matrix $M$, we have that $\E[X^t M Y] = \Tr(M \Sigma^t)$.

Also, to be more concrete, instead of working with the function $\rho$ in Assumption \ref{assumption:translation-invariance}, one may work with scalar functions $\rho^{a_1 a_2}_{j_1 j_2} : (\torus^d)^2 \ra \R$ for $a_1, a_2 \in [\lalgdim]$, $j_1, j_2 \in [d]$, which are defined by requiring that
\[ \E[(\rconn^{0, a_1}_{j_1}, \phi_1) (\rconn^{0, a_2}_{j_2}, \phi_2)] = \int_{\torus^d} \int_{\torus^d} \phi_1(x) \rho^{a_1 a_2}_{j_1 j_2}(x, y) \phi_2(y) dx dy.\]
The function $\rho^{a_1 a_2}_{j_1 j_2}$ is then interpreted as the $((a_1, j_1), (a_2, j_2))$ matrix entry of $\rho$.
\end{remark}

\begin{remark}
In fact, we don't really need to assume that $\alpha < d$; $G_0^\alpha$ can be defined for $\alpha \geq d$ as well. However, we will just assume that $\alpha \in (0, d)$, because this will simplify our proofs later on (the cases $\alpha = d$ and $\alpha > d$ would have to be handled somewhat separately).
In any case, the regime $\alpha \in (0, d)$ is the more nontrivial setting, since $G_0^\alpha$ becomes less singular as $\alpha$ increases (see, e.g., \cite[Section 6.1]{SteinWeiss1971}).
\end{remark}

\begin{remark}
In assumption \ref{assumption:four-product-assumption}, it is important that we don't have the term $\E[Z_1 Z_2] \E[\bar{Z}_3 \bar{Z}_4]$, because this term will lead to divergences in Section \ref{section:technical-proofs-nonlinear-part}.
\end{remark}

We now state the main result of this paper.

\begin{theorem}\label{thm:main}
Let $\rconn^0$ be a random $\lalg^d$-valued distribution that satisfies Assumptions \ref{assumption:l2-regularity}--\ref{assumption:four-product-assumption}. Moreover, suppose that Assumption \ref{assumption:bounded-by-fractional-greens-function} is satisfied with $\alpha \in (\max\{d - 4/3, d/2\}, d)$. Then there exists a $\lalg^d$-valued stochastic process $\rconn = (\rconn(t, x), t \in (0, 1), x \in \torus^d)$, and a random variable $T \in (0, 1]$, such that the following hold. The function $(t, x) \mapsto \rconn(t, x)$ is in $C^\infty((0, T) \times \torus^d, \lalg^d)$, and moreover, it is a solution to \eqref{eq:ZDDS} on $(0, T)$. Also, $\E[T^{-p}] <  \infty$ for all $p \geq 1$. 

The process $\rconn$ relates to $\rconn^0$ in the following way. There exists a sequence $\{T_N\}_{N \geq 0}$ of $(0, 1]$-valued random variables such that the following hold. First, for any $p \geq 1$, we have that $\sup_{N \geq 0} \E[T_N^{-p}] < \infty$, and that $\E[|T_N^{-1} - T^{-1}|^p] \ra 0$. 
Also, let $\{\rconn^0_N\}_{N \geq 0}$ be the sequence of Fourier truncations of $\rconn^0$ (defined in Definition \ref{def:fourier-truncations}). Then there is a sequence $\{\rconn_N\}_{N \geq 0}$ of $\lalg^d$-valued stochastic processes $\rconn_N = (\rconn_N(t, x), t \in [0, 1), x \in \torus^d)$ such that for each $N \geq 0$, the function $(t, x) \mapsto \rconn_N(t, x)$ is in $C^\infty([0, T_n) \times \torus^d, \lalg^d)$ and is the solution to \eqref{eq:ZDDS} on $[0, T_n)$ with initial data $\rconn_N(0) = \rconn^0_N$. 

Finally, for any $k \in \{0, 1\}$, $p \geq 1$, $\delta \in (0, 1), \varep > 0$, we have that
\[\begin{split} 
\lim_{N \toinf} \E\bigg[\sup_{\substack{t \in (0, (1 - \delta)T) }} t^{p((k/2) + (1/4)(d-\alpha) + \varep)}\|\rconn_N(t) - \rconn(t)\|_{C^k}^p \bigg] &= 0. \end{split}\]
\end{theorem}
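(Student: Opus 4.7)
The plan is to implement a Da Prato--Debussche decomposition: write
\[ \rconn_N(t) = \Psi_N(t) + v_N(t), \qquad \Psi_N(t) := e^{t\Delta}\rconn^0_N, \]
so that the rough stochastic part $\Psi_N$ carries the initial data along the linear heat flow, while the remainder $v_N$ is forced to solve the mild Duhamel equation
\[ v_N(t) = \int_0^t e^{(t-s)\Delta}\,\mc{N}\big(\Psi_N(s) + v_N(s)\big)\,ds, \]
where $\mc{N}$ collects the quadratic and cubic Lie-bracket terms on the right-hand side of \eqref{eq:ZDDS}. My strategy is to solve for $v_N$ by a contraction mapping in a time-weighted $C^1$ Banach space on a short random interval $[0, T_N]$, then pass to the limit as $N \to \infty$. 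Smoothness of $\rconn_N = \Psi_N + v_N$ on $(0, T_N) \times \torus^d$ will follow by standard parabolic regularity once the equation is closed in a space where the forcing is classically defined.

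The first step is to control the stochastic linear part $\Psi_N$. Using Assumption~\ref{assumption:translation-invariance} and Assumption~\ref{assumption:bounded-by-fractional-greens-function}, a Fourier-side computation bounds $\E|\partial^\beta \Psi_N(t,x)|^2$ by a constant multiple of $\sum_{0 < |n| \leq N} |n|^{2|\beta|-\alpha} e^{-2t|n|^2}$, which behaves like $t^{-(d-\alpha)/2 - |\beta|}$ as $t \downarrow 0$. Plugging this into the stretched-exponential concentration of Assumption~\ref{assumption:tail-bounds} and running a Kolmogorov/chaining argument jointly in $(t,x)$ produces, for $k \in \{0,1\}$, $p \geq 1$, and $\varep > 0$,
\[ \E\bigg[\sup_{t \in (0,1)} t^{p((k/2)+(1/4)(d-\alpha)+\varep)} \|\Psi_N(t) - \Psi(t)\|_{C^k}^p\bigg] \xrightarrow[N \to \infty]{} 0, \]
which is precisely the weight appearing in the theorem's conclusion. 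The condition $\alpha > d/2$ enters here to keep the underlying regularity of $\rconn^0$ above $-d/4$, which is the threshold at which $\Psi$ itself (as opposed to its bilinears) needs no further renormalization.

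The second and most delicate step is to define the bilinear stochastic objects entering the nonlinearity, in particular $[\Psi_{N,j}, \partial_j \Psi_{N,i} - \partial_i \Psi_{N,j}]$ and $[\Psi_{N,j}, \Psi_{N,i}]$, which do not converge deterministically since $\Psi$ is a genuine distribution. Expanding in the orthonormal basis $(\onbasislalg^a)$, the antisymmetry of the Lie bracket eliminates all diagonal $a_1 = a_2$ contributions, so only the off-diagonal ones survive and Assumption~\ref{assumption:four-product-assumption} applies directly. A fourth-moment computation via \ref{assumption:four-product-assumption} --- which crucially omits the divergent non-normal-ordered channel $\E[Z_1 Z_2]\E[\bar Z_3 \bar Z_4]$ --- combined with the concentration in \ref{assumption:tail-bounds} yields uniform-in-$N$ moment bounds and Cauchy convergence of these bilinears in a time-weighted space of negative spatial regularity. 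This is where $\alpha > d - 4/3$ is used: it forces $d - \alpha < 4/3$, keeping the singularity of $[\Psi, \partial\Psi]$ below the threshold that a $C^1$-valued $v_N$ can absorb against the heat kernel.

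With $\Psi_N$ and its bilinears in hand, the Duhamel equation for $v_N$ is set up as a contraction on a ball in a time-weighted $C^1$ path space: pure-$\Psi_N$ terms enter as data whose norms are random but have all polynomial moments, mixed and pure-$v_N$ terms are Lipschitz in $v_N$, and the heat-semigroup smoothing closes the estimates provided $T_N$ is chosen small enough in terms of these random data norms. The stretched-exponential tails from \ref{assumption:tail-bounds} propagate to give $\sup_N \E[T_N^{-p}] < \infty$ for all $p \geq 1$, and Lipschitz stability of the fixed point in its data, combined with the convergences from Steps~1 and~2, yields both $\E|T_N^{-1} - T^{-1}|^p \to 0$ and the claimed $C^k$ convergence of $\rconn_N$ to $\rconn := \Psi + v$ with the exact time weight. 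The main obstacle is Step~2: even making sense of $[\Psi, \partial\Psi]$ in the limit is inherently probabilistic, and it is Assumption~\ref{assumption:four-product-assumption} --- by excluding the $\E[Z_1 Z_2]\E[\bar Z_3 \bar Z_4]$ contribution --- that makes this bilinear well defined at the critical regularity.
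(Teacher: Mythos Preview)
Your proposal is correct and follows essentially the same Da~Prato--Debussche strategy as the paper: decompose into the linear part $\Psi = e^{t\Delta}\rconn^0$, control the stochastic bilinear $[\Psi,\partial\Psi]$ probabilistically via the antisymmetry $a_1 \neq a_2$ and Assumption~\ref{assumption:four-product-assumption}, then close a deterministic fixed point for the remainder in a time-weighted $C^1$ space. The only organizational difference is that the paper packages the stochastic input as the \emph{integrated} object $B^1 = \int_0^t e^{(t-s)\Delta}X(\Psi(s))\,ds$ (a ``first nonlinear part'' in $\mc{Q}_1^{\gamma_2}$, i.e.\ positive regularity after smoothing) and separates the deterministic fixed point into an abstract local-existence theorem taking the pair $(A^1, B^1)$ as data, rather than working with the raw bilinear in negative regularity as you describe; the substance is identical.
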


\begin{remark}
A closely related result was obtained by Chandra et al. as part of their recent work \cite{CCHS2022} -- see Section 1.2 and Remarks 2.8 and 3.15 in their paper for some similarities and differences. While there is some variation in the ways the results and the proofs are phrased, ultimately we are both (as far as we can tell) exploiting the same phenomenon, which is probabilistic smoothing, which we describe next. 
\end{remark}

\begin{remark}
The assumption that $\alpha > \max\{d - 4/3, d/2\}$ ensures that there is no need for renormalization when defining the solution to \eqref{eq:ZDDS} with initial data $\rconn^0$.
\end{remark}

We now give a quick overview of the proof of the local existence part of Theorem \ref{thm:main}. The proof of the other part of the theorem has a similar main idea. As usual with local existence for parabolic PDEs, we would like to try to realize the solution $\rconn$ as the fixed point of some contraction map $W$. Then, we could for instance obtain $\rconn$ by taking the limit of $W^{(n)} (\rconn^1)$, where $W^{(n)}$ is the $n$-fold composition of $W$, and $\rconn^1$ is the linear part of $\rconn$, i.e., $\rconn^1$ is the solution to the heat equation with initial data $\rconn^0$. However, the problem is that the initial data $\rconn^0$ is too rough, so that deterministic arguments break down already in the first step of the Picard iteration --- that is, when trying to obtain estimates on $W(\rconn^1)$ by deterministic (worst-case) methods, we get divergent integrals.

The saving grace is that $W(\rconn^1)$ behaves better than the worst-case. So instead of bounding $W(\rconn^1)$ deterministically, we bound it probabilistically, which allows us to take advantage of probabilistic cancellations which occur. To give an analogy with an elementary example, note that if $\{X_n\}_{n \geq 1}$ is a sequence of i.i.d.~random variables with  mean $0$ variance $1$, then the series $\sum_{n=1}^\infty X_n / n$ converges a.s.~(by Kolmogorov's two series theorem). However, if we were to try to bound this deterministically, the sum $\sum_{n=1}^\infty n^{-1} = \infty$ would inevitably appear. Analogously, we show that $W(\rconn^1)$ can be defined in a probabilistic sense, and in fact the difference $W(\rconn^1) - \rconn^1$ is more regular than the linear part $\rconn^1$. Once this regularity gain is established, we can then obtain the local existence of $\rconn$ by a deterministic fixed point argument (i.e., Picard iteration). 

This general idea to exploit the effects of probabilistic smoothing was (as far as we can tell) first used by Bourgain \cite{Bourgain1996, Bourgain1999} to analyze the nonlinear Schr\"{o}dinger equation with GFF initial data. A similar idea was later used by Da Prato and Debussche \cite{DaDeb2002, DaDeb2003} in the stochastic PDE setting. There is by now a wide body of work building on this idea in many different settings -- see \cite[Section 1.2.2]{DNY2019} for a much more complete list of references.

We next introduce the Gaussian free field (GFF), which will be the main example of random distributional initial data in this paper. Standard references are \cite{BP2021, She2007, WP2020}. A $d$-dimensional mean zero GFF on $\torus^d$ is a mean zero  Gaussian process $h = ((h, \phi), \phi \in C^\infty(\torus^d, \R))$ such that for all test functions $\phi_1, \phi_2 \in C^\infty(\torus^d, \R)$, the covariance is given by
\beq\label{eq:gff-covariance-condition} \E[ (h, \phi_1)(h, \phi_2)] = \sum_{\substack{n \in \Z^d \\ n \neq 0}} \frac{1}{|n|^2} \hat{\phi}_1(n) \ovl{\hat{\phi}_2(n)}.\eeq
We proceed to give an explicit construction of $h$. Fix a subset $I_\infty \sse \Z^d$ such that $0 \notin I_\infty$, and such that for each $n \in \Z^d \setminus \{0\}$, exactly one of $n, -n$ is in $I_\infty$. Let $(Z_n, n \in I_\infty)$ be an i.i.d.~collection of standard complex Gaussian random variables. For $n \notin I_\infty$, define $Z_n := \ovl{Z_{-n}}$. We can then interpret $h$ as the random Fourier series
\beq\label{eq:GFF-fourier-series} h = \sum_{\substack{n \in \Z^d \\ n \neq 0}} \frac{Z_n}{|n|} e_n. \eeq
There are several interpretations of the above; one is that the sum a.s.~converges in a negative Sobolev space (see, e.g., \cite[Theorem 1.24]{BP2021}). The interpretation that we will use is that the above identity means that for any $\phi \in C^\infty(\torus^d, \R)$, we have that
\beq\label{eq:GFF-phi-def} (h, \phi) \stackrel{a.s.}{=} \sum_{\substack{n \in \Z^d \\ n \neq 0}} \frac{Z_n}{|n|} \ovl{\hat{\phi}(n)}.\eeq
(Using standard concentration arguments, we have that a.s., $|Z_n| = O(\sqrt{d \log |n|})$. Combining this with \eqref{eq:fourier-coefficients-rapid-decay-general-dim}, we have that a.s., for all $\phi \in C^\infty(\torus^d, \R)$, the sum on the right hand side above is absolutely summable.) Using that $\E[Z_n \ovl{Z_m}] = \ind(n = m)$ (i.e., $1$ if $n=m$ and $0$ otherwise), it can be verified that the condition \eqref{eq:gff-covariance-condition} is satisfied, given the above.

For $N \geq 0$, let the Fourier truncation $h_N = (h_N(x), x \in \torus^d)$ be the mean zero Gaussian process with smooth sample paths defined as $h_N := \FT_N h$.
By standard properties of $\FT_N$ and the GFF, we have that for any $\phi \in C^\infty(\torus^d, \R)$, $(h_N, \phi) \ra (h, \phi)$ both a.s. and in $L^2$ (actually, the a.s.~convergence holds simultaneously for all $\phi$).
Therefore $h_N$ converges to $h$ in a natural sense. (Another viewpoint is that if we view $h$ as a random element of a negative Sobolev space, then $h_N$ a.s.~converges to $h$ in that space.)

Since \eqref{eq:ZDDS} is a PDE on {\oneforms}, the initial data we take must also be a {\oneform}. Recalling that we may view a {\oneform} $A$ as a collection of functions $(A^a_j, a \in [\lalgdim], j \in [d])$ satisfying \eqref{eq:one-form-R-valued-functions-relation}, this motivates the following definition of the $d$-dimensional $\lalg^d$-valued GFF.  We say that $\rconn^0$ is a $d$-dimensional $\lalg^d$-valued GFF if it is a collection of stochastic processes
\[ \rconn^0 = (\rconn^{0, a}_j,  a \in [\lalgdim], j \in [d]), \]
where $\rconn^{0,  a}_j, a \in [\lalgdim], j \in [d]$ are independent $d$-dimensional GFFs. This is $\lalg^d$-valued, because given $\phi \in C^\infty(\torus^d, \R)$, we may define a $\lalg^d$-valued random variable $(\rconn^0, \phi) = ((\rconn^0, \phi)_j, j \in [d])$ through the relation 
\[ (\rconn^0, \phi)_j  := \sum_{a \in [\lalgdim]} (\rconn^{0, a}_j, \phi) \onbasislalg^a, ~~ j \in [d].\]


As previously mentioned, the assumptions of Theorem \ref{thm:main} were abstracted from properties of the GFF. Thus, naturally, we will be able to obtain the following corollary of Theorem \ref{thm:main}.

\begin{cor}\label{cor:gff}
Let $d = 3$, and let $\rconn^0$ be a $\lalg^3$-valued GFF on $\torus^3$. Then the statement of Theorem \ref{thm:main} applies to $\rconn^0$.
\end{cor}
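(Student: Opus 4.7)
The plan is to instantiate Theorem \ref{thm:main} with $\alpha = 2$ and to verify that the $\lalg^3$-valued GFF $\rconn^0$ satisfies Assumptions \ref{assumption:l2-regularity}--\ref{assumption:four-product-assumption}. Since $d = 3$, we have $\max\{d - 4/3, d/2\} = \max\{5/3, 3/2\} = 5/3$, so $\alpha = 2$ lies strictly in the permissible range $(5/3, 3)$. By construction, $\rconn^0$ is a family of $d \lalgdim$ i.i.d.~scalar real GFFs indexed by $(a, j)$, whose covariance kernel is $G^2$ by \eqref{eq:gff-covariance-condition}; I will exploit this product structure throughout.

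Four of the five assumptions reduce to elementary checks. Assumption \ref{assumption:l2-regularity} follows directly from \eqref{eq:gff-covariance-condition} combined with the rapid Fourier decay \eqref{eq:fourier-coefficients-rapid-decay-general-dim}, by dominated convergence on $\Z^3$. Assumption \ref{assumption:translation-invariance} is immediate with scalar kernels $\rho^{a_1 a_2}_{j_1 j_2}(x, y) = \delta_{a_1 a_2} \delta_{j_1 j_2} G^2(x, y)$, so that $\rho(x, y)$ is the scalar operator $G^2(x - y) \cdot I$ on $\lalg^3$, which is translation invariant with integrable matrix entries (since $G_0^2 \in L^1(\torus^3)$). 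Assumption \ref{assumption:bounded-by-fractional-greens-function} then follows from $\Tr(\rho(x, y)) = 3 \lalgdim \, G^2(x, y)$ together with the lower bound on $G_0^2$ given by Lemma \ref{lemma:fractional-greens-function-properties}, by choosing $\constgf$ sufficiently large. Finally, Assumption \ref{assumption:four-product-assumption} uses only independence: when $a_1 \neq a_2$, the pairs $(Z_1, Z_3)$ and $(Z_2, Z_4)$ are built from two independent scalar GFFs, so $\E[Z_1 Z_2 \ovl{Z_3 Z_4}] = \E[Z_1 \ovl{Z_3}] \E[Z_2 \ovl{Z_4}]$ and the inequality holds with $\constfourp = 1$.

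The only substantive step is the tail bound assumption \ref{assumption:tail-bounds}. The linear part is a standard Gaussian concentration estimate for the centered Gaussian $(\rconn^0, \phi)$ with variance $\sigma_\phi^2$. For the quadratic part, the off-diagonal hypothesis that $K$ vanishes on entries with matching Lie algebra index forces $(\rconn^0_N, K \rconn^0_N)$ to be a bilinear form in Gaussian variables drawn from distinct independent scalar GFF components. This places it in the second Wiener chaos with mean zero, so by the Hanson--Wright inequality (equivalently, by hypercontractivity in the second Gaussian chaos), it satisfies a sub-exponential tail bound of the form
\[
\p\bigl(|(\rconn^0_N, K \rconn^0_N)| > u\bigr) \leq 2 \exp\bigl(-c \min(u/\sigma_{N,K}, (u/\sigma_{N,K})^2)\bigr),
\]
which gives the required bound with $\exptb = 1$ (and the linear part is then immediate with the same choice of $\exptb$).

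The expected obstacle is the bookkeeping in this last step: one must express the bilinear form in the finite-dimensional Gaussian Hilbert space spanned by the Fourier coefficients $\{\hat{\rconn}^0(n)\}_{|n|_\infty \leq N}$ in a form where Hanson--Wright applies, verify that $\sigma_{N, K}$ coincides up to a universal constant with the Hilbert--Schmidt norm governing the tail bound, and confirm that the constants appearing are uniform in $N$ and in $K$. Once this is carried out, all five assumptions are verified with $\alpha = 2$ and Theorem \ref{thm:main} applies directly to give the corollary.
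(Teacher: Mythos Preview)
Your proposal is correct and follows essentially the same route as the paper: verify Assumptions \ref{assumption:l2-regularity}--\ref{assumption:four-product-assumption} for the GFF with $\alpha=2$, then invoke Theorem~\ref{thm:main}. Your treatment of \ref{assumption:l2-regularity}, \ref{assumption:translation-invariance}, \ref{assumption:bounded-by-fractional-greens-function}, and \ref{assumption:four-product-assumption} matches the paper's almost line for line.

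The only tactical difference is in the quadratic tail bound in \ref{assumption:tail-bounds}. The paper proves a dedicated concentration lemma for centered Gaussian quadratic forms (via spectral decomposition and a direct moment-generating-function estimate), then applies it to Riemann-sum approximations $Q_k$ of $(\rconn^0_N, K\rconn^0_N)$ and passes to the limit with Fatou's lemma and a uniform-integrability argument. You instead propose to write $(\rconn^0_N, K\rconn^0_N)$ directly as a finite quadratic form in the Gaussian Fourier coefficients $\{Z^a_j(n)\}_{|n|_\infty\le N}$ and invoke Hanson--Wright (or second-chaos hypercontractivity). Your route avoids the Riemann-sum approximation altogether, since the truncation already makes the underlying Gaussian finite-dimensional; the paper's route is more self-contained but needs the extra limiting step. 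Both yield $\exptb=1$ with dimension-free constants, so either is fine.
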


\begin{remark}
The above corollary will also hold for $d = 2$ by a simpler deterministic argument. However, once $d \geq 4$, the same result does not necessarily apply, because Assumption \ref{assumption:bounded-by-fractional-greens-function} will only be satisfied for $\alpha$ small (e.g., $\alpha = 2$ when $d = 4$), which is to say that the GFF becomes too rough once $d \geq 4$.
\end{remark}

\subsection{Additional notation}\label{section:notation}
We introduce some additional notation. Throughout this paper, $\const$ will denote a generic constant that may depend only on $\liegroup$. It may change from line to line, and even within a line. To express dependence on some additional parameter, say $\alpha$, we will write $\const_\alpha$. In these situations, we always understand $\const_\alpha$ to also depend on $\liegroup$.  Similarly, if we say that $\const_\alpha$ depends only on $\alpha$, we really mean that $\const_\alpha$ depends only on $\alpha$ and $\liegroup$.

The metric on $\torus^d$ (that is, the metric induced by the standard Euclidean metric of $\R^d$) will be denoted by $\metricspace$. Explicitly, if $\Pi : \R^d \ra \torus^d$ is the canonical projection map, then $\metricspace(x, y) := \inf \{|x_0 - y_0| : \Pi(x_0) = x, \Pi(y_0) = y\}$. Here $|x_0 - y_0|$ is the Euclidean distance between $x_0, y_0 \in \R^d$. 

Fix a real finite-dimensional normed linear space $(\vectorspace, |\cdot|)$ (for instance, we may take $V = \lalg$ or $\lalg^d$). 
For a tuple $\alpha = (\alpha_1, \ldots, \alpha_d) \in \N^d$ of nonnegative integers and a function $f : \torus^d \ra V$, let  
\[ \ptl^\alpha f := \ptl_1^{\alpha_1} \cdots \ptl_d^{\alpha_d} f, \]
provided that the mixed partial derivative on the right exists. 
We write $|\alpha|_1 := \sum_{i=1}^d \alpha_i$. For an integer $m \geq 0$, let $C^m(\torus^d, V)$ be the space of $m$-times continuously differentiable functions, that is, the space of functions $f : \torus^d \ra V$ such that for all multi-indices $\alpha$ with $|\alpha|_1 \leq m$, the derivative $\ptl^\alpha f$ exists and is continuous. For $f \in C^0(\torus^d, V)$, define
\[ \|f\|_{C^0(\torus^d, V)} := \sup_{x \in \torus^d} |f(x)|,\]
and for $f \in C^m(\torus^d, V)$, let 
\[ \|f\|_{C^m(\torus^d, V)} := \max_{|\alpha|_1 \leq m} \|\ptl^\alpha f\|_{C^0}.\]
We will usually write $\|f\|_{C^m}$ for brevity. 
Let $C^\infty(\torus^d, V)$ be the space of smooth functions from $\torus^d \ra V$, that is, $C^\infty(\torus^d, V) = \bigcap_{m=0}^\infty C^m(\torus^d, V)$. Note in particular $C^\infty(\torus^d, \lalg^d)$ is the space of smooth {\oneforms}. 

For a real number $0 < r < 1$, define the H\"{o}lder space $C^r(\torus^d, V)$ to be the space of continuous functions $f : \torus^d \ra V$ with finite H\"{o}lder norm: 
\[ \|f\|_{C^r(\torus^d, V)} := \max\bigg\{\|f\|_{C^0}, \sup_{\substack{x, y \in \torus^d \\ x \neq y}} \frac{|f(x) - f(y)|}{d_{\torus^d}(x, y)^r}\bigg\} < \infty. \]
We will usually write $\|f\|_{C^r}$ for brevity. 
Observe that for $f, g \in C^r(\torus^d, \R)$,
\beq\label{eq:C-r-norm-product}\|f g\|_{C^r} \leq 2\|f\|_{C^r} \|g\|_{C^r}.\eeq
For a real number $r \geq 0$, let $m \geq 0$, $0 \leq s < 1$ be such that $r = m + s$, where $m$ is the integer part of $r$. Define the H\"{o}lder space $C^r(\torus^d, V)$ to be the space of functions $f : \torus^d \ra V$ such that $f \in C^m(\torus^d, V)$ and such that for each multi-index $\alpha$ with $|\alpha|_1 = m$, the derivative $\ptl^\alpha f$ is in $C^s(\torus^d, V)$. Define the H\"{o}lder norm
\[ \|f\|_{C^r(\torus^d, V)} := \max\bigg\{\|f\|_{C^m},  \max_{|\alpha|_1 = m} \|\ptl^\alpha f\|_{C^s}\bigg\}. \]
Again, we will usually write $\|f\|_{C^r}$ for brevity. Note that when $r$ is integer, the above definition coincides with our previous definition of $\|\cdot\|_{C^r}$. We have that $(C^r(\torus^d, V), \|\cdot\|_{C^r})$ is a Banach space for any $r \in [0, \infty)$ (see, e.g.,  \cite[Appendix A]{T2011a}).

We next define $L^p$ spaces of functions. Let $1 \leq p \leq \infty$. Given a (measurable) function $f : \torus^d \ra \vectorspace$, let
\[
\|f\|_{L^p(\torus^d, \vectorspace)} := \begin{cases}  \big(\int_{\torus^d} |f(x)|^p dx\big)^{1/p} & 1 \leq p < \infty, \\ 
\esssup_{x \in \torus^d} |f(x)| & p = \infty .
\end{cases}
\]
For $1 \leq p \leq \infty$, let $L^p(\torus^d, V)$ be the real Banach space (of a.e.~equivalence classes) of measurable functions $\torus^d \ra V$ with finite $L^p$ norm. We will write $\|\cdot\|_p$ instead of $\|\cdot\|_{L^p(\torus^d, V)}$ for brevity.



Let $(e^{t \Delta})_{t > 0}$ be the semigroup generated by the Laplacian $\Delta$. Explicitly, given $f \in L^1(\torus^d, V)$ and $t > 0$, we have that (see e.g. \cite[Chapter 6.1]{T2011a}) 
\beq\label{eq:heat-semigroup-def} e^{t \Delta} f = \sum_{n \in \Z^d} e^{-4\pi^2|n|^2 t} \hat{f}(n) e_n. \eeq
Using that $|\hat{f}(n)| \leq \|f\|_{L^1}$ for all $n \in \Z^d$, it can be shown that the map $(t, x) \mapsto (e^{t\Delta} f)(x)$ is in $C^\infty((0, \infty) \times \torus^d, V)$, and it is a solution to the heat equation on $(0, \infty) \times \torus^d$.
Additionally, $e^{t \Delta} f$ has an explicit representation in terms of convolution with the heat kernel $\Phi$, i.e., for all $t > 0$, $x \in \torus^d$, we have that
\beq\label{eq:heat-kernel-semigroup-convolution} (e^{t \Delta} f)(x) = \int_{\torus^d} f(y) \Phi(t, x-y) dy, \eeq
where
\beq\label{eq:heat-kernel-def} \Phi(t, x) := \sum_{n \in \Z^d} e^{-4\pi^2 |n|^2 t} e_n(x), ~~ t > 0, x \in \torus^d.\eeq
Since $\Phi(t, \cdot)$ is also given by a probability density (see \cite[Chapter 6.1]{T2011a}), it is non-negative, and thus we have the following monotonicity property for integrable $\R$-valued functions $f, g$:
\beq\label{eq:heat-semigroup-monotonicity} |f| \leq g \implies |e^{t \Delta} f| \leq e^{t \Delta} g. \eeq
(In the above, $|f| \leq g$ means $|f(x)| \leq g(x)$ for all $x \in \torus^d$ --- actually, a.e.~$x \in \torus^d$ suffices --- and similarly for $|e^{t\Delta}f| \leq e^{t \Delta} g$.)

Recall the orthonormal basis $(\onbasislalg^a, ~a \in [\lalgdim])$ of $\lalg$ from Definition \ref{def:lalg-and-X}.  Let $(f^{abc}, ~a, b, c \in [\lalgdim])$ be the corresponding structure constants, i.e.,
\beq\label{eq:structure-constants-def} [\onbasislalg^a, \onbasislalg^b] = \sum_{c \in [\lalgdim]} f^{abc} \onbasislalg^c. \eeq
By starting from the definition above (and using that the inner product is given by $\langle S_1, S_2\rangle = \mathrm{Tr}(S_1^* S_2) = -\mathrm{Tr}(S_1 S_2)$), we obtain for $a, b, c \in [\lalgdim]$,
\[\begin{split}
\langle [\onbasislalg^a, \onbasislalg^b], \onbasislalg^c\rangle &= -\mathrm{Tr}([\onbasislalg^a, \onbasislalg^b] \onbasislalg^c) \\
&=  -\mathrm{Tr}((\onbasislalg^a \onbasislalg^b - \onbasislalg^b \onbasislalg^a) \onbasislalg^c) \\
&= - \mathrm{Tr}((\onbasislalg^b \onbasislalg^c - \onbasislalg^c \onbasislalg^b) \onbasislalg^a) \\
&=-\mathrm{Tr}([\onbasislalg^b, \onbasislalg^c] \onbasislalg^a) = \langle [\onbasislalg^b, \onbasislalg^c], \onbasislalg^a \rangle.
\end{split} \]
This shows that $f^{abc} = f^{bca}$. Proceeding similarly, we may obtain
\beq\label{eq:structure-constants-permute-indices} f^{abc} = f^{cab} = -f^{acb} = -f^{bac} = -f^{cba} = f^{bca}.\eeq

\begin{remark}
Even though we are introducing structure constants here, the results of this paper do not really rely on the specific bracket structure of \eqref{eq:ZDDS}. Indeed, we expect that the arguments could be adapted to the case where $\lalg$ is replaced by a finite-dimensional normed algebra, and \eqref{eq:ZDDS} is replaced by an equation of the form
\[ \ptl_t A = \Delta A + A (\nabla A) + A^3. \]
\end{remark}

\subsection{Organization of the paper}

We now give a summary of the rest of the paper. In Section~\ref{section:deterministic-results}, we state Theorem \ref{thm:rough-distributional-zdds-local-existence}, which is a deterministic result that gives local existence of solutions to \eqref{eq:ZDDS} with distributional intial data, assuming certain conditions are met. We also state various other useful deterministic lemmas in Section \ref{section:useful-lemmas}. Given Theorem \ref{thm:rough-distributional-zdds-local-existence}, the remainder of the paper is then concerned with showing that the conditions of the theorem are indeed met, for random distributional initial data with certain properties, as listed just before Theorem \ref{thm:main}.
Sections \ref{section:linear-part} and \ref{section:nonlinear-part} collect the main intermediate steps towards the proof of Theorem \ref{thm:main}. Given these intermediate steps, Theorem \ref{thm:main} is proven in Section \ref{section:proofs-main}. Corollary \ref{cor:gff} is obtained as an application of Theorem \ref{thm:main} in the same section. 
In Section \ref{section:useful-variant}, we state and prove Proposition \ref{prop:main-useful-variant}, which is a variant of Theorem \ref{thm:main} that will be used in \cite{CaoCh2021}. Sections \ref{section:technical-proofs-linear-part} and \ref{section:technical-proofs-nonlinear-part} contain the technical arguments needed to prove the intermediate results of Sections \ref{section:linear-part} and \ref{section:nonlinear-part}. 


Here is a flowchart illustrating the structure outlined above: \\

\begin{center}
\begin{tikzpicture}[node distance=2cm]


\node (linear-part) [startstop, xshift=1.5cm] {Section \ref{section:linear-part}};

\node (nonlinear-part) [startstop, right of=linear-part, xshift=1.5cm] {Section \ref{section:nonlinear-part}};

\node[align=center] (proof-linear-part) [startstop, above of=linear-part] {Section \ref{section:technical-proofs-linear-part}} ;

\node[align=center] (proofs-det) [startstop, left of=proof-linear-part, xshift=-1.5cm] {Section \ref{section:deterministic-results}};


\node[align=center] (proof-nonlinear-part) [startstop, above of=nonlinear-part] {Section \ref{section:technical-proofs-nonlinear-part}} ;

\node[align=center] (main) [startstop, below of=linear-part] {Theorem \ref{thm:main} \\ Proposition \ref{prop:main-useful-variant}};

\node (main-gff) [startstop, below of=main] {Corollary \ref{cor:gff}};

\draw [arrow] (proofs-det) -- (proof-linear-part) ;
\draw [arrow] (proofs-det) to [out=45, in=135] (proof-nonlinear-part) ;
\draw [arrow] (linear-part) -- (main) ;
\draw [arrow] (nonlinear-part) -- (main) ;
\draw [arrow] (main) -- (main-gff) ;
\draw [arrow] (proof-linear-part) -- (linear-part) ;
\draw [arrow] (proof-nonlinear-part) -- (nonlinear-part) ;
\draw [arrow] (proofs-det) -- (main) ;

\end{tikzpicture}
\end{center}

\subsection{Acknowledgements}
We are grateful to  Nelia Charalambous, Persi Diaconis, Len Gross, and Phil Sosoe for various helpful comments and discussions. We are also grateful to the referees for many helpful and detailed comments.

\section{Deterministic results}\label{section:deterministic-results}

In this section, we collect the deterministic results that are needed later on in the paper. We emphasize here that the results of this section may be read independently of the rest of the paper (although of course the main reason for these results is to use them to deduce Theorem \ref{thm:main}). The main result of this section (Theorem \ref{thm:rough-distributional-zdds-local-existence}) shows local existence of solutions to \eqref{eq:ZDDS} with distributional initial data.
The proofs of most results in this section are small variations of proofs of classical results in the theory of local existence for nonlinear parabolic PDEs. Thus, unless otherwise given, all proofs in this section are in Appendix \ref{appendix:deterministic-pde-proofs}.
We first define the notation that will be needed in Theorem \ref{thm:rough-distributional-zdds-local-existence} and in other parts of this paper.

\begin{definition}\label{def:P-T-r}
For $r \geq 0$, $T > 0$, define the path space $\mc{P}_T^r$ to be the space of continuous functions $A : [0, T] \ra C^r(\torus^d, \lalg^d)$. Define the norm $\|\cdot\|_{\mc{P}_T^r}$ on $\mc{P}_T^r$ by
\[ \|A\|_{\mc{P}_T^r} := \sup_{0 \leq t \leq T} \|A\|_{C^r}, ~~ A \in \mc{P}_T^r. \]
Note that $(\mc{P}_T^r, \|\cdot\|_{\mc{P}_T^r})$ is a Banach space, because $C^r(\torus^d, \lalg^d)$ is a Banach space (as noted in Section \ref{section:notation})
\end{definition}

\begin{definition}\label{def:Q-T-gamma}
Let $\gamma \geq 0$, $T > 0$. Define the path space $\mc{Q}_T^\gamma$ to be the space of continuous functions $A : (0, T] \ra C^1(\torus^d, \lalg^d)$ such that
\[ \|A\|_{\mc{Q}_T^\gamma} :=\sup_{t \in (0, T]}  t^\gamma  \|A(t)\|_{C^0} + \sup_{t \in (0, T]} t^{(1/2) + \gamma} \|A(t)\|_{C^1} < \infty. \]
For $R \geq 0$, define $\mc{Q}_{T, R}^\gamma := \{A \in \mc{Q}_T^\gamma : \|A\|_{\mc{Q}_T^\gamma} \leq R\}$.
\end{definition}

\begin{remark}
We thank one of the referees for pointing out here that the regularity parameter $\gamma$ is flipped, in that larger $\gamma$ allows for more irregularity.
\end{remark}


We will later show that $\mc{Q}_T^\gamma$ is a Banach space under this norm (Lemma~\ref{lemma:Q-T-gamma-banach}).

\begin{definition}\label{def:X}
Given a {\oneform} $A \in C^1(\torus^d, \lalg^d)$, define $X(A) \in C^0(\torus^d, \lalg^d)$ by $X(A) = (X_i(A), i \in [d])$, where
\[ X_i(A) := \sum_{j \in [d]} [A_j, 2\ptl_j A_i - \ptl_i A_j + [A_j, A_i]], ~~ i \in [d]. \]
Define $X^{(2)}(A), X^{(3)}(A) \in C^0(\torus^d, \lalg^d)$ as follows. For $i \in [d]$, let
\[ X^{(2)}_i(A) := \sum_{j \in [d]} [A_j, 2\ptl_j A_i - \ptl_i A_j], ~~ X^{(3)}_i(A) := \sum_{j \in [d]} [A_j, [A_j, A_i]]. \]
Note by construction that $X(A) = X^{(2)}(A) + X^{(3)}(A)$.
\end{definition}


\begin{definition}\label{def:rho}
Let $T > 0$. Let $j \in \{2, 3\}$. Let $A : [0, T] \ra C^1(\torus^d, \lalg^d)$ be a continuous function. Suppose that
\beq\label{eq:rho-finite-integral}  \int_0^T \|e^{(t-s)\Delta} X^{(j)}(A(s))\|_{C^1} ds < \infty. \eeq
Define $\rho^{(j)}(A) : [0, T] \ra C^1(\torus^d, \lalg^d)$ by
\[ \rho^{(j)}(A)(t) := \int_0^t e^{(t-s)\Delta} X^{(j)}(A(s)) ds, ~~ t \in [0, T].\]
We say that $\rho^{(j)}(A)$ is well-defined for $A$ if \eqref{eq:rho-finite-integral} holds. Now if $\rho^{(j)}(A)$ is well-defined for $A$ for $j = 2, 3$, define $\rho(A) : [0, T] \ra C^1(\torus^d, \lalg^d)$ by $\rho(A) := \rho^{(2)}(A) + \rho^{(3)}(A)$. In this case, we say that $\rho(A)$ is well-defined for $A$. Note since $X = X^{(2)} + X^{(3)}$, we have that \[ \rho(A)(t) = \int_0^t e^{(t-s)\Delta} X(A(s)) ds, ~~ t \in [0, T].\]
Finally, if the domain of $A$ is $(0,T]$ instead of $[0,T]$ (all other conditions remaining the same), we define $\rho^{(j)}(A) : (0, T] \ra C^1(\torus^d, \lalg^d)$ and $\rho(A) : (0, T] \ra C^1(\torus^d, \lalg^d)$ by replacing $[0, T]$ with $(0, T]$ everywhere above, provided that the integrals are absolutely convergent.
\end{definition}



The next lemma shows that $\rho(A)$ is well-defined for $A$ if $A \in \mc{P}_T^1$, and moreover that $\rho(A) \in \mc{P}_T^1$. 

\begin{lemma}\label{lemma:rho-in-P-T-0}
Let $T \in (0, 1]$. Let $A \in \mc{P}_T^1$. Then $\rho^{(j)}(A)$ is well-defined for $A$ for $j \in \{2, 3\}$, and moreover $\rho^{(2)}(A), \rho^{(3)}(A) \in \mc{P}_T^1$. Thus also $\rho(A) \in \mc{P}_T^1$.
\end{lemma}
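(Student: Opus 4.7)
The plan is to bound the integrand $e^{(t-s)\Delta} X^{(j)}(A(s))$ in $C^1$ by combining a pointwise $C^0$ estimate on the nonlinearity $X^{(j)}(A(s))$ with the standard parabolic smoothing gain of the heat semigroup, and then verify continuity in $t$ separately.

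Setting $M := \|A\|_{\mc{P}_T^1}$ and using bilinearity and boundedness of the Lie bracket ($|[S_1, S_2]| \leq \const |S_1| |S_2|$) together with $\|\ptl_j A_i(s)\|_{C^0} \leq \|A(s)\|_{C^1} \leq M$, the pointwise bounds $\|X^{(2)}(A(s))\|_{C^0} \leq \const M^2$ and $\|X^{(3)}(A(s))\|_{C^0} \leq \const M^3$ follow for all $s \in [0, T]$. I would then invoke the standard smoothing estimate $\|e^{t\Delta} f\|_{C^1} \leq \const t^{-1/2} \|f\|_{C^0}$ for $t \in (0, 1]$, obtained by differentiating the convolution representation \eqref{eq:heat-kernel-semigroup-convolution} and using the heat-kernel bound $\|\nabla \Phi(t, \cdot)\|_{L^1(\torus^d)} \leq \const t^{-1/2}$. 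Combining the two yields $\|e^{(t-s)\Delta} X^{(j)}(A(s))\|_{C^1} \leq \const M^j (t-s)^{-1/2}$ for $s < t$, which is integrable in $s$. This verifies condition \eqref{eq:rho-finite-integral} (so $\rho^{(j)}(A)$ is well-defined per Definition \ref{def:rho}) and gives the uniform bound $\sup_{t \in [0, T]} \|\rho^{(j)}(A)(t)\|_{C^1} \leq \const M^j T^{1/2}$.

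For continuity of $t \mapsto \rho^{(j)}(A)(t)$ into $C^1$, for $0 \leq t_1 < t_2 \leq T$ I would write
\beq
\rho^{(j)}(A)(t_2) - \rho^{(j)}(A)(t_1) = \int_{t_1}^{t_2} e^{(t_2-s)\Delta} X^{(j)}(A(s))\, ds + \int_0^{t_1} \bigl(e^{(t_2-t_1)\Delta} - \groupid\bigr) e^{(t_1-s)\Delta} X^{(j)}(A(s))\, ds.
\eeq
The first integral has $C^1$-norm $\leq \const M^j (t_2 - t_1)^{1/2}$ by the previous step. For the second, I would use strong continuity of the heat semigroup on $C^1$ (giving pointwise-in-$s$ convergence of the integrand to $0$) together with the integrable dominating function $2\const M^j (t_1 - s)^{-1/2}$, valid uniformly in $t_2$ near $t_1$, and then apply dominated convergence. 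Continuity at $t = 0$ follows automatically since $\|\rho^{(j)}(A)(t)\|_{C^1} \leq \const M^j t^{1/2} \to 0$. Taking $j \in \{2, 3\}$ and summing then gives $\rho(A) \in \mc{P}_T^1$.

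Nothing in this argument is substantively hard; the single nontrivial input is the heat-semigroup smoothing estimate on the torus, which is standard. The most delicate moment will be the dominated-convergence step in the continuity argument, but once the $(t_1 - s)^{-1/2}$ dominating function is in place this is routine. It is worth noting that the restriction $T \leq 1$ is used only to write the smoothing estimate in the clean form $\const t^{-1/2}$ without tracking an exponential factor.
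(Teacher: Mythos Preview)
Your proof is correct and follows essentially the same approach as the paper, which derives the lemma directly from Lemma~\ref{lemma:contraction-estimates}: bound $X^{(j)}$ in a $C^r$ norm, apply the heat-semigroup smoothing estimate \eqref{eq:heat-semigroup-Cr-C-r-plus-half}, and integrate. The only minor differences are that the paper chooses exponents yielding an extra $1/4$ of regularity (so in fact $\rho^{(j)}(A) \in \mc{P}_T^{5/4}$, needed later for bootstrapping), and handles time-continuity via the quantitative estimate \eqref{eq:heat-semigroup-general-time-continuity} rather than your dominated-convergence argument.
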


\begin{remark}\label{remark:integral-equation}
The reason for the preceding definitions is that (as is usual for parabolic PDEs) to prove local existence, we will cast \eqref{eq:ZDDS} into its corresponding integral form
\[ A(t) = e^{t \Delta} A(0) + \int_0^t e^{(t-s)\Delta} X(A(s)) ds = e^{t \Delta} A(0) + \rho(A)(t). \]
(Note that $X(A(s))$ is the nonlinear part of \eqref{eq:ZDDS}.) We will first construct a solution $A$ to the above integral equation, and then show that the integral equation implies \eqref{eq:ZDDS}. 
\end{remark}

\begin{definition}\label{def:nonlinear-part-of-linear-part}
Let $A^1 : (0, 1] \ra C^1(\torus^d, \lalg^d)$ be such that $A^1(t) = e^{(t-s)\Delta} A^1(s)$ for all $s, t \in (0, 1]$, $s < t$. Let $B^1 : (0, 1] \ra C^1(\torus^d, \lalg^d)$ be a continuous function. We say that $B^1$ is a first nonlinear part for $A^1$ if the following holds. For $t_0 \in (0, 1)$, let $\tilde{A}^1(t) := A^1(t_0 + t)$, $\tilde{B}^1(t) := B^1(t_0 + t)$, $t \in [0, 1 - t_0]$. Then for all $t_0 \in (0, 1)$ and all $t \in [0, 1 - t_0]$, we have that $\tilde{B}^1(t) = e^{t \Delta} \tilde{B}^1(0) + \rho(\tilde{A}^1)(t)$.
\end{definition}


\begin{remark}\label{remark:first-nonlinear-part-c1-example}
To see where Definition \ref{def:nonlinear-part-of-linear-part} comes from, suppose $A^0$ is a smooth {\oneform}, and let $A^1(t) = e^{t \Delta} A^0$, $t \in (0, 1]$. Let $B^1 = \rho(A^1)$. Then for $t_0 \in (0, 1)$, $t \in [0, 1 - t_0]$, we have that
\[ B^1(t_0 + t) = e^{t\Delta} \int_0^{t_0} e^{(t_0 - s)\Delta} X(A^1(s)) ds + \int_{t_0}^{t_0 + t} e^{(t_0 + t - s)\Delta} X(A^1(s)) ds. \]
Changing variables $s \mapsto s - t_0$ in the second integral above, we then obtain
\[ B^1(t_0 + t) = e^{t\Delta} B^1(t_0) + \int_0^t e^{(t-s)\Delta} X(A^1(t_0 + s)) ds, \]
and thus we see that $B^1$ is a first nonlinear part of $A^1$.
Definition \ref{def:nonlinear-part-of-linear-part} abstracts this relation to the setting where $\rho(A^1)$ is not necessarily well-defined, which will be the case for us, because we are considering (random) distributional initial data.
\end{remark}

We can now state the main result of this section. This theorem is the deterministic part of the argument outlined just after the statement of Theorem \ref{thm:main}. In essence, this theorem says the following. In usual local existence arguments via contraction mapping, given $A^1$ as in Definition \ref{def:nonlinear-part-of-linear-part}, we would want to bound $\rho(A^1)$, and moreover, show that $\rho(A^1)$ is more regular than $A^1$. However, for us, $\rho(A^1)$ will not even be well-defined, because $A^1$ will be too rough. On the other hand, if we are able to obtain a proxy $B^1$ for $\rho(A^1)$, such that $B^1$ is more regular than $A^1$, then we can still run a fixed point argument to obtain a solution to \eqref{eq:ZDDS}. If it helps, one can think of this strategy as running a fixed point argument on an ``enhanced space" consisting of pairs $(A^1, B^1)$, instead of just $A^1$.

\begin{theorem}
\label{thm:rough-distributional-zdds-local-existence}
Let $\gamma_1 \in [0, 1/2)$, $\gamma_2 \in [0, 1/4)$ be such that $\gamma_1 + \gamma_2 < 1/2$. Then, there is a continuous non-increasing function $\tau_{\gamma_1 \gamma_2} : [0, \infty) \ra (0, 1]$ (which only depends on $\gamma_1, \gamma_2, d$) such that the following holds. Let $A^1 : (0, 1] \ra C^1(\torus^d, \lalg^d)$ be such that $A^1(t) = e^{(t-s)\Delta} A^1(s)$ for all $s, t \in (0, 1]$, $s < t$. Suppose $A^1 \in \mc{Q}_1^{\gamma_1}$. Suppose that there exists $B^1 \in \mc{Q}_1^{\gamma_2}$ which is a first nonlinear part for $A^1$. Let $R := \max\{\|A^1\|_{\mc{Q}_1^{\gamma_1}}, \|B^1\|_{\mc{Q}_1^{\gamma_2}}\}$, and let $T := \tau_{\gamma_1 \gamma_2}(R)$. Then there exists $B \in \mc{Q}_{T, 3R}^{\gamma_2}$ such that $A := A^1 + B$ is in $C^\infty((0, T) \times \torus^d, \lalg^d)$, and moreover $A$ is a solution to \eqref{eq:ZDDS} on $(0, T)$. 

Additionally, we have continuity in the data, in the following sense. Suppose that we have a sequence $\{A^1_n\}_{n \geq 1} \sse \mc{Q}_1^{\gamma_1}$ such that for each $n \geq 1$, $A^1_n(t) = e^{(t-s)\Delta} A^1_n(s)$ for all $s, t \in (0, 1]$, $s < t$. Suppose we have a sequence $\{B^1_n\}_{n \geq 1} \sse \mc{Q}_1^{\gamma_2}$, such that for each $n \geq 1$, $B^1_n$ is a first nonlinear part for $A^1_n$. Suppose that $\|A_n^1 - A^1\|_{\mc{Q}_1^{\gamma_1}} \ra 0$ and $\|B_n^1 - B^1\|_{\mc{Q}_1^{\gamma_2}} \ra 0$. Let $R_n := \max\{\|A^1_n\|_{\mc{Q}_1^{\gamma_1}}, \|B^1_n\|_{\mc{Q}_1^{\gamma_2}}\}$, and $T_n := \tau_{\gamma_1 \gamma_2}(R_n)$. For each $n \geq 1$, let $B_n \in \mc{Q}_{T_n, 3R_n}^{\gamma_2}$ be as constructed by the first part of the theorem, so that $A_n := A^1_n + B_n$ is a solution to \eqref{eq:ZDDS} on $(0, T_n)$. Then for all $T_0 \in (0, T)$, we have that $\|B_n - B\|_{\mc{Q}_{T_0}^{\gamma_2}} \ra 0$, which implies that $\|A_n - A\|_{\mc{Q}_{T_0}^{\max\{\gamma_1, \gamma_2\}}} \ra 0$. (Note that since $R_n \ra R$ and since $\tau_{\gamma_1 \gamma_2}$ is continuous, we have that $T_n \ra T$, so that for large enough $n$, $A_n, B_n$ are defined on $(0, T_0]$.)
\end{theorem}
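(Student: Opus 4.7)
\emph{Strategy via a fixed point for the remainder.} I would write $A = A^1 + B^1 + B^2$ and take $B^2$ as the unknown. If $A$ solves \eqref{eq:ZDDS} on $(0,T)$, the Duhamel form starting from any $t_0 \in (0, T)$ reads $A(t_0+t) = e^{t\Delta} A(t_0) + \int_0^t e^{(t-s)\Delta} X(A(t_0+s))\, ds$; using $A^1(t_0+t) = e^{t\Delta} A^1(t_0)$ together with the first-nonlinear-part identity satisfied by $B^1$, this simplifies to
\[ B^2(t_0+t) = e^{t\Delta} B^2(t_0) + \int_0^t e^{(t-s)\Delta} \bigl[X(A^1 + B^1 + B^2) - X(A^1)\bigr](t_0+s)\, ds. \]
Formally sending $t_0 \downarrow 0$ and imposing $B^2(0) = 0$ gives the candidate integral equation
\[ B^2(t) = \Psi(B^2)(t) := \int_0^t e^{(t-s)\Delta} \bigl[X(A^1 + B^1 + B^2) - X(A^1)\bigr](s)\, ds. \]
The key structural observation is that every term inside the bracket contains at least one factor of $B^1 + B^2$, so the ``pure $A^1$'' contributions, which obstruct the existence of $\rho(A^1)$, are canceled by construction.

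\emph{Contraction estimates in $\mc{Q}_T^{\gamma_2}$.} I would solve $\Psi(B^2) = B^2$ by Banach fixed point on a closed ball in $\mc{Q}_T^{\gamma_2}$ of radius $2R$. Expanding the bracket using the bi/trilinearity of $X^{(2)}, X^{(3)}$ produces a finite sum of monomials; each is a product of factors drawn from $\{A^1, B^1, B^2\}$, carries one or two spatial derivatives distributed among the factors, and contains at least one factor from $\{B^1, B^2\}$. Using the bounds $\|A^1(s)\|_{C^k} \lesssim R\, s^{-k/2 - \gamma_1}$ and $\|B^j(s)\|_{C^k} \lesssim R\, s^{-k/2 - \gamma_2}$ for $k \in \{0, 1\}$, the semigroup smoothing $\|\nabla e^{(t-s)\Delta} f\|_{C^0} \lesssim (t-s)^{-1/2} \|f\|_{C^0}$, and the Beta-type identity $\int_0^t (t-s)^{-a} s^{-b}\, ds \lesssim t^{1-a-b}$ (for $a, b \in [0,1)$ and $a+b < 1$), each contribution to $\Psi(B^2)$ is bounded in $\mc{Q}_T^{\gamma_2}$ by a positive power of $T$ times a polynomial in $R$ and $\|B^2\|_{\mc{Q}_T^{\gamma_2}}$. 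The three hypotheses correspond precisely to integrability of the three most singular time-rates in this expansion: $\gamma_1 + \gamma_2 < 1/2$ for the mixed $X^{(2)}$ terms $[A^1, \partial B]$ and $[B, \partial A^1]$ (rate $s^{-1/2-\gamma_1-\gamma_2}$); $\gamma_2 < 1/4$ for the purely quadratic term $[B, \partial B]$ (rate $s^{-1/2-2\gamma_2}$); and $\gamma_1 < 1/2$ (in fact implied by the first) for the $X^{(3)}$ term $[A^1, [A^1, B]]$ (rate $s^{-2\gamma_1-\gamma_2}$). A matching Lipschitz estimate on $\Psi(B^2) - \Psi(\tilde B^2)$, whose integrand is affine in $B^2 - \tilde B^2$ with coefficients bounded in terms of $R$, supplies contractivity once $T = \tau_{\gamma_1\gamma_2}(R)$ is small enough. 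The resulting fixed point gives $B := B^1 + B^2 \in \mc{Q}_{T, 3R}^{\gamma_2}$.

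\emph{Smoothness and continuity in data.} Once $B^2$ is constructed, $A = A^1 + B^1 + B^2$ lies in $C((0,T], C^1(\torus^d, \lalg^d))$ and satisfies the Duhamel form of \eqref{eq:ZDDS} starting from every $t_0 \in (0, T)$. A standard parabolic bootstrap gains roughly $1/2$-H\"older derivative per iteration (apply $e^{(t-s)\Delta}$ to $X(A(s))$, which is at most quadratic in $A$ and first order in $\partial A$, and integrate); after finitely many iterations $A(t_0)$ is smooth, and Theorem~\ref{thm:zdds-existence} together with Lemma~\ref{lemma:zdds-uniqueness} upgrades this to $A \in C^\infty((0,T) \times \torus^d, \lalg^d)$ and to pointwise satisfaction of \eqref{eq:ZDDS}. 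For the continuity-in-data statement, I run the same fixed-point argument for each $(A^1_n, B^1_n)$: since $R_n \to R$ and $\tau_{\gamma_1\gamma_2}$ is continuous, $T_n \to T$, and for large $n$ I work on a common subinterval $[0, T_0] \subset [0, T_n]$. The Lipschitz dependence of $\Psi$ on $(A^1, B^1)$, in the very same norms used for the contraction, then yields $\|B^2_n - B^2\|_{\mc{Q}_{T_0}^{\gamma_2}} \to 0$, whence $\|B_n - B\|_{\mc{Q}_{T_0}^{\gamma_2}} \to 0$ and $\|A_n - A\|_{\mc{Q}_{T_0}^{\max\{\gamma_1,\gamma_2\}}} \to 0$. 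The main obstacle is the careful bookkeeping of the multilinear expansion of $X(A^1+B^1+B^2) - X(A^1)$ and the matching of each monomial's singular time-profile to a Beta-integral that is sharp at one of the three threshold conditions; a secondary technical point is propagating smoothness of $A$ from the integral equation down to every $t > 0$ despite the lack of any $C^1$ control on $A^1$ at $t = 0$.
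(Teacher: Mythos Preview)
Your proposal is correct and essentially identical to the paper's proof: the paper runs the contraction on $B$ via the map $V(B) = B^1 + \rho(B) + \eta(A^1, B)$ in the ball $\mc{Q}_{T,3R}^{\gamma_2}$, which under your change of variables $B = B^1 + B^2$ becomes exactly your map $\Psi(B^2) = \rho(B^1+B^2) + \eta(A^1, B^1+B^2)$. The paper's Lemmas~\ref{lemma:distributional-contraction-bound} and~\ref{lemma:distributional-cross-term-bound} supply precisely the monomial-by-monomial Beta-integral estimates you sketch, and the smoothness upgrade and continuity-in-data follow as you describe via Lemma~\ref{lemma:distributional-fixed-point-is-solution} and the Lipschitz dependence of the contraction on $(A^1, B^1)$.
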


We next state several auxiliary results that arise from the proof of Theorem \ref{thm:rough-distributional-zdds-local-existence}. 

\begin{lemma}\label{lemma:tau-gamma-bound}
Let $\gamma_1, \gamma_2, \tau_{\gamma_1 \gamma_2}$ be as in Theorem \ref{thm:rough-distributional-zdds-local-existence}. Then 
\[ \tau_{\gamma_1 \gamma_2}(R)^{-1} \leq \const_{\gamma_1, \gamma_2, d} (1 + R^{4/(1 - 2\max\{\gamma_1, \gamma_2\})}), ~~ R \geq 0. \]
\end{lemma}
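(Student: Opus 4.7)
The plan is to extract the bound on $\tau_{\gamma_1\gamma_2}(R)^{-1}$ by tracking the dependence on $R$ inside the contraction-mapping argument that produces $\tau_{\gamma_1\gamma_2}$ in the proof of Theorem~\ref{thm:rough-distributional-zdds-local-existence}. Recall that there, $T = \tau_{\gamma_1\gamma_2}(R)$ is chosen to be the largest $T \in (0,1]$ for which the map
\[ \Psi(B)(t) := B^1(t) + \int_0^t e^{(t-s)\Delta} \bigl( X(A^1(s) + B(s)) - X(A^1(s)) \bigr) \, ds \]
is a contraction from the closed ball $\mc{Q}_{T, 3R}^{\gamma_2}$ into itself. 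Thus the task reduces to bounding the Lipschitz constant of $\Psi$, and also $\|\Psi(0)\|_{\mc{Q}_T^{\gamma_2}}$, explicitly in terms of $R$ and $T$.

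To estimate the Lipschitz constant I would write $X(A^1 + B) - X(A^1 + B') = \int_0^1 dX|_{A^1 + B' + \lambda(B-B')}[B - B']\, d\lambda$. Since $X = X^{(2)} + X^{(3)}$ is the sum of quadratic and cubic polynomials in the connection and its first-order derivatives (cf.~Definition~\ref{def:X}), the linearization $dX|_C[D]$ splits into schematic terms of the form $C \cdot \nabla D$, $D \cdot \nabla C$ and $C \cdot C \cdot D$, where $C$ is an affine combination of $A^1, B, B'$ and hence satisfies $\|C(s)\|_{C^0} \leq \const R\, s^{-\max\{\gamma_1,\gamma_2\}}$ and $\|C(s)\|_{C^1} \leq \const R\, s^{-1/2-\max\{\gamma_1,\gamma_2\}}$; the difference $D = B - B'$ obeys the analogous bounds with exponent $\gamma_2$ and with $R$ replaced by $\|B - B'\|_{\mc{Q}_T^{\gamma_2}}$. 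Combining these pointwise estimates with the smoothing bound $\|e^{(t-s)\Delta} f\|_{C^1} \leq \const (t-s)^{-1/2}\|f\|_{C^0}$ and the Beta identity $\int_0^t (t-s)^{-1/2} s^{b-1}\, ds = t^{b-1/2}\, \mathrm{B}(1/2, b)$ (whose integrand is integrable precisely by virtue of the hypotheses $\gamma_1 < 1/2$, $\gamma_2 < 1/4$ and $\gamma_1 + \gamma_2 < 1/2$), the quadratic linearized terms contribute $\leq \const R\, T^{1/2 - \max\{\gamma_1,\gamma_2\}}\|B-B'\|_{\mc{Q}_T^{\gamma_2}}$, and the cubic ones $\leq \const R^2\, T^{1 - 2\max\{\gamma_1,\gamma_2\}}\|B-B'\|_{\mc{Q}_T^{\gamma_2}}$.

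Since $T \leq 1$ and $\max\{\gamma_1,\gamma_2\} < 1/2$, the cubic time factor is dominated by the quadratic one, $T^{1 - 2\max\{\gamma_1,\gamma_2\}} \leq T^{1/2 - \max\{\gamma_1,\gamma_2\}}$, yielding the unified estimate
\[ \|\Psi(B) - \Psi(B')\|_{\mc{Q}_T^{\gamma_2}} \leq \const_{\gamma_1\gamma_2 d}(R + R^2)\, T^{1/2 - \max\{\gamma_1,\gamma_2\}}\, \|B - B'\|_{\mc{Q}_T^{\gamma_2}}, \]
and an entirely analogous computation, using $\|B^1\|_{\mc{Q}_1^{\gamma_2}} \leq R$, bounds $\|\Psi(B)\|_{\mc{Q}_T^{\gamma_2}}$ by $R + \const_{\gamma_1\gamma_2 d}(R^2 + R^3)\, T^{1/2 - \max\{\gamma_1,\gamma_2\}}$. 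Requiring this common prefactor to be at most a sufficiently small absolute constant simultaneously secures the contraction property and the self-mapping of $\mc{Q}_{T, 3R}^{\gamma_2}$; solving $(R + R^2)\, T^{1/2 - \max\{\gamma_1,\gamma_2\}} \leq c$ for $T$ yields
\[ T^{-1} \leq \const_{\gamma_1\gamma_2 d}\, (1 + R)^{4/(1 - 2\max\{\gamma_1,\gamma_2\})} \leq \const_{\gamma_1\gamma_2 d}\, \bigl(1 + R^{4/(1 - 2\max\{\gamma_1,\gamma_2\})}\bigr), \]
which is the claimed bound. The only obstacle is careful bookkeeping of the exponents in the Beta integrals and in the combinatorial expansion of $X^{(2)}, X^{(3)}$; no new ideas beyond those already present in the proof of Theorem~\ref{thm:rough-distributional-zdds-local-existence} are required.
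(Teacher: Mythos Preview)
Your proposal is correct and follows essentially the same approach as the paper: the key constraint is $(R+R^2)\,T^{(1/2)-\max\{\gamma_1,\gamma_2\}}\le c$, from which the exponent $4/(1-2\max\{\gamma_1,\gamma_2\})$ drops out. The paper's proof is shorter only because it reads this constraint directly off the defining inequalities \eqref{eq:tau-gamma-ineq-def} for $\tau_{\gamma_1\gamma_2}$ (already established in the proof of Theorem~\ref{thm:rough-distributional-zdds-local-existence}), rather than re-deriving the contraction and self-mapping estimates as you do.
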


The following lemma shows that for smooth initial data $A^0$, the solution to \eqref{eq:ZDDS} given by Theorem \ref{thm:rough-distributional-zdds-local-existence} coincides with the solution to \eqref{eq:ZDDS} with initial data $A^0$ (which is given by Theorem \ref{thm:zdds-existence}).

\begin{lemma}\label{lemma:zdds-smooth-initial-data-distributional-local-existence}
Let $\gamma_1, \gamma_2, \tau_{\gamma_1 \gamma_2}$ be as in Theorem \ref{thm:rough-distributional-zdds-local-existence}. Let $A^0$ be a smooth {\oneform}. Let $A^1(t) = e^{t \Delta} A^0$, $t \geq 0$. Let $B^1 = \rho(A^1)$. (Recall that by Remark \ref{remark:first-nonlinear-part-c1-example}, $B^1$ is a first nonlinear part for $A^1$.) Let $R :=  \max\{\|A^1\|_{\mc{Q}_1^{\gamma_1}}, \|B^1\|_{\mc{Q}_1^{\gamma_2}}\}$, $T := \tau_{\gamma_1 \gamma_2}(R) > 0$. Then, there exists $A \in C^\infty([0, T) \times \torus^d, \lalg^d)$ such that $A$ is the solution to \eqref{eq:ZDDS} on $[0, T)$ with initial data $A(0) = A^0$. Moreover, on $(0, T) \times \torus^d$, $A$ is equal to the solution to \eqref{eq:ZDDS} given by Theorem \ref{thm:rough-distributional-zdds-local-existence}.
\end{lemma}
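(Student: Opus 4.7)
The plan is to use Theorem \ref{thm:zdds-existence} to produce a classical smooth solution $\tilde A$ with initial data $A^0$ on some short time interval, and then identify $\tilde A$ with the solution $A^1 + B$ produced by Theorem \ref{thm:rough-distributional-zdds-local-existence} on that interval by exploiting uniqueness of the fixed point that defines $B$. From this identification we extend $A = A^1 + B$ smoothly to $[0, T) \times \torus^d$ with $A(0) = A^0$.

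First I would apply Theorem \ref{thm:zdds-existence} to obtain a smooth solution $\tilde A \in C^\infty([0, T_0] \times \torus^d, \lalg^d)$ of \eqref{eq:ZDDS} with $\tilde A(0) = A^0$, for some small $T_0 > 0$. Since $\tilde A$ is smooth and satisfies the parabolic equation, Duhamel's formula for the heat semigroup (applied to the nonlinearity $X(\tilde A)$) yields
\[
\tilde A(t) = e^{t\Delta} A^0 + \int_0^t e^{(t-s)\Delta} X(\tilde A(s))\, ds = A^1(t) + \rho(\tilde A)(t)
\]
on $[0, T_0]$. Setting $\tilde B := \tilde A - A^1$, this reads $\tilde B = \rho(A^1 + \tilde B)$. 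Using the hypothesis $B^1 = \rho(A^1)$ and decomposing $X(A^1 + \tilde B)$ into the pure $A^1$ piece plus terms containing at least one factor of $\tilde B$, this can be rewritten as $\tilde B = B^1 + (\rho(A^1 + \tilde B) - \rho(A^1))$, which is precisely the fixed point equation for $B$ used inside the proof of Theorem \ref{thm:rough-distributional-zdds-local-existence}.

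Next I would choose $T_1 \in (0, \min(T, T_0)]$ small enough that $\tilde B|_{(0, T_1]} \in \mc{Q}_{T_1, 3R}^{\gamma_2}$. This is possible because $\tilde B$ is continuous up to $t = 0$ with $\tilde B(0) = 0$ and is uniformly bounded in $C^1$ on $[0, T_0]$, so both $\sup_{t \in (0, T_1]} t^{\gamma_2} \|\tilde B(t)\|_{C^0}$ and $\sup_{t \in (0, T_1]} t^{(1/2) + \gamma_2} \|\tilde B(t)\|_{C^1}$ tend to $0$ as $T_1 \downarrow 0$. The fixed point map inherited from Theorem \ref{thm:rough-distributional-zdds-local-existence} is a contraction on $\mc{Q}_{T, 3R}^{\gamma_2}$; the same map (with the same data $A^1, B^1$ restricted to $[0, T_1]$) is then also a contraction on $\mc{Q}_{T_1, 3R}^{\gamma_2}$, the contraction constant only improving on smaller time intervals. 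Since $B|_{(0, T_1]}$ and $\tilde B|_{(0, T_1]}$ both lie in $\mc{Q}_{T_1, 3R}^{\gamma_2}$ and both satisfy the fixed point equation, uniqueness forces $\tilde B = B$ on $(0, T_1]$, and hence $\tilde A = A^1 + B$ on $(0, T_1]$.

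Having established this, set $A(0) := A^0$. On $[0, T_1]$, $A$ agrees with the smooth $\tilde A$, and on $(0, T)$, $A = A^1 + B \in C^\infty((0, T) \times \torus^d, \lalg^d)$ by Theorem \ref{thm:rough-distributional-zdds-local-existence}. These smooth structures are compatible on $(0, T_1]$, so $A \in C^\infty([0, T) \times \torus^d, \lalg^d)$, it solves \eqref{eq:ZDDS} on $(0, T) \times \torus^d$, and $A(0) = A^0$. The only genuinely delicate step is the uniqueness argument for the fixed point on the shrunk interval $(0, T_1]$; once that is in place, the remainder is standard: Duhamel's formula for smooth parabolic solutions, a simple continuity estimate showing $\|\tilde B\|_{\mc{Q}_{T_1}^{\gamma_2}} \to 0$ as $T_1 \downarrow 0$, and patching together two smooth functions that coincide on an open subinterval.
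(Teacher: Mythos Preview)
Your argument is correct, but it takes a genuinely different route from the paper's proof.

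The paper works directly with the fixed point $B\in\mc{Q}_{T,3R}^{\gamma_2}$ produced by Theorem~\ref{thm:rough-distributional-zdds-local-existence} and shows, via a bootstrap in the $\mc{Q}$-scale, that $B$ extends continuously to $t=0$ with $B(0)=0$ in $C^1$. Concretely, from $B=B^1+\rho(B)+\eta(A^1,B)$ and the smoothness of $A^0$, the paper first deduces $\|B\|_{\mc{Q}_t^0}\to 0$ as $t\downarrow 0$ (using Lemmas~\ref{lemma:distributional-contraction-bound} and~\ref{lemma:distributional-cross-term-bound} with $\gamma=\gamma_2$), and then feeds this back in (now with $\gamma=0$) to obtain $\|\rho(B)(t)\|_{C^1},\|\eta(A^1,B)(t)\|_{C^1}\to 0$. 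Once $B\in\mc{P}_T^1$, the identity $\rho(A^1+B)=\rho(A^1)+\rho(B)+\eta(A^1,B)$ gives $A^1+B=A^1+\rho(A^1+B)$, and Lemma~\ref{lemma:mild-solution-is-classical-solution} upgrades this to a classical smooth solution on $[0,T)$.

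Your approach instead invokes Theorem~\ref{thm:zdds-existence} to manufacture a classical solution $\tilde A$ on a short interval, reads off that $\tilde B=\tilde A-A^1$ solves the same fixed-point equation, and identifies $\tilde B$ with $B$ by uniqueness of the contraction on $\mc{Q}_{T_1,3R}^{\gamma_2}$. This is clean and avoids the two-step bootstrap; the price is that you import Theorem~\ref{thm:zdds-existence} (and implicitly Lemma~\ref{lemma:classical-solution-is-mild-solution} for Duhamel) as black boxes, whereas the paper's argument stays entirely within the $\mc{Q}$-space estimates already developed for Theorem~\ref{thm:rough-distributional-zdds-local-existence}. Both approaches are valid; yours is arguably more transparent, the paper's is more self-contained.
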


\begin{cor}\label{cor:B-limit-B-n}
Let $\gamma_1, \gamma_2, \tau_{\gamma_1 \gamma_2}$,  $A^1, B^1, B$, $R, T$ be as in Theorem \ref{thm:rough-distributional-zdds-local-existence}. For $n \geq 2$, inductively define $B^n \in \mc{Q}_1^{\gamma_2}$ by
\[ B^n(t) := B^1(t) + \rho(B^{n-1})(t) + \eta(A^1, B^{n-1})(t), ~~ t \in (0, 1]. \]
Then $\lim_{n \toinf} \|B^n - B\|_{\mc{Q}_T^{\gamma_2}} = 0$.
\end{cor}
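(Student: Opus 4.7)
The plan is to recognize that the iterates $B^n$, once restricted to $(0, T]$, are the Picard iterates of the contraction map that the proof of Theorem~\ref{thm:rough-distributional-zdds-local-existence} uses to build $B$. Concretely, writing
\[ W(C)(t) := B^1(t) + \rho(C)(t) + \eta(A^1, C)(t), \qquad t \in (0, T], \]
the construction of $B$ in the theorem (deferred to the appendix) proceeds by choosing $T = \tau_{\gamma_1 \gamma_2}(R)$ small enough that $W$ both leaves the closed ball $\mc{Q}_{T, 3R}^{\gamma_2}$ invariant and is a strict contraction on it with some factor $\theta \in [0, 1)$ independent of $C$, so that $B$ is the unique fixed point of $W$ there. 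What I need is to identify each $B^n|_{(0, T]}$ with $W^{n-1}(B^1|_{(0, T]})$ and then invoke Banach's principle.

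The identification is immediate because both $\rho(C)(t)$ and $\eta(A^1, C)(t)$ depend only on the values of $C$ on $(0, t]$, so the recursion $B^n = B^1 + \rho(B^{n-1}) + \eta(A^1, B^{n-1})$ in $\mc{Q}_1^{\gamma_2}$ restricts consistently to $\mc{Q}_T^{\gamma_2}$, and by induction $B^n|_{(0, T]} = W^{n-1}(B^1|_{(0, T]})$. Since $\|B^1\|_{\mc{Q}_T^{\gamma_2}} \leq \|B^1\|_{\mc{Q}_1^{\gamma_2}} \leq R \leq 3R$, the starting point lies inside $\mc{Q}_{T, 3R}^{\gamma_2}$, and the invariance property of $W$ then keeps $B^n|_{(0, T]}$ in that ball for every $n \geq 1$.

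Applying the contraction estimate (using $B = W(B)$) gives
\[ \|B^n - B\|_{\mc{Q}_T^{\gamma_2}} = \bigl\|W^{n-1}(B^1|_{(0, T]}) - W^{n-1}(B)\bigr\|_{\mc{Q}_T^{\gamma_2}} \leq \theta^{n-1} \|B^1 - B\|_{\mc{Q}_T^{\gamma_2}} \leq 4R\,\theta^{n-1}, \]
which tends to $0$ as $n \to \infty$, proving the corollary. The only bookkeeping step that warrants a sentence of care is extracting from the proof of Theorem~\ref{thm:rough-distributional-zdds-local-existence} that the contraction factor $\theta$ is uniform over $\mc{Q}_{T, 3R}^{\gamma_2}$; this is automatic because the standard semilinear parabolic fixed point argument produces a contraction constant that depends only on $R, \gamma_1, \gamma_2, d, T$ and not on the particular element of the ball. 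I do not anticipate any genuine obstacle beyond this routine translation, and no estimates new to the theorem are needed.
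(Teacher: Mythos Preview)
Your proposal is correct and matches the paper's approach exactly: the paper constructs $B$ as the unique fixed point of the $(1/2)$-contraction $V(C) = B^1 + \rho(C) + \eta(A^1, C)$ on $\mc{Q}_{T,3R}^{\gamma_2}$, and its proof of the corollary simply says the result follows immediately from this fact and is omitted. Your write-up is in fact more detailed than the paper's, correctly handling the restriction from $(0,1]$ to $(0,T]$ and the fact that $B^1$ lies in the invariant ball.
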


\subsection{Useful lemmas}\label{section:useful-lemmas}

In this section, we introduce some deterministic lemmas which will be needed later. For the first lemma, recall the definition of $\mc{P}_T^r$ from Definition \ref{def:P-T-r}, as well as the definitions of $\rho^{(2)}$ and $\rho^{(3)}$  from Definition \ref{def:rho}.  As usual, unless otherwise given, all proofs are in Appendix \ref{appendix:deterministic-pde-proofs}.

\begin{lemma}\label{lemma:rho-A-n-convergence}
Let $\{A^0_n\}_{n \leq \infty} \sse C^1(\torus^d, \lalg^d)$ be a sequence of {\oneforms}. For $n \leq \infty$, let $A^1_n(t) = e^{t\Delta} A^0_n$, $t \geq 0$. Let $T \in (0, 1]$, and suppose that $\|A_n^1 - A_\infty^1\|_{\mc{P}_T^1} \ra 0$.
Then for $j \in \{2, 3\}$, we have that $\|\rho^{(j)}(A_n^1) - \rho^{(j)}(A_\infty^1)\|_{\mc{P}_T^1} \ra 0$, and consequently, we also have that $\|\rho(A_n^1) - \rho(A_\infty^1)\|_{\mc{P}_T^1} \ra 0$.
\end{lemma}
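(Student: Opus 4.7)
The plan is to use the multilinear structure of $X^{(2)}$ and $X^{(3)}$ (which is bilinear in $A$ with one derivative, and trilinear in $A$ with no derivatives, respectively) to write the differences $X^{(j)}(A_n^1(s)) - X^{(j)}(A_\infty^1(s))$ as telescoping sums in which each summand carries exactly one factor of $A_n^1(s) - A_\infty^1(s)$, and then combine this with the standard smoothing estimate for the heat semigroup on H\"older spaces, namely $\|e^{\tau\Delta} f\|_{C^1} \leq \const\,\tau^{-1/2}\|f\|_{C^0}$ (a classical fact likely recorded earlier in the deterministic section), to obtain a Lipschitz-type bound for $\rho^{(j)}$ with a vanishing gain in $T$.

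First, since $\|A_n^1 - A_\infty^1\|_{\mc{P}_T^1}\to 0$, the quantity $M := \sup_{n\le\infty}\|A_n^1\|_{\mc{P}_T^1}$ is finite. For $j=3$, trilinearity of $X^{(3)}$ in $A$ gives
\[\|X^{(3)}(A_n^1(s)) - X^{(3)}(A_\infty^1(s))\|_{C^0} \leq \const\, M^2 \|A_n^1(s) - A_\infty^1(s)\|_{C^0}.\]
For $j=2$, since each term of $X^{(2)}$ is a product of an undifferentiated $A$-factor and a first derivative of $A$, bilinearity together with the product-rule bookkeeping gives
\[\|X^{(2)}(A_n^1(s)) - X^{(2)}(A_\infty^1(s))\|_{C^0} \leq \const\, M \|A_n^1(s) - A_\infty^1(s)\|_{C^1}.\]
In both cases the right-hand side is bounded by $\const\, M^{\max\{1,2\}}\|A_n^1 - A_\infty^1\|_{\mc{P}_T^1}$.

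Next, write
\[\rho^{(j)}(A_n^1)(t) - \rho^{(j)}(A_\infty^1)(t) = \int_0^t e^{(t-s)\Delta}\bigl(X^{(j)}(A_n^1(s)) - X^{(j)}(A_\infty^1(s))\bigr)\,ds.\]
Taking $C^1$ norms under the integral and applying the smoothing estimate pointwise in $s$, together with the trivial bound $\|e^{(t-s)\Delta}f\|_{C^0}\leq\|f\|_{C^0}$ to handle the $C^0$ component of the $C^1$ norm, we obtain
\[\|\rho^{(j)}(A_n^1)(t) - \rho^{(j)}(A_\infty^1)(t)\|_{C^1} \leq \const \int_0^t \bigl(1 + (t-s)^{-1/2}\bigr)\,\|X^{(j)}(A_n^1(s)) - X^{(j)}(A_\infty^1(s))\|_{C^0}\,ds.\]
Plugging in the bilinear/trilinear estimates, taking the supremum over $t\in[0,T]$, and using $\int_0^T (t-s)^{-1/2}ds \leq 2T^{1/2}$, we get
\[\|\rho^{(j)}(A_n^1) - \rho^{(j)}(A_\infty^1)\|_{\mc{P}_T^1} \leq \const\, (T + T^{1/2})\, M^{k_j}\, \|A_n^1 - A_\infty^1\|_{\mc{P}_T^1},\]
where $k_j\in\{1,2\}$. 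Since $\|A_n^1 - A_\infty^1\|_{\mc{P}_T^1}\to 0$, the right-hand side vanishes, establishing the conclusion for each $j$, and hence (by the triangle inequality and $\rho = \rho^{(2)} + \rho^{(3)}$) also for $\rho$.

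The proof is essentially routine, and I do not anticipate any serious obstacle beyond correctly tracking the bilinear/trilinear structure of $X^{(j)}$ and invoking the smoothing estimate for the heat semigroup; the fact that $A_n^1, A_\infty^1 \in \mc{P}_T^1$ (as opposed to merely $\mc{P}_T^0$) is exactly what allows the $X^{(2)}$ term, which costs one derivative, to be handled by the $\tau^{-1/2}$ smoothing without any further regularity considerations.
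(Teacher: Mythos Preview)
Your proof is correct and follows essentially the same approach as the paper. The paper simply cites its Lemma~\ref{lemma:contraction-estimates}, whose proof carries out exactly the multilinear telescoping of $X^{(j)}$ combined with the heat-semigroup smoothing estimate that you wrote down; the only cosmetic difference is that the paper's lemma aims for the slightly stronger $\mc{P}_T^{r+1/4}$ norm (via the $u=5/4$ smoothing exponent), whereas you use $u=1$ and land in $\mc{P}_T^1$, which is all that is needed here.
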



\begin{lemma}\label{lemma:rho-3-bounds}
Let $\gamma \in [0, 1/3)$, $T \in (0, 1]$, $R \geq 0$. Let $A \in \mc{Q}_{T, R}^\gamma$. Then $\rho^{(3)}(A)$ is well-defined for $A$, and moreover $\rho^{(3)}(A) \in \mc{Q}_T^0$, and
\[ \|\rho^{(3)}(A)\|_{\mc{Q}_T^0} \leq \const T^{1-3\gamma} R^3. \]
Additionally, for $A_1, A_2 \in \mc{Q}_{T, R}^\gamma$, we have that 
\[ \|\rho^{(3)}(A_1) - \rho^{(3)}(A_2)\|_{\mc{Q}_T^0} \leq \const T^{1 - 3\gamma} R^2 \|A_1 - A_2\|_{\mc{Q}_T^\gamma}. \]
\end{lemma}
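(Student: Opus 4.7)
The plan is to exploit the fact that $X^{(3)}(A)$ is a purely algebraic cubic expression in $A$ with no derivatives (unlike $X^{(2)}$), so all of the regularity in $\rho^{(3)}(A)$ must come from smoothing by the heat semigroup. The proof will rest on two ingredients: the pointwise algebra bound $\|X^{(3)}(B)\|_{C^0} \leq \const \|B\|_{C^0}^3$, coming from submultiplicativity of the bracket on $\lalg$, and the standard heat-semigroup estimates $\|e^{t\Delta} f\|_{C^0} \leq \|f\|_{C^0}$ together with $\|e^{t\Delta} f\|_{C^1} \leq \const t^{-1/2}\|f\|_{C^0}$.

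For $A \in \mc{Q}_{T,R}^\gamma$, the defining bound $\|A(s)\|_{C^0} \leq R s^{-\gamma}$ yields $\|X^{(3)}(A(s))\|_{C^0} \leq \const R^3 s^{-3\gamma}$. Plugging this into the Duhamel integral and using the two semigroup estimates above, I obtain
\[ \|\rho^{(3)}(A)(t)\|_{C^0} \leq \const R^3 \int_0^t s^{-3\gamma} ds = \const_\gamma R^3 t^{1-3\gamma}, \]
\[ t^{1/2} \|\rho^{(3)}(A)(t)\|_{C^1} \leq \const R^3 t^{1/2} \int_0^t (t-s)^{-1/2} s^{-3\gamma} ds = \const_\gamma R^3 t^{1-3\gamma}, \]
where the Beta integral $\int_0^t (t-s)^{-1/2} s^{-3\gamma} ds = t^{1/2 - 3\gamma} B(1/2, 1 - 3\gamma)$ converges precisely because $\gamma < 1/3$. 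These simultaneously give well-definedness of $\rho^{(3)}(A)$, continuity in $t$ (by dominated convergence applied to the integrand), and the claimed estimate $\|\rho^{(3)}(A)\|_{\mc{Q}_T^0} \leq \const T^{1-3\gamma} R^3$ after using $t \leq T \leq 1$.

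For the Lipschitz bound, I would telescope the cubic difference as
\[ X^{(3)}_i(A_1) - X^{(3)}_i(A_2) = \sum_j \bigl([A_{1,j} - A_{2,j},[A_{1,j}, A_{1,i}]] + [A_{2,j},[A_{1,j} - A_{2,j}, A_{1,i}]] + [A_{2,j},[A_{2,j}, A_{1,i} - A_{2,i}]]\bigr), \]
and bound each of the three summands pointwise by a product of two factors of size $R s^{-\gamma}$ together with one factor $\|A_1 - A_2\|_{\mc{Q}_T^\gamma}\, s^{-\gamma}$, giving $\|X^{(3)}(A_1(s)) - X^{(3)}(A_2(s))\|_{C^0} \leq \const R^2 s^{-3\gamma} \|A_1 - A_2\|_{\mc{Q}_T^\gamma}$. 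Rerunning the two heat-semigroup computations above then produces the desired estimate.

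The main (and essentially only) obstacle is convergence of the time integrals at $s = 0$, which is exactly what the hypothesis $\gamma < 1/3$ is tailored to ensure; beyond that, the argument is a direct unwinding of definitions, with no need for the heat kernel to smooth away any derivatives (since $X^{(3)}$ has none).
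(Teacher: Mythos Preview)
Your proposal is correct and follows essentially the same route as the paper: the paper's proof invokes the argument of Lemma~\ref{lemma:distributional-contraction-bound}, which is precisely your combination of the cubic pointwise bound $\|X^{(3)}(A(s))\|_{C^0}\le \const R^3 s^{-3\gamma}$, the semigroup smoothing estimates, and the Beta-type integral (the paper's \eqref{eq:t-minus-sigma-sigma-integral-2}), with the same telescoping for the Lipschitz part. One small caveat: your dominated-convergence remark does not by itself handle continuity into $C^1$, since the $C^1$ integrand carries the moving singularity $(t-s)^{-1/2}$; the paper deals with this by splitting $\rho^{(3)}(A)(t_1)-\rho^{(3)}(A)(t_0)$ and applying \eqref{eq:heat-semigroup-general-time-continuity} (see the end of the proof of Lemma~\ref{lemma:distributional-contraction-bound}), which you could cite or reproduce.
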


In the remainder of Section \ref{section:useful-lemmas}, we will give an explicit formula for $\rho(A^1)$, where $A^1$ is defined as  $A^1(t) = e^{t \Delta} A^0$, and $A^0$ is a smooth {\oneform}. This formula will be in terms of the Fourier coefficients of $A^0$. It will be used in Section \ref{section:technical-proofs}.

Recall from Section \ref{section:main-result} that we may view {\oneforms} $A^0 : \torus^d \ra \lalg^d$ equivalently as collections of $\R$-valued functions $(A^{0, a}_j, a \in [\lalgdim], j \in [d])$ which satisfy the relation \eqref{eq:one-form-R-valued-functions-relation}. Note that for any $1 \leq r \leq \infty$, $A^0 \in C^r(\torus^d, \lalg^d)$ if and only if $A^{0, a}_j \in C^r(\torus^d, \R)$ for all $a \in [\lalgdim]$, $j \in [d]$. In the following, recall also the structure constants $(f^{abc}, ~a, b,c \in [\lalgdim])$ defined at the end of Section \ref{section:notation}.

\begin{definition}\label{def:I-d}
For $m = (n^1, n^2) \in (\Z^d)^2$, $t \geq 0$, define
\[ I(m, t) := \int_0^t e^{-4\pi^2 |n^1 + n^2|^2(t-s)} e^{-4\pi^2 (|n^1|^2 + |n^2|^2) s} ds. \]
Additionally, for $a = (a_0, a_1, a_2) \in [\lalgdim]^3$, $j = (j_0, j_1, j_2) \in [d]^3$, define
\[ d(m, a, j) := \icomplex 2\pi f^{a_0 a_1 a_2} \big(\delta_{j_0 j_2} n^2_{j_1} - \delta_{j_0 j_1} n^1_{j_2} + (1/2) \delta_{j_1 j_2} (n^1_{j_0} - n^2_{j_0})\big). \]
Here, $\delta_{jk} = \ind(j = k)$ for $j, k \in [d]$.
\end{definition}

\begin{remark}\label{remark:I-and-d-properties} 
Note that $0 \leq I(m, t) \leq t$. Note that by \eqref{eq:structure-constants-permute-indices}, if $a_0, a_1, a_2$ are not distinct, then $f^{a_0 a_1 a_2} = 0$, and thus also $d(m, a, j) = 0$. Also, note that if we let $-m := (-n^1, -n^2)$, then (here we use that $f^{a_0 a_1 a_2}$ is real, which follows by definition -- recall \eqref{eq:structure-constants-def})
\beq\label{eq:d-complex-conjugate} d(-m, a, j) = -d(m, a, j) = \ovl{d(m, a, j)}. \eeq
Finally, note that $|d(m, a, j)| \leq \const (|n^1| + |n^2|)$. 
\end{remark}


\begin{lemma}\label{lemma:rho-A-1-smooth}
Let $A^0 \in C^\infty(\torus^d, \lalg^d)$ be a smooth {\oneform}. Let $A^1(t) = e^{t\Delta} A^0$, $t \geq 0$. Let $A^2 = \rho^{(2)}(A^1)$. For any $a_0 \in [\lalgdim]$, $j_0 \in [d]$, $t \geq 0$, we have that
\[ A^{2, a_0}_{j_0}(t) = \sum_{\substack{a_1, a_2 \in [\lalgdim] \\ j_1, j_2 \in [d]}}\sum_{n^1, n^2 \in \Z^d} I(m, t) d(m, a, j) \hat{A}^{0, a_1}_{j_1}(n^1) \hat{A}^{0, a_2}_{j_2}(n^2) e_{n^1 + n^2}, \]
where, for brevity, we have taken $m = (n^1, n^2)$, $a = (a_0,  a_1, a_2)$, $j = (j_0, j_1, j_2)$. Additionally, we have that 
\[\begin{split}
\ptl_t &A^{2, a_0}_{j_0}(t) = \Delta A^{2, a_0}_{j_0}(t) ~+ \\
&\sum_{\substack{a_1, a_2 \in [\lalgdim] \\ j_1, j_2 \in [d]}}\sum_{n^1, n^2 \in \Z^d} d(m, a, j) e^{-4\pi^2 |n^1|^2 t} \hat{A}^{0, a_1}_{j_1}(n^1) e^{-4\pi^2 |n^2|^2 t} \hat{A}^{0, a_2}_{j_2}(n^2) e_{n^1 + n^2}. \end{split} \]
(The series in the above two displays converge absolutely, by the decay of Fourier coefficients of smooth functions given in~\eqref{eq:fourier-coefficients-rapid-decay-general-dim}, combined with the facts that $0 \leq I(m, t) \leq t$ and $|d(m, a, j)| \leq \const(|n^1| + |n^2|)$.)
\end{lemma}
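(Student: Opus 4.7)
The proof will be a direct computation starting from the Fourier expansion of $A^0$ and pushing it through each operation in the definition of $\rho^{(2)}$, with a symmetrization at the end to obtain the stated form of $d(m, a, j)$.

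First I would observe that for $s \geq 0$, $n \in \Z^d$, $a \in [\lalgdim]$, $j \in [d]$, the heat semigroup acts diagonally in Fourier, so the Fourier coefficients of $A^1(s) = e^{s\Delta} A^0$ satisfy $\hat{A}^{1,a}_j(n, s) = e^{-4\pi^2 |n|^2 s} \hat{A}^{0,a}_j(n)$. Next, expanding the Lie bracket using the structure constants and the identity $f^{a_1 a_2 a_0} = f^{a_0 a_1 a_2}$ from \eqref{eq:structure-constants-permute-indices}, together with $\partial_j e_n = \icomplex 2\pi n_j e_n$, the $a_0, j_0$-scalar component of $X^{(2)}(A^1(s))$ takes the form
\[
\sum_{\substack{a_1, a_2 \in [\lalgdim] \\ j_1 \in [d]}} \sum_{n^1, n^2 \in \Z^d} \icomplex 2\pi f^{a_0 a_1 a_2}\bigl(2 n^2_{j_1} \hat{A}^{0, a_2}_{j_0}(n^2) - n^2_{j_0} \hat{A}^{0, a_2}_{j_1}(n^2)\bigr) \hat{A}^{0, a_1}_{j_1}(n^1) e^{-4\pi^2(|n^1|^2 + |n^2|^2) s} e_{n^1 + n^2}.
\]
Introducing a dummy index $j_2$ via Kronecker deltas converts the bracket into $\sum_{j_2}(2 \delta_{j_0 j_2} n^2_{j_1} - \delta_{j_1 j_2} n^2_{j_0}) \hat{A}^{0, a_2}_{j_2}(n^2)$, giving an unsymmetrized kernel $d'(m, a, j) := \icomplex 2\pi f^{a_0 a_1 a_2}(2\delta_{j_0 j_2} n^2_{j_1} - \delta_{j_1 j_2} n^2_{j_0})$.

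Next I would apply $e^{(t-s)\Delta}$, which contributes the factor $e^{-4\pi^2 |n^1 + n^2|^2 (t-s)}$, and integrate in $s \in [0, t]$; the product of exponentials integrates to $I(m, t)$ by definition. This yields the desired formula for $A^{2, a_0}_{j_0}(t)$ but with $d'$ in place of $d$. Since $I(m, t)$ and the product $\hat{A}^{0, a_1}_{j_1}(n^1) \hat{A}^{0, a_2}_{j_2}(n^2)$ are symmetric under the exchange $(n^1, a_1, j_1) \leftrightarrow (n^2, a_2, j_2)$, I can replace $d'(m, a, j)$ by its symmetrization under this exchange. Using $f^{a_0 a_2 a_1} = -f^{a_0 a_1 a_2}$ from \eqref{eq:structure-constants-permute-indices}, this symmetrization is straightforwardly computed to be exactly $d(m, a, j)$ as defined in Definition~\ref{def:I-d}. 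Throughout, the interchange of sums and integrals is justified by the rapid decay \eqref{eq:fourier-coefficients-rapid-decay-general-dim} of the Fourier coefficients $\hat{A}^{0, a}_j(n)$, combined with the elementary bounds $0 \leq I(m, t) \leq t$ and $|d'(m, a, j)| \leq \const(|n^1|+|n^2|)$.

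For the second formula, I would differentiate under the integral sign, using that $A^2(t) = \int_0^t e^{(t-s)\Delta} X^{(2)}(A^1(s)) ds$ satisfies $\partial_t A^2(t) = X^{(2)}(A^1(t)) + \Delta A^2(t)$ (by the standard Duhamel calculation, with absolute convergence provided by the decay of Fourier coefficients justifying the differentiation). Then I would expand $X^{(2)}(A^1(t))$ in Fourier exactly as in the first step, noting that here no $(t-s)$-integral appears, so the factor $I(m, t)$ is replaced by $e^{-4\pi^2 |n^1|^2 t} \cdot e^{-4\pi^2 |n^2|^2 t}$ (which again is symmetric under the exchange), and the same symmetrization argument produces the kernel $d(m, a, j)$.

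The only mildly subtle point is the symmetrization step; once that is set up, the remainder is bookkeeping. The convergence and differentiation-under-the-integral justifications are all standard consequences of \eqref{eq:fourier-coefficients-rapid-decay-general-dim}, so they pose no real obstacle.
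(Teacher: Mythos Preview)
Your proposal is correct and follows essentially the same approach as the paper: both expand $X^{(2)}(A^1(s))$ using structure constants and Fourier series, apply $e^{(t-s)\Delta}$ and integrate to produce $I(m,t)$, and symmetrize via $f^{a_0 a_2 a_1}=-f^{a_0 a_1 a_2}$ to obtain $d(m,a,j)$; the only cosmetic differences are that the paper performs the symmetrization before passing to Fourier (in a separate preparatory lemma) rather than after, and obtains the second identity by differentiating $I(m,t)$ termwise rather than invoking the Duhamel relation $\partial_t A^2 = \Delta A^2 + X^{(2)}(A^1)$.
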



To prove Lemma \ref{lemma:rho-A-1-smooth}, we first show the following lemma.

\begin{lemma}\label{lemma:A-2-formula-symmetrized}
Let $A^0$ be a smooth {\oneform}. Let $A^1, A^2$ be defined using $A^0$ as in Lemma \ref{lemma:rho-A-1-smooth}. For $a_0 \in [\lalgdim]$, $j_0 \in [d]$, $t \geq 0$, we have that
\[\begin{split}
&A^{2, a_0}_{j_0}(t) \\
&= \sum_{\substack{a_1, a_2 \in [\lalgdim] \\ j \in [d]}} f^{a_0 a_1 a_2} \int_0^t e^{(t-s)\Delta} \big(A^{1, a_1}_j(s) \ptl_j A^{1, a_2}_{j_0}(s) - \ptl_j A^{1, a_1}_{j_0}(s) A^{1, a_2}_j(s) \\
&\qquad \qquad +(1/2)\big(\ptl_{j_0} A^{1, a_1}_j(s) A^{1, a_2}_j(s) - A^{1, a_1}_j(s) \ptl_{j_0} A^{1, a_2}_j(s)\big)\big) ds.
\end{split}\]
\end{lemma}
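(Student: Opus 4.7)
My plan is to prove this by direct expansion of $X^{(2)}(A^1(s))$ in components and then using the antisymmetry of the structure constants to rewrite it in the symmetrized form stated. Recall from Definition \ref{def:X} that
\[ X^{(2)}_{j_0}(A^1(s)) = \sum_{j \in [d]} [A^1_j(s), 2\ptl_j A^1_{j_0}(s) - \ptl_{j_0} A^1_j(s)]. \]
Expanding each $A^{1}_j = \sum_{a} A^{1, a}_j \onbasislalg^a$ and using \eqref{eq:structure-constants-def}, the $a_0$-component of the above equals
\[ \sum_{\substack{a_1, a_2 \in [\lalgdim] \\ j \in [d]}} f^{a_1 a_2 a_0} A^{1,a_1}_j(s) \bigl(2 \ptl_j A^{1,a_2}_{j_0}(s) - \ptl_{j_0} A^{1,a_2}_j(s)\bigr), \]
and by the cyclic symmetry $f^{a_1 a_2 a_0} = f^{a_0 a_1 a_2}$ from \eqref{eq:structure-constants-permute-indices}, I can rewrite this with $f^{a_0 a_1 a_2}$ out front.

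Next, I would symmetrize the two terms using the antisymmetry $f^{a_0 a_1 a_2} = -f^{a_0 a_2 a_1}$. For the first term, swapping the summation labels $a_1 \leftrightarrow a_2$ in half of it yields
\[ \sum f^{a_0 a_1 a_2} A^{1,a_1}_j \ptl_j A^{1,a_2}_{j_0} = \frac{1}{2}\sum f^{a_0 a_1 a_2} \bigl( A^{1,a_1}_j \ptl_j A^{1,a_2}_{j_0} - \ptl_j A^{1,a_1}_{j_0} A^{1,a_2}_j \bigr) \]
(times $2$ accounts for the factor of $2$ in the original expression); thus the doubled first term matches the first two terms inside the bracket in the statement. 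For the second term $-\sum f^{a_0 a_1 a_2} A^{1,a_1}_j \ptl_{j_0} A^{1,a_2}_j$, the same swap shows this equals $\frac{1}{2} \sum f^{a_0 a_1 a_2} \bigl(\ptl_{j_0} A^{1,a_1}_j A^{1,a_2}_j - A^{1,a_1}_j \ptl_{j_0} A^{1,a_2}_j\bigr)$, matching the last two terms in the bracket with the stated factor of $1/2$.

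Finally, I would apply $\int_0^t e^{(t-s)\Delta}(\cdot)\,ds$ termwise, using linearity of the heat semigroup and the fact that the integrand is smooth (since $A^1$ is smooth on $(0, \infty) \times \torus^d$ for smooth initial data $A^0$), to conclude the claimed formula for $A^{2, a_0}_{j_0}(t) = \rho^{(2)}(A^1)^{a_0}_{j_0}(t)$. This last step is routine; the main (mild) obstacle is simply bookkeeping through the two index swaps to get both the right signs and the right factors of $1/2$, which is what the cyclic/antisymmetry identities in \eqref{eq:structure-constants-permute-indices} are designed for.
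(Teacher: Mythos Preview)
Your proposal is correct and follows essentially the same approach as the paper: expand $X^{(2)}_{j_0}(A^1(s))$ in the basis $(\onbasislalg^a)$, use the cyclic identity $f^{a_1 a_2 a_0}=f^{a_0 a_1 a_2}$, and then symmetrize via the swap $a_1\leftrightarrow a_2$ together with $f^{a_0 a_2 a_1}=-f^{a_0 a_1 a_2}$. The only cosmetic difference is that the paper performs the swap on the full expression (adding the original and swapped versions and dividing by $2$), whereas you do it term by term; the content is identical.
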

\begin{proof}
By definition, we have that (here we write $\rho^{(2)} = (\rho^{(2)}_j, j \in [d])$, so that $\rho^{(2)}_j : \torus^d \ra \lalg$ is a component function of $\rho^{(2)}$)
\[ A^2_{j_0}(t) = \rho^{(2)}_{j_0}(A^1)(t) = \int_0^t e^{(t-s)\Delta} X^{(2)}_{j_0}(A^1(s)) ds. \]
We have that
\[ X^{(2)}_{j_0}(A^1(s)) = \sum_{j \in [d]} [A^1_j(s), 2\ptl_j A_{j_0}^1(s) - \ptl_{j_0} A_j^1(s)]. \]
Now,
\[\begin{split}
[A^1_j(s), \ptl_j A_{j_0}^1(s)] &= \bigg[\sum_{a_1 \in [\lalgdim]} A^{1, a_1}_j(s) \onbasislalg^{a_1}, \sum_{a_2 \in [\lalgdim]} \ptl_j A^{1, a_2}_{j_0}(s) \onbasislalg^{a_2}\bigg] \\
&= \sum_{a_1, a_2 \in [\lalgdim]} A^{1, a_1}_j(s) \ptl_j A^{1, a_2}_{j_0}(s) [\onbasislalg^{a_1}, \onbasislalg^{a_2}] \\
&= \sum_{a_1, a_2 \in [\lalgdim]}  A^{1, a_1}_j(s) \ptl_j A^{1, a_2}_{j_0}(s)  \sum_{a \in [\lalgdim]} f^{a_1 a_2 a} \onbasislalg^a.
\end{split}\]
Using that $f^{a_1 a_2 a_0} = f^{a_0 a_1 a_2}$ (by \eqref{eq:structure-constants-permute-indices}), we obtain that
\[ [A^1_j(s), \ptl_j A_{j_0}^1(s)]^{a_0} = \sum_{a_1, a_2 \in [\lalgdim]}  f^{a_0 a_1 a_2}  A^{1, a_1}_j(s) \ptl_j A^{1, a_2}_{j_0}(s). \]
By expanding the other term similarly, we obtain
\[ \begin{split}
A^{2, a_0}_{j_0}(t)  = \sum_{\substack{a_1, a_2 \in [\lalgdim] \\ j \in [d]}} f^{a_0 a_1 a_2} \int_0^t e^{(t-s)\Delta} \big(2 &A^{1, a_1}_j(s) \ptl_j A^{1, a_2}_{j_0}(s) ~- \\
&A^{1, a_1}_j(s) \ptl_{j_0} A^{1, a_2}_j(s)\big) ds.
\end{split}\]
Now by swapping $a_1, a_2$, we also obtain
\[\begin{split}
A^{2, a_0}_{j_0}(t) = \sum_{\substack{a_1, a_2 \in [\lalgdim] \\ j \in [d]}} f^{a_0 a_2 a_1} \int_0^t e^{(t-s)\Delta} \big(2 &A^{1, a_2}_j(s) \ptl_j A^{1, a_1}_{j_0}(s) ~- \\
&A^{1, a_2}_j(s) \ptl_{j_0} A^{1, a_1}_j(s)\big) ds. 
\end{split}\]
Recall that $f^{a_0 a_2 a_1} = - f^{a_0 a_1 a_2}$ by \eqref{eq:structure-constants-permute-indices}. The desired result now follows by adding the two expressions for $A^{2, a_0}_{j_0}(t)$, and then dividing by $2$.
\end{proof}

\begin{proof}[Proof of Lemma \ref{lemma:rho-A-1-smooth}]
Throughout this proof, all infinite sums converge absolutely (by \eqref{eq:fourier-coefficients-rapid-decay-general-dim} and the facts that $0 \leq I(m, t) \leq t$, $|d(m, a, j)| \leq \const (|n^1| + |n^2|)$), and so we may freely swap the order of summation. Note that we may rewrite
\[\begin{split} \sum_{j \in [d]} \big(A^{1, a_1}_j(s) &\ptl_j A^{1, a_2}_{j_0}(s) - \ptl_j A^{1, a_1}_{j_0}(s) A^{1, a_2}_j(s) ~+ \\
&(1/2)\big(\ptl_{j_0} A^{1, a_1}_j(s) A^{1, a_2}_j(s) - A^{1, a_1}_j(s) \ptl_{j_0} A^{1, a_2}_j(s)\big)\big) \end{split}\]
as
\[\begin{split}
\sum_{j_1 j_2 \in [d]} \big( &\delta_{j_2 j_0} A^{1, a_1}_{j_1}(s) \ptl_{j_1} A^{1, a_2}_{j_2}(s) - \delta_{j_1 j_0} \ptl_{j_2} A^{1, a_1}_{j_1}(s) A^{1, a_2}_{j_2}(s) ~+ \\
&\delta_{j_1 j_2} (1/2)\big(\ptl_{j_0} A^{1, a_1}_{j_1}(s) A^{1, a_2}_{j_2}(s) - A^{1, a_1}_{j_1}(s) \ptl_{j_0} A^{1, a_2}_{j_2}(s)\big)\big).
\end{split}\]
Next, for $b \in [\lalgdim]$, $j, k \in [d]$, $s \geq 0$, we have that (recalling \eqref{eq:heat-semigroup-def} and using that $\ptl_j e_n = \icomplex 2\pi n_j e_n$)
\[ \ptl_j A^{1, b}_k(s) = \icomplex 2\pi \sum_{n \in \Z^d} n_j e^{-4\pi^2 |n|^2 s} \hat{A}^{0, b}_k(n) e_n.\]
Using this, it follows that the expression
\[\begin{split}
f^{a_0 a_1 a_2}\big(\delta_{j_2 j_0} &A^{1, a_1}_{j_1}(s) \ptl_{j_1} A^{1, a_2}_{j_2}(s) - \delta_{j_1 j_0} \ptl_{j_2} A^{1, a_1}_{j_1}(s) A^{1, a_2}_{j_2}(s) ~+ \\
&\delta_{j_1 j_2} (1/2)\big(\ptl_{j_0} A^{1, a_1}_{j_1}(s) A^{1, a_2}_{j_2}(s) - A^{1, a_1}_{j_1}(s) \ptl_{j_0} A^{1, a_2}_{j_2}(s)\big)\big)
\end{split}\]
is equal to 
\[ \sum_{n^1, n^2 \in \Z^d} d(m, a, j) e^{-4\pi^2 (|n^1|^2 + |n^2|^2)s} \hat{A}^{0, a_1}_{j_1}(n^1) \hat{A}^{0, a_2}_{j_2}(n^2)  e_{n^1 + n^2},\]
where  $m = (n^1, n^2)$, $a = (a_0, a_1, a_2)$, $j = (j_0, j_1, j_2)$.
Next, note that 
\[
e^{(t-s)\Delta} e_{n^1 + n^2} = e^{-4\pi^2 |n^1 + n^2|^2 (t-s)} e_{n^1 + n^2}
\]
for $n^1, n^2 \in \Z^d$, and
\[ \int_0^t e^{-4\pi^2 |n^1 + n^2|^2(t-s)} e^{-4\pi^2 (|n^1|^2 + |n^2|^2) s} ds = I(m, t). \]
Combining the above few observations with Lemma \ref{lemma:A-2-formula-symmetrized}, we obtain
\[ A^{2, a_0}_{j_0}(t) =  \sum_{\substack{a_1, a_2 \in [\lalgdim] \\ j_1, j_2 \in [d]}} \sum_{n^1, n^2 \in \Z^d} d(m, a, j) I(m, t) \hat{A}^{0, a_1}_{j_1}(n^1) \hat{A}^{0, a_2}_{j_2}(n^2) e_{n^1 + n^2},\]
which is the first desired result.

For the second desired result, note that for any $m = (n^1, n^2) \in (\Z^d)^2$, $t > 0$, we have that
\[ \ptl_t I(m, t) = - 4\pi^2 |n^1 + n^2|^2 I(m, t) + e^{-4\pi^2 |n^1|^2 t} e^{-4\pi^2 |n^2|^2 t}. \]
The desired result now follows (again, the interchange of differentiation and summation follows due to the rapid decay of the Fourier coefficients, i.e., \eqref{eq:fourier-coefficients-rapid-decay-general-dim}).
\end{proof}

From Lemma \ref{lemma:rho-A-1-smooth}, one can show the following. 
Recall the notation from Definition \ref{def:quadratic-forms}.

\begin{lemma}\label{lemma:A-2-is-quadratic-form}
Let $A^0, A^2$ be as in Lemma \ref{lemma:rho-A-1-smooth}. Suppose that for some $N \geq 0$, we have that $\hat{A}^0(n) = 0$ for all $|n|_\infty > N$. Then for any $a_0 \in [\lalgdim]$, $j_0 \in [d]$, $t_0 \in (0, 1]$, $x_0 \in \torus^d$, there exists a smooth function $K \in C^\infty(\mbb{I}^2, \R)$ such that 
\[ A^{2, a_0}_{j_0}(t_0, x_0) = (A^0, K A^0). \]
Additionally, for $l \in [d]$, there exists a smooth function $L \in C^\infty(\mbb{I}^2, \R)$ such that 
\[ \ptl_l A^{2, a_0}_{j_0}(t_0, x_0) = (A^0, L A^0). \]
Finally, for any $a_1 \in [\lalgdim]$, $j_1, j_2 \in [d]$, $x, y \in \torus^d$, we have that 
\[
K((a_1, j_1, x), (a_1, j_2, y)) = L((a_1, j_1, x), (a_1, j_2, y)) = 0.
\]
\end{lemma}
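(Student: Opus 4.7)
The plan is to read off $K$ and $L$ directly from the explicit Fourier-series formulas for $A^{2, a_0}_{j_0}(t_0)$ and its spatial derivatives established in Lemma \ref{lemma:rho-A-1-smooth}. Since $\hat{A}^0(n) = 0$ for $|n|_\infty > N$, the double sum in that lemma is finite, and rewriting each Fourier coefficient $\hat{A}^{0, a_i}_{j_i}(n^i)$ as an integral of $A^{0, a_i}_{j_i}$ against $e_{-n^i}$ converts the formula into the quadratic-form shape of \eqref{eq:quadratic-form-def}. Concretely I would set
\[
K((a_1, j_1, x), (a_2, j_2, y)) := \sum_{\substack{n^1, n^2 \in \Z^d \\ |n^1|_\infty, |n^2|_\infty \leq N}} I(m, t_0)\, d(m, a, j)\, e_{-n^1}(x)\, e_{-n^2}(y)\, e_{n^1 + n^2}(x_0),
\]
with $m = (n^1, n^2)$, $a = (a_0, a_1, a_2)$, $j = (j_0, j_1, j_2)$, where $a_0, j_0, t_0, x_0$ are frozen. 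Smoothness of $K$ on $\mbb{I}^2$ is immediate since this is a finite sum of smooth functions. Substituting the integral representations of $\hat{A}^{0, a_i}_{j_i}(n^i)$ into the formula from Lemma \ref{lemma:rho-A-1-smooth} and swapping the (finite) sum with the two spatial integrals then yields $A^{2, a_0}_{j_0}(t_0, x_0) = (A^0, K A^0)$.

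The one subtlety to check is that $K$ is genuinely $\R$-valued, whereas $d(m, a, j)$ carries an explicit factor of $\icomplex$. I would pair each term with its partner under the involution $(n^1, n^2) \mapsto (-n^1, -n^2)$ and invoke Remark \ref{remark:I-and-d-properties}: $I(m, t_0)$ is invariant (since $|n|^2 = |{-n}|^2$), $d(m, a, j)$ maps to its complex conjugate, and $e_{-n^1}(x) e_{-n^2}(y) e_{n^1+n^2}(x_0)$ also gets conjugated. Consequently the two paired summands are complex conjugates and their sum is real. For the derivative claim I would differentiate the same formula at $x_0$ in the $l$-th direction, which brings down a factor of $\icomplex 2\pi (n^1 + n^2)_l$, and define
\[
L((a_1, j_1, x), (a_2, j_2, y)) := \sum_{\substack{|n^1|_\infty, |n^2|_\infty \leq N}} I(m, t_0)\, d(m, a, j)\, \icomplex 2\pi (n^1 + n^2)_l\, e_{-n^1}(x)\, e_{-n^2}(y)\, e_{n^1 + n^2}(x_0).
\]
Reality is verified by the same $(n^1, n^2) \mapsto (-n^1, -n^2)$ pairing, where the sign flip of $(n^1 + n^2)_l$ is absorbed into the conjugation of $d(m, a, j)$.

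Finally, the vanishing when the two Lie-algebra indices agree is essentially a structure-constant calculation. The factor $d(m, (a_0, a_1, a_1), j)$ contains $f^{a_0 a_1 a_1}$, and the antisymmetry in the last two slots from \eqref{eq:structure-constants-permute-indices} (i.e.\ $f^{abc} = -f^{acb}$ forces $f^{a_0 a_1 a_1} = 0$) makes every term in the defining sums of $K$ and $L$ vanish, yielding $K((a_1, j_1, x), (a_1, j_2, y)) = L((a_1, j_1, x), (a_1, j_2, y)) = 0$. There is no serious obstacle here; the only care required is bookkeeping for the reality check, and everything else is direct substitution into the formulas from Lemma \ref{lemma:rho-A-1-smooth}.
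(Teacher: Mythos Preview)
Your proposal is correct and follows essentially the same approach as the paper: define $K$ and $L$ by the explicit finite Fourier sums coming from Lemma~\ref{lemma:rho-A-1-smooth}, verify $\R$-valuedness via the $(n^1,n^2)\mapsto(-n^1,-n^2)$ pairing using Remark~\ref{remark:I-and-d-properties}, and deduce the vanishing on the diagonal from $f^{a_0 a_1 a_1}=0$. The paper's proof is essentially identical.
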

\begin{proof}
For $n \in \Z^d$, we may write
\[ \hat{A}^{0, a_1}_{j_1}(n) = \int_{\torus^d} A^{0, a_1}_{j_1}(x) e_{-n}(x) dx. \]
Combining this with Lemma \ref{lemma:rho-A-1-smooth} and the assumption that $\hat{A}^0(n) = 0$ for $|n|_\infty > N$, we may thus write
\[\begin{split}
&A^{2, a_0}_{j_0}(t_0, x_0)  = \sum_{\substack{a_1, a_2 \in [\lalgdim] \\ j_1, j_2 \in [d]}} \int_{\torus^d} \int_{\torus^d} A^{0, a_1}_{j_1}(x) A^{0, a_2}_{j_2}(y) \times \\
&\Bigg(\sum_{\substack{n^1, n^2 \in \Z^d \\ |n^1|_\infty, |n^2|_\infty \leq N}} I(m, t_0) d(m, a, j) e_{n^1 + n^2}(x_0) e_{-n^1}(x) e_{-n^2}(y)\Bigg) dx dy.
\end{split}\]
(In the above, $m = (n^1, n^2)$, $a = (a_0, a_1, a_2)$, $j = (j_0, j_1, j_2)$.) We may thus define, for $i_1 = (a_1, j_1, x)$, $i_2 = (a_2, j_2, y)$,
\[ K(i_1, i_2) := \sum_{\substack{n^1, n^2 \in \Z^d \\ |n^1|_\infty, |n^2|_\infty \leq N}} I(m, t_0) d(m, a, j) e_{n^1 + n^2}(x_0) e_{-n^1}(x) e_{-n^2}(y).\]
By using that $I(-m, t_0) = I(m, t_0)$ and $d(-m, a, j) = \ovl{d(m, a, j)}$, one can show that $\ovl{K(i_1, i_2)} = K(i_1, i_2)$, and so $K(i_1, i_2) \in \R$.
The fact that $K(i_1, i_2) \in \R$ follows because 
\[\begin{split}
\ovl{K(i_1, i_2)} &= \sum_{\substack{n^1, n^2 \in \Z^d \\ |n^1|_\infty, |n^2|_\infty \leq N}} I(m, t_0) \ovl{d(m, a, j)} e_{-(n^1 + n^2)}(x_0) e_{n^1}(x) e_{n^2}(y) \\
&= \sum_{\substack{n^1, n^2 \in \Z^d \\ |n^1|_\infty, |n^2|_\infty \leq N}} I(-m, t_0) \ovl{d(-m, a, j)} e_{n^1 + n^2}(x_0) e_{-n^1}(x) e_{-n^2}(y) \\
&=  \sum_{\substack{n^1, n^2 \in \Z^d \\ |n^1|_\infty, |n^2|_\infty \leq N}} I(m, t_0) d(m, a, j) e_{n^1 + n^2}(x_0) e_{-n^1}(x) e_{-n^2}(y) \\
&= K(i_1, i_2),
\end{split}\]
where we used that $I(-m, t_0) = I(m, t_0)$, and $d(-m, a, j) = \ovl{d(m, a, j)}$ (recall Remark \ref{remark:I-and-d-properties}). 
This shows the first claim. The second claim follows by a similar argument, where we define
\[ \begin{split}
&L(i_1, i_2) := \\
&\sum_{\substack{n^1, n^2 \in \Z^d \\ |n^1|_\infty, |n^2|_\infty \leq N}} \icomplex 2\pi(n^1 + n^2)_l I(m, t_0) d(m, a, j) e_{n^1 + n^2}(x_0) e_{-n^1}(x) e_{-n^2}(y).
\end{split}\]
The final claim follows because $d(m, a, j) = 0$ for any $a$ of the form $(a_0, a_1, a_1)$ (recall Remark \ref{remark:I-and-d-properties}).
\end{proof}

\section{Outline of intermediate results and proof of Theorem \ref{thm:main}}\label{section:distributional-initial-data}

In this section, we outline the intermediate results that are needed in the proof of Theorem \ref{thm:main}. Then, in Section \ref{section:proofs-main}, we show how to use these intermediate results to deduce Theorem \ref{thm:main}. The proofs of these intermediate results are deferred until Section \ref{section:technical-proofs}.

\subsection{Linear part}\label{section:linear-part}

Throughout this section, let $\rconn^0$ be a random $\lalg^d$-valued distribution satisfying Assumptions \ref{assumption:l2-regularity}, \ref{assumption:tail-bounds}, \ref{assumption:translation-invariance}, and \ref{assumption:bounded-by-fractional-greens-function}.  For this section, we just assume that Assumption \ref{assumption:bounded-by-fractional-greens-function} holds for some $\alpha \in (0, d)$, i.e., we do not need the restriction $\alpha > \max\{d - 4/3, d/2\}$ that appears in Theorem \ref{thm:main}. These assumptions hold, even if they are not explicitly stated in the various lemmas or propositions. The proofs of all results stated in this section are in Section \ref{section:technical-proofs-linear-part}.

Recall the definition of the heat kernel $\Phi$ (equation \eqref{eq:heat-kernel-def}). We proceed to define $\rconn^1$, which may be interpreted as $\rconn^1(t) = e^{t \Delta} \rconn^0$.

\begin{definition}\label{def:A-1}
Define the $\lalg^d$-valued stochastic process $\rconn^1 = (\rconn^1(t, x), t \in (0, 1], x \in \torus^d)$ by $\rconn^1(t, x) := (e^{t \Delta} \rconn^0)(x) = (\rconn^0, \Phi(t, x - \cdot))$.
\end{definition}

We will first show the following result about regularity of $\rconn^1$. 

\begin{lemma}\label{lemma:A-1-modification}
There exists a modification of $\rconn^1$
which has smooth sample paths, and which is a solution to the heat equation on $(0, 1] \times \torus^d$.
\end{lemma}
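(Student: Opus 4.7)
The plan is to construct the modification explicitly as a Fourier series. For $t > 0$, one expects to write
\[ \tilde{\rconn}^1(t,x) := \sum_{n \in \Z^d} e^{-4\pi^2|n|^2 t}\, \hat{\rconn}^0(n)\, e_n(x), \]
so the first step is to show this sum agrees a.s.~(at each fixed $(t,x)$) with $\rconn^1(t,x) = (\rconn^0, \Phi(t, x - \cdot))$. Since the heat kernel $\Phi(t, x - \cdot) = \sum_n e^{-4\pi^2|n|^2 t} e_n(x) e_{-n}(\cdot)$ converges in $C^\infty(\torus^d)$ for $t > 0$, applying Assumption \ref{assumption:l2-regularity} gives that $\rconn^1(t,x)$ is the $L^2$ limit of the symmetric partial sums $\sum_{|n|_\infty \leq N} e^{-4\pi^2|n|^2 t}\hat{\rconn}^0(n) e_n(x)$.

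The second step is to show that these partial sums actually converge a.s.~in $C^\infty_{\mathrm{loc}}((0,1]\times \torus^d, \lalg^d)$. The crucial input is a uniform bound $\E[|\hat{\rconn}^0(n)|^2] \leq \const$: viewing $\hat{\rconn}^0(n)$ componentwise in terms of the Fourier coefficients of the real distributions $\rconn^{0,a}_j$, and using Assumption \ref{assumption:translation-invariance} with $\phi_1,\phi_2 \in \{\cos(2\pi n\cdot), \sin(2\pi n\cdot)\}$, one sees that the variances are controlled by $(\widehat{|\Tr \rho|})(n)$, and Assumption \ref{assumption:bounded-by-fractional-greens-function} together with the fact that $G^\alpha$ has Fourier coefficients $|n|^{-\alpha}$ (bounded for $n\neq 0$) yields the claim. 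Then Assumption \ref{assumption:tail-bounds} applied to the real test functions $\cos(2\pi n \cdot), \sin(2\pi n \cdot)$, combined with a Borel--Cantelli argument over $n \in \Z^d$, gives that a.s.
\[ |\hat{\rconn}^0(n)| \leq \const_\omega \bigl(\log(2 + |n|)\bigr)^{1/\exptb}\quad \text{for all } n \in \Z^d. \]
Together with the exponential decay of $e^{-4\pi^2|n|^2 t}$ and the fact that term-by-term differentiation in $t$ and $x$ only inserts polynomial factors in $|n|$, this yields absolute convergence of the differentiated series uniformly on $[\varepsilon, 1] \times \torus^d$ for any $\varepsilon > 0$.

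I would then define $\tilde{\rconn}^1$ to be this $C^\infty_{\mathrm{loc}}$ limit on the full-measure event where convergence holds, and $0$ otherwise. On this event, $\tilde{\rconn}^1 \in C^\infty((0,1]\times \torus^d, \lalg^d)$. For each fixed $(t,x)$, the identity $\tilde{\rconn}^1(t,x) = \rconn^1(t,x)$ a.s.~follows from uniqueness of limits (the partial sums converge both a.s.~and in $L^2$ to the same thing), so $\tilde{\rconn}^1$ is indeed a modification of $\rconn^1$. Each truncated Fourier sum satisfies the heat equation exactly, and the $C^\infty_{\mathrm{loc}}$ convergence is preserved under $\partial_t$ and $\Delta$, so $\tilde{\rconn}^1$ solves the heat equation on $(0,1] \times \torus^d$.

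The main technical obstacle is purely the uniform Fourier variance bound, which requires slightly careful bookkeeping between the complex Fourier coefficients and the real-test-function hypotheses in Assumptions \ref{assumption:tail-bounds}--\ref{assumption:bounded-by-fractional-greens-function}; once this is in place, the rest of the argument is a standard Borel--Cantelli plus heat-semigroup smoothing estimate. Note that no restriction on $\alpha \in (0,d)$ beyond what is already assumed is needed, since the heat kernel factor provides arbitrarily fast decay.
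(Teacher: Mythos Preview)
Your approach is correct and leads to the same modification, but the route differs from the paper's. You invoke the tail bound (Assumption~\ref{assumption:tail-bounds}) together with Borel--Cantelli to obtain an a.s.\ pathwise estimate $|\hat{\rconn}^0(n)| \le \const_\omega(\log(2+|n|))^{1/\exptb}$, and then use the exponential heat factor to absorb this. The paper instead avoids Assumption~\ref{assumption:tail-bounds} entirely: it applies Cauchy--Schwarz to write
\[
\sum_{n}(1+|n|)^k e^{-4\pi^2|n|^2 t}|\hat{\rconn}^0(n)| \le \Big(\sum_n (1+|n|)^{2k} e^{-4\pi^2|n|^2 t}\Big)^{1/2}\Big(\sum_n e^{-4\pi^2|n|^2 t}|\hat{\rconn}^0(n)|^2\Big)^{1/2},
\]
and then uses translation invariance (Lemma~\ref{lemma:fourier-coeff-uncorrelated}) to get $\E\big[\sum_n e^{-4\pi^2|n|^2 t}|\hat{\rconn}^0(n)|^2\big]=(e^{t\Delta}\tau)(0)<\infty$ via Assumption~\ref{assumption:bounded-by-fractional-greens-function}. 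This is a pure second-moment argument, needing only Assumptions~\ref{assumption:l2-regularity}, \ref{assumption:translation-invariance}, \ref{assumption:bounded-by-fractional-greens-function}. Your version is slightly heavier in hypotheses but yields a cleaner pointwise control of each coefficient; the paper's is shorter and shows that the tail assumption is not needed at this stage.
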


Thus from here on out, we will assume that 
$\rconn^1$ has been modified to have smooth sample paths which are solutions to the heat equation (so that $\rconn^1(t) = e^{(t-s)\Delta} \rconn^1(s)$ for all $s, t \in (0, 1]$, $s < t$). Next, we define the natural notion of Fourier truncations of $\rconn^1$. 


\begin{definition}\label{def:A-1-N}
Let $N \geq 0$.
Define the $\lalg^d$-valued stochastic process $\rconn^1_N = (\rconn^1_N(t, x), t \in (0, 1], x \in \torus^d)$ by $\rconn^1_N := \FT_N \rconn^1$.
\end{definition}

\begin{remark}\label{remark:A-1-N-convergence}
Since $\FT_N$ is linear and $\rconn^1 = e^{t \Delta} \rconn^0$, we have that $\rconn^1_N = e^{t\Delta} \FT_N \rconn^0 = e^{t \Delta} \rconn^0_N$ (recall we defined $\rconn^0_N = \FT_N \rconn^0$ in Definition \ref{def:A-0-N}). Also, we have that $\rconn^1_N(t, x) = (\FT_N \rconn^0, \Phi(t, x - \cdot)) = (\rconn^0, \FT_N \Phi(t, x - \cdot))$, and so by Assumption \ref{assumption:l2-regularity}, we have that for any $t \in (0, 1]$, $x \in \torus^d$, $\rconn^1_N(t, x) \stackrel{L^2}{\ra} \rconn^1(t, x)$.
\end{remark}

We now state the main result of Section \ref{section:linear-part}.

\begin{prop}\label{prop:linear-part-in-Q-space}
For $\varep > 0$, let $\gamma_\varep := (1/4)(d-\alpha) + \varep$. For any $\varep > 0$, $p \geq 1$, we have that
\[ \sup_{N \geq 0} \E\big[\|\rconn^1_N\|^p_{\mc{Q}_1^{\gamma_\varep}}\big], \E\big[\|\rconn^1\|^p_{\mc{Q}_1^{\gamma_\varep}}\big] \leq \const_{\varep, p} < \infty.  \]
Here, $\const_{\varep, p}$ depends only on $\varep, p, d$, and the various constants in Assumptions \ref{assumption:l2-regularity}--\ref{assumption:bounded-by-fractional-greens-function}; i.e., $\alpha$, $\exptb$, $\consttb$, etc.
Additionally, we have that
\[ \lim_{N \toinf} \E\big[\|\rconn^1_N - \rconn^1\|^p_{\mc{Q}_1^{\gamma_\varep}}\big] = 0.\]
\end{prop}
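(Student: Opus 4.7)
The plan is to derive sharp pointwise moment bounds on $\rconn^1(t,x)$ and its first spatial derivatives, promote these to $C^0$ and $C^1$ control via Kolmogorov's continuity theorem, and then take the supremum in $t$ by a dyadic decomposition. Running the identical scheme on $\rconn^1 - \rconn^1_N$ yields the convergence statement.

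For the pointwise bounds, write $\rconn^1(t,x) = (\rconn^0, \Phi(t, x-\cdot))$. Assumptions \ref{assumption:translation-invariance} and \ref{assumption:bounded-by-fractional-greens-function} give
\[ \E[|\rconn^1(t,x)|^2] \leq \const \cdot (\Phi(t, \cdot) * \Phi(t, \cdot) * (G^\alpha + \const))(0), \]
and a Fourier-side computation using $\hat\Phi(t,\cdot)(n) = e^{-4\pi^2|n|^2 t}$ and $\hat G^\alpha(n) = |n|^{-\alpha}$ yields an $O(t^{-(d-\alpha)/2})$ bound. Replacing $\Phi$ by $\ptl_i \Phi$ costs an extra factor of $t^{-1}$. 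An analogous calculation bounds the variance of the spatial increment: using $|e_n(x) - e_n(y)|^2 \leq \min(4, (2\pi |n| \metricspace(x,y))^2)$ and $\min(1,u) \leq u^\eta$ for $0 \leq \eta \leq 1$, one obtains
\[ \E[|\rconn^1(t,x) - \rconn^1(t,y)|^2] \leq \const_\eta \cdot t^{-(d-\alpha)/2 - \eta} \metricspace(x,y)^{2\eta}, \]
and similarly for derivative increments. Assumption \ref{assumption:tail-bounds} upgrades all these $L^2$ bounds to $L^p$ bounds for every finite $p$ at the cost of only a multiplicative constant.

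Kolmogorov's continuity theorem, applied in the spatial variable with $p$ large and $\eta$ small, then yields the fixed-$t$ estimates
\[ \E[\|\rconn^1(t)\|_{C^0}^p] \leq \const_{p,\varep} \cdot t^{-p((d-\alpha)/4 + \varep/2)}, \qquad \E[\|\rconn^1(t)\|_{C^1}^p] \leq \const_{p,\varep} \cdot t^{-p((d-\alpha)/4 + 1/2 + \varep/2)}. \]
To handle the supremum in $t$, split $(0,1]$ dyadically into $I_k := [2^{-k-1}, 2^{-k}]$. Since $\rconn^1(t) = e^{(t-s)\Delta} \rconn^1(s)$ and $e^{(t-s)\Delta}$ is a contraction on both $C^0$ and $C^1$, the maps $t \mapsto \|\rconn^1(t)\|_{C^r}$ ($r=0,1$) are non-increasing on each $I_k$, so $\sup_{t \in I_k} t^{\gamma_\varep} \|\rconn^1(t)\|_{C^0} \leq 2^{-k\gamma_\varep} \|\rconn^1(2^{-k-1})\|_{C^0}$ (and analogously for the $C^1$ piece). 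Then $\E[\sup_k X_k^p] \leq \sum_k \E[X_k^p]$ reduces the $\mc{Q}_1^{\gamma_\varep}$-norm estimate to a convergent geometric series in $k$, which converges precisely because $\gamma_\varep - ((d-\alpha)/4 + \varep/2) = \varep/2 > 0$. The bound for $\rconn^1_N$ is identical, uniformly in $N$: Fourier truncation of $\rconn^0$ only zeroes out modes, so the variance computations above dominate every $N < \infty$ case as well.

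For the convergence statement, apply the same chain of estimates to the difference $(\rconn^1 - \rconn^1_N)(t,x) = (\rconn^0 - \rconn^0_N, \Phi(t, x-\cdot))$. At each $(t,x)$ the $L^2$ norm of this difference tends to $0$ by Assumption \ref{assumption:l2-regularity}, while the uniform-in-$N$ variance bound above dominates every quantity in sight. Dominated convergence applied at each stage of the moment/Kolmogorov/dyadic scheme then gives $\E[\|\rconn^1 - \rconn^1_N\|_{\mc{Q}_1^{\gamma_\varep}}^p] \ra 0$. The main obstacle I anticipate is the careful bookkeeping through the Kolmogorov step, where one must trade regularity in $x$ for singularity in $t$ at exactly the right rate so that the final gap $\varep$ in $\gamma_\varep$ absorbs every small loss from the interpolation $\min(1,u) \leq u^\eta$, from Kolmogorov, and from the dyadic summation.
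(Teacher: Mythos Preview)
Your approach is correct and genuinely different from the paper's in two respects. First, the paper immediately reduces the $C^1$ piece to the $C^0$ piece via the smoothing estimate $\|\rconn^1(t)\|_{C^1}=\|e^{(t/2)\Delta}\rconn^1(t/2)\|_{C^1}\le C\,t^{-1/2}\|\rconn^1(t/2)\|_{C^0}$, so it never has to run a separate chaining argument on derivatives; you instead treat $C^1$ directly via Kolmogorov applied to $\partial_i\rconn^1$. Second, and more interestingly, to control the supremum over $(t,x)$ the paper performs generic chaining (Dirksen's theorem) over the full space--time shell $(t_0/2,t_0]\times\torus^d$ with the parabolic metric $d_{t_0}$, which requires tail bounds on increments in both space and time (Lemmas~\ref{lemma:linear-part-entropy-estimate}--\ref{lemma:A-1-exp-sup-local}). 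You sidestep this entirely by exploiting the monotonicity $t\mapsto\|\rconn^1(t)\|_{C^r}$ inherited from the contractivity of $e^{t\Delta}$ on $C^r$, so that a single fixed-time Kolmogorov estimate at the left endpoint of each dyadic block suffices. Both routes finish with the same dyadic summation in $t$ (Lemma~\ref{lemma:first-chaining-bound} in the paper). Your argument is more elementary here, but note that the monotonicity trick is special to the linear part: it fails for $\rconn^2=\rho^{(2)}(\rconn^1)$, which is why the paper's space--time chaining machinery is built in a form that can be reused verbatim in Section~\ref{section:technical-proofs-nonlinear-part}.
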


\begin{remark}\label{remark:A-1-surely-in-Q-space}
By Proposition \ref{prop:linear-part-in-Q-space}, upon replacing $\rconn^1$ by a suitable modification, we may assume that $\|\rconn^1\|_{\mc{Q}_1^{(1/4)(d-\alpha) + \varep}} < \infty$ for all $\varep > 0$. Hereafter, we assume that this holds for $\rconn^1$.
\end{remark}

\subsection{Nonlinear part}\label{section:nonlinear-part}

As in Section \ref{section:linear-part}, throughout this section, we assume that $\rconn^0$ is a random $\lalg^d$-valued distribution satisfying Assumptions \ref{assumption:l2-regularity}, \ref{assumption:tail-bounds}, \ref{assumption:translation-invariance}, and \ref{assumption:bounded-by-fractional-greens-function}. For this section, we assume (as in Theorem \ref{thm:main}) that Assumption \ref{assumption:bounded-by-fractional-greens-function} holds for some $\alpha \in (\max\{d - 4/3, d/2\}, d)$. Additionally, we assume that Assumption \ref{assumption:four-product-assumption} is satisfied. These assumptions hold, even if they are not explicitly stated in the various lemmas, corollaries, or propositions. 

Recall the process $\rconn^1$ constructed in Section \ref{section:linear-part}. This process is such that $\rconn^1(t) = e^{(t-s)\Delta} \rconn^1(s)$ for all $0 < s < t \leq 1$. In this section, we construct a first nonlinear part $\mbf{B}^1$ for $\rconn^1$, in the sense of Definition~\ref{def:nonlinear-part-of-linear-part}. We will do this by constructing $\rconn^2 = \rho^{(2)}(\rconn^1)$ and $\rconn^3 = \rho^{(3)}(\rconn^1)$ (recall Definition~\ref{def:rho} for the definitions of $\rho^{(2)}, \rho^{(3)}$), and then letting $\mbf{B}^1 = \rconn^2 + \rconn^3$. The construction of $\rconn^3$ is easier, so we handle it first.


\begin{definition}
Recall Remark \ref{remark:A-1-surely-in-Q-space} that $\rconn^1 \in \mc{Q}_1^{(1/4)(d-\alpha) + \varep}$ for all $\varep > 0$. Thus by Lemma \ref{lemma:rho-3-bounds}, and the assumption that $\alpha > d - 4/3$, we may define a $\lalg^d$-valued stochastic process $(\rconn^3(t, x), t \in (0, 1], x \in \torus^d)$ by $\rconn^3 := \rho^{(3)}(\rconn^1)$, and moreover this process is such that $\rconn^3 \in \mc{Q}_1^0$. 
Also, by the definition of $\rho^{(3)}$, the following holds. Take any $T_0 \in (0, 1)$, and let $\tilde{\rconn}^3 : [0,  1 - T_0] \ra C^1(\torus^d, \lalg^d)$ be defined by $\tilde{\rconn}^3(t) := \rconn(T_0 + t)$, $t \in [0, 1 - T_0]$. Then
\beq\label{eq:A-3-integral-identity} \tilde{\rconn}^3(t) = e^{t\Delta} \tilde{\rconn}^3(0) + \int_0^t e^{(t - s)\Delta} X^{(3)}(\rconn^1(T_0 + s)) ds, ~~ t \in [0, 1 - T_0].\eeq
For $N \geq 0$, define also the stochastic process $\rconn^3_N = (\rconn^3_N(t, x), t \in (0, 1], x \in \torus^d)$ by $\rconn^3_N := \rho^{(3)}(\rconn^1_N)$. 
\end{definition}

The next result shows that $\rconn^3_N$ converges to $\rconn^3$ as $N \toinf$, as expected.

\begin{lemma}\label{lemma:A-3-N-convergence}
For any $p \geq 1$, we have that 
\[ \sup_{N \geq 0} \E\big[\|\rconn^3_N\|^p_{\mc{Q}_1^0}\big], \E\big[\|\rconn^3\|^p_{\mc{Q}_1^0}\big] \leq \const_p < \infty. \]
Here $\const_p$ depends only on $p, d$, and the various constants in Assumptions \ref{assumption:l2-regularity}--\ref{assumption:bounded-by-fractional-greens-function}; i.e., $\alpha$, $\exptb$, $\consttb$, etc. Additionally,
\[ \lim_{N \toinf} \E\big[\|\rconn^3_N - \rconn^3\|_{\mc{Q}_1^0}^p\big] = 0. \]
\end{lemma}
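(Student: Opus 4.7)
The plan is to reduce the claim to two already-established ingredients: the deterministic estimates on $\rho^{(3)}$ in Lemma \ref{lemma:rho-3-bounds}, and the probabilistic moment bounds on $\rconn^1_N$ and $\rconn^1_N - \rconn^1$ in $\mc{Q}_1^{\gamma_\varep}$ provided by Proposition \ref{prop:linear-part-in-Q-space}. The hypothesis $\alpha > d - 4/3$ is used exactly to allow the choice of $\varep > 0$ small enough that $\gamma_\varep := (1/4)(d-\alpha) + \varep$ lies in $[0, 1/3)$, the range in which Lemma \ref{lemma:rho-3-bounds} applies. I fix such an $\varep$ at the start.

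For the first assertion (moment bounds), the deterministic part of Lemma \ref{lemma:rho-3-bounds} with $T = 1$ yields the pointwise inequalities
\[ \|\rconn^3_N\|_{\mc{Q}_1^0} \leq \const \|\rconn^1_N\|^3_{\mc{Q}_1^{\gamma_\varep}}, \qquad \|\rconn^3\|_{\mc{Q}_1^0} \leq \const \|\rconn^1\|^3_{\mc{Q}_1^{\gamma_\varep}}, \]
valid almost surely. Raising to the $p$-th power and taking expectations, the desired bound follows immediately from Proposition \ref{prop:linear-part-in-Q-space} applied with exponent $3p$ in place of $p$, uniformly in $N$.

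For the convergence assertion, I would use the Lipschitz part of Lemma \ref{lemma:rho-3-bounds}. Setting $R_N := \max\{\|\rconn^1_N\|_{\mc{Q}_1^{\gamma_\varep}}, \|\rconn^1\|_{\mc{Q}_1^{\gamma_\varep}}\}$, so that both $\rconn^1_N$ and $\rconn^1$ lie in $\mc{Q}_{1, R_N}^{\gamma_\varep}$, the lemma gives
\[ \|\rconn^3_N - \rconn^3\|_{\mc{Q}_1^0} = \|\rho^{(3)}(\rconn^1_N) - \rho^{(3)}(\rconn^1)\|_{\mc{Q}_1^0} \leq \const R_N^2 \|\rconn^1_N - \rconn^1\|_{\mc{Q}_1^{\gamma_\varep}}. \]
Raising to the $p$-th power and applying Cauchy--Schwarz,
\[ \E\|\rconn^3_N - \rconn^3\|_{\mc{Q}_1^0}^p \leq \const^p \big(\E R_N^{4p}\big)^{1/2} \big(\E \|\rconn^1_N - \rconn^1\|_{\mc{Q}_1^{\gamma_\varep}}^{2p}\big)^{1/2}. \]
The first factor is uniformly bounded in $N$ by the moment bound in Proposition \ref{prop:linear-part-in-Q-space}, while the second factor tends to zero by the convergence statement in the same proposition. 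This yields the claimed $L^p$-convergence.

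No serious obstacle is anticipated: both steps are formally immediate once Lemma \ref{lemma:rho-3-bounds} and Proposition \ref{prop:linear-part-in-Q-space} are in hand. The only conceptual point is the choice of $\varep$, which exploits precisely the restriction $\alpha > d - 4/3$ appearing in the hypotheses of Theorem \ref{thm:main}; this is also the reason the more involved analysis in the companion construction of $\rconn^2$ will later be needed, since the weaker estimates available for $\rho^{(2)}$ cannot absorb the roughness of $\rconn^1$ by a purely deterministic bound.
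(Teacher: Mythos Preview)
Your proposal is correct and matches the paper's proof exactly: the paper's one-line argument simply says to combine Lemma \ref{lemma:rho-3-bounds}, H\"{o}lder's inequality, and Proposition \ref{prop:linear-part-in-Q-space} with large enough $p$, and you have filled in precisely those details (your Cauchy--Schwarz step being the relevant instance of H\"{o}lder). The choice of $\varep$ so that $\gamma_\varep < 1/3$ is also the right bridge between the two cited results.
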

\begin{proof}
Both claims follow by combining Lemma \ref{lemma:rho-3-bounds}, H\"{o}lder's inequality, and Proposition \ref{prop:linear-part-in-Q-space} with large enough $p$.
\end{proof}

We next proceed to construct $\rconn^2 = \rho^{(2)}(\rconn^1)$. We cannot just construct this deterministically as we did for $\rconn^3 = \rho^{(3)}(\rconn^1)$, because $\rconn^1$ is too rough, so that $\rho^{(2)}(\rconn^1)$ will not be well-defined. Instead, $\rconn^2$ will be constructed probabilistically. 

\begin{definition}\label{def:A-2-N}
For $N \geq 0$, define the process $\rconn^2_N = (\rconn^2_N(t, x), t \in (0, 1], x \in \torus^d)$ by $\rconn^2_N := \rho^{(2)}(\rconn^1_N)$. 
\end{definition} 

We proceed to construct $\rconn^2$ as an appropriate limit of $\rconn^2_N$. First,
we show the following result. The proof is in Section \ref{section:technical-proofs-nonlinear-part}.

\begin{lemma}\label{lemma:B-1-cauchy}
For any $t \in (0, 1]$, $x \in \torus^d$, we have that $\{\rconn^{2}_{N}(t, x)\}_{N \geq 0}$ is a Cauchy sequence in $L^2$.
\end{lemma}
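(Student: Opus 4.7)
The plan is to leverage the explicit Fourier series representation provided by Lemma~\ref{lemma:rho-A-1-smooth}, applied to the smooth initial data $\rconn^0_N$. For fixed $(t,x)$ and fixed component indices $a_0, j_0$, this expresses $\rconn^{2,a_0}_{N,j_0}(t,x)$ as a quadratic form
\[
\sum_{a_1,a_2,j_1,j_2} \sum_{\substack{n^1,n^2 \in \Z^d \\ |n^1|_\infty, |n^2|_\infty \leq N}} I(m,t)\, d(m,a,j)\, \hat{\rconn}^{0,a_1}_{j_1}(n^1)\, \hat{\rconn}^{0,a_2}_{j_2}(n^2)\, e_{n^1+n^2}(x)
\]
in the Fourier coefficients of $\rconn^0$, with kernel vanishing whenever $a_1 = a_2$ (since $d(m,a,j)$ is proportional to the structure constant $f^{a_0 a_1 a_2}$, which vanishes on non-distinct triples). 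Thus for $N > M$, the difference $\rconn^2_N(t,x) - \rconn^2_M(t,x)$ has exactly the same form with summation restricted to the annular region $S_{M,N} := \{(n^1,n^2) : \max(|n^1|_\infty, |n^2|_\infty) \in (M, N]\}$.

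I would then compute the $L^2$ norm of this difference by expanding the square, yielding a sum over four Fourier modes $(n^1, n^2, n'^1, n'^2)$ of products $I(m,t) I(m',t) d(m,a,j) \ovl{d(m',a,j)}$ times the four-point moment
\[
\E\bigl[\hat{\rconn}^{0,a_1}_{j_1}(n^1)\,\hat{\rconn}^{0,a_2}_{j_2}(n^2)\, \ovl{\hat{\rconn}^{0,a_1}_{j_1}(n'^1)\,\hat{\rconn}^{0,a_2}_{j_2}(n'^2)}\bigr]
\]
and an oscillatory factor $e_{n^1+n^2-n'^1-n'^2}(x)$. Because $a_1 \neq a_2$ throughout, Assumption~\ref{assumption:four-product-assumption} bounds the four-point moment by the sum of the two admissible pairings $\E[Z_1\ovl{Z_3}]\E[Z_2\ovl{Z_4}]$ and $\E[Z_1\ovl{Z_4}]\E[Z_2\ovl{Z_3}]$; the dangerous third pairing $\E[Z_1 Z_2]\E[\ovl{Z_3 Z_4}]$ is crucially excluded. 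Translation invariance (Assumption~\ref{assumption:translation-invariance}) then forces each two-point factor to be diagonal in frequency, pinning the sum to $n^1 = n'^1,\ n^2 = n'^2$ (or the swap), which cancels the oscillatory factor, and Assumption~\ref{assumption:bounded-by-fractional-greens-function} bounds each diagonal covariance by $O(|n^1|^{-\alpha}|n^2|^{-\alpha})$ via the Fourier coefficients of $G^\alpha$.

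After these reductions, the variance is bounded by a scalar Fourier sum of the schematic form
\[
\sum_{(n^1,n^2) \in S_{M,N}} I(m,t)^2\, |d(m,a,j)|^2 \cdot \frac{\const}{|n^1|^\alpha\, |n^2|^\alpha},
\]
plus its cross-pairing analogue. The factor $I(m,t)$ provides parabolic smoothing, essentially gaining an extra $(|n^1|^2 + |n^2|^2)^{-1}$ (together with exponential decay in $t$ outside a near-resonant region), while $|d(m,a,j)|$ grows at most linearly in $|n^1|+|n^2|$. Together with the standing hypothesis $\alpha > d/2$, these ingredients render the full sum over $(\Z^d)^2 \setminus \{0\}$ absolutely convergent, so the tail contribution from $S_{M,N}$ vanishes as $M \toinf$, yielding the Cauchy property in $L^2$.

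The main obstacle will be carrying out the Fourier sum estimate cleanly: one must extract enough decay from $I(m,t)$ to counteract both the linear growth of $d(m,a,j)$ and the mild singularity $|n|^{-\alpha}$ of the covariance, uniformly in $N$. This is where the specific parabolic structure of \eqref{eq:ZDDS} is exploited, and where the threshold $\alpha > d/2$ becomes tight. A secondary delicate point is the near-resonant regime $n^1 + n^2 \approx 0$, where $I(m,t)$ only gives the weak bound $I(m,t) \leq t$ rather than the stronger bound of order $(|n^1|^2 + |n^2|^2)^{-1}$; one has to check that the measure of such frequencies is small enough that they do not spoil the summability.
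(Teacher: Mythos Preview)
Your proposal is correct and follows essentially the same approach as the paper. The paper packages the argument into intermediate lemmas: the expansion and four-point reduction you describe is Lemma~\ref{lemma:A-2-bounds} (and its difference version Lemma~\ref{lemma:nonlinear-part-derivative-difference-moment-bounds}), while the scalar Fourier-sum estimate you identify as the main obstacle is isolated as Lemma~\ref{lemma:tau-1-tau-2-bound}, whose proof in Appendix~\ref{appendix:tau1-tau2-bound} handles exactly the two issues you flag (extracting parabolic gain from $I(m,t)^2$ via a change of variables $u = s_1+s_2$, and the near-resonant regime via the convolution estimate of Lemma~\ref{lemma:technical-sum-bound}, which is where $\alpha > d/2$ enters).
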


This leads directly to the following definition.

\begin{definition}\label{def:nonlinear-part-l2-limit}
Define the $\lalg^d$-valued stochastic process $\rconn^2 = (\rconn^{2}(t, x), t \in (0, 1], x \in \torus^d)$ as follows. By Lemma \ref{lemma:B-1-cauchy}, we have that $\{\rconn^{2}_{N}(t, x)\}_{N \geq 0}$ is Cauchy in $L^2$, and thus the sequence converges in $L^2$. Define $\rconn^{2}(t, x)$ to be the limit.
\end{definition}

Having defined $\rconn^2$, the next step is the following. The proof is in Section \ref{section:technical-proofs-nonlinear-part}.

\begin{lemma}\label{lemma:A-2-continuous-modification}
The process $\rconn^2$ has a modification such that the function $t \mapsto \rconn^{2}(t)$ is a continuous function from $(0, 1]$ into $C^1(\torus^d, \lalg^d)$. 
\end{lemma}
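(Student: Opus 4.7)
The plan is to invoke a Kolmogorov-type continuity argument on the approximating sequence $\{\rconn^2_N\}$, combined with uniform-in-$N$ moment bounds coming from Assumption \ref{assumption:tail-bounds}. The key observation is that, for each fixed $N$, $t$, and $x$, the random variable $\rconn^2_N(t, x)$ (and likewise each spatial derivative $\ptl_l \rconn^2_N(t, x)$) can be written as a quadratic form $(\rconn^0, K \rconn^0)$ with an off-diagonal kernel $K$, by Lemma \ref{lemma:A-2-is-quadratic-form}. Applying Assumption \ref{assumption:tail-bounds} then yields stretched-exponential tail bounds controlled by $\sigma_{N, K}$, which in turn can be estimated via Assumptions \ref{assumption:translation-invariance} and \ref{assumption:bounded-by-fractional-greens-function}.

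First I would establish uniform-in-$N$ moment bounds of the form
\begin{align*}
\E\big[|\ptl_l\rconn^2_N(t, x) - \ptl_l \rconn^2_N(t, y)|^p\big] &\leq \const_p\, t^{-p \gamma_1}\, \metricspace(x, y)^{p \kappa}, \\
\E\big[|\ptl_l \rconn^2_N(t, x) - \ptl_l \rconn^2_N(s, x)|^p\big] &\leq \const_p\, (s \wedge t)^{-p \gamma_2}\, |t - s|^{p \kappa'},
\end{align*}
together with analogous Cauchy-type bounds on the increments $\rconn^2_M - \rconn^2_N$ that tend to zero as $M, N \toinf$. These are derived by computing $\sigma_{N, K}$ for the kernels corresponding to the relevant differences, using the explicit Fourier series from Lemma \ref{lemma:rho-A-1-smooth}, and then bounding the resulting double sum using the covariance estimate in Assumption \ref{assumption:bounded-by-fractional-greens-function}. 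The restriction $\alpha > \max\{d - 4/3, d/2\}$ in force throughout Section \ref{section:nonlinear-part} is precisely what ensures all such sums converge.

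Second, I would apply Kolmogorov's continuity theorem on $(0, 1] \times \torus^d$, with time weights to absorb the possible singularity at $t = 0$, to deduce that a subsequence of $\{\rconn^2_N\}$ converges almost surely in $C((0, 1], C^{1 + \kappa'}(\torus^d, \lalg^d))$ for some small $\kappa' > 0$, which embeds continuously into $C((0, 1], C^1(\torus^d, \lalg^d))$. Since $\rconn^2_N(t, x) \to \rconn^2(t, x)$ in $L^2$ for each fixed $(t, x)$ by Definition \ref{def:nonlinear-part-l2-limit}, the almost sure subsequential limit must coincide with $\rconn^2(t, x)$ almost surely on a countable dense set of $(t, x)$. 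Combining this with the continuity of the subsequential limit yields the desired modification whose sample paths lie in $C((0, 1], C^1(\torus^d, \lalg^d))$.

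The main obstacle is the careful computation of the variance bounds $\sigma_{N, K}$ for the kernels corresponding to spatial and temporal increments of the derivatives $\ptl_l \rconn^2_N$, where the Fourier expansion from Lemma \ref{lemma:rho-A-1-smooth} loses an extra factor of $|n^1| + |n^2|$. Establishing that the resulting sums are finite, with the correct polynomial scaling in $\metricspace(x, y)$, $|t - s|$, and $t$, is exactly where the probabilistic smoothing alluded to in the introduction enters the picture: a deterministic worst-case bound would diverge, but Assumption \ref{assumption:bounded-by-fractional-greens-function} (with $\alpha$ in the prescribed range) allows the variance estimates to close, thanks to the cancellations encoded in the product structure of $\sigma_{N, K}^2$. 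I expect this computation to constitute the bulk of the technical work, and to be postponed to Section \ref{section:technical-proofs-nonlinear-part} as is standard for this paper.
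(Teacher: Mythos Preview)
Your approach is sound but differs from the paper's. You propose a direct Kolmogorov argument on the derivative process $\ptl_l \rconn^2_N$, obtaining a $C^1$-valued continuous limit in one pass. The paper instead first establishes only a $C^0$-continuous modification of $\rconn^2$ via Kolmogorov (Lemma \ref{lemma:A-2-a-j-cont-mod}, using the increment estimates of Lemma \ref{lemma:nonlinear-part-finite-N-bounds} and tail bounds of Lemma \ref{lemma:A-2-tail-bounds}), then proves the integral identity $\rconn^2(t_1) = e^{(t_1 - t_0)\Delta}\rconn^2(t_0) + \rho^{(2)}(\tilde{\rconn}^1)(t_1 - t_0)$ on a probability-one event (Corollary \ref{cor:A-2-integral-identity}), and reads off continuity of $t \mapsto \rconn^2(t)$ into $C^1$ \emph{deterministically}: the first term is continuous into $C^1$ for $t_1 > t_0$ by heat-semigroup smoothing of a $C^0$ function, and the second is in $\mc{P}_{1-t_0}^1$ by Lemma \ref{lemma:rho-in-P-T-0} since $\tilde{\rconn}^1$ is genuinely $C^1$ at positive times. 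The paper's route thus avoids any chaining on derivatives at this stage; your route is more direct, and the derivative moment bounds you describe are essentially those later proved in Lemmas \ref{lemma:nonlinear-part-derivative-entropy-estimates}--\ref{lemma:nonlinear-part-derivative-finite-N-difference-bound} (which the paper eventually needs anyway for Proposition \ref{prop:A-2-derivative}, though there the identification $\lim_N \ptl_l \rconn^2_N = \ptl_l \rconn^2$ in Lemma \ref{lemma:A2-derivative-l2-convergence} in fact leans on the integral identity just established). One point you should make explicit in your version: to conclude that the limit lies in $C^1$, you must invoke that a.s.\ uniform convergence of $\rconn^2_N$ together with a.s.\ uniform convergence of $\ptl_l \rconn^2_N$ on compacta forces the limit of the former to be differentiable with derivative equal to the limit of the latter; also, Assumption \ref{assumption:four-product-assumption} (not only \ref{assumption:bounded-by-fractional-greens-function}) is what reduces the four-point expectation in $\sigma_{N,K}^2$ to products of two-point functions.
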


Thus hereafter, we assume that (after a suitable modification) $\rconn^2$ is such that the function $t \mapsto \rconn^{2}(t)$ is a continuous function from $(0, 1]$ into $C^1(\torus^d, \lalg^d)$.

\begin{definition}\label{def:A-2-from-A-2-a-j}
Define the $\lalg^d$-valued stochastic process $\mbf{B}^1 = (\mbf{B}^1(t, x), t \in (0, 1], x \in \torus^d)$ by $\mbf{B}^1(t, x) := \rconn^2(t, x) + \rconn^3(t, x)$.
For $N \geq 0$, let $\mbf{B}^1_N := \rho(\rconn^1_N) = \rho^{(2)}(\rconn^1_N) + \rho^{(3)}(\rconn^1_N) = \rconn^2_N + \rconn^3_N$.
\end{definition}

The next result shows that $\mbf{B}^1$ is indeed a first nonlinear part for $\rconn^1$, in the sense of Definition \ref{def:nonlinear-part-of-linear-part}. The proof is in Section~\ref{section:technical-proofs-nonlinear-part}.

\begin{lemma}\label{lemma:B-1-is-nonlinear-part}
On an event of probability $1$, we have that for all $t_0, t_1 \in (0, 1]$, $t_0 < t_1$, $x \in \torus^d$, 
\[ \mbf{B}^1(t_1, x) = (e^{(t_1 - t_0) \Delta} \mbf{B}^1(t_0))(x) + \int_0^{t_1 - t_0} \big(e^{(t_1 - t_0 - s)\Delta} X(\rconn^1(t_0 + s))\big)(x) ds. \]
\end{lemma}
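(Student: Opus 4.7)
The plan is to obtain the identity for $\mbf{B}^1$ by passing to the limit in the corresponding identity for the Fourier truncations $\mbf{B}^1_N$, which is automatic at the finite-$N$ level since $\rconn^1_N$ has smooth sample paths. Specifically, for each $N \geq 0$, Definition~\ref{def:rho} combined with the semigroup property of $e^{t\Delta}$ (exactly as in Remark~\ref{remark:first-nonlinear-part-c1-example}) gives, surely,
\[ \mbf{B}^1_N(t_1, x) = (e^{(t_1 - t_0)\Delta} \mbf{B}^1_N(t_0))(x) + \int_0^{t_1 - t_0} \big(e^{(t_1 - t_0 - s)\Delta} X(\rconn^1_N(t_0 + s))\big)(x)\, ds \]
for every $0 < t_0 < t_1 \le 1$ and every $x \in \torus^d$. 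I would then fix $(t_0, t_1, x)$ and send $N \to \infty$ in $L^2(\omega)$ on each side, identifying the limit of each term.

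For the left-hand side, convergence to $\mbf{B}^1(t_1, x)$ in $L^2$ is immediate from Definition~\ref{def:nonlinear-part-l2-limit} for the $\rconn^2_N$ piece and Lemma~\ref{lemma:A-3-N-convergence} for the $\rconn^3_N$ piece. For the integral on the right, I would apply Lemma~\ref{lemma:rho-A-n-convergence} to the shifted process with initial data $\rconn^1_N(t_0)$: Proposition~\ref{prop:linear-part-in-Q-space} gives $\rconn^1_N(t_0) \to \rconn^1(t_0)$ in $C^1$ in every $L^p$, so along a subsequence this holds almost surely, and then Lemma~\ref{lemma:rho-A-n-convergence} delivers $\mc{P}^1_{t_1-t_0}$-convergence (hence pointwise convergence at $x$) of $\int_0^{t_1-t_0} e^{(t_1-t_0-s)\Delta} X(\rconn^1_N(t_0+s))\, ds$ to $\int_0^{t_1-t_0} e^{(t_1-t_0-s)\Delta} X(\rconn^1(t_0+s))\, ds$.

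The delicate step is the semigroup term $(e^{(t_1-t_0)\Delta}\mbf{B}^1_N(t_0))(x)$. Its $\rconn^3_N(t_0)$ portion converges in $C^0$ by Lemma~\ref{lemma:A-3-N-convergence}, which is straightforward. For the $\rconn^2_N(t_0)$ portion, I would write $(e^{(t_1-t_0)\Delta}\rconn^2_N(t_0))(x) = \int \Phi(t_1-t_0, x-y)\,\rconn^2_N(t_0, y)\, dy$ and use Cauchy--Schwarz (treating $\Phi(t_1-t_0, x-\cdot)$ as a probability density) to obtain
\[ \E\big[\big|(e^{(t_1-t_0)\Delta}(\rconn^2_N - \rconn^2)(t_0))(x)\big|^2\big] \le \int \Phi(t_1-t_0, x-y)\, \E\big[|(\rconn^2_N-\rconn^2)(t_0, y)|^2\big]\, dy. \]
The key observation is that the whole construction is translation-equivariant in space --- the truncation $\FT_N$, the semigroup $e^{t\Delta}$, and the bilinear map $\rho^{(2)}$ all commute with translations, while the covariance of $\rconn^0$ is translation-invariant by Assumption~\ref{assumption:translation-invariance} --- so the second moment $\E[|(\rconn^2_N-\rconn^2)(t_0, y)|^2]$ is independent of $y$. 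Definition~\ref{def:nonlinear-part-l2-limit} forces this second moment to zero at $y = 0$, hence uniformly in $y$, and the display above then delivers $L^2$-convergence at $x$.

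Combining these three pieces yields the desired identity in $L^2$, and hence almost surely, for each fixed triple $(t_0, t_1, x)$. To conclude the stated version (on a single event of probability one, valid for all $(t_0, t_1, x)$), I would take a countable dense set of such triples, intersect the corresponding full-measure events, and extend by joint continuity: the left-hand side is continuous in $(t_1, x)$ because $t \mapsto \mbf{B}^1(t)$ is continuous into $C^1$ (by Lemma~\ref{lemma:A-2-continuous-modification} and the inclusion $\mc{Q}_1^0 \subset C((0,1], C^1)$), and the right-hand side is continuous in $(t_0, t_1, x)$ by strong continuity of the heat semigroup together with the local boundedness of $\|\rconn^1(r)\|_{C^1}$ on $[t_0, 1]$ coming from Proposition~\ref{prop:linear-part-in-Q-space}. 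The main obstacle is the $L^2$-convergence of $e^{(t_1-t_0)\Delta}\rconn^2_N(t_0)$ at $x$, since only pointwise-in-$(t,x)$ $L^2$-convergence of $\rconn^2_N$ is given a priori; the translation-invariance of the covariance is precisely what converts this into the uniform-in-$y$ bound needed for the heat-kernel average to converge.
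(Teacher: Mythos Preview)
Your overall plan coincides with the paper's: establish the identity at the truncated level, pass to the limit for fixed $(t_0,t_1,x)$, then upgrade to all triples by continuity on a countable dense set. This is exactly the content of Lemma~\ref{lemma:nonlinear-part-duhamel-probabilistic} and Corollary~\ref{cor:A-2-integral-identity} for the $\rconn^2$ part, added to the deterministic identity \eqref{eq:A-3-integral-identity} for $\rconn^3$. There is, however, a genuine gap in your treatment of the semigroup term. You assert that $\E\big[|(\rconn^2_N-\rconn^2)(t_0,y)|^2\big]$ is independent of $y$ because the construction is translation-equivariant and the \emph{covariance} of $\rconn^0$ is translation-invariant. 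But $\rconn^2_N-\rconn^2$ is quadratic in $\rconn^0$, so its second moment is a \emph{fourth} moment of $\rconn^0$; Assumption~\ref{assumption:translation-invariance} only controls the two-point function, and Assumption~\ref{assumption:four-product-assumption} is an inequality, not an identity. Under the stated hypotheses the exact fourth moments need not be translation-invariant, so your constancy claim is not justified.

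The repair is straightforward: what your Jensen step actually needs is $\sup_y \E\big[|(\rconn^2_N-\rconn^2)(t_0,y)|^2\big]\to 0$, and this is precisely Corollary~\ref{cor:D-N-l2-bound} (equivalently Lemma~\ref{lemma:nonlinear-part-derivative-difference-moment-bounds} with $k=0$), whose bound---derived through Assumption~\ref{assumption:four-product-assumption}---is uniform in the spatial variable. The paper itself takes a heavier route at this point, invoking the full $C^0$-convergence of Proposition~\ref{prop:nonlinear-part-c0-bound} so that $\E\big[\|\rconn^2_N(t_0)-\rconn^2(t_0)\|_{C^0}\big]\to 0$ and the convergence of $e^{(t_1-t_0)\Delta}\rconn^2_N(t_0)$ follows from the contraction \eqref{eq:heat-semigroup-Cr-contraction}. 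Your heat-kernel-average idea is a legitimate lighter alternative once you cite the correct uniform moment bound rather than translation invariance.
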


In light of Lemma \ref{lemma:B-1-is-nonlinear-part}, hereafter, we assume that $\rconn^1, \rconn^2, \rconn^3$ have been modified so that $\mbf{B}^1$ is a first nonlinear part of $\rconn^1$ (in the sense of Definition \ref{def:nonlinear-part-of-linear-part}). We can now finally state the main result of Section \ref{section:nonlinear-part}. The proof is in Section \ref{section:technical-proofs-nonlinear-part}.

\begin{prop}\label{prop:B-1}
For $\varep > 0$, let $\gamma_\varep := (1/2)(d-1-\alpha) + \varep$. For any $\varep > 0$, $p \geq 1$, we have that
\[ \sup_{N \geq 0} \E\big[\|\rconn^2_N\|^p_{\mc{Q}_1^{\gamma_\varep}}\big], \E\big[\|\rconn^2\|^p_{\mc{Q}_1^{\gamma_\varep}}\big] \leq \const_{\varep, p} < \infty, ~~\lim_{N \toinf} \E\big[\|\rconn^2_N - \rconn^2\|_{\mc{Q}_1^{\gamma_\varep}}^p\big] = 0.\]
Consequently, we have that 
\[ \sup_{N \geq 0} \E\big[\|\mbf{B}^1_N\|_{\mc{Q}_1^{\gamma_\varep}}^p\big], \E\big[\|\mbf{B}^1\|_{\mc{Q}_1^{\gamma_\varep}}^p\big] \leq \const_{\varep, p} < \infty, \]
\[ \lim_{N \toinf} \E\big[\|\mbf{B}^1_N - \mbf{B}^1\|_{\mc{Q}_1^{\gamma_\varep}}^p\big] = 0. \]
Here, $\const_{\varep, p}$ depends only on $\varep, p, d$, and the various constants in Assumptions \ref{assumption:l2-regularity}--\ref{assumption:four-product-assumption}; i.e., $\alpha$, $\exptb$, $\consttb$, etc.
\end{prop}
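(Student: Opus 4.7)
The plan is to exploit the structural fact from Lemma \ref{lemma:A-2-is-quadratic-form}: for each fixed $t \in (0,1]$, $x_0 \in \torus^d$, indices $(a_0, j_0)$, and Fourier cutoff $N$, the random variable $\rconn^{2, a_0}_{N, j_0}(t, x_0)$ is a quadratic form $(\rconn^0_N, K \rconn^0_N)$ in $\rconn^0_N$, and the kernel $K$ vanishes on the ``same-color'' diagonal $K((a, j_1, x), (a, j_2, y)) = 0$. The same holds for each spatial derivative $\ptl_l \rconn^{2, a_0}_{N, j_0}(t, x_0)$ with a kernel $L$ enjoying the same diagonal-vanishing. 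This is precisely the hypothesis under which the tail bound in Assumption \ref{assumption:tail-bounds} applies, so subexponential tails for these random variables follow from control of $\sigma_{N,K}$ and $\sigma_{N,L}$. Once we have (i) a pointwise variance bound, (ii) a Kolmogorov-type promotion to the $\mc{Q}_1^{\gamma_\varep}$ norm, and (iii) the analogue for differences $\rconn^2_N - \rconn^2_M$, the proposition will follow, with the statement for $\mbf{B}^1_N$ obtained by combining with Lemma \ref{lemma:A-3-N-convergence}.

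\textbf{Variance bounds.} Starting from the Fourier formula in Lemma \ref{lemma:rho-A-1-smooth}, I would compute
\[ \E\big[|\rconn^{2, a_0}_{N, j_0}(t, x_0)|^2\big] = \sum_{m, m'} I(m,t) I(m',t)\, d(m,a,j)\, \ovl{d(m',a,j)}\, \E\big[\hat\rconn^{0,a_1}_{j_1}(n^1)\hat\rconn^{0,a_2}_{j_2}(n^2)\ovl{\hat\rconn^{0,a_1'}_{j_1'}(n^{1\prime}) \hat\rconn^{0,a_2'}_{j_2'}(n^{2\prime})}\big]. \]
Because the summand is supported on off-diagonal color indices (the diagonal vanishes by Remark \ref{remark:I-and-d-properties}, as $d(m,a,j)=0$ whenever $a_0,a_1,a_2$ are not distinct), Assumption \ref{assumption:four-product-assumption} reduces the four-Fourier-mode expectation to a sum of products of two-point covariances. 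Those two-point covariances are, via Assumptions \ref{assumption:translation-invariance} and \ref{assumption:bounded-by-fractional-greens-function}, controlled in Fourier by $|n|^{-\alpha}$ up to a smooth remainder from $\constgf$. Using $|d(m,a,j)| \leq \const(|n^1| + |n^2|)$ and $0 \leq I(m,t) \leq \const\, (1 + |n^1|^2 + |n^2|^2)^{-1} \wedge t$, a direct lattice sum yields
\[ \sigma_{N,K}^2 \leq \const_\varep\, t^{-(d-1-\alpha)-\varep}, \qquad \sigma_{N,L}^2 \leq \const_\varep\, t^{-(d-\alpha)-\varep}, \]
uniformly in $N$ and (by translation invariance of $\rho$) in $x_0$. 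The restriction $\alpha > \max\{d-4/3, d/2\}$ is exactly what is needed for the sum to converge to a bound of this shape without renormalization.

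\textbf{From pointwise bounds to $\mc{Q}_1^{\gamma_\varep}$.} Combining the variance estimate with the tail bound of Assumption \ref{assumption:tail-bounds} gives, for every $p \geq 1$,
\[ \E\big[|\rconn^{2, a_0}_{N, j_0}(t, x_0)|^p\big] \leq \const_{\varep,p}\, t^{-(p/2)(d-1-\alpha) - p\varep/2}, \quad \E\big[|\ptl_l \rconn^{2, a_0}_{N, j_0}(t, x_0)|^p\big] \leq \const_{\varep,p}\, t^{-(p/2)(d-\alpha) - p\varep/2}, \]
uniformly in $N, x_0$. Spatial increment bounds come from running the same quadratic-form / tail-bound argument on $\rconn^{2, a_0}_{N, j_0}(t, x_0) - \rconn^{2, a_0}_{N, j_0}(t, x_0')$, whose kernel is again diagonal-free and whose variance one estimates in terms of $\metricspace(x_0, x_0')^{2\theta}$ by using $|e_{n^1+n^2}(x_0) - e_{n^1+n^2}(x_0')| \lesssim (|n^1| + |n^2|)^\theta \metricspace(x_0, x_0')^\theta$. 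A Kolmogorov continuity/Besov embedding with $p$ sufficiently large then promotes the pointwise moment bounds to $\E[\|\rconn^2_N(t)\|_{C^k}^p] \leq \const_{\varep,p}\, t^{-p((k/2) + (1/2)(d-1-\alpha) + \varep)}$ for $k \in \{0,1\}$. Finally, a Kolmogorov argument in the time variable (exploiting that $\rconn^2_N$ is smooth in $t$ for fixed $N$, so the supremum over $t$ is controlled by moments at a countable set of times) upgrades this to the claimed bound $\sup_N \E[\|\rconn^2_N\|_{\mc{Q}_1^{\gamma_\varep}}^p] < \infty$.

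\textbf{Convergence of differences and main obstacle.} For the $L^p$-convergence $\rconn^2_N \to \rconn^2$ in $\mc{Q}_1^{\gamma_\varep}$, the same pipeline is applied to $\rconn^{2,a_0}_{N, j_0}(t, x_0) - \rconn^{2, a_0}_{M, j_0}(t, x_0)$: this difference is again a quadratic form in $\rconn^0$ with a diagonal-free kernel, now supported on Fourier modes with $\max(|n^1|_\infty, |n^2|_\infty) > \min(N, M)$. The same sum as above, restricted to this tail region, yields a variance bound tending to $0$ as $N, M \to \infty$; feeding this through the tail bound and Kolmogorov gives $\E[\|\rconn^2_N - \rconn^2\|_{\mc{Q}_1^{\gamma_\varep}}^p] \to 0$. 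Combined with Lemma \ref{lemma:A-3-N-convergence} this proves the full statement for $\mbf{B}^1$. The principal obstacle throughout is the careful bookkeeping of the Fourier sum in the variance calculation: the factors $I(m,t)^2$, $|d(m,a,j)|^2$, and two copies of $|n^i|^{-\alpha}$ must be balanced against the double lattice sum over $(n^1, n^2) \in (\Z^d)^2$ in such a way as to extract the precise $t$-power $(d-1-\alpha)$ with an arbitrarily small $\varep$-loss, and to see that the condition $\alpha > \max\{d - 4/3, d/2\}$ is exactly what makes all the relevant sums convergent.
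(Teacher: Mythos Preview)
Your high-level strategy matches the paper's: quadratic-form structure via Lemma~\ref{lemma:A-2-is-quadratic-form}, variance bounds via Assumption~\ref{assumption:four-product-assumption}, tail bounds via Assumption~\ref{assumption:tail-bounds}, then promotion to sup norms. The paper carries out the last step by chaining (Theorem~\ref{thm:dirksen-exp-sup}) on parabolic patches $(t_0/2,t_0]\times\torus^d$ equipped with the metric $\dchain_{t_0}$ of Definition~\ref{def:dchain}, followed by a dyadic-in-time summation (Lemmas~\ref{lemma:first-chaining-bound} and~\ref{lemma:chaining-bound-convergence-to-0}), rather than Kolmogorov/Besov; that part is a minor difference in packaging. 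However, two steps in your sketch have real gaps.

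\textbf{The variance bound.} Your assertion that the two-point covariances are ``controlled in Fourier by $|n|^{-\alpha}$'' is not justified by the hypotheses. Assumption~\ref{assumption:bounded-by-fractional-greens-function} is a \emph{pointwise physical-space} bound $|\tau(x)|\le \constgf(G^\alpha_0(x)+\constgf)$; combined with $\hat\tau(n)\ge 0$ (Lemma~\ref{lemma:fourier-coeff-uncorrelated}) it yields only $\hat\tau(n)\le C|n|^{d-\alpha}$, which grows. So a ``direct lattice sum'' using $I(m,t)\lesssim (1+|n^1|^2+|n^2|^2)^{-1}\wedge t$ together with a putative $\hat\tau(n)\lesssim |n|^{-\alpha}$ is not available. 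The paper's route (Lemma~\ref{lemma:tau-1-tau-2-bound}, proved in Appendix~\ref{appendix:tau1-tau2-bound}) is to keep $I(m,t)^2$ in its double-integral form, change variables to $u=s_1+s_2$, and recognize the resulting $n^1,n^2$-sum as essentially $(e^{(2t-u)\Delta}((e^{(u/2)\Delta}\tau)^2))(0)$; heat-semigroup monotonicity~\eqref{eq:heat-semigroup-monotonicity} then transfers the physical-space bound on $\tau$ to one on $G^\alpha_0$, and Lemma~\ref{lemma:greens-function-smooth-product-bound-2} finishes. This yields the sharp $\E[|\rconn^2_N(t,x)|^2]\le C\,t^{-(d-1-\alpha)}$ of Lemma~\ref{lemma:A-2-bounds} with no $\varep$-loss. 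Incidentally, only $\alpha>d/2$ enters here (via Lemma~\ref{lemma:technical-sum-bound}); the restriction $\alpha>d-4/3$ is used solely for the cubic piece $\rconn^3$ through Lemma~\ref{lemma:rho-3-bounds}.

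\textbf{Time regularity.} The parenthetical ``exploiting that $\rconn^2_N$ is smooth in $t$ for fixed $N$, so the supremum over $t$ is controlled by moments at a countable set of times'' does not produce a uniform-in-$N$ bound; you need quantitative time-increment moment estimates. The paper supplies them via the identity $\ptl_t\rconn^2_N=\Delta\rconn^2_N+\mbf{F}_N$ of~\eqref{eq:A2-time-derivative}, where $\mbf{F}_N$ is itself a diagonal-free quadratic form with $\E[|\mbf{F}_N(t,x)|^2]\le C\,t^{-(d+1-\alpha)}$ (Lemma~\ref{lemma:F-bound}). Integrating this together with the higher-order spatial moments of Lemma~\ref{lemma:A-2-bounds} gives the $L^2$ time-increment control in Lemma~\ref{lemma:nonlinear-part-finite-N-bounds}, which then feeds into the chaining. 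The same mechanism (Lemmas~\ref{lemma:nonlinear-part-derivative-difference-moment-bounds} and~\ref{lemma:F-bound-difference}) handles the differences $\rconn^2_N-\rconn^2_M$.
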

\begin{remark}\label{remark:B-1-surely-in-Q-space}
By Proposition \ref{prop:B-1}, upon replacing $\rconn^1, \mbf{B}^1$ by suitable modifications, we may assume that $\|\mbf{B}^1\|_{\mc{Q}_1^{(1/2)(d-1-\alpha) + \varep}} < \infty$ for all $\varep > 0$, while still ensuring that $\mbf{B}^1$ is a first nonlinear part of $\rconn^1$. Hereafter, we assume that this holds for $\rconn^1, \mbf{B}^1$. 
\end{remark}

\subsection{Proofs of Theorem \ref{thm:main} and Corollary \ref{cor:gff}}\label{section:proofs-main}

We can now prove Theorem \ref{thm:main} by combining Propositions \ref{prop:linear-part-in-Q-space} and \ref{prop:B-1} with Theorem \ref{thm:rough-distributional-zdds-local-existence}.

\begin{proof}[Proof of Theorem \ref{thm:main}]
Let $\rconn^1$ be as constructed in Section \ref{section:linear-part}, and let $\mbf{B}^1$ be as constructed in Section \ref{section:nonlinear-part}. We want to apply Theorem \ref{thm:rough-distributional-zdds-local-existence} to $\rconn^1, \mbf{B}^1$. First, note that by the assumption that $\alpha > d - 4/3$ in the statement of Theorem \ref{thm:main}, we have that $(1/4)(d-\alpha) < 1/2$, $(1/2)(d-1-\alpha) < 1/4$, $(1/4)(d-\alpha) + (1/2)(d-1-\alpha) < 1/2$. Therefore, we may take $\varep_0 > 0$ small enough such that defining $\gamma_1 := (1/4)(d-\alpha) + \varep_0$, $\gamma_2 := (1/2)(d-1-\alpha) +\varep_0$, we have that $\gamma_1 \in [0, 1/2)$, $\gamma_2 \in [0, 1/4)$, and $\gamma_1 + \gamma_2 < 1/2$. By Proposition \ref{prop:linear-part-in-Q-space} and Remark \ref{remark:A-1-surely-in-Q-space}, we have that $\|\rconn^1\|_{\mc{Q}_1^{\gamma_1}} < \infty$, and similarly by Proposition \ref{prop:B-1} and Remark \ref{remark:B-1-surely-in-Q-space}, we have that $\|\mbf{B}^1\|_{\mc{Q}_1^{\gamma_2}} < \infty$.

Let $R = \max\{\|\rconn^1\|_{\mc{Q}_1^{\gamma_1}}, \|\mbf{B}^1\|_{\mc{Q}_1^{\gamma_2}}\}$, and let $T = \tau_{\gamma_1 \gamma_2}(R)$, where $\tau_{\gamma_1 \gamma_2}$ is as in Theorem \ref{thm:rough-distributional-zdds-local-existence}. Note that since $R$ is a random variable and $\tau_{\gamma_1 \gamma_2}$ is continuous, we have that $T$ is also a random variable. 
The fact that $\E[T^{-p}] < \infty$ for all $p \geq 1$ follows by Lemma \ref{lemma:tau-gamma-bound} and Propositions \ref{prop:linear-part-in-Q-space} and \ref{prop:B-1}. By Theorem \ref{thm:rough-distributional-zdds-local-existence}, there exists $\mbf{B} \in \mc{Q}_{T, 3R}^{\gamma_2}$ such that $\rconn^1 + \mbf{B}$ is in $C^\infty((0, T) \times \torus^d, \lalg^d)$, and moreover $\rconn^1 + \mbf{B}$ is a solution to \eqref{eq:ZDDS} on $(0, T)$.
We can then define the process $\rconn = (\rconn(t, x), t \in (0, 1],  x \in \torus^d)$ by:
\[ \rconn(t, x) := \ind(t < T) (\rconn^1(t, x) + \mbf{B}(t, x)). \]

We next show that $\rconn$ is indeed a stochastic process, i.e., that $\rconn(t, x)$ is indeed a random variable (i.e., measurable) for all $t \in (0, 1], x \in \torus^d$. Since we know that $\rconn^1$ is a stochastic process, and $T$ is a random variable, we have that $\ind(t < T) \rconn^1(t, x)$ is a random variable. Next, following Corollary \ref{cor:B-limit-B-n}, for $n \geq 2$, we may inductively define $\mbf{B}^n \in \mc{Q}_1^{\gamma_2}$ by
\[ \mbf{B}^n := \mbf{B}^1 + \rho(\mbf{B}^{n-1}) + \eta(\rconn^1, \mbf{B}^{n-1}). \]
By Corollary \ref{cor:B-limit-B-n}, we have that $\|\mbf{B}^n - \mbf{B}\|_{\mc{Q}_T^{\gamma_2}} \ra 0$. Thus to show that $\ind(t < T) \mbf{B}(t, x)$ is a random variable, it suffices to show for all $n \geq 1$ that $\mbf{B}^{n} = (\mbf{B}^{n}(t, x), t \in (0, 1], x \in \torus^d)$ is a stochastic process. The base case $n = 1$ follows by construction. For general $n$, note that recalling the definitions of $\rho$ and $\eta$ (Definitions \ref{def:rho} and \ref{def:eta}), we have that $\mbf{B}^n$ is defined in terms of integrals involving $\rconn^1$ and $\mbf{B}^{n-1}$. We may thus proceed inductively, using the facts that $\rconn^1 \in \mc{Q}_1^{\gamma_1}$ and $\mbf{B}^{n-1} \in \mc{Q}_1^{\gamma_2}$ (and Lemmas \ref{lemma:distributional-contraction-bound} and \ref{lemma:distributional-cross-term-bound}) to express the integrals in terms of limits of finite sums involving $\rconn^1, \mbf{B}^{n-1}$.

We now move on to the second part of the theorem. Let $\{\rconn^1_N\}_{N \geq 0}$, $\{\mbf{B}^1_N\}_{N \geq 0}$ be as constructed in Sections \ref{section:linear-part} and \ref{section:nonlinear-part}, respectively. For $N \geq 0$, let $R_N = \max(\|\rconn^1_N\|_{\mc{Q}_1^{\gamma_1}}, \|\mbf{B}^1_N\|_{\mc{Q}_1^{\gamma_2}})$, and let $T_N = \tau_{\gamma_1 \gamma_2}(R_N)$. For the same reasons as before, we have that $\sup_{N \geq 0} \E[T_N^{-p}] < \infty$ for all $p \geq 1$. Also, by Propositions~\ref{prop:linear-part-in-Q-space} and~\ref{prop:B-1}, we have that $R_N \stackrel{P}{\ra} R$ (here $\stackrel{P}{\ra}$ denotes convergence in probability) and thus, since $\tau_{\gamma_1 \gamma_2}$ is continuous, we obtain that $T_N \stackrel{P}{\ra} T$. This implies that $T_N^{-1} \stackrel{P}{\ra} T^{-1}$. The fact that $\E[|T_N^{-1} - T^{-1}|^p] \ra 0$ for all $p \geq 1$ now follows by Vitali's (\cite[(21.2) Theorem]{RW1994}) theorem (combined with the $L^p$-boundedness for any $p \geq 1$, which gives uniform integrability).

For each $N \geq 0$, we apply Theorem \ref{thm:rough-distributional-zdds-local-existence} and Lemma \ref{lemma:zdds-smooth-initial-data-distributional-local-existence} to obtain $\mbf{B}_N \in \mc{Q}_{T_N, 3R_N}^{\gamma_2}$ such that $\rconn^1_N + \mbf{B}_N$ is in $C^\infty([0, T_N) \times \torus^d, \lalg^d)$, and moreover it is the solution to \eqref{eq:ZDDS} on $[0, T_N)$ with initial data $\rconn_N^0$ (recall that the solution is unique, by Lemma \ref{lemma:zdds-uniqueness}). We may thus define the process $\rconn_N = (\rconn_N(t, x), t \in [0, 1], x \in \torus^d)$ by
\[ \rconn_N(t, x) := \ind(t < T_N) (\rconn^1_N(t, x) + \mbf{B}_N(t, x)). \]
(The fact that $\rconn_N$ is a stochastic process follows by the same proof as before.) 
It remains to show the last claims about convergence of $\rconn_N$ to $\rconn$. Let $p \geq 1$, $\delta \in (0, 1)$, $\varep > 0$.
Note that by Proposition \ref{prop:linear-part-in-Q-space}, 
\[
\|\rconn^1_N - \rconn^1\|_{\mc{Q}_1^{\gamma_1}} \stackrel{P}{\ra} 0,
\]
and that by Proposition~\ref{prop:B-1},
\[
\|\mbf{B}^1_N - \mbf{B}^1\|_{\mc{Q}_1^{\gamma_2}} \stackrel{P}{\ra} 0.
\]
Combining this with Theorem \ref{thm:rough-distributional-zdds-local-existence}, we obtain that 
\[
\|\mbf{B}_N - \mbf{B}\|_{\mc{Q}_{(1-\delta)T}^{\gamma_2}} \stackrel{P}{\ra} 0.
\]
(This can be shown by using the fact that convergence to $0$ in probability is equivalent to the property that for any subsequence, there is a further subsequence which converges to $0$ a.s. The $(1-\delta)T$ comes from the fact that $\mbf{B}_N$ is a solution to \eqref{eq:ZDDS} on $[0, T_N)$, and $T_N$ may be less than $T$. However, we know that $T_N \stackrel{P}{\ra} T$.) We thus also obtain
\[
\|\rconn_N - \rconn\|_{\mc{Q}_{(1 - \delta)T}^{\gamma_1}} \stackrel{P}{\ra} 0,
\]
because the assumption that $\alpha > d - 4/3$ implies that $\gamma_1 > \gamma_2$. Now to show the last claims about convergence of $\rconn_N$ to $\rconn$, it suffices (by Vitali's theorem --- \cite[(21.2) Theorem]{RW1994}) to show that for any $p \geq 1$, the sequence 
\[
\{\|\rconn_N - \rconn\|_{\mc{Q}_{(1 - \delta)T}^{\gamma_2}}^p\}_{N \geq 0}
\]
is $L^2$-bounded (and thus uniformly integrable). Fix $p \geq 1$. Since $\rconn_N(t) = 0$ if $t \geq T_N$, we have that
\[  \|\rconn_N - \rconn\|_{\mc{Q}_{(1-\delta)T}^{\gamma_1}}^{2p} \leq \const_p \big(\|\rconn_N\|_{\mc{Q}_{T_N}^{\gamma_1}}^{2p} + \|\rconn\|_{\mc{Q}_T^{\gamma_1}}^{2p}\big). \]
We have that $\rconn_N = \rconn^1_N + \mbf{B}_N$, where $\rconn^1_N, \mbf{B}_N \in \mc{Q}_{T_N, 3R_N}^{\gamma_1}$, and similarly for $\rconn$. From this, we obtain \[ \|\rconn_N\|_{\mc{Q}_{T_N}^{\gamma_1}}^{2p} \leq \const_p R_N^{2p}, ~~ \|\rconn\|_{\mc{Q}_T^{\gamma_1}}^{2p} \leq \const_p R^{2p} .\]
Now by Propositions \ref{prop:linear-part-in-Q-space} and \ref{prop:B-1}, we have that $\sup_{N \geq 0} \E[R_N^{2p}] < \infty$, $\E[R^{2p}] <  \infty$. The desired $L^2$-boundedness now follows. Thus we have shown the last claims for $\varep = \varep_0$, where $\varep_0$ is a small enough quantity that we fixed at the beginning. The last claims for general $\varep > 0$ then follow, because due to monotonicity in $\varep$, it just suffices to show the claims for small enough $\varep > 0$.
\end{proof}

We next turn to proving Corollary \ref{cor:gff}.

\begin{lemma}\label{lemma:GFF-satisfies-assumptions}
Let $d = 3$, and let $\rconn^0$ be a 3D $\lalg^3$-valued GFF. Then Assumptions \ref{assumption:l2-regularity}--\ref{assumption:four-product-assumption} are satisfied, and moreover Assumption \ref{assumption:bounded-by-fractional-greens-function} is satisfied with $\alpha = 2$. 
\end{lemma}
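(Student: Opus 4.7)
The plan is to verify Assumptions \ref{assumption:l2-regularity}--\ref{assumption:four-product-assumption} for the $\lalg^3$-valued GFF using the representation of $\rconn^0$ as $3\lalgdim$ independent scalar 3D GFFs together with the Fourier series \eqref{eq:GFF-fourier-series}. I would address the covariance-based assumptions \ref{assumption:l2-regularity}, \ref{assumption:translation-invariance}, and \ref{assumption:bounded-by-fractional-greens-function} first by direct computation. Independence of components and \eqref{eq:gff-covariance-condition} give
\[
\E[(\rconn^{0, a_1}_{j_1}, \phi_1)(\rconn^{0, a_2}_{j_2}, \phi_2)] = \delta_{a_1 a_2}\delta_{j_1 j_2} \int_{\torus^3}\int_{\torus^3} \phi_1(x)\phi_2(y) G^2(x,y)\, dx\, dy.
\]
Expanding $K \in L(\lalg^3, \lalg^3)$ in the coordinate basis induced by $(\onbasislalg^a)$ and taking expectation of $\langle (\rconn^0, \phi_1), K(\rconn^0, \phi_2)\rangle$ identifies $\rho(x, y) = G^2(x, y)\cdot \mathrm{Id}_{\lalg^3}$; this is manifestly translation invariant, and integrable in $d=3$ because Lemma \ref{lemma:fractional-greens-function-properties}(2) with $\alpha = 2$ gives $G^2_0(x) \sim \metricspace(0, x)^{-1}$ near the origin. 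For \ref{assumption:bounded-by-fractional-greens-function}, $\Tr(\rho(x, y)) = 3\lalgdim\cdot G^2(x, y)$, and using the lower bound on $G_0^2$ from Lemma \ref{lemma:fractional-greens-function-properties}(1) to dominate $|G^2|$ by $G^2$ plus a constant, the inequality holds with $\alpha = 2$ and $\constgf$ sufficiently large. For \ref{assumption:l2-regularity}, $\E[|(\rconn^0, \phi)|^2] = 3\lalgdim \sum_{n \neq 0} |n|^{-2}|\hat{\phi}(n)|^2$ is finite by \eqref{eq:fourier-coefficients-rapid-decay-general-dim}, and the $L^2$ convergence $(\rconn^0_N, \phi) \to (\rconn^0, \phi)$ amounts to convergence of this Fourier series as $N \toinf$.

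Assumption \ref{assumption:four-product-assumption} is immediate from independence across colors: if $a_1 \neq a_2$ then $(Z_1, Z_3)$ is a function of $\rconn^{0, a_1}_{j_1}$ only, and $(Z_2, Z_4)$ is a function of $\rconn^{0, a_2}_{j_2}$ only, so the two pairs are independent, giving the exact identity
\[
\E[Z_1 Z_2 \ovl{Z_3 Z_4}] = \E[Z_1 \ovl{Z_3}]\,\E[Z_2 \ovl{Z_4}],
\]
and the bound holds with $\constfourp = 1$.

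The substantive verification is the tail bound \ref{assumption:tail-bounds}. The linear half is immediate: $(\rconn^0, \phi)$ is centered Gaussian with variance $\sigma_\phi^2$, so $\p(|(\rconn^0, \phi)| > u) \leq 2\exp(-u^2/(2\sigma_\phi^2))$. For the quadratic half, the key observation is that the color-off-diagonal hypothesis on $K$ makes $(\rconn^0_N, K\rconn^0_N)$ a mean-zero second-order Gaussian chaos in the finite-dimensional Gaussian vector of Fourier coefficients $(\hat{\rconn}^{0, a}_{N, j}(n))_{n, a, j}$: every surviving term has $a_1 \neq a_2$, and its expectation vanishes by independence of distinct color components. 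I would then invoke the standard tail bound for an order-$2$ Gaussian chaos (equivalently, a Hanson--Wright-type estimate for bilinear forms in independent Gaussian vectors), which gives $\p(|Q| > u) \leq 2\exp(-cu/\sigma)$ with $\sigma^2 = \textup{Var}(Q)$. Applied to $Q = (\rconn^0_N, K\rconn^0_N)$ this yields exactly the desired bound with $\sigma = \sigma_{N,K}$ and $\exptb = 1$; the linear subgaussian tail is then also covered by the common choice $\exptb = 1$ after enlarging $\consttb$.

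The step I expect to require the most care is the quadratic tail in \ref{assumption:tail-bounds}: the color-off-diagonal hypothesis on $K$ is used essentially to force $(\rconn^0_N, K\rconn^0_N)$ to be a \emph{homogeneous} chaos of degree $2$, with no linear or constant part, so that $\textup{Var}(Q) = \E[Q^2] = \sigma_{N,K}^2$ and the chaos tail bound comes out directly in terms of $\sigma_{N,K}$ rather than in terms of a kernel norm that would be harder to relate to $\sigma_{N,K}$. Once this is checked, the remaining steps are routine calculations with Fourier series and independence.
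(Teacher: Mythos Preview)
Your proposal is correct and, for Assumptions \ref{assumption:l2-regularity}, \ref{assumption:translation-invariance}, \ref{assumption:bounded-by-fractional-greens-function}, and \ref{assumption:four-product-assumption}, it matches the paper's argument essentially verbatim: direct computation of the covariance from \eqref{eq:gff-covariance-condition}, identification of $\rho(x,y) = G^2(x,y)\cdot\mathrm{Id}_{\lalg^3}$, and independence of the color components.

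The one point of genuine divergence is the quadratic tail in Assumption \ref{assumption:tail-bounds}. The paper discretizes the spatial integral defining $(\rconn^0_N, K\rconn^0_N)$ by Riemann sums $Q_k$ on a lattice of mesh $1/k$, applies its Lemma on Gaussian quadratic-form concentration to each finite-dimensional $Q_k$, and then passes to the limit via Fatou together with $\E[(Q_k-Q)^2]\to 0$ (the latter justified by uniform integrability from Gaussian moment bounds). You instead observe that $\rconn^0_N$ is already a finite linear combination of the Fourier-mode Gaussians $Z^a_j(n)$, $|n|_\infty\le N$, so $(\rconn^0_N,K\rconn^0_N)$ is outright a quadratic form in a finite-dimensional Gaussian vector, and the color-off-diagonal hypothesis kills the diagonal so that $\E[Q]=0$ and $\sigma_{N,K}^2=\mathrm{Var}(Q)$. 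This lets you apply the same concentration lemma (or Hanson--Wright) directly, with no limiting step. Your route is shorter and avoids the Fatou/uniform-integrability bookkeeping; the paper's route has the minor virtue of not needing to unpack the Fourier representation and would apply verbatim to any smooth Gaussian process, but for the GFF specifically your argument is cleaner.
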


Before we prove Lemma \ref{lemma:GFF-satisfies-assumptions}, we note that Corollary \ref{cor:gff} follows directly. 

\begin{proof}[Proof of Corollary \ref{cor:gff}]
This follows from Theorem \ref{thm:main} and Lemma \ref{lemma:GFF-satisfies-assumptions}. 
\end{proof}

The rest of this section is devoted to the proof of Lemma \ref{lemma:GFF-satisfies-assumptions}. Thus, we assume in the rest of this section that $d = 3$, and $\rconn^0$ is a 3D $\lalg^3$-valued GFF. As in Section~\ref{section:introduction}, this may be constructed explicitly, as follows. Fix a subset $I_\infty \sse \Z^3$ such that $0 \notin I_\infty$, and such that for each $n \in \Z^3 \setminus \{0\}$, exactly one of $n, -n$ is in $I_\infty$. Let $(Z^a_j(n), a \in [\lalgdim], j \in [d], n \in I_\infty)$ be an i.i.d.~collection of standard complex Gaussian random variables. For $n \in \Z^3 \setminus \{0\}$, $n \notin I_\infty$, define $Z^a_j(n) := \ovl{Z^a_j(-n)}$. Then define
\[ \rconn^{0, a}_j := \sum_{\substack{n \in \Z^3 \\ n \neq 0}} \frac{Z^a_j(n)}{|n|} e_n,  ~~ a \in [\lalgdim], j \in [3]. \]
Recall from Section \ref{section:introduction} (in particular, \eqref{eq:GFF-phi-def}) that this is interpreted as for any $\phi \in C^\infty(\threetorus, \R)$, we have that
\[ (\rconn^{0, a}_j, \phi) \stackrel{a.s.}{=} \sum_{\substack{n \in \Z^3 \\ n \neq 0}} \frac{Z^a_j(n)}{|n|} \ovl{\hat{\phi}(n)}. \]
(Recall the discussion after \eqref{eq:GFF-phi-def} for why the series converges.)
The proof of Lemma \ref{lemma:GFF-satisfies-assumptions} is split into several parts.

\begin{proof}[Proof of Assumption \ref{assumption:l2-regularity}]
It suffices to verify the assumption ``coordinate-wise", i.e. for the processes $\rconn^{0, a}_j$. Thus, fix $a \in [\lalgdim]$, $j \in [d]$. For $\phi \in C^\infty(\torus^3, \R)$, we have $(\rconn^{0, a}_j, \phi) \sim N(0, (\sigma^a_{\phi, j})^2)$, where
\[ (\sigma^a_{\phi, j})^2 = \sum_{\substack{n \in \Z^3 \\ n \neq 0}} \frac{1}{|n|^2} |\hat{\phi}(n)|^2 < \infty. \]
(The sum is finite by \eqref{eq:fourier-coefficients-rapid-decay-general-dim}.)
We then have that
\[ (\sigma^a_{\phi - \FT_N \phi, j})^2 = \sum_{\substack{n \in \Z^d \\ |n|_\infty > N}} \frac{1}{|n|^2} |\hat{\phi}(n)|^2 \ra 0 \text{ as $N \toinf$}. \]
Assumption \ref{assumption:l2-regularity} thus follows.
\end{proof}

We skip over Assumption \ref{assumption:tail-bounds} and move on to the next assumptions for now. 

\begin{proof}[Proof of Assumptions \ref{assumption:translation-invariance} and \ref{assumption:bounded-by-fractional-greens-function}]
From the explicit construction of $\rconn^{0, a}_j$ as a random Fourier series, we have that for $a_1, a_2 \in [\lalgdim]$, $j_1, j_2 \in [d]$, $\phi_1, \phi_2 \in C^\infty(\torus^d, \R)$,
\[\begin{split}
\E \big[(\rconn^{0, a_1}_{j_1}, \phi_1) (\rconn^{0, a_2}_{j_2}, \phi_2)\big] &= \ind_{(a_1, j_1) = (a_2, j_2)} \sum_{\substack{n \in \Z^3 \\ n \neq 0}} \frac{1}{|n|^2} \hat{\phi}_1(n) \ovl{\hat{\phi}_2(n)} \\
&= \ind_{(a_1, j_1) = (a_2, j_2)} \int_{\threetorus} \int_{\threetorus} \phi_1(x) G^2(x, y) \phi_2(y) dx dy.
\end{split}\]
Define the function $\rho : (\torus^d)^2 \ra L(\lalg^d, \lalg^d)$ by specifying that the $((a_1, j_1), (a_2, j_2))$ matrix entry (with respect to the basis on $\lalg^d$ induced by the orthonormal basis $(S^a, a \in [\lalgdim])$ of $\lalg$ that we fixed) is the function $\ind_{(a_1, j_1) = (a_2, j_2)} G^2$. Then for any linear map $K : \lalg^d \ra \lalg^d$, we have that
\[ \E[\langle (\rconn^0, \phi_1), K(\rconn^0, \phi_2)\rangle] = \int_{\torus^3} \int_{\torus^3} \phi_1(x) \phi_2(y) \Tr(K \rho(x, y)^t)  dx dy. \]
Since $G^2$ is translation invariant, so is $\rho$. Thus Assumption \ref{assumption:translation-invariance} is verified.

For Assumption \ref{assumption:bounded-by-fractional-greens-function}, note that from the definition of $\rho$,
\[ \Tr(\rho(x, y)) = (\lalgdim \cdot d) G^2(x, y).\]
Since $G^2$ is bounded from below by Lemma \ref{lemma:fractional-greens-function-properties}, it follows that by taking large enough $\constgf$, we have that $|\Tr(\rho(x, y))| \leq \constgf (G^2(x, y) + \constgf)$ for all $x, y \in \threetorus$, $x \neq y$, and thus Assumption \ref{assumption:bounded-by-fractional-greens-function} holds.
\end{proof}

\begin{proof}[Proof of Assumption \ref{assumption:four-product-assumption}]
Let all notation be as in Assumption \ref{assumption:four-product-assumption}. Note that since $a_1 \neq a_2$, we have that $(Z_1, Z_3)$ is independent of $(Z_2, Z_4)$. It follows that
\[ \E[Z_1 Z_2 \ovl{Z_3 Z_4}] = \E[Z_1 \ovl{Z_3}] \E [Z_2 \ovl{Z_4}].\]
Thus Assumption \ref{assumption:four-product-assumption} is satisfied with $\constfourp = 1$.
\end{proof}

\begin{proof}[Proof of Assumption \ref{assumption:tail-bounds}]
We will show the assumption with $\exptb = 1$. We will verify the assumption ``coordinate-wise", i.e. for the processes $\rconn^{0, a}_j$. First, for any $\phi \in C^\infty(\threetorus,  \R)$,  $a \in [\lalgdim]$, $j \in [3]$, we have that $(\rconn^{0, a}_j, \phi) \sim N(0, (\sigma^a_{\phi, j})^2)$ (here we define $(\sigma^a_{\phi, j})^2$ to be the variance of $(\rconn^{0, a}_j, \phi)$), and thus by the standard Gaussian tail bound, we have that 
\[ \p(|(\rconn^{0, a}_j, \phi)| > u) \leq 2 \exp(-u^2 / (2 (\sigma^a_{\phi, j})^2)), ~~u \geq 0.\]
By splitting into cases $u \leq \sigma^a_{\phi, j}$ and $u \geq \sigma^a_{\phi, j}$, it then follows that 
(for the latter case, note that $1 \leq 2 e^{-1/2}$)
\[ \p(|(\rconn^{0, a}_j, \phi)| > u) \leq 2 \exp(-u / (2 \sigma^a_{\phi, j})),  ~~u \geq 0.\]
We next turn to concentration for quadratic forms. Fix $N \geq 0$. Observe that $(\rconn^{0, a}_{N, j}(x), a \in [\lalgdim], j \in [3], x \in \threetorus)$ is a mean $0$ Gaussian process with smooth sample paths. Let $K$ be as in Assumption \ref{assumption:tail-bounds}. For notational simplicity, let $Q = (\rconn^0_N, K \rconn^0_N)$. Let $k \geq 1$. By approximating $\threetorus$ by a lattice with spacing $1/k$, we may obtain a random variable $Q_k$, which is a Riemann sum approximation of $Q$ (recall the definition of $(\rconn^0_N, K \rconn^0_N)$ in equation \eqref{eq:quadratic-form-def}). Moreover, $Q_k$ is a quadratic form of a centered Gaussian vector. Also, we have that $\E (Q_k)  = 0$, because $\rconn^0_N$ is a mean $0$ process, $\rconn^{0, a_1}_{N, j_1}, \rconn^{0, a_2}_{N, j_2}$ are independent for $a_1 \neq a_2$, and the assumption that $K((a, j_1, x), (a, j_2, y)) = 0$. Thus by Lemma \ref{lemma:quadratic-form-gaussian-concentration}, we have that for all $k \geq 1$,
\[ \p(|Q_k| > u) \leq 2 e^{3/2} \exp(-u / (2 (\E(Q_k^2))^{1/2})), ~~ u \geq 0.\]
Now since $\rconn^0_N$ has smooth sample paths, we have that $Q_k \ra Q$. It then follows that (by, e.g., Fatou's lemma)
\[ \p(|Q| > u) \leq \liminf_k \p(|Q_k| > u). \]
Thus to finish, it suffices to show that $\limsup_k \E (Q_k^2)\leq \E(Q^2)$ (note that $\E(Q) = 0$ as well, for the same reasons why $\E(Q_k) = 0$). Actually, we have that $\E[(Q_k - Q)^2] \ra 0$, because $Q_k \ra Q$, and the sequence $\{(Q_k - Q)^2\}_{k \geq 1}$ is uniformly integrable. The uniform integrability follows because $\sup_{k \geq 1} \E(Q_k^4), \E(Q^4) < \infty$, which itself can be seen from the fact that $\rconn^0_N$ is a Gaussian process such that $\sup_{x \in \threetorus} \E[|\rconn^{0}_{N}(x)|^2] < \infty$.
\end{proof}

\subsection{A related result}\label{section:useful-variant}

As mentioned in Section \ref{section:introduction}, the results of this paper will be applied in \cite{CaoCh2021}. In particular, we will use Corollary \ref{cor:gff} in \cite{CaoCh2021}. However, we will not directly use Theorem \ref{thm:main}. Instead, we now give a related result that will be more suited for the purposes of \cite{CaoCh2021}. First, suppose that $\rconn^0 = (\rconn^0(x), x \in \torus^d)$ is now a $\lalg^d$-valued stochastic process with smooth sample paths. Note that this naturally induces a random $\lalg^d$-valued distribution by defining $(\rconn^0, \phi) := \int_{\torus^d} \rconn^0(x) \phi(x) dx$ for all $\phi \in C^\infty(\torus^d, \R)$. We say that $\rconn^0$ satisfies some given assumption if the corresponding random $\lalg^d$-valued distribution satisfies the assumption.

\begin{prop}\label{prop:main-useful-variant}
Let $\rconn^0 = (\rconn^0(x), x \in \torus^d)$ be a $\lalg^d$-valued stochastic process with smooth sample paths. Suppose that it satisfies Assumptions \ref{assumption:l2-regularity}--\ref{assumption:four-product-assumption}. Moreover, suppose that Assumption \ref{assumption:bounded-by-fractional-greens-function} is satisfied with $\alpha \in (\max\{d - 4/3, d/2\}, d)$. Then there exists a $\lalg^d$-valued stochastic process $\rconn = (\rconn(t, x), t \in [0, 1), x \in \torus^d)$, and a random variable $T \in (0, 1]$, such that the following hold. The function $(t, x) \mapsto \rconn(t, x)$ is in $C^\infty([0, T) \times \torus^d, \lalg^d)$, and moreover it is the solution to \eqref{eq:ZDDS} on $[0, T)$ with initial data $\rconn(0) = \rconn^0$. Also, $\E[T^{-p}] \leq \const_p < \infty$ for all $p \geq 1$, where $\const_p$ depends only on $p, d$, and the various constants in Assumptions \ref{assumption:l2-regularity}--\ref{assumption:four-product-assumption}; i.e., $\alpha$, $\exptb$, $\consttb$, etc. Finally, for any $k \in \{0, 1\}$, $p \geq 1$, $\varep > 0$, we have that
\[\begin{split}
\E\bigg[\sup_{\substack{t \in (0, T)}} t^{p((k/2) + (1/4)(d-\alpha) + \varep)}\|\rconn(t)\|_{C^k}^p \bigg] &\leq \const_{p, \varep}. 
\end{split}\]
Here $\const_{p, \varep}$ depends only on $p, \varep, d$, and the various constants in Assumptions \ref{assumption:l2-regularity}--\ref{assumption:four-product-assumption}; i.e., $\alpha$, $\exptb$, $\consttb$, etc.
\end{prop}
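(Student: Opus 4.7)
The plan is to combine Theorem \ref{thm:rough-distributional-zdds-local-existence} and Lemma \ref{lemma:zdds-smooth-initial-data-distributional-local-existence} with the probabilistic estimates of Propositions \ref{prop:linear-part-in-Q-space} and \ref{prop:B-1}. Since $\rconn^0$ has smooth sample paths, set $\rconn^1(t) := e^{t\Delta} \rconn^0$ and $\mbf{B}^1 := \rho(\rconn^1)$; by Remark \ref{remark:first-nonlinear-part-c1-example}, $\mbf{B}^1$ is a first nonlinear part for $\rconn^1$. Since $\alpha > \max\{d - 4/3, d/2\}$, fix $\varep_0 > 0$ small enough that $\gamma_1 := (1/4)(d-\alpha) + \varep_0$ and $\gamma_2 := (1/2)(d-1-\alpha) + \varep_0$ satisfy $\gamma_1 \in [0, 1/2)$, $\gamma_2 \in [0, 1/4)$, and $\gamma_1 + \gamma_2 < 1/2$. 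Put $R := \max\{\|\rconn^1\|_{\mc{Q}_1^{\gamma_1}}, \|\mbf{B}^1\|_{\mc{Q}_1^{\gamma_2}}\}$ and $T := \tau_{\gamma_1 \gamma_2}(R)$. Theorem \ref{thm:rough-distributional-zdds-local-existence} then produces $\mbf{B} \in \mc{Q}_{T, 3R}^{\gamma_2}$ such that $\rconn := \rconn^1 + \mbf{B}$ solves \eqref{eq:ZDDS} on $(0, T)$, and Lemma \ref{lemma:zdds-smooth-initial-data-distributional-local-existence} upgrades this to a smooth solution on $[0, T)$ with $\rconn(0) = \rconn^0$.

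For the moment bound on $T^{-p}$, view $\rconn^0$ as the random $\lalg^d$-valued distribution it naturally induces and apply Propositions \ref{prop:linear-part-in-Q-space} and \ref{prop:B-1} to conclude $\E[R^p] \leq \const_p$ for all $p \geq 1$. A small consistency point here is that the deterministic $\mbf{B}^1 = \rho(\rconn^1)$ used above coincides with the $\mbf{B}^1$ constructed in Section \ref{section:nonlinear-part} as an $L^2$ limit of Fourier truncations; this follows because $\rconn^0_N \to \rconn^0$ in $C^r$ for every $r$ when the sample paths are smooth, so Lemma \ref{lemma:rho-A-n-convergence} handles the $\rho^{(3)}$ piece and the $L^2$ convergence (as in Lemma \ref{lemma:B-1-cauchy}) handles the $\rho^{(2)}$ piece. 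Combining $\E[R^p] < \infty$ with Lemma \ref{lemma:tau-gamma-bound} then yields $\E[T^{-p}] \leq \const_p$.

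For the $C^k$ bound with $k \in \{0, 1\}$, the definitions of the path spaces give, for $t \in (0, T)$,
\[ t^{k/2 + \gamma_1}\|\rconn^1(t)\|_{C^k} \leq R \quad \text{and} \quad t^{k/2 + \gamma_2} \|\mbf{B}(t)\|_{C^k} \leq 3R. \]
Since $T \leq 1$ and $\gamma_1 \geq \gamma_2$, the second bound implies $t^{k/2 + \gamma_1}\|\mbf{B}(t)\|_{C^k} \leq 3R \cdot t^{\gamma_1 - \gamma_2} \leq 3R$, so $\sup_{t \in (0, T)} t^{k/2 + \gamma_1}\|\rconn(t)\|_{C^k} \leq 4R$. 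Raising to the $p$-th power and taking expectations yields the desired bound for $\varep = \varep_0$; the bound for $\varep > \varep_0$ follows because the weight $t^{p(k/2 + (1/4)(d-\alpha) + \varep)}$ is smaller on $(0, 1)$, and for $\varep < \varep_0$ one simply reruns the proof replacing $\varep_0$ by $\varep$. Since the intermediate results from Sections \ref{section:linear-part} and \ref{section:nonlinear-part} do the heavy lifting, the main obstacle here is really just the consistency check identifying the deterministic and probabilistic constructions of $\mbf{B}^1$.
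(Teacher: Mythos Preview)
Your proof is correct and follows essentially the same route as the paper. The paper packages your ``consistency point'' into a separate lemma (Lemma \ref{lemma:main-useful-variant}) showing that the $\rconn^1, \mbf{B}^1$ constructed probabilistically in Sections \ref{section:linear-part}--\ref{section:nonlinear-part} coincide a.s.\ with the deterministic $e^{t\Delta}\rconn^0$ and $\rho(\rconn^1)$, but the argument is the same as your sketch: use $\|\rconn^0_N - \rconn^0\|_{C^1} \to 0$ together with Lemma \ref{lemma:rho-A-n-convergence} on one side and the $L^p$ convergence from Propositions \ref{prop:linear-part-in-Q-space} and \ref{prop:B-1} on the other.
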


This proposition gives bounds on $\rconn$, as opposed to Theorem \ref{thm:main}, which only gives that $\rconn$ exists, and that $\rconn_N$ converges to $\rconn$ in a suitable sense. Also, note that in contrast to Theorem \ref{thm:main}, we take sup over $t \in (0, T)$ as opposed to sup over $t \in (0, (1-\delta)T)$ in the final two inequalities. This is because we are only bounding $\rconn$, which we know exists on $(0, T)$, as opposed to $\rconn_N - \rconn$. Recall that $\rconn_N$ is a solution to \eqref{eq:ZDDS} only on $[0, T_N)$, and it may be the case that $T_N < T$ (but on the other hand, we do have that $T_N > (1-\delta) T$ with probability tending to $1$ as $N \toinf$). Before we prove Proposition \ref{prop:main-useful-variant}, we need the following natural lemma.

\begin{lemma}\label{lemma:main-useful-variant}
Let $\rconn^0 = (\rconn^0(x), x \in \torus^d)$ be as in Proposition \ref{prop:main-useful-variant}. Let $\rconn^1, \mbf{B}^1$ be constructed using $\rconn^0$ as in Sections \ref{section:linear-part} and \ref{section:nonlinear-part}. Then a.s., for all $t \in (0, 1], x \in \torus^d$, we have that $\rconn^1(t, x) = (e^{t \Delta} \rconn^0)(x)$, $\mbf{B}^1(t, x) = (\rho(\rconn^1)(t))(x)$. 
\end{lemma}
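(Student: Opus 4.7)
The plan is to establish the two equalities separately by first showing each holds almost surely at each fixed $(t, x)$, and then extending to the uniform statement ``a.s., for all $(t, x)$" using almost sure continuity of both sides in $(t, x)$ together with a countable dense set argument.

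For the first equality, recall from Definition \ref{def:A-1} that before passing to the smooth modification, one has $\rconn^1(t, x) = (\rconn^0, \Phi(t, x - \cdot))$. Since $\rconn^0$ has smooth sample paths, the random $\lalg^d$-valued distribution it induces satisfies $(\rconn^0, \phi) = \int_{\torus^d} \rconn^0(y)\phi(y)\,dy$ almost surely for each fixed $\phi \in C^\infty(\torus^d, \R)$. Applying this to $\phi = \Phi(t, x - \cdot)$ (which is smooth in $y$ for fixed $t, x$) gives $\rconn^1(t, x) = (e^{t \Delta}\rconn^0)(x)$ a.s.\ at each fixed $(t, x)$. After passing to the modification from Lemma \ref{lemma:A-1-modification}, this equality still holds a.s.\ at each fixed $(t, x)$. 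Choose a countable dense set $D \sse (0, 1] \times \torus^d$ and let $\Omega_0$ be the intersection over $D$ of these probability-one events; then $\prob(\Omega_0) = 1$. On $\Omega_0$ the two sides agree on $D$, and since the modified $\rconn^1$ is smooth and $(t, x) \mapsto (e^{t\Delta}\rconn^0)(x)$ is smooth on $(0, 1] \times \torus^d$ (the heat semigroup applied to the smooth function $\rconn^0$), both sides are continuous in $(t, x)$ a.s., so the equality extends to all $(t, x)$ on a probability-one event by density.

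For the second equality, since $\rconn^3 = \rho^{(3)}(\rconn^1)$ holds by construction and $\mbf{B}^1 = \rconn^2 + \rconn^3$, $\rho(\rconn^1) = \rho^{(2)}(\rconn^1) + \rho^{(3)}(\rconn^1)$, it suffices to show $\rconn^2(t, x) = (\rho^{(2)}(\rconn^1)(t))(x)$ a.s.\ for all $(t, x)$. Because $\rconn^0$ has smooth sample paths, its Fourier coefficients decay rapidly a.s.\ and hence $\rconn^0_N \to \rconn^0$ in $C^1(\torus^d, \lalg^d)$ a.s.; using the just-proved first equality together with the contractivity of $e^{t\Delta}$ on $C^1$ (via \eqref{eq:heat-semigroup-def} and \eqref{eq:heat-semigroup-monotonicity}) yields $\|\rconn^1_N - \rconn^1\|_{\mc{P}_1^1} \to 0$ a.s. Lemma \ref{lemma:rho-A-n-convergence} then gives $\|\rho^{(2)}(\rconn^1_N) - \rho^{(2)}(\rconn^1)\|_{\mc{P}_1^1} \to 0$ a.s., so in particular $\rconn^2_N(t, x) \to (\rho^{(2)}(\rconn^1)(t))(x)$ a.s.\ at each fixed $(t, x)$. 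On the other hand, by Definition \ref{def:nonlinear-part-l2-limit} we also have $\rconn^2_N(t, x) \to \rconn^2(t, x)$ in $L^2$. Uniqueness of limits in probability identifies $\rconn^2(t, x) = (\rho^{(2)}(\rconn^1)(t))(x)$ a.s.\ at each fixed $(t, x)$, and the same density/continuity argument as in the first step (using the continuous modification of $\rconn^2$ from Lemma \ref{lemma:A-2-continuous-modification} and the continuity of $\rho^{(2)}(\rconn^1)$ provided by Lemma \ref{lemma:rho-in-P-T-0}, valid because $\rconn^1 \in \mc{P}_1^1$ a.s.\ when $\rconn^0$ is smooth) extends this to all $(t, x)$ a.s.

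The main obstacle is purely bookkeeping: one must carefully arrange the probability-one events so that a single event of probability one simultaneously secures smoothness of $\rconn^0$, convergence $\rconn^0_N \to \rconn^0$ in $C^1$, smoothness of the modified $\rconn^1$, $C^1$-continuity of the modified $\rconn^2$, and pointwise agreement of both sides of each equality on a fixed countable dense set. All of these are countable conditions, so their intersection still has probability one; after that, the density-plus-continuity extension is immediate and no quantitative estimates are required.
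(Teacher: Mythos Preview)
Your proof is correct, but it takes a somewhat different route from the paper's. The paper defines $\tilde{\rconn}^1(t) := e^{t\Delta}\rconn^0$ and $\tilde{\mbf{B}}^1 := \rho(\tilde{\rconn}^1)$, shows (exactly as you do) that $\|\rconn^1_N - \tilde{\rconn}^1\|_{\mc{P}_1^1} \to 0$ and $\|\mbf{B}^1_N - \tilde{\mbf{B}}^1\|_{\mc{P}_1^1} \to 0$ a.s.\ via smoothness of $\rconn^0$ and Lemma~\ref{lemma:rho-A-n-convergence}, and then invokes Propositions~\ref{prop:linear-part-in-Q-space} and~\ref{prop:B-1} to get $\E[\|\rconn^1_N - \rconn^1\|_{\mc{Q}_1^{\gamma_1}}] \to 0$ and $\E[\|\mbf{B}^1_N - \mbf{B}^1\|_{\mc{Q}_1^{\gamma_2}}] \to 0$; uniqueness of limits then gives $\|\rconn^1 - \tilde{\rconn}^1\|_{\mc{Q}_1^{\gamma_1}} = 0$ and $\|\mbf{B}^1 - \tilde{\mbf{B}}^1\|_{\mc{Q}_1^{\gamma_2}} = 0$ a.s., which is the uniform statement directly. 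You instead identify the limits pointwise (for $\rconn^1$ via Definition~\ref{def:A-1}, for $\rconn^2$ via the $L^2$ convergence in Definition~\ref{def:nonlinear-part-l2-limit}), and then upgrade to the uniform statement via a countable-dense-set plus continuity argument. Your route is more elementary in that it sidesteps the heavier chaining machinery behind Propositions~\ref{prop:linear-part-in-Q-space} and~\ref{prop:B-1}; the paper's route is cleaner because the norm-level identification delivers the uniform statement in one stroke without any density bookkeeping. (A minor citation slip: the $C^1$-contractivity of $e^{t\Delta}$ you need is \eqref{eq:heat-semigroup-Cr-contraction}, not \eqref{eq:heat-semigroup-def} or \eqref{eq:heat-semigroup-monotonicity}.)
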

\begin{proof}
Let $\tilde{\rconn}^1 : [0, 1] \ra C^1(\torus^d, \lalg^d)$ be given by $\tilde{\rconn}^1(t) = e^{t\Delta} \rconn^0$. Let $\tilde{\mbf{B}}^1 : [0, 1] \ra C^1(\torus^d, \lalg^d)$ be given by $\tilde{\mbf{B}}^1 = \rho(\tilde{\rconn}^1)$. Let $\{\rconn^0_N\}_{N \geq 0}$ be the Fourier truncations of $\rconn^0$ as in Definition \ref{def:fourier-truncations}. Recall from Definitions \ref{def:A-1-N} and \ref{def:A-2-from-A-2-a-j} the processes $\rconn^1_N, \mbf{B}^1_N$, given by $\rconn^1_N(t) = e^{t\Delta} \rconn^0_N$, $\mbf{B}^1_N = \rho(\rconn^1_N)$. Now since $\rconn^0$ is smooth, we have that $\|\rconn^0_N - \rconn^0\|_{C^1} \ra 0$ (which folows by e.g. \eqref{eq:fourier-coefficients-rapid-decay-general-dim}). This implies that (using \eqref{eq:heat-semigroup-Cr-contraction})
$\|\rconn^1_N - \tilde{\rconn}^1\|_{\mc{P}_1^1} \ra 0$ (recall Definition \ref{def:P-T-r}). Then by Lemma \ref{lemma:rho-A-n-convergence}, we also obtain $\|\mbf{B}^1_N - \tilde{\mbf{B}}^1\|_{\mc{P}_1^1} \ra 0$. On the other hand, by Propositions \ref{prop:linear-part-in-Q-space} and \ref{prop:B-1}, we have that for some $\gamma_1, \gamma_2$,
\[ \lim_{N \toinf} \E[\|\rconn^1_N - \rconn^1\|_{\mc{Q}_1^{\gamma_1}}]  = 0, ~~ \lim_{N \toinf} \E[\|\mbf{B}^1_N - \mbf{B}^1\|_{\mc{Q}_1^{\gamma_2}} = 0. \] 
By combining the previous few observations, we obtain that a.s., $\|\rconn^1 - \tilde{\rconn}^1\|_{\mc{Q}_1^{\gamma_1}} = 0$ and $\|\mbf{B}^1 - \tilde{\mbf{B}}^1\|_{\mc{Q}_1^{\gamma_2}} = 0$. The desired result now follows.
\end{proof}

\begin{proof}[Proof of Proposition \ref{prop:main-useful-variant}]
We slightly modify the proof of Theorem \ref{thm:main}. As in that proof, take $\varep_0 > 0$ small enough such that defining $\gamma_1 := (1/4)(d-\alpha) + \varep_0$, $\gamma_2 := (1/2)(d-1-\alpha) +\varep_0$, we have that $\gamma_1 \in [0, 1/2)$, $\gamma_2 \in [0, 1/4)$, and $\gamma_1 + \gamma_2 < 1/2$. Let $\rconn^1, \mbf{B}^1$ be constructed using $\rconn^0$ as in Sections \ref{section:linear-part} and \ref{section:nonlinear-part}. By Lemma \ref{lemma:main-useful-variant}, after a suitable modification of $\rconn^1, \mbf{B}^1$, we have that $\rconn^1(t) = e^{t\Delta} \rconn^0$, $\mbf{B}^1 = \rho(\rconn^1)$. 
Then by arguing as in the proof of Theorem \ref{thm:main}, we obtain a stochastic process $\rconn = (\rconn(t, x), t \in (0, 1), x \in \torus^d)$ by letting
\[ \rconn(t, x) := \ind(t < T) (\rconn^1(t, x) + \mbf{B}(t, x)),\]
where $T = \tau_{\gamma_1 \gamma_2}(R)$, $R = \max\{\|\rconn^1\|_{\mc{Q}_1^{\gamma_1}}, \|\mbf{B}^1\|_{\mc{Q}_1^{\gamma_2}}\}$, and $\mbf{B} \in \mc{Q}_{T, 3R}^{\gamma_2}$. Moreover, $\rconn$ is the solution to \eqref{eq:ZDDS} on $(0, T)$ constructed by Theorem \ref{thm:rough-distributional-zdds-local-existence}, using $\rconn^1, \mbf{B}^1$. We can then extend this to a stochastic process on $[0, 1) \times \torus^d$, by setting $\rconn(0, x) := \rconn^0(x)$ for $x \in \torus^d$. Then by Lemma \ref{lemma:zdds-smooth-initial-data-distributional-local-existence}, we obtain that $\rconn$ is the solution to \eqref{eq:ZDDS} on $[0, T)$ with initial data $\rconn(0) = \rconn^0$.

Next, by Propositions \ref{prop:linear-part-in-Q-space} and \ref{prop:B-1}, we have that $\E[R^p] \leq \const_{p, \varep_0}$. By Lemma \ref{lemma:tau-gamma-bound}, we have that $\E[T^{-p}] \leq \const_p + \const_p \E[R^{4p / (1 - 2\max(\gamma_1, \gamma_2))}]$. From this, we obtain $\E[T^{-p}] \leq \const_p$ for all $p \geq 1$, where $\const_p$ depends only on $p, d$, and the various constants in Assumptions \ref{assumption:l2-regularity}--\ref{assumption:four-product-assumption}.

For the final two inequalities, note that since $\mbf{B} \in \mc{Q}_{T, 3R}^{\gamma_2}$ and $\rconn^1 \in \mc{Q}_{T, R}^{\gamma_1}$, we have that $\|\rconn\|_{\mc{Q}_{T}^{\gamma_1}} \leq 4R$ (here we use that $\gamma_1 > \gamma_2$, which follows by the assumption that $\alpha > d - 4/3$). Thus recalling that $\E[R^p] \leq \const_{p, \varep_0}$, we have shown the last two inequalities for $\varep = \varep_0$, where $\varep_0$ is a small enough quantity that we fixed at the beginning. The last two inequalities for general $\varep > 0$ then follow, because due to the monotonicity in $\varep$, it just suffices to show the inequalities for small enough $\varep > 0$. 
\end{proof}

\section{Technical proofs}\label{section:technical-proofs}

We first show some general results which will be needed for both the linear and nonlinear parts. Recall the covariance function $\rho : (\torus^d)^2 \ra L(\lalg^d, \lalg^d)$ from Assumption \ref{assumption:translation-invariance}.
For notational simplicity, let $\tau : \torus^d \ra \R$ be defined by $\tau(x) := \Tr(\rho(x, 0))$. Since $\rho$ is integrable and translation invariant by Assumption \ref{assumption:translation-invariance}, we have that $\tau$ is also integrable. We will denote the Fourier coefficients of $\tau$ by $\hat{\tau}(n)$ for $n \in \Z^d$.


The following lemma shows that the translation invariance assumption leads to the Fourier coefficients being uncorrelated. 

\begin{lemma}\label{lemma:fourier-coeff-uncorrelated}
Suppose that Assumption \ref{assumption:translation-invariance} holds.  For any $n^1, n^2 \in \Z^d$, we have that
\[ \E[\langle \hat{\rconn}^0(n^1),  \hat{\rconn}^0(n^2)\rangle] = \ind(n^1 = n^2) \hat{\tau}(n^1).\]
Consequently, $\E[|\hat{\rconn}^0(n)|^2] = \hat{\tau}(n) \geq 0$. Additionally, for any $a_1, a_2 \in [\lalgdim]$, $j_1, j_2 \in [d]$, $n^1, n^2 \in \Z^d$, we have that
\[ |\E\big[\hat{\rconn}^{0, a_1}_{j_1}(n^1) \ovl{\hat{\rconn}^{0, a_2}_{j_2}(n^2)}\big]| \leq  \ind(n^1 = n^2) \hat{\tau}(n^1).\]
\end{lemma}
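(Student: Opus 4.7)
The plan is to derive all three parts from Assumption \ref{assumption:translation-invariance}, after extending it from real to complex test functions, combined with the translation invariance of the covariance function $\rho$. First I would decompose any complex test function into real and imaginary parts and use $\C$-linearity of the pairing (together with $\ovl{(\rconn^0, \phi)} = (\rconn^0, \bar\phi)$ from Remark \ref{remark:random-distribution-different-viewpoints}) to obtain the sesquilinear identity
\[
\E\big[(\rconn^{0,a_1}_{j_1}, \phi_1)\, \ovl{(\rconn^{0,a_2}_{j_2}, \phi_2)}\big] = \int_{\torus^d}\!\!\int_{\torus^d} \phi_1(x)\, \ovl{\phi_2(y)}\, \rho^{a_1 a_2}_{j_1 j_2}(x, y)\, dx\, dy,
\]
for $\phi_1, \phi_2 \in C^\infty(\torus^d, \C)$, where $\rho^{a_1 a_2}_{j_1 j_2}$ is the scalar matrix entry of $\rho$ highlighted in the remark following Assumption \ref{assumption:translation-invariance}. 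The relevant convention on $(\lalg^d)^\C$ is the Hermitian one $\langle v, w \rangle = \sum_{a,j} v^a_j\, \ovl{w^a_j}$, which is the one compatible with the norm $|v_1 + \icomplex v_2| = (|v_1|^2 + |v_2|^2)^{1/2}$ specified in Remark \ref{remark:random-distribution-different-viewpoints}.

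Next I would apply the above identity with $\phi_1 = e_{-n^1}$ and $\phi_2 = e_{-n^2}$ (so that $\ovl{\phi_2(y)} = e_{n^2}(y)$), invoke the translation invariance $\rho^{a_1 a_2}_{j_1 j_2}(x,y) = \rho^{a_1 a_2}_{j_1 j_2}(x-y, 0)$, and perform the substitution $u = x - y$ to factor the resulting double integral as
\[
\E\big[\hat{\rconn}^{0,a_1}_{j_1}(n^1)\, \ovl{\hat{\rconn}^{0,a_2}_{j_2}(n^2)}\big] = \ind(n^1 = n^2)\, \widehat{\rho^{a_1 a_2}_{j_1 j_2}(\cdot, 0)}(n^1).
\]
Summing this over $a_1 = a_2 = a$ and $j_1 = j_2 = j$, and using that $\sum_{a,j} \rho^{aa}_{jj}(x, 0) = \Tr(\rho(x, 0)) = \tau(x)$, yields the first assertion. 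The second assertion then follows by taking $n^1 = n^2$ and noting that $\E[|\hat{\rconn}^0(n)|^2] \geq 0$ automatically.

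For the third assertion, the factored identity above already gives zero when $n^1 \neq n^2$, so only the case $n^1 = n^2 = n$ requires work. Here I would apply the Cauchy--Schwarz inequality,
\[
\big|\E\big[\hat{\rconn}^{0,a_1}_{j_1}(n)\, \ovl{\hat{\rconn}^{0,a_2}_{j_2}(n)}\big]\big| \leq \big(\E\big[|\hat{\rconn}^{0,a_1}_{j_1}(n)|^2\big]\, \E\big[|\hat{\rconn}^{0,a_2}_{j_2}(n)|^2\big]\big)^{1/2},
\]
and bound each diagonal term on the right by the full nonnegative sum $\sum_{b,k} \E[|\hat{\rconn}^{0,b}_k(n)|^2] = \E[|\hat{\rconn}^0(n)|^2] = \hat{\tau}(n)$, which closes the desired inequality. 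The only mildly nontrivial step is the bookkeeping in the first paragraph, namely verifying that the complexification of Assumption \ref{assumption:translation-invariance} produces the sesquilinear form with $\ovl{\phi_2(y)}$ rather than $\phi_2(y)$; once this is in place, all remaining manipulations are standard Fourier computations and a one-line Cauchy--Schwarz bound.
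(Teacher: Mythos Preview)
Your proposal is correct and follows essentially the same approach as the paper: extend Assumption~\ref{assumption:translation-invariance} to complex test functions, use translation invariance to factor the double integral, and bound the off-diagonal correlation by the diagonal ones. The only cosmetic differences are that the paper works with a general linear map $K$ and the trace formula rather than scalar matrix entries, and uses the AM--GM bound $|z_1 z_2| \leq \tfrac{1}{2}(|z_1|^2 + |z_2|^2)$ in place of your Cauchy--Schwarz step.
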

\begin{proof}
Let $K : \lalg^d \ra \lalg^d$ be a linear map. Note that we may extend $K : (\lalg^d)^\C \ra (\lalg^d)^\C$ by setting (for $a, b \in \lalg^d$) $K(a + \icomplex b) := Ka + \icomplex Kb$. Now by Assumption \ref{assumption:translation-invariance} and linearity, we have that for $\C$-valued test functions $\phi_1, \phi_2$,
\[ \E[\langle (\rconn^0, \phi_1), K (\rconn^0, \phi_2) \rangle] = \int_{\torus^d} \int_{\torus^d} \phi_1(x) \ovl{\phi_2(y)} \Tr(K \rho(x,y)^t) dx dy.\]
Here, the inner product on $(\lalg^d)^\C$ is given by (for $a_1, a_2, b_1, b_2 \in \lalg^d$)
\[\langle a_1 + \icomplex b_1, a_2 + \icomplex b_2 \rangle = \langle a_1, a_2 \rangle + \langle b_1, b_2 \rangle + \icomplex (\langle b_1, a_2 \rangle - \langle a_1, b_2 \rangle). \]
Applying this with $\phi_1 = e_{-n^1}$, $\phi_2 = e_{-n^2}$, we obtain
\[\begin{split}
\E[\langle\hat{\rconn}^0(n^1), K \hat{\rconn}^0(n^2) \rangle] &= \int_{\torus^d} \int_{\torus^d} e_{-n^1}(x) e_{n^2}(y) \Tr(K \rho(x, y)^t) dx dy \\ 
&= \int_{\torus^d} dy e_{n^2 - n^1}(y) \int_{\torus^d} dx e_{-n^1}(x-y) \Tr(K \rho(x-y, 0)^t) \\
&= \int_{\torus^d} dy e_{n^2 - n^1}(y) \int_{\torus^d} dx e_{-n^1}(x) \Tr(K\rho(x, 0)^t) \\
&= \ind(n^1 = n^2) \int_{\torus^d} dx \Tr(K \rho(x, 0)^t) e_{-n^1}(x).
\end{split}\]
By taking $K$ to be the identity map, we obtain the first claim.

For the second claim, given $a_1, a_2 \in [\lalgdim]$, $j_1, j_2 \in [d]$, first take $K : \lalg^d \ra \lalg^d$ to be the linear map with the property that for $\xi^1 = (\xi^1_j, j \in [d]), \xi^2 = (\xi^2_j, j \in [d]) \in \lalg^d$, we have that $\langle \xi^1, K \xi^2 \rangle = \xi^{1, a_1}_{j_1} \xi^{2, a_2}_{j_2}$ (here $\xi^i_j = \sum_{a \in [\lalgdim]} \xi^{i, a}_j S^a$ for $i \in \{1, 2\}$, $j \in [d]$). Observe then that the extension of $K$ to $(\lalg^d)^\C$ now has the property that $\langle \xi^1, K \xi^2 \rangle = \xi^{1, a_1}_{j_1} \ovl{\xi^{2, a_2}_{j_2}}$, where now $\xi^i \in (\lalg^d)^\C$, $\xi^i = (\xi^i_j, j \in [d])$, $\xi^i_j \in \lalg^\C$, $\xi^i_j = \sum_{a \in [\lalgdim]} \xi^{i, a}_j S^a$, $\xi^{i, a}_j \in \C$. Applying the above result with this $K$, we obtain that 
\[ \E\big[\hat{\rconn}^{0, a_1}_{j_1}(n^1) \ovl{\hat{\rconn}^{0, a_2}_{j_2}(n^2)}\big] \text{ is nonzero only if $n^1 = n^2$.}\]
On the other hand, using that $|z_1 z_2| \leq (1/2)|z_1|^2 + (1/2) |z_2|^2$,
we may obtain
\[ |\E\big[\hat{\rconn}^{0, a_1}_{j_1}(n^1) \ovl{\hat{\rconn}^{0, a_2}_{j_2}(n^2)}\big]| \leq (1/2) \E[|\hat{\rconn}^{0, a_1}_{j_1}(n^1)|^2] + (1/2) \E[\hat{\rconn}^{0, a_2}_{j_2}(n^2)|^2]. \]
Using that $|\hat{\rconn}^{0, a}_j(n)|^2 \leq |\hat{\rconn}^0(n)|^2$, combined with the first claim and the previous few observations, we may obtain
\[ |\E\big[\hat{\rconn}^{0, a_1}_{j_1}(n^1) \ovl{\hat{\rconn}^{0, a_2}_{j_2}(n^2)}\big]| \leq \ind(n^1 = n^2) \hat{\tau}(n^1), \]
as desired.
\end{proof}

The following few lemmas will be needed in Sections \ref{section:technical-proofs-linear-part} and \ref{section:technical-proofs-nonlinear-part}. 

\begin{lemma}\label{lemma:fractional-greens-function-heat-kernel}
Let $\alpha \in (0, d)$. For $t \in (0, 1]$, we have that
\[ \|e^{t \Delta} G_0^\alpha\|_{C^0} \leq \const_{d, \alpha} t^{-(1/2)(d-\alpha)}.\]
Here $\const_{d, \alpha}$ depends only on $d, \alpha$.
\end{lemma}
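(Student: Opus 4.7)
The plan is to work entirely in Fourier space. By Definition \ref{def:G-alpha} and \eqref{eq:heat-semigroup-def}, we have
\[ (e^{t\Delta} G_0^\alpha)(x) = \sum_{\substack{n \in \Z^d \\ n \neq 0}} e^{-4\pi^2 |n|^2 t} \frac{1}{|n|^\alpha} e_n(x), \]
so taking absolute values and using $|e_n(x)| = 1$ immediately gives
\[ \|e^{t\Delta} G_0^\alpha\|_{C^0} \leq \sum_{\substack{n \in \Z^d \\ n \neq 0}} e^{-4\pi^2 |n|^2 t} |n|^{-\alpha}. \]
So the whole task reduces to bounding this scalar sum by $\const_{d,\alpha} t^{-(d-\alpha)/2}$ uniformly for $t \in (0,1]$.

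The standard way I would do this is by comparison with a Gaussian integral. Split the sum into (i) the finitely many indices $n$ with $1 \leq |n| < \sqrt{d}$, which contribute at most a constant $\const_d$, and hence (using $t \in (0,1]$) at most $\const_d \, t^{-(d-\alpha)/2}$; and (ii) the remaining indices $|n| \geq \sqrt{d}$. For these, center a unit cube $Q_n = n + [-1/2, 1/2]^d$ at $n$; for $\xi \in Q_n$ one has $|\xi| \in [|n|/2, 2|n|]$, so there exist constants depending only on $d$ such that
\[ e^{-4\pi^2 |n|^2 t} |n|^{-\alpha} \leq \const_{d,\alpha} \int_{Q_n} e^{-c|\xi|^2 t} |\xi|^{-\alpha} \, d\xi, \]
for some $c > 0$. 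Summing over $n$ and using that the cubes $Q_n$ are essentially disjoint gives
\[ \sum_{|n| \geq \sqrt{d}} e^{-4\pi^2 |n|^2 t} |n|^{-\alpha} \leq \const_{d,\alpha} \int_{\R^d} e^{-c|\xi|^2 t} |\xi|^{-\alpha} \, d\xi, \]
where I extend the domain to all of $\R^d$ (the integral still converges since $\alpha < d$).

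The final step is the scaling change of variables $\xi = \eta/\sqrt{t}$, which yields $d\xi = t^{-d/2}\, d\eta$, $|\xi|^{-\alpha} = t^{\alpha/2}|\eta|^{-\alpha}$, and $e^{-c|\xi|^2 t} = e^{-c|\eta|^2}$, so the integral becomes
\[ t^{-(d-\alpha)/2} \int_{\R^d} e^{-c|\eta|^2} |\eta|^{-\alpha} \, d\eta. \]
Since $\alpha \in (0, d)$, this last integral is finite and depends only on $d$ and $\alpha$ (integrability at the origin uses $\alpha < d$, integrability at infinity is automatic from the Gaussian factor). Combining with the trivial bound from step (i) yields the desired estimate.

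The argument is entirely routine harmonic analysis; the only mildly delicate point is the sum-to-integral comparison, where one must separate off the finitely many small-$|n|$ terms since the comparison $|\xi| \asymp |n|$ on $Q_n$ degenerates when $|n|$ is comparable to the cube size. Once that is handled, the Gaussian scaling exactly produces the factor $t^{-(d-\alpha)/2}$, and the constraint $\alpha < d$ (which is part of the hypothesis) is what makes the residual $\eta$-integral finite.
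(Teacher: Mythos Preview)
Your proof is correct and follows essentially the same approach as the paper: bound the $C^0$ norm by the absolutely convergent Fourier sum, compare the sum to an integral, and extract the $t^{-(d-\alpha)/2}$ factor by Gaussian scaling. The only cosmetic difference is that the paper first collapses the lattice sum to a radial sum $\sum_{r\ge 1} r^{d-1-\alpha} e^{-4\pi^2 r^2 t}$ and then compares to a one-dimensional integral, whereas you compare directly to the $d$-dimensional integral $\int_{\R^d} e^{-c|\xi|^2 t}|\xi|^{-\alpha}\,d\xi$; both routes are standard and equivalent.
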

\begin{proof}
Recall from Lemma \ref{lemma:fractional-greens-function-properties} that the Fourier coefficients of $G_0^\alpha$ are $\hat{G}_0^\alpha(0) = 0$, $\hat{G}_0^\alpha(n) = |n|^{-\alpha}$, $n \in \Z^d \setminus \{0\}$. Thus we have that
\[\begin{split}
\|e^{t \Delta} G_0^\alpha\|_{C^0} &\leq \const \sum_{\substack{n \in \Z^d \\ n \neq 0}} e^{-4\pi^2 |n|^2 t} \frac{1}{|n|^\alpha} \leq \const + \const \sum_{r=1}^\infty r^{d - 1 -\alpha} e^{-4\pi^2 r^2 t}.
\end{split}\]
Now note that
\[\begin{split}
\sum_{r=1}^\infty r^{d - 1 -\alpha} e^{-4\pi^2 r^2 t} &\leq \const \int_1^\infty r^{d-1-\alpha} e^{-4\pi^2 r^2 t} dr  \\
&=C t^{-(1/2)(d-\alpha)} \int_{\sqrt{t}}^\infty u^{d-1-\alpha} e^{-4\pi^2 u^2} du. \end{split}\]
Using that $\alpha < d$, we have that $d - 1 - \alpha > -1$, and thus
\[ \int_{\sqrt{t}}^\infty u^{d-1-\alpha} e^{-4\pi^2 u^2} du \leq \int_0^1 u^{d-1-\alpha} du + \int_1^\infty e^{-4\pi^2 u^2} du \leq \const_{d, \alpha}.\]
The desired result now follows by combining the previous few estimates (and using that $t \in (0, 1]$ to bound $\const \leq \const t^{-(1/2)(d-\alpha)}$).
\end{proof}

\begin{lemma}\label{lemma:heat-semigroup-covariance}
Suppose that Assumption \ref{assumption:bounded-by-fractional-greens-function} holds for some $\alpha \in (0, d)$. For any $k \geq 0, t \in (0, 1]$, we have that
\[ \|e^{t \Delta} \tau\|_{C^k} \leq C_k t^{-(d+k-\alpha)/2}.\]
Here, $\const_k$ depends only on $k$, $d$, $\alpha$, and the constant $\constgf$ from Assumption \ref{assumption:bounded-by-fractional-greens-function}.
\end{lemma}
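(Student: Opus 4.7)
The plan is to reduce the lemma to the previous Lemma \ref{lemma:fractional-greens-function-heat-kernel} by a two-step argument: first handle the case $k=0$ via the pointwise bound from Assumption \ref{assumption:bounded-by-fractional-greens-function}, and then bootstrap to $k \geq 1$ using the semigroup property and a standard parabolic smoothing estimate.

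For the base case $k = 0$, I would observe that by Assumption \ref{assumption:bounded-by-fractional-greens-function}, $|\tau(x)| = |\Tr(\rho(x,0))| \leq \constgf(G_0^\alpha(x) + \constgf)$ for a.e.\ $x \in \torus^d$. Since the right-hand side is nonnegative, the monotonicity property \eqref{eq:heat-semigroup-monotonicity} of the heat semigroup yields
\[ |e^{t\Delta} \tau|(x) \leq \constgf (e^{t\Delta} G_0^\alpha)(x) + \constgf^2 \]
for every $x \in \torus^d$ (using that $e^{t\Delta}$ preserves constants). Taking the supremum over $x$ and applying Lemma \ref{lemma:fractional-greens-function-heat-kernel}, together with the fact that $t \in (0,1]$ makes the constant $\constgf^2$ absorbable into $t^{-(d-\alpha)/2}$, gives $\|e^{t\Delta}\tau\|_{C^0} \leq \const_{d,\alpha} t^{-(d-\alpha)/2}$, which is the claim for $k = 0$.

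For general $k \geq 1$, I would use the semigroup decomposition $e^{t\Delta} = e^{(t/2)\Delta} \circ e^{(t/2)\Delta}$ together with the standard parabolic smoothing bound
\[ \|e^{s\Delta} f\|_{C^k} \leq \const_k \, s^{-k/2} \|f\|_{C^0}, \qquad s \in (0,1], \ f \in C^0(\torus^d,\R), \]
which follows directly from differentiating the heat-kernel convolution representation \eqref{eq:heat-kernel-semigroup-convolution} (derivatives of $\Phi(s,\cdot)$ on the torus contribute factors of $s^{-k/2}$). Applying this smoothing estimate to $f = e^{(t/2)\Delta}\tau$ and combining with the $k=0$ bound gives
\[ \|e^{t\Delta}\tau\|_{C^k} \leq \const_k (t/2)^{-k/2} \|e^{(t/2)\Delta}\tau\|_{C^0} \leq \const_k t^{-k/2} (t/2)^{-(d-\alpha)/2} \leq \const_k t^{-(d+k-\alpha)/2}, \]
which is the desired estimate.

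There is no serious obstacle in this argument; the only point requiring a little care is the standard smoothing estimate for $e^{s\Delta}$ from $C^0$ to $C^k$, which I would either cite from the parabolic PDE references already used in the paper (e.g., \cite{T2011a}) or else verify in an appendix by differentiating \eqref{eq:heat-kernel-def} term by term and bounding $\sum_{n\in\Z^d}|n|^{k}e^{-4\pi^2|n|^2 s}$ by $\const_k s^{-(d+k)/2}$ via a Gaussian integral comparison, then integrating against $\|f\|_{C^0}$.
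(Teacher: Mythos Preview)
Your proposal is correct and follows essentially the same approach as the paper. The only cosmetic difference is that for $k\geq 1$ the paper proceeds by induction, gaining one derivative at a time via the estimate \eqref{eq:heat-semigroup-Cr-C-r-plus-half} with $u=1$, whereas you apply the $C^0\to C^k$ smoothing estimate in a single step; note that this smoothing estimate is exactly \eqref{eq:heat-semigroup-Cr-C-r-plus-half} with $r=0$, $u=k$, so you can simply cite it rather than rederive it.
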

\begin{proof}
By Assumption \ref{assumption:bounded-by-fractional-greens-function} and the monotonicity property \eqref{eq:heat-semigroup-monotonicity}, we have that
\[ \|e^{t \Delta} \tau\|_{C^0} \leq \constgf \big(\|e^{t \Delta} G_0^\alpha\|_{C^0} + \constgf\big). \]
By Lemma \ref{lemma:fractional-greens-function-heat-kernel}, we obtain (here we use that $t \leq 1$)
\[ \|e^{t \Delta} \tau\|_{C^0} \leq \const t^{-(1/2)(d-\alpha)}, \]
which is the desired result for $k = 0$. Next, observe that for $k \geq 1$, $t \in (0, 1]$, we have that (applying \eqref{eq:heat-semigroup-Cr-C-r-plus-half} with $u = 1$ in the first inequality)
\[\begin{split}
\|e^{t \Delta} \tau\|_{C^k} &= \|e^{(t/2) \Delta} e^{(t/2) \Delta} \tau\|_{C^k} \\
&\leq \const (t/2)^{-1/2} \|e^{(t/2) \Delta} \tau^{a_1 a_2}_{j_1 j_2}\|_{C^{k-1}} \\
&\leq \const t^{-1/2} \|e^{(t/2) \Delta} \tau^{a_1 a_2}_{j_1 j_2}\|_{C^{k-1}}.
\end{split}\]
The desired result now follows by induction on $k$.
\end{proof}

\begin{definition}\label{def:dchain}
For $t \in (0, 1]$, define the metric $\dchain_t$ on $(t/2, t] \times \torus^d$ by
\[ \dchain_t((r, x), (s, y)) := \frac{|r-s|}{t} + \min\bigg\{\frac{\metricspace(x, y)}{\sqrt{t}}, 1\bigg\}.\]
For $\varep > 0$, let $N_{t, \varep}$ be the minimum number of $\varep$-balls needed to cover the metric space $((t/2, t] \times \torus^d, \dchain_t)$.
\end{definition}

\begin{lemma}\label{lemma:covering-number-bound}
For any $t \in (0, 1]$, $\varep > 0$, we have that
\[  N_{t, \varep} \leq \const t^{-d/2} \varep^{-(d+1)}.  \]
If $\varep \geq 3/2$, then we have that $N_{t, \varep} = 1$. 
\end{lemma}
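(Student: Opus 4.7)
The plan is to prove both claims by direct estimates on the metric $\dchain_t$, with the first reducing to a product covering and the second to a diameter bound.

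For the second claim, I would first show that the metric space $((t/2, t] \times \torus^d, \dchain_t)$ has diameter at most $3/2$. Indeed, for any two points $(r, x), (s, y)$ in this space, we have $|r - s|/t \leq 1/2$ because $r, s \in (t/2, t]$ implies $|r-s| \leq t/2$, and $\min\{\metricspace(x, y)/\sqrt{t}, 1\} \leq 1$ by definition. Summing yields $\dchain_t((r,x),(s,y)) \leq 3/2$. Consequently, for any choice of basepoint, the single $\dchain_t$-ball of radius $\varep \geq 3/2$ centered there covers the whole space, and so $N_{t, \varep} = 1$.

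For the first claim, I would use a product covering which handles the two coordinates separately according to their rescalings in $\dchain_t$. First, partition the time interval $(t/2, t]$ (which has length $t/2$) into $M_1 \leq \lceil 1/\varep \rceil$ subintervals each of length at most $t\varep/2$, so that in the rescaled time coordinate $|r-s|/t$ each subinterval has diameter at most $\varep/2$. Next, using the standard volume bound for $\torus^d$ (which has bounded diameter in $\metricspace$), cover $\torus^d$ by $M_2 \leq \const (\sqrt{t}\varep)^{-d} = \const t^{-d/2}\varep^{-d}$ metric balls of $\metricspace$-radius at most $\sqrt{t}\varep/4$; these have rescaled spatial diameter at most $\varep/2$. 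The product of these two partitions yields a cover of $(t/2, t] \times \torus^d$ by $M_1 M_2$ sets each of $\dchain_t$-diameter at most $\varep$ (since $\dchain_t$ is a sum of the two rescaled metrics), and hence each contained in some $\dchain_t$-ball of radius $\varep$. This gives $N_{t, \varep} \leq M_1 M_2 \leq \const t^{-d/2} \varep^{-(d+1)}$ in the relevant range $\varep \leq 3/2$; for $\varep > 3/2$ the second claim already provides $N_{t, \varep} = 1$.

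There is no substantive obstacle here. The only care needed is bookkeeping: correctly tracking the rescaling factors $t^{-1}$ in the time coordinate and $t^{-1/2}$ in the space coordinate when converting between $\dchain_t$-diameters and the original metric quantities. The extra factor of $\varep^{-1}$ relative to the naive spatial covering arises precisely from the one-dimensional time direction, whose rescaled length is $1/2$.
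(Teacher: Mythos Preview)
Your proposal is correct and follows essentially the same approach as the paper: both proofs first bound the $\dchain_t$-diameter by $3/2$ to handle the case $\varep \geq 3/2$, and then build a product cover by separately covering the time interval $(t/2,t]$ at scale $t\varep$ (yielding $O(\varep^{-1})$ pieces) and the torus $\torus^d$ at scale $\sqrt{t}\,\varep$ (yielding $O(t^{-d/2}\varep^{-d})$ pieces). The only cosmetic difference is that the paper phrases the construction as choosing ball centers in each factor, while you phrase it as partitioning/covering each factor by small-diameter sets and then noting each product set lies in a single $\varep$-ball; the arithmetic and the resulting bound are identical.
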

\begin{proof}
First, note that the diameter of $((t/2, t] \times \torus^d, \dchain_t)$ is at most $3/2$, and thus the second claim follows. For the first claim, note that the metric space $(t/2, t]$ equipped with Euclidean distance may be covered by $O(\varep^{-1})$ balls of radius $(\varep / 2) t$. Let $\{x_i\}_{i \in [n]} \sse (t/2, t]$ be such  a cover. The metric space $(\torus^d, \metricspace)$ may be covered by $O((t^{1/2} \varep)^{-d})$ balls of radius $\sqrt{t} (\varep /2)$. Let $\{y_j\}_{j \in [m]} \sse \torus^d$ be such a cover. We then have that $\{(x_i, y_j)\}_{i \in [n], j \in [m]}$ is an $\varep$-cover of $((t/2, t] \times \torus^d, \dchain_t)$, and additionally $mn = O(t^{-d/2} \varep^{-(d+1)})$. The desired result now follows.
\end{proof}

\begin{lemma}\label{lemma:entropy-integral-bound}
For any $t \in (0, 1]$, $\beta > 0$, we have that
\[ \int_0^\infty (\log N_{t, \varep})^{\beta} d\varep \leq  \const_{\beta, d} (1 + |\log t|^{\beta}). \]
Here $\const_{\beta, d}$ depends only on $\beta, d$.
\end{lemma}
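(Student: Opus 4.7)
The plan is to reduce to a bounded interval using the second part of Lemma \ref{lemma:covering-number-bound}, plug in the explicit polynomial bound from the first part, and then split the resulting logarithm into a $t$-dependent piece and an $\varepsilon$-dependent piece that integrates to a universal constant.

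First I would note that by Lemma \ref{lemma:covering-number-bound}, $N_{t, \varepsilon} = 1$ for $\varepsilon \geq 3/2$, so $\log N_{t, \varepsilon} = 0$ on $[3/2, \infty)$ and hence
\[ \int_0^\infty (\log N_{t, \varepsilon})^\beta d\varepsilon = \int_0^{3/2} (\log N_{t, \varepsilon})^\beta d\varepsilon. \]
On the interval $(0, 3/2)$, the other half of Lemma \ref{lemma:covering-number-bound} gives $N_{t,\varepsilon} \leq \const \, t^{-d/2} \varepsilon^{-(d+1)}$, so
\[ \log N_{t, \varepsilon} \leq \const + (d/2) |\log t| + (d+1) |\log \varepsilon|. \]

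Next I would apply the elementary inequality $(a_1 + a_2 + a_3)^\beta \leq \const_\beta (a_1^\beta + a_2^\beta + a_3^\beta)$ for nonnegative $a_i$ to obtain
\[ (\log N_{t, \varepsilon})^\beta \leq \const_\beta \bigl(1 + |\log t|^\beta + |\log \varepsilon|^\beta\bigr). \]
Integrating over $\varepsilon \in (0, 3/2)$, the first two terms contribute $\const_\beta(1 + |\log t|^\beta)$ since they are constant in $\varepsilon$ and the interval has finite length. For the last term, a standard calculation (for instance, the substitution $u = -\log \varepsilon$ reducing to $\int_{\log(2/3)}^\infty u^\beta e^{-u} du$) yields
\[ \int_0^{3/2} |\log \varepsilon|^\beta d\varepsilon \leq \const_\beta < \infty. \]
Summing these three contributions gives the claimed bound.

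There is no serious obstacle here; the only point that requires a moment of care is ensuring the upper limit of integration is effectively finite, which is exactly what the $\varepsilon \geq 3/2$ case of Lemma \ref{lemma:covering-number-bound} provides. Everything else is a direct computation with logarithms.
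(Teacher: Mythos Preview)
The proposal is correct and follows essentially the same approach as the paper: reduce to the interval $(0,3/2)$ via the diameter bound, insert the polynomial covering-number estimate, split the logarithm into constant, $|\log t|$, and $|\log \varepsilon|$ pieces via $(a+b+c)^\beta \le \const_\beta(a^\beta+b^\beta+c^\beta)$, and observe that $\int_0^{3/2}|\log\varepsilon|^\beta\,d\varepsilon$ is finite.
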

\begin{proof}
By Lemma \ref{lemma:covering-number-bound}, we may bound (using that $\beta \neq 0$)
\[\int_0^\infty (\log  N_{t, \varep} )^{\beta} d\varep  \leq \int_0^{3/2} \big(\log{(C t^{-d/2} \varep^{-(d+1)})} \big)^{\beta} d\varep.\]
Now, note that
\[\big(\log{(C t^{-d/2} \varep^{-(d+1)})}\big)^{\beta} \leq \const_{\beta, d} + \const_{\beta, d} |\log t|^{\beta} + \const_{\beta, d} |\log \varep|^{\beta}. \]
The desired result now follows by noting that (here using that $\beta \geq 0$)
\[ \int_0^{3/2} |\log \varep|^{\beta} d\varep \leq \const_\beta < \infty. \qedhere \]
\end{proof}

In what follows, given a (possibly vector-valued) random variable $X$, we will write $\|X\|_{L^2}$ as a shorthand for the $L^2$ norm of $X$, i.e., $\|X\|_{L^2} := (\E(|X|^2))^{1/2}$.

\subsection{Linear part}\label{section:technical-proofs-linear-part}

As in Section \ref{section:linear-part}, throughout this section, let $\rconn^0$ be a random $\lalg^d$-valued distribution satisfying Assumptions \ref{assumption:l2-regularity}, \ref{assumption:tail-bounds}, \ref{assumption:translation-invariance}, and \ref{assumption:bounded-by-fractional-greens-function}. We just assume that Assumption \ref{assumption:bounded-by-fractional-greens-function} holds for some $\alpha \in (0, d)$ --- that is, for this section, we do not need the restriction that $\alpha > \max\{d - 4/3, d/2\}$ which appears in Theorem \ref{thm:main}. These assumptions hold, even if they are not explicitly stated in the various lemmas or propositions. We first show the following result. Recall the definition of $\rconn^{1}_{N}$ from Definition \ref{def:A-1-N}.

\begin{lemma}\label{lemma:linear-part-regularity}
We have that a.s., for all $t > 0$, $k \geq 0$, 
\[ \sum_{n \in \Z^d} (1 + |n|)^k e^{-4\pi^2 |n|^2 t} |\hat{\rconn}^0(n)| < \infty.  \]
Consequently, we have that a.s., for all $t \in (0, 1]$, $x \in \torus^d$,
\[ \lim_{N \toinf} \rconn^{1}_{N}(t, x) = \sum_{n \in \Z^d} e^{-4\pi^2 |n|^2 t} \hat{\rconn}^{0}(n) e_n(x) \]
We also have that for all $t \in (0, 1]$, $x \in \torus^d$,
\[ \rconn^{1}(t, x) \stackrel{a.s.}{=} \sum_{n \in \Z^d} e^{-4\pi^2 |n|^2 t} \hat{\rconn}^{0}(n) e_n(x). \]
\end{lemma}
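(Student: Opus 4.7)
The plan is to first establish a uniform moment bound $\E|\hat{\rconn}^0(n)|^p \leq \const_p$ for every $p \geq 1$, uniformly in $n \in \Z^d$. Both the a.s.\ summability statement and the identification of the Fourier series representations of $\rconn^1_N$ and $\rconn^1$ will then follow by routine manipulations. To establish this bound, I would split
\[ \hat{\rconn}^0(n) = (\rconn^0, \cos(2\pi n \cdot\, )) - \icomplex(\rconn^0, \sin(2\pi n \cdot\, )) \]
and apply Assumption \ref{assumption:tail-bounds} to each real test function $\phi$, whose $\sigma_\phi$ is controlled via
\[ \sigma_\phi^2 = \int_{\torus^d}\int_{\torus^d} \phi(x)\phi(y)\tau(x-y)\, dx\, dy \leq \|\phi\|_\infty^2 \|\tau\|_{L^1} \leq \const, \]
using integrability of $\tau$ (Assumption \ref{assumption:translation-invariance}, combined with Assumption \ref{assumption:bounded-by-fractional-greens-function} to bound $\|\tau\|_{L^1}$ in terms of $\|G^\alpha_0\|_{L^1}$, which is finite since $\alpha < d$). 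Integrating the resulting stretched-exponential tail yields the desired $p$-th moment bound uniformly in $n$.

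From this bound, for any fixed $t > 0$ and $k \geq 0$, Tonelli gives
\[ \E\bigg[\sum_{n \in \Z^d} (1+|n|)^k e^{-4\pi^2|n|^2 t} |\hat{\rconn}^0(n)|\bigg] \leq \const_1 \sum_{n \in \Z^d} (1+|n|)^k e^{-4\pi^2|n|^2 t} < \infty, \]
so that sum is a.s.\ finite. To upgrade ``a.s.\ for each $(t,k)$'' to ``a.s.\ for all $(t, k)$'' on a single event, I would intersect over $t \in \{1/m : m \in \N\}$ and $k \in \N$, then note that the summand is pointwise decreasing in $t$ and nondecreasing when $k$ is replaced by $\lceil k \rceil$, which covers arbitrary $t > 0$ and $k \geq 0$.

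For the second claim, Remark \ref{remark:A-1-N-convergence} gives $\rconn^1_N = e^{t\Delta}\rconn^0_N$, so expanding $\rconn^0_N$ in its Fourier series and applying \eqref{eq:heat-semigroup-def} yields
\[ \rconn^1_N(t,x) = \sum_{\substack{n \in \Z^d \\ |n|_\infty \leq N}} e^{-4\pi^2|n|^2 t} \hat{\rconn}^0(n) e_n(x); \]
convergence of this partial sum to the full series as $N \toinf$ is immediate from the absolute convergence established in the first claim. For the third claim, the key is that $\Phi(t, x - \cdot) \in C^\infty(\torus^d, \R)$ for $t > 0$, so Assumption \ref{assumption:l2-regularity} applied to this test function gives $(\rconn^0, \FT_N \Phi(t, x-\cdot)) \ra (\rconn^0, \Phi(t, x - \cdot)) = \rconn^1(t, x)$ in $L^2$, while self-adjointness of $\FT_N$ together with the convolution identity \eqref{eq:heat-kernel-semigroup-convolution} shows $(\rconn^0, \FT_N \Phi(t, x - \cdot)) = (\rconn^0_N, \Phi(t, x - \cdot)) = \rconn^1_N(t, x)$. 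Combining this $L^2$ convergence with the a.s.\ convergence from the second claim identifies $\rconn^1(t, x)$ with the series a.s. The only subtle point is the uniform-in-$n$ bound on $\sigma_\phi$ for the Fourier basis; once that is in hand, everything else is routine.
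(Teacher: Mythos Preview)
Your proof is correct, but it takes a different route from the paper's. The paper avoids Assumption \ref{assumption:tail-bounds} entirely: instead of bounding first moments of $|\hat{\rconn}^0(n)|$ one at a time, it applies Cauchy--Schwarz to split
\[
\sum_{n} (1+|n|)^k e^{-4\pi^2|n|^2 t}\,|\hat{\rconn}^0(n)|
\;\le\; \Bigl(\sum_{n}(1+|n|)^{2k}e^{-4\pi^2|n|^2 t}\Bigr)^{1/2}\Bigl(\sum_{n}e^{-4\pi^2|n|^2 t}\,|\hat{\rconn}^0(n)|^2\Bigr)^{1/2},
\]
then uses Lemma \ref{lemma:fourier-coeff-uncorrelated} to recognise $\E|\hat{\rconn}^0(n)|^2 = \hat\tau(n)$, so that the second factor has expectation exactly $(e^{t\Delta}\tau)(0)$, finite by Lemma \ref{lemma:heat-semigroup-covariance}. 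This handles all $k$ simultaneously via a single $L^2$ estimate and draws only on Assumptions \ref{assumption:translation-invariance}--\ref{assumption:bounded-by-fractional-greens-function}. Your approach is more elementary in that it skips the Cauchy--Schwarz reduction, but it pays for this by invoking the tail bound \ref{assumption:tail-bounds} (a stronger hypothesis than needed here) and re-deriving from scratch a uniform bound on $\sigma_\phi$ that is essentially the content of $\hat\tau(n) \le \|\tau\|_{L^1}$. For the second and third claims your argument coincides with the paper's, which likewise appeals to Remark \ref{remark:A-1-N-convergence} for the $L^2$ convergence.
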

\begin{proof}
It suffices to prove the a.s.~result for fixed $t > 0$, since we can then obtain the a.s. result simultaneously for all $\{t_n\}_{n \geq 1}$, which is a sequence chosen so that $t_n \downarrow 0$. The a.s.~result simultaneously for all $t > 0$ then follows because of monotonicity in $t$. 
Towards this end, note that by the Cauchy--Schwarz inequality,
\[\begin{split}
\sum_{n \in \Z^d} &(1 + |n|)^{k} e^{-4\pi^2 |n|^2 t} |\hat{\rconn}^{0}(n)|  \leq \const \bigg(\sum_{n \in \Z^d} e^{-4\pi^2 |n|^2 t} |\hat{\rconn}^{0}(n)|^2 \bigg)^{1/2},
\end{split}\]
for some finite constant $\const$.
To finish, observe that (using Lemmas \ref{lemma:fourier-coeff-uncorrelated} and \ref{lemma:heat-semigroup-covariance})
\[ \E \bigg[\sum_{n \in \Z^d} e^{-4\pi^2 |n|^2 t} | \hat{\rconn}^{0}(n)|^2\bigg] = \sum_{n \in \Z^d} e^{-4\pi^2 |n|^2 t} \hat{\tau}(n) = (e^{t\Delta} \tau)(0) < \infty. 
\]
The a.s. convergence follows immediately from the first claim. The a.s. equality follows from the a.s. convergence and the fact that $\rconn^{1}_{N}(t, x) \stackrel{L^2}{\ra} \rconn^{1}(t, x)$ (recall Remark \ref{remark:A-1-N-convergence}).
\end{proof}

\begin{proof}[Proof of Lemma \ref{lemma:A-1-modification}]
Let $E$ be the event that for all $t > 0$, $k \geq 0$, we have that 
\[ \sum_{n \in \Z^d} (1 + |n|)^k e^{-4\pi^2 |n|^2 t} |\hat{\rconn}^{0}(n)| < \infty.  \]
Define the modification $\tilde{\rconn}^1 = (\tilde{\rconn}^{1}(t, x), t \in (0, 1], x \in \torus^d)$ as follows:
\[ \tilde{\rconn}^{1}(t, x) := \ind_E \sum_{n \in \Z^d} e^{-4\pi^2 |n|^2 t} \hat{\rconn}^{0}(n) e_n(x). \]
By Lemma \ref{lemma:linear-part-regularity}, we have that $\p(E) = 1$. It follows from this and the last claim of Lemma \ref{lemma:linear-part-regularity} that $\tilde{\rconn}^1$ is indeed a modification of $\rconn^1$. The fact that $\tilde{\rconn}^1$ is smooth and solves the heat equation follows by construction.
\end{proof}

\begin{remark}
From Lemma \ref{lemma:linear-part-regularity}, we can also ensure that (after a suitable modification) for all $k \geq 0$,
$t_0 \in (0, 1]$, 
\beq\label{eq:linear-part-positive-time} \sup_{t \in [t_0, 1]} \|\rconn^{1}(t)\|_{C^k} < \infty, \eeq
\beq\label{eq:linear-part-convergence-positive-time}\lim_{N \toinf} \sup_{t \in [t_0, 1]} \|\rconn^{1}_{N}(t) - \rconn^{1}(t)\|_{C^k} = 0. \eeq
\end{remark}




We next begin to work towards the proof of Proposition \ref{prop:linear-part-in-Q-space}.

\begin{lemma}\label{lemma:linear-part-moment-estimates}
For any $k \in \{0, 1, 2\}$, $l_1, \ldots, l_k \in [d]$, $t \in (0, 1], x \in \torus^d$, we have that
\beq\label{eq:A1-bound} \E[|\ptl_{l_1} \cdots \ptl_{l_k} \rconn^1(t, x)|^2] \leq \const t^{-(1/2)(d-\alpha) - k}. \eeq
The above inequalities are also true with $\rconn^{1}$ replaced by $\rconn^{1}_{N}$ for any $N \geq 0$. Here, $\const$ depends only on $d$ and the constants $\alpha, \constgf$ from Assumption \ref{assumption:bounded-by-fractional-greens-function}.
\end{lemma}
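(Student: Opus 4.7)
The plan is to translate the $L^2$ moment into an explicit Fourier series, use Lemma~\ref{lemma:fourier-coeff-uncorrelated} (which is the translation-invariance input) to collapse the resulting double Fourier sum into a single diagonal one involving $\hat\tau$, and then recognize that diagonal sum as a Laplacian derivative of the heat-extended covariance $e^{2t\Delta}\tau$ evaluated at the origin, so that Lemma~\ref{lemma:heat-semigroup-covariance} supplies exactly the decay we need.

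Concretely, fix $a \in [\lalgdim]$ and $j \in [d]$. Differentiating the Fourier representation of $\rconn^{1,a}_j(t,x)$ from Lemma~\ref{lemma:linear-part-regularity} termwise (justified by the absolute convergence provided by that lemma) gives
\[
\partial_{l_1}\cdots\partial_{l_k}\rconn^{1,a}_j(t,x) \stackrel{a.s.}{=} \sum_{n \in \Z^d} (\icomplex 2\pi)^k n_{l_1}\cdots n_{l_k}\, e^{-4\pi^2|n|^2 t}\, \hat{\rconn}^{0,a}_j(n)\, e_n(x).
\]
Expanding $|\cdot|^2$ as a double Fourier sum, taking expectation (which one can swap with the summation by first truncating to $|n|_\infty \leq N$ via $\rconn^1_N$ and then passing to the limit using Assumption~\ref{assumption:l2-regularity}), and applying Lemma~\ref{lemma:fourier-coeff-uncorrelated} to each pair, all off-diagonal terms vanish and one is left with
\[
\E\big[|\partial_{l_1}\cdots\partial_{l_k}\rconn^{1,a}_j(t,x)|^2\big] \leq (2\pi)^{2k} \sum_{n \in \Z^d} |n_{l_1}|^2\cdots|n_{l_k}|^2\, e^{-8\pi^2|n|^2 t}\, \hat{\tau}(n).
\]
Summing over $a,j$ contributes only a factor $\lalgdim\cdot d$.

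Since $|n_{l_r}| \leq |n|$ and $\hat{\tau}(n) \geq 0$ (by Lemma~\ref{lemma:fourier-coeff-uncorrelated}), we can dominate the right-hand side by $\const \sum_n |n|^{2k} e^{-8\pi^2|n|^2 t} \hat{\tau}(n)$, which up to a constant equals $(-\Delta)^k (e^{2t\Delta}\tau)(0)$. Hence
\[
\sum_{n \in \Z^d} |n|^{2k} e^{-8\pi^2|n|^2 t} \hat{\tau}(n) \leq \const_k \|e^{2t\Delta}\tau\|_{C^{2k}},
\]
and Lemma~\ref{lemma:heat-semigroup-covariance} yields $\|e^{2t\Delta}\tau\|_{C^{2k}} \leq \const_k (2t)^{-(d+2k-\alpha)/2} \leq \const_k t^{-(d-\alpha)/2 - k}$, which is the claimed estimate. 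For the truncated process $\rconn^1_N$ the identical derivation applies with the Fourier sum restricted to $\{|n|_\infty \leq N\}$; since every summand is non-negative, the restricted sum is no larger than its unrestricted counterpart, so the same bound holds uniformly in $N$. There is no real obstacle here: the calculation is essentially mechanical, and the only structural input is Lemma~\ref{lemma:fourier-coeff-uncorrelated}, which diagonalizes the covariance in Fourier modes and turns the moment bound into a heat-kernel estimate for $\tau$.
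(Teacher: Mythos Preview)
Your proposal is correct and follows essentially the same approach as the paper: both reduce the moment via Lemma~\ref{lemma:fourier-coeff-uncorrelated} to a diagonal Fourier sum bounded by $\const\|e^{2t\Delta}\tau\|_{C^{2k}}$, and then invoke Lemma~\ref{lemma:heat-semigroup-covariance}. The only procedural difference is that the paper first proves the bound for $\rconn^1_N$ and then passes to $\rconn^1$ via Fatou's lemma and \eqref{eq:linear-part-convergence-positive-time}, whereas you argue directly for $\rconn^1$ and observe that the truncated sum is a sub-sum; this is a cosmetic reordering, not a different method.
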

\begin{proof}
Let $N \geq 0$. We have that for some constant $C$,
\[ \ptl_{l_1} \cdots \ptl_{l_k} \rconn^1_N(t, x) = C \sum_{\substack{n \in \Z^d \\ |n|_\infty \leq N}} n_{l_1} \cdots n_{l_k} e^{-4\pi^2 |n|^2 t} \hat{\rconn}^0(n) e_n(x).\]
Thus by Lemma \ref{lemma:fourier-coeff-uncorrelated}, we obtain
\[ \E[|\ptl_{l_1} \cdots \ptl_{l_k} \rconn^1_N(t, x)|^2] \leq \const \sum_{\substack{n \in \Z^d \\ |n|_\infty \leq N}} |n|^{2k} e^{-8\pi^2 |n|^2 t} \hat{\tau}(n) \leq \const \|e^{2t\Delta} \tau\|_{C^{2k}}. \]
By Lemma \ref{lemma:heat-semigroup-covariance}, the right hand side is bounded by $\const t^{-(1/2)(d - \alpha) - k}$, where the constant $\const$ is uniform in $N$. To finish, we use Fatou's lemma, combined with equation \eqref{eq:linear-part-convergence-positive-time}.
\end{proof}

Let $t \in (0, 1]$, $x \in \torus^d$. For $N, M \geq 0$, let 
\[ \mbf{D}^{1}_{N}(t, x) := \rconn^{1}_{N}(t, x) - \rconn^{1}(t, x), ~~~\mbf{D}^{1}_{N, M}(t, x) := \rconn^{1}_{N}(t, x) - \rconn^{1}_{M}(t, x). \]


\begin{lemma}\label{lemma:linear-part-difference-moment-estimates}
There exists a sequence $\{\delta_N^{\ref{lemma:linear-part-difference-moment-estimates}}\}_{N \geq 0}$ of non-increasing functions $\delta_N^{\ref{lemma:linear-part-difference-moment-estimates}} : (0, 1] \ra [0, 1]$ such that for any $t \in (0, 1]$, $\lim_{N \toinf} \delta_N^{\ref{lemma:linear-part-difference-moment-estimates}}(t) = 0$, and 
for any $k \in \{0, 1, 2\}$, $l_1, \ldots, l_k \in [d]$, $N\ge 0$, $t \in (0, 1]$, $x \in \torus^d$, 
\[ \E[|\ptl_{l_1} \cdots \ptl_{l_k} \mbf{D}^1_N(t, x)|^2] \leq \const t^{-(1/2)(d-\alpha) - k} \delta_N^{\ref{lemma:linear-part-difference-moment-estimates}}(t). \]
Here, $\const$ depends only on $d$ and the constants $\alpha, \constgf$ from Assumption \ref{assumption:bounded-by-fractional-greens-function}.
\end{lemma}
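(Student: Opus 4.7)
The plan is to use the explicit Fourier-series representation of $\rconn^1$ (and hence of $\mbf{D}^1_N$) that is available after the modification in Lemma~\ref{lemma:A-1-modification}, and then just compute second moments term by term, exploiting the Gaussian decay factor $e^{-4\pi^2 |n|^2 t}$ in the high-frequency regime. After the modification, Lemma~\ref{lemma:linear-part-regularity} gives
\[
\mbf{D}^1_N(t,x) = \rconn^1_N(t,x) - \rconn^1(t,x) = -\sum_{\substack{n \in \Z^d \\ |n|_\infty > N}} e^{-4\pi^2 |n|^2 t}\,\hat{\rconn}^0(n)\,e_n(x),
\]
and the sum converges absolutely, so derivatives can be passed through to yield a similar series with an extra factor $(2\pi\icomplex)^k n_{l_1}\cdots n_{l_k}$.

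Next, I would expand $|\partial_{l_1}\cdots\partial_{l_k}\mbf{D}^1_N(t,x)|^2 = \sum_{a,j} |\partial_{l_1}\cdots\partial_{l_k}\mbf{D}^{1,a}_{N,j}(t,x)|^2$, take expectations, and invoke Lemma~\ref{lemma:fourier-coeff-uncorrelated} to see that the double sum over $(n,m)$ collapses to the diagonal up to a factor bounded by $\hat{\tau}(n)$. The estimate $|n_{l_1}\cdots n_{l_k}|^2 \leq |n|^{2k}$ then gives
\[
\E[|\partial_{l_1}\cdots\partial_{l_k}\mbf{D}^1_N(t,x)|^2] \leq \const \sum_{\substack{n \in \Z^d \\ |n|_\infty > N}} |n|^{2k}\, e^{-8\pi^2 |n|^2 t}\, \hat{\tau}(n),
\]
which mirrors the derivation in Lemma~\ref{lemma:linear-part-moment-estimates} but restricted to high frequencies.

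The key step, and the only one with any real content, is to split the Gaussian weight as $e^{-8\pi^2 |n|^2 t} = e^{-4\pi^2 |n|^2 t} \cdot e^{-4\pi^2 |n|^2 t}$. On the summation range $\{|n|_\infty > N\}$ (where $N$ is a nonnegative integer and thus $|n|_\infty \geq N+1$, so $|n|^2 > N^2$), one factor is pointwise at most $e^{-4\pi^2 N^2 t}$, which pulls out of the sum. The remaining sum $\sum_{n \in \Z^d} |n|^{2k} e^{-4\pi^2 |n|^2 t} \hat{\tau}(n)$ is controlled exactly as in the proof of Lemma~\ref{lemma:linear-part-moment-estimates}: it is comparable to a $(2k)$-th order derivative of $e^{t\Delta}\tau$ at the origin, so Lemma~\ref{lemma:heat-semigroup-covariance} yields a bound of order $t^{-(d-\alpha)/2 - k}$. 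Combining the two estimates,
\[
\E[|\partial_{l_1}\cdots\partial_{l_k}\mbf{D}^1_N(t,x)|^2] \leq \const_k\, t^{-(1/2)(d-\alpha) - k}\, e^{-4\pi^2 N^2 t}.
\]

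Finally, I would set $\delta_N^{\ref{lemma:linear-part-difference-moment-estimates}}(t) := e^{-4\pi^2 N^2 t}$ and take $\const := \max_{k \in \{0,1,2\}} \const_k$ to absorb the $k$-dependence into a single constant uniform over $k \in \{0,1,2\}$. The three required properties of $\delta_N^{\ref{lemma:linear-part-difference-moment-estimates}}$ are immediate: it is $(0,1]$-valued; it is non-increasing in $t$ because $N^2 \geq 0$; and for each fixed $t > 0$ it decays to $0$ as $N \toinf$. There is no genuine obstacle here beyond being careful that the integer-valued constraint $|n|_\infty > N$ with $N \in \N$ really does force $|n|^2 > N^2$, which is what makes the exponential decay $e^{-4\pi^2 N^2 t}$ available while still retaining the correct $t^{-(d-\alpha)/2-k}$ singularity for the remaining full sum.
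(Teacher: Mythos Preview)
Your argument is correct and reaches the same intermediate tail sum
\[
S(N,2k,t) = \sum_{|n|_\infty > N} |n|^{2k}\, e^{-8\pi^2|n|^2 t}\, \hat{\tau}(n)
\]
that the paper obtains. The genuine difference is in how $\delta_N$ is extracted from this quantity. The paper proceeds softly: it defines $\delta_N(t_0)$ as a supremum over $t\in[t_0,1]$ of the ratio $S(N,m,t)/\max\{1,S(0,m,t)\}$ (for $m\in\{0,2,4\}$) and appeals to dominated convergence to conclude that $\delta_N(t_0)\to 0$. You instead split the Gaussian factor $e^{-8\pi^2|n|^2 t}=e^{-4\pi^2|n|^2 t}\cdot e^{-4\pi^2|n|^2 t}$, peel off $e^{-4\pi^2 N^2 t}$ from the high-frequency range, and bound the remaining full-range sum via Lemma~\ref{lemma:heat-semigroup-covariance}, yielding the explicit $\delta_N(t)=e^{-4\pi^2 N^2 t}$. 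Your route is more elementary and gives a quantitative decay rate in $N$; the paper's route is more robust in that it only uses finiteness of $S(0,m,t)$ and monotone tail convergence, so it would transfer unchanged to settings where no pointwise spectral-gap factor is available. A minor stylistic point: the paper passes through the finite-$M$ differences $\mbf{D}^1_{N,M}$ and Fatou, whereas you differentiate the a.s.\ absolutely convergent Fourier series for $\mbf{D}^1_N$ directly; both are justified by Lemma~\ref{lemma:linear-part-regularity}.
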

\begin{proof}
It suffices to show the inequalities with $\mbf{D}^{1}_{N}$ replaced by $\mbf{D}^{1}_{N, M}$ for any $M \geq N$, since $\mbf{D}^{1}_{N, M} \ra \mbf{D}^{1}_{N}$ as $M \toinf$ (recall \eqref{eq:linear-part-convergence-positive-time}). 
For $N \geq 0$, $j \geq 0$, $t \in (0, 1]$, define
\[ S(N, j, t) := \sum_{\substack{n \in \Z^d \\ |n|_\infty > N}} |n|^j e^{-8\pi^2 |n|^2 t} \hat{\tau}(n).\]
Arguing as in the proof of Lemma \ref{lemma:linear-part-moment-estimates}, we can obtain that
\[ \sup_{M \geq N} \E\big[|\ptl_{l_1} \cdots \ptl_{l_k} \mbf{D}^{1}_{N, M}(t, x)|^2\big] \leq \const_k S(N, 2k, t). \]
Observe that for $m \in \{0, 2, 4\}$, we have that $S(N, m, t) \leq S(0, m, t)$, and moreover, from the proof of Lemma \ref{lemma:linear-part-moment-estimates}, we have that $S(0, m, t) \leq \const t^{-(1/2)(d+m - \alpha)}$. We may thus define 
\[ \delta_N^{\ref{lemma:linear-part-difference-moment-estimates}}(t_0) := \sup_{t \in [t_0, 1]} \max_{m \in \{0, 2, 4\}} \frac{S(N, m, t_0)}{\max\{1, S(0, m, t_0)\}}, ~~ t_0 \in (0, 1].\]
This ensures that $\delta_N^{\ref{lemma:linear-part-difference-moment-estimates}}$ is non-increasing, and that it maps into $[0, 1]$. To finish, we need to show that $\delta_N^{\ref{lemma:linear-part-difference-moment-estimates}}(t_0) \ra 0$. To show this, it suffices to show that for $m \in \{0, 2, 4\}$, we have that
\[ \lim_{N \toinf} \sup_{t \in [t_0, 1]} S(N, m, t_0) = 0.\]
Fix $m \in \{0, 2, 4\}$. Note that for any $N \geq 0$, $S(N, m, t)$ is non-increasing in $t$ (this follows since $\hat{\tau}(n) \geq 0$ for all $n \in \Z^d$ --- recall Lemma \ref{lemma:fourier-coeff-uncorrelated}), and thus it just suffices to show that $S(N, m, t_0) \ra 0$. This follows by the fact that $S(0, m, t_0) < \infty$, combined with the definition of $S(N, m, t_0)$ and dominated convergence.
\end{proof}

In what follows, recall the definition of $\dchain_t$ in Definition \ref{def:dchain}. 

\begin{lemma}\label{lemma:linear-part-entropy-estimate}
For any $t \in (0, 1]$, and any $r, s \in (t/2, t]$, $x, y \in \torus^d$, we have that
\[ \|\rconn^{1}(r, x) - \rconn^{1}(s, y)\|_{L^2} \leq \const t^{-(1/4)(d-\alpha)} \dchain_t((r, x), (s, y)).\]
The above inequality is also true with $\rconn^{1}$ replaced by $\rconn^{1}_{N}$ for any $N \geq 0$. Here, $\const$ depends only on $d$ and the constants $\alpha, \constgf$ from Assumption \ref{assumption:bounded-by-fractional-greens-function}.
\end{lemma}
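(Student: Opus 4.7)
\medskip

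\noindent\textbf{Proof proposal for Lemma \ref{lemma:linear-part-entropy-estimate}.} The plan is to split the increment into a purely temporal piece and a purely spatial piece, and estimate each using only the already-established $L^2$ bounds on $\rconn^1$ and its derivatives (Lemma \ref{lemma:linear-part-moment-estimates}). Since Lemma \ref{lemma:linear-part-moment-estimates} is uniform in $N$ and also holds for $\rconn^1_N$, the same argument will give the claim for $\rconn^1_N$ simultaneously, so I will write the proof only for $\rconn^1$. Throughout, we fix $t \in (0,1]$ and $r,s \in (t/2, t]$, $x,y \in \torus^d$.

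\emph{Temporal increment.} Since the modification of $\rconn^1$ given by Lemma \ref{lemma:A-1-modification} is smooth on $(0,1] \times \torus^d$ and solves the heat equation, we may write (assuming WLOG $s \le r$)
\[ \rconn^1(r,x) - \rconn^1(s,x) = \int_s^r \Delta \rconn^1(u,x)\, du. \]
Minkowski's integral inequality gives $\|\rconn^1(r,x) - \rconn^1(s,x)\|_{L^2} \le \int_s^r \|\Delta \rconn^1(u,x)\|_{L^2}\, du$. Expanding $\Delta = \sum_l \partial_l \partial_l$ and applying Lemma \ref{lemma:linear-part-moment-estimates} with $k = 2$, together with $u \in (t/2, t]$, bounds the integrand by $\const\, u^{-(1/4)(d-\alpha) - 1} \le \const\, t^{-(1/4)(d-\alpha) - 1}$. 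Integrating over $[s, r]$ yields
\[ \|\rconn^1(r,x) - \rconn^1(s,x)\|_{L^2} \le \const\, t^{-(1/4)(d-\alpha)}\cdot \frac{|r-s|}{t}. \]

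\emph{Spatial increment.} I split into two cases according to whether $\metricspace(x,y) \le \sqrt{t}$. If $\metricspace(x,y) > \sqrt{t}$, then the spatial part of $\dchain_t$ equals $1$, and the triangle inequality together with Lemma \ref{lemma:linear-part-moment-estimates} (with $k = 0$) gives
\[ \|\rconn^1(s,x) - \rconn^1(s,y)\|_{L^2} \le 2\, s^{-(1/4)(d-\alpha)/?} \le \const\, t^{-(1/4)(d-\alpha)}, \]
(taking $k=0$ in \eqref{eq:A1-bound} and using $s \ge t/2$). If instead $\metricspace(x,y) \le \sqrt{t}$, let $\gamma:[0,1] \to \torus^d$ be a length-minimizing geodesic from $x$ to $y$ (with $|\dot\gamma| \equiv \metricspace(x,y)$), so that
\[ \rconn^1(s,x) - \rconn^1(s,y) = -\int_0^1 \sum_{l \in [d]} \partial_l \rconn^1(s, \gamma(u))\, \dot\gamma_l(u)\, du. \]
Minkowski's inequality together with Lemma \ref{lemma:linear-part-moment-estimates} (with $k=1$) bounds each $\|\partial_l \rconn^1(s,\gamma(u))\|_{L^2}$ by $\const\, s^{-(1/4)(d-\alpha) - 1/2} \le \const\, t^{-(1/4)(d-\alpha) - 1/2}$, giving
\[ \|\rconn^1(s,x) - \rconn^1(s,y)\|_{L^2} \le \const\, t^{-(1/4)(d-\alpha)} \cdot \frac{\metricspace(x,y)}{\sqrt{t}}. \]

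Combining the two cases, the spatial increment is at most $\const\, t^{-(1/4)(d-\alpha)} \min\{\metricspace(x,y)/\sqrt t,\, 1\}$. Adding this to the temporal bound and using the triangle inequality in $L^2$ for the decomposition $\rconn^1(r,x) - \rconn^1(s,y) = [\rconn^1(r,x) - \rconn^1(s,x)] + [\rconn^1(s,x) - \rconn^1(s,y)]$ produces the desired inequality. The step for $\rconn^1_N$ is identical, since Lemma \ref{lemma:linear-part-moment-estimates} gives the same bounds uniformly in $N$. There is no real obstacle here: the only mild subtlety is handling the two regimes in the spatial variable so that the bound matches the truncated quantity $\min\{\metricspace(x,y)/\sqrt{t}, 1\}$ appearing in $\dchain_t$.
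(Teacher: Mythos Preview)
Your proposal is correct and follows essentially the same strategy as the paper: split into a temporal piece (handled via the heat equation $\partial_t \rconn^1 = \Delta \rconn^1$ and the $k=2$ bound of Lemma \ref{lemma:linear-part-moment-estimates}) and a spatial piece (handled via a geodesic and the $k=1$ bound, combined with the trivial $k=0$ bound to get the $\min$). The only cosmetic differences are that you use Minkowski's integral inequality where the paper squares and applies Cauchy--Schwarz, and you interchange the roles of the intermediate point in the triangle-inequality decomposition; neither affects the outcome. (Note the stray ``$/?$'' in your display for the large-distance case is presumably a typo for $\const\, s^{-(1/4)(d-\alpha)}$.)
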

\begin{proof}
We will show the result for $\rconn^1$. The proof for $\rconn^{1}_{N}$ for $N \geq 0$ will be essentially the same. We will show that
\[\begin{split}
\|\rconn^{1}(r, x) - \rconn^{1}(r, y)\|_{L^2} &\leq \const t^{-(1/4)(d-\alpha)} \min\bigg\{ \frac{\metricspace(x, y)}{\sqrt{t}}, 1\bigg\}, \\
\|\rconn^{1}(r, y) - \rconn^{1}(s, y)\|_{L^2} &\leq \const t^{-(1/4)(d-\alpha)} \frac{|r-s|}{t}. \end{split}\] 
Let $\wloop : [0, 1] \ra \torus^d$ be a geodesic from $y$ to $x$, so that $\wloop$ has constant speed $|\wloop'| = \metricspace(x, y)$. Then
\[ \rconn^{1}(r, x) - \rconn^{1}(r, y) = \int_0^1 \nabla \rconn^{1}(r, \wloop(u)) \cdot \wloop'(u) du. \]
We thus have
\[\begin{split}
|\rconn^{1}(r, x) - \rconn^{1}(r, y)|^2 &\leq \int_0^1 | \nabla \rconn^{1}(r, \wloop(u)) \cdot \wloop'(u)|^2  du  \\
&\leq \metricspace(x, y)^2 \int_0^1 | \nabla \rconn^{1}(r, \wloop(u))|^2 du. 
\end{split}\]
Taking expectations and applying \eqref{eq:A1-bound} gives 
\[ \|\rconn^{1}(r, x) - \rconn^{1}(r, y)\|_{L^2} \leq \const t^{-(1/4)(d-\alpha)} \frac{\metricspace(x, y)}{t^{1/2}}. \]
Combining this with \eqref{eq:A1-bound}, we obtain the first desired inequality.

For the second inequality, assume without loss of generality that $s < r$, and note that
\[ \rconn^{1}(r, y) - \rconn^{1}(s, y) = \int_s^r \ptl_u \rconn^{1}(u, y) du = \int_s^r \Delta \rconn^{1}(u, y) du. \]
We thus obtain
\[ |\rconn^{1}(r, y) - \rconn^{1}(s, y)|^2 \leq |r-s| \int_s^r |\Delta \rconn^{1}(u, y)|^2 du. \] 
Applying \eqref{eq:A1-bound}, we obtain the second desired inequality.
\end{proof}

The following result will allow us to show the convergence of $\rconn^1_N$ to $\rconn^1$ (recall the statement of Proposition \ref{prop:linear-part-in-Q-space}).

\begin{lemma}\label{lemma:linear-part-difference-entropy-bound}
There is a sequence $\{\delta_N^{\ref{lemma:linear-part-difference-entropy-bound}}\}_{N \geq 0}$ of functions $\delta_N^{\ref{lemma:linear-part-difference-entropy-bound}} : (0, 1] \ra [0, 1]$, such that the following hold. For any $t \in (0, 1]$, we have that $\lim_{N \toinf} \delta_N^{\ref{lemma:linear-part-difference-entropy-bound}}(t) = 0$. Also, for any $N \geq 0$, $t \in (0, 1]$, $r, s \in (t/2, t]$, $x, y \in \torus^d$, we have that
\[\begin{split} 
\|\mbf{D}^{1}_{N}(r, x)\|_{L^2} &\leq \const t^{-(1/4)(d-\alpha)} \delta_N^{\ref{lemma:linear-part-difference-entropy-bound}}(t), \\
\|\mbf{D}^{1}_{N}(r, x) - \mbf{D}^{1}_{N}(s, y)\|_{L^2} &\leq \const t^{-(1/4)(d-\alpha)} \dchain_t((r, x), (s, y)) \delta_N^{\ref{lemma:linear-part-difference-entropy-bound}}(t).
\end{split}\]
Here, $\const$ depends only on $d$ and the constants $\alpha, \constgf$ from Assumption \ref{assumption:bounded-by-fractional-greens-function}.
\end{lemma}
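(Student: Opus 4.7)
The plan is to mirror the proof of Lemma \ref{lemma:linear-part-entropy-estimate} essentially verbatim, but with $\mbf{D}^1_N$ replacing $\rconn^1$ and with the $N$-sensitive derivative bounds of Lemma \ref{lemma:linear-part-difference-moment-estimates} replacing those of Lemma \ref{lemma:linear-part-moment-estimates}. The key structural observation is that the factor $\delta_N^{\ref{lemma:linear-part-difference-moment-estimates}}(t)$ sits on the right-hand side of an $L^2$-squared estimate, so after taking square roots and carrying the factor through Cauchy--Schwarz and through one-variable integration (over a geodesic, or over time), it propagates simply as $(\delta_N^{\ref{lemma:linear-part-difference-moment-estimates}}(t))^{1/2}$ on the final modulus estimates.

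Accordingly, I will set
\[ \delta_N^{\ref{lemma:linear-part-difference-entropy-bound}}(t) := \bigl(\delta_N^{\ref{lemma:linear-part-difference-moment-estimates}}(t)\bigr)^{1/2}, \]
which lies in $[0,1]$ and tends to $0$ as $N \toinf$ for each fixed $t$. The first claimed inequality $\|\mbf{D}^1_N(r,x)\|_{L^2} \leq \const t^{-(1/4)(d-\alpha)} \delta_N^{\ref{lemma:linear-part-difference-entropy-bound}}(t)$ is then immediate: apply Lemma \ref{lemma:linear-part-difference-moment-estimates} with $k=0$ at $(r,x)$, take square roots, and use $r \in (t/2, t]$ to replace $r^{-(1/2)(d-\alpha)}$ by $\const t^{-(1/2)(d-\alpha)}$ up to an absolute constant.

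For the second inequality I would split via triangle inequality into a spatial piece $\|\mbf{D}^1_N(r,x) - \mbf{D}^1_N(r,y)\|_{L^2}$ and a temporal piece $\|\mbf{D}^1_N(r,y) - \mbf{D}^1_N(s,y)\|_{L^2}$, and bound each exactly as in Lemma \ref{lemma:linear-part-entropy-estimate}. For the spatial piece, I integrate along a constant-speed geodesic $\wloop$ of length $\metricspace(x,y)$ in $\torus^d$, apply Cauchy--Schwarz in the path variable, take expectations, and invoke Lemma \ref{lemma:linear-part-difference-moment-estimates} with $k=1$; this yields a bound with factor $\metricspace(x,y)/\sqrt{t} \cdot \delta_N^{\ref{lemma:linear-part-difference-entropy-bound}}(t)$. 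The truncation by $1$ in $\min\{\metricspace(x,y)/\sqrt{t}, 1\}$ is handled by falling back on the $k=0$ bound and the triangle inequality (twice the first claim). For the temporal piece, I assume WLOG $s < r$ and use that $\mbf{D}^1_N$ satisfies the heat equation on $(0,1] \times \torus^d$ (since both $\rconn^1_N$ and $\rconn^1$ do, after modification); this gives $\ptl_u \mbf{D}^1_N(u,y) = \Delta \mbf{D}^1_N(u,y)$, and integrating from $s$ to $r$ and applying Cauchy--Schwarz in $u$ together with Lemma \ref{lemma:linear-part-difference-moment-estimates} at $k=2$ produces the factor $|r-s|/t$.

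There is no real obstacle: the proof is a bookkeeping exercise that tracks the extra factor $\delta_N^{\ref{lemma:linear-part-difference-moment-estimates}}(t)$ through the argument already executed in Lemma \ref{lemma:linear-part-entropy-estimate}. The only minor points to note are that $r,s \in (t/2, t]$ makes powers of $r, s$, and $t$ interchangeable up to constants (which is what lets us pull $t^{-(1/4)(d-\alpha) - k/2}$ out of the integrals uniformly), and that for the temporal bound one uses that $u \in [s,r] \subset (t/2, t]$ along the path of integration so that $u^{-(1/2)(d-\alpha)-2}$ is controlled by $\const \cdot t^{-(1/2)(d-\alpha)-2}$.
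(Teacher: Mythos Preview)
Your approach is essentially the same as the paper's, and the argument is correct in spirit. There is, however, one small but genuine slip in your definition of $\delta_N^{\ref{lemma:linear-part-difference-entropy-bound}}$. You set $\delta_N^{\ref{lemma:linear-part-difference-entropy-bound}}(t) := (\delta_N^{\ref{lemma:linear-part-difference-moment-estimates}}(t))^{1/2}$, but when you apply Lemma~\ref{lemma:linear-part-difference-moment-estimates} at a point $(r,x)$ or $(u,y)$ with $r,u \in (t/2,t]$, the bound you obtain involves $\delta_N^{\ref{lemma:linear-part-difference-moment-estimates}}(r)$ or $\delta_N^{\ref{lemma:linear-part-difference-moment-estimates}}(u)$, not $\delta_N^{\ref{lemma:linear-part-difference-moment-estimates}}(t)$. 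Since $\delta_N^{\ref{lemma:linear-part-difference-moment-estimates}}$ is non-increasing and $r,u \leq t$, you have $\delta_N^{\ref{lemma:linear-part-difference-moment-estimates}}(r) \geq \delta_N^{\ref{lemma:linear-part-difference-moment-estimates}}(t)$, which is the wrong direction. What works is to use $r,u > t/2$ together with monotonicity to get $\delta_N^{\ref{lemma:linear-part-difference-moment-estimates}}(r), \delta_N^{\ref{lemma:linear-part-difference-moment-estimates}}(u) \leq \delta_N^{\ref{lemma:linear-part-difference-moment-estimates}}(t/2)$. The paper accordingly defines $\delta_N^{\ref{lemma:linear-part-difference-entropy-bound}}(t) := (\delta_N^{\ref{lemma:linear-part-difference-moment-estimates}}(t/2))^{1/2}$. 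With that adjustment your proof goes through exactly as you describe, and matches the paper's.
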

\begin{proof}
Let $\{\delta_N^{\ref{lemma:linear-part-difference-moment-estimates}}\}_{N \geq 0}$ be the sequence of functions from Lemma \ref{lemma:linear-part-difference-moment-estimates}. For $t \in (0, 1]$, let $\delta_N^{\ref{lemma:linear-part-difference-entropy-bound}}(t) := (\delta_N^{\ref{lemma:linear-part-difference-moment-estimates}}(t/2))^{1/2}$. The first inequality follows by Lemma~\ref{lemma:linear-part-difference-moment-estimates} and the fact that $\delta_N^{\ref{lemma:linear-part-difference-moment-estimates}}$ is non-increasing. The second inequality follows by the same argument as in the proof of Lemma \ref{lemma:linear-part-entropy-estimate}, where we use Lemma \ref{lemma:linear-part-difference-moment-estimates} in place of Lemma \ref{lemma:linear-part-moment-estimates}. In the course of the argument, we also use that $\delta_N^{\ref{lemma:linear-part-difference-moment-estimates}}$ is non-increasing, so that the value of this function at $u \in (t/2, t]$ is bounded by its value at $t/2$.
\end{proof}

By combining Lemmas \ref{lemma:linear-part-entropy-estimate} and \ref{lemma:linear-part-difference-entropy-bound} with Assumption \ref{assumption:tail-bounds}, we obtain the following result.

\begin{lemma}\label{lemma:A-1-tail-bounds}
For $t \in (0, 1]$, $r, s \in (t/2, t]$, $x, y \in \torus^d$, we have that for $u \geq 0$,
\[ \p(|\rconn^{1}(r, x)| > u) \leq \const \exp(- (u /t^{-(1/4)(d-\alpha)})^{\exptb}),\]
\[\begin{split}
\p(|\rconn^{1}(r, x) -  &\rconn^{1}(s, y)| > u) \leq  \\
&\const \exp\big(- \big(u / {(t^{-(1/4)(d-\alpha)} \dchain_t((r, x), (s, y)))}\big)^{\exptb} / C\big).
\end{split}\]
The above inequalities are also true with $\rconn^{1}$ replaced by $\rconn^{1}_{N}$ for any $N \geq 0$. Additionally, let $\{\delta_N^{\ref{lemma:linear-part-difference-entropy-bound}}\}_{N \geq 0}$ be the sequence of functions from Lemma \ref{lemma:linear-part-difference-entropy-bound}. Then for any $N \geq 0$, we have that for $u \geq 0$,
\[ \p(|\mbf{D}^{1}_{N}(r, x)| > u) \leq \const \exp\big(- (u / ({t^{-(1/4)(d-\alpha)} \delta_N^{\ref{lemma:linear-part-difference-entropy-bound}}(t)}))^{\exptb} / C\big),\]
\[ \begin{split}
\p(|\mbf{D}^{1}_{N}(r, x) &- \mbf{D}^{1}_{N}(s, y)| > u) \leq \\
&\const \exp\big(- (u / ({t^{-(1/4)(d-\alpha)} \dchain_t((r, x), (s, y)) \delta_N^{\ref{lemma:linear-part-difference-entropy-bound}}(t)}))^{\exptb} / C\big).
\end{split}\]
Here, $\const$ depends only on $d$ and the constants $\exptb, \consttb, \alpha, \constgf$ from Assumptions \ref{assumption:tail-bounds} and \ref{assumption:bounded-by-fractional-greens-function}.
\end{lemma}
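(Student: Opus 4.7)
The plan is to reduce all five inequalities to a single application of Assumption \ref{assumption:tail-bounds}, using the fact that $\rconn^1(r,x)$, $\rconn^1_N(r,x)$, and $\mbf{D}^1_N(r,x)$ are all of the form $(\rconn^0, \phi)$ for suitable deterministic real-valued smooth test functions $\phi$ on $\torus^d$. Concretely, by Definition~\ref{def:A-1} we have $\rconn^1(r,x) = (\rconn^0, \Phi(r, x-\cdot))$, and similarly $\rconn^1_N(r,x) = (\rconn^0, \FT_N\Phi(r, x-\cdot))$ (using that $\FT_N$ is self-adjoint with respect to the $L^2$ inner product and that $\FT_N \rconn^0 = \rconn^0_N$ pulls through the pairing). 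Consequently, by linearity of the pairing,
\[ \mbf{D}^1_N(r,x) = (\rconn^0, \FT_N\Phi(r,x-\cdot) - \Phi(r,x-\cdot)), \]
and the two-point differences $\rconn^1(r,x) - \rconn^1(s,y)$, $\rconn^1_N(r,x) - \rconn^1_N(s,y)$, $\mbf{D}^1_N(r,x) - \mbf{D}^1_N(s,y)$ are likewise pairings of $\rconn^0$ against a fixed real smooth test function (a difference of heat kernels, or of their Fourier truncations, etc.).

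Once each random variable is written as $(\rconn^0, \phi)$, Assumption~\ref{assumption:tail-bounds} immediately gives
\[ \p(|(\rconn^0, \phi)| > u) \leq \consttb \exp(-(u/\sigma_\phi)^{\exptb}/\consttb), \qquad u \geq 0, \]
where $\sigma_\phi = \|(\rconn^0, \phi)\|_{L^2}$. Thus I would just need to plug in the $L^2$ bounds already established. For the single-point bound on $\rconn^1$ I would use Lemma~\ref{lemma:linear-part-moment-estimates} with $k=0$ (and, as remarked after Lemma~\ref{lemma:linear-part-moment-estimates}, this also covers $\rconn^1_N$). For the two-point bound on $\rconn^1$ and $\rconn^1_N$ I would use Lemma~\ref{lemma:linear-part-entropy-estimate}. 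For the $\mbf{D}^1_N$ bounds I would use the two inequalities in Lemma~\ref{lemma:linear-part-difference-entropy-bound}. In each case, $\sigma_\phi$ is bounded by the corresponding right-hand side, so monotonicity of $u \mapsto \exp(-(u/a)^{\exptb})$ in $a$ yields the stated tail estimates (absorbing $\consttb$ and other absolute constants into the single symbol $\const$, which is allowed to depend on $d$ together with $\exptb, \consttb, \alpha, \constgf$).

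There is essentially no obstacle, since all the analytic work was done in Lemmas \ref{lemma:linear-part-moment-estimates}--\ref{lemma:linear-part-difference-entropy-bound}; the only bookkeeping point is to make sure the test functions are real-valued (so that Assumption~\ref{assumption:tail-bounds} applies as stated), which is immediate because $\Phi(r,\cdot)$ is a real-valued smooth function on $\torus^d$ for every $r>0$, and $\FT_N$ preserves this property. I would also note in passing that if $\sigma_\phi = 0$ then $(\rconn^0, \phi) = 0$ a.s.~by Assumption~\ref{assumption:l2-regularity}, so the tail bound is trivial; otherwise we divide by $\sigma_\phi$ as above. This yields all five stated inequalities in a uniform fashion.
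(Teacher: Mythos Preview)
Your proposal is correct and follows essentially the same approach as the paper: write each quantity as $(\rconn^0,\phi)$ for a suitable real smooth test function (heat kernel, Fourier-truncated heat kernel, or differences thereof), invoke Assumption~\ref{assumption:tail-bounds}, and plug in the $L^2$ bounds from Lemmas~\ref{lemma:linear-part-moment-estimates}, \ref{lemma:linear-part-entropy-estimate}, and \ref{lemma:linear-part-difference-entropy-bound}. Your extra bookkeeping remarks (real-valuedness of $\Phi$, the trivial case $\sigma_\phi=0$) are fine and do not change the argument.
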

\begin{proof}
We will prove the first two inequalities for $\rconn^{1}$. The proof for $\rconn^{1}_{N}$ for $N \geq 0$ will be essentially the same. Note that (recall \eqref{eq:heat-kernel-def} for the definition of $\Phi$)
\[ \rconn^{1}(r, x) \stackrel{a.s.}{=} (\rconn^{0}, \Phi(r, x - \cdot)). \]
The first inequality now follows by Assumption \ref{assumption:tail-bounds} and the fact that 
\[
\|\rconn^{1}(r, x)\|_{L^2} \leq \const t^{-(1/4)(d-\alpha)}
\]
(which holds by Lemma \ref{lemma:linear-part-moment-estimates}). Similarly, note that
\[ \begin{split}
\rconn^{1}(r, x) - \rconn^{1}(s, y) &\stackrel{a.s.}{=} (\rconn^{0}, \Phi(r, x - \cdot)) - (\rconn^{0}, \Phi(s, y - \cdot)) \\
&\stackrel{a.s.}{=} (\rconn^{0}, \Phi(r, x - \cdot) - \Phi(s, y - \cdot)). 
\end{split}\]
The second inequality now follows by Assumption \ref{assumption:tail-bounds} and Lemma \ref{lemma:linear-part-entropy-estimate}. For the last two inequalities, note that (recall Definition \ref{def:A-1-N})
\[ \rconn^{1}_{N}(r, x) \stackrel{a.s.}{=} (\FT_N \rconn^{0}, \Phi(r, x - \cdot)) = (\rconn^0, (\FT_N \Phi)(r, x - \cdot)) . \]
We then proceed as before, using Lemma \ref{lemma:linear-part-difference-entropy-bound} instead of Lemmas \ref{lemma:linear-part-moment-estimates} and \ref{lemma:linear-part-entropy-estimate}.
\end{proof}

The tail bounds from Lemma \ref{lemma:A-1-tail-bounds} allow us to obtain following result.

\begin{lemma}\label{lemma:A-1-exp-sup-local}
For any $p \geq 1$, $t_0 \in (0, 1]$, we have that
\[ \E\bigg[\sup_{\substack{t \in (t_0/2, t_0] \\ x \in \torus^d}} |\rconn^{1}(t, x)|^p\bigg] \leq \const t^{-p(1/4)(d-\alpha)} (1 + |\log t_0|^{1/\exptb})^p.  \]
The above also holds with $\rconn^{1}$ replaced by $\rconn^{1}_{N}$ for any $N \geq 0$. Additionally, let $\{\delta_N^{\ref{lemma:linear-part-difference-entropy-bound}}\}_{N \geq 0}$ be the sequence of functions from Lemma \ref{lemma:linear-part-difference-entropy-bound}. Then for any $N \geq 0$, we have that
\[\begin{split}
\E\bigg[\sup_{\substack{t \in (t_0/2, t_0] \\ x \in \torus^d}} |\rconn^{1}_{N}(t, x)& - \rconn^{1}(t, x)|^p\bigg] \leq \\
&\const t^{-p(1/4)(d-\alpha)} (1 + |\log t_0|^{1/\exptb})^p (\delta_N^{\ref{lemma:linear-part-difference-entropy-bound}}(t_0))^p. 
\end{split}\]
Here, $\const$ depends only on $d, p$ and the constants $\exptb, \consttb, \alpha, \constgf$ from Assumptions \ref{assumption:tail-bounds} and \ref{assumption:bounded-by-fractional-greens-function}.
\end{lemma}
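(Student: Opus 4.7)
The plan is to view the restriction of $\rconn^1$ to $(t_0/2, t_0] \times \torus^d$ as a stochastic process indexed by the metric space $((t_0/2, t_0] \times \torus^d, \dchain_{t_0})$, and apply a chaining argument to its $\psi_{\exptb}$-type increment bounds established in Lemma \ref{lemma:A-1-tail-bounds}. Since $\rconn^1$ has smooth sample paths (by our standing modification), the supremum over the uncountable set is a genuine random variable, equal to a supremum over any countable dense subset.

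First, I would reduce the supremum bound to a supremum of increments. Fix a reference point $(r_0, x_0) \in (t_0/2, t_0] \times \torus^d$. Then
\[
\sup_{(r,x)} |\rconn^1(r,x)|^p \leq 2^{p-1} |\rconn^1(r_0,x_0)|^p + 2^{p-1}\sup_{(r,x)} |\rconn^1(r,x) - \rconn^1(r_0,x_0)|^p.
\]
Integrating the first tail bound from Lemma \ref{lemma:A-1-tail-bounds} yields $\E[|\rconn^1(r_0,x_0)|^p] \leq \const_p \, t_0^{-p(1/4)(d-\alpha)}$, which is of the desired order (absorbing the constant into the $(1 + |\log t_0|^{1/\exptb})^p$ factor).

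The main step is bounding the $L^p$ norm of the increment supremum. Setting $K := \const \, t_0^{-(1/4)(d-\alpha)}$, the second bound in Lemma \ref{lemma:A-1-tail-bounds} says that the increments of the process $\rconn^1$ on $((t_0/2, t_0] \times \torus^d, \dchain_{t_0})$ satisfy a $\psi_{\exptb}$-type tail bound with scale $K \cdot \dchain_{t_0}$. I would then invoke the standard generic chaining / Dudley-type bound for processes with Orlicz $\psi_{\exptb}$ increments (see e.g.\ van der Vaart--Wellner or Ledoux--Talagrand, with the usual modifications handling the quasi-norm case $\exptb < 1$) to obtain, for each $p \geq 1$,
\[
\E\Big[\sup_{(r,x)} |\rconn^1(r,x) - \rconn^1(r_0,x_0)|^p\Big] \leq \const_p \, K^p \Big(\int_0^\infty (\log N_{t_0,\varep})^{1/\exptb} \, d\varep\Big)^p.
\]
Applying Lemma \ref{lemma:entropy-integral-bound} with $\beta = 1/\exptb$ then bounds the entropy integral by $\const (1 + |\log t_0|^{1/\exptb})$, giving the first inequality of the lemma.

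The second inequality (for $\rconn^1_N$) is proved identically, since Lemma \ref{lemma:A-1-tail-bounds} gives the same tail estimates for $\rconn^1_N$. For the third inequality (for $\rconn^1_N - \rconn^1 = \mbf{D}^1_N$), the chaining argument proceeds verbatim but with scale $K \cdot \delta_N^{\ref{lemma:linear-part-difference-entropy-bound}}(t_0)$ in place of $K$, using the last two tail bounds in Lemma \ref{lemma:A-1-tail-bounds}; the extra factor $\delta_N^{\ref{lemma:linear-part-difference-entropy-bound}}(t_0)^p$ is what produces the asserted decay in $N$. The one technical point that requires care is invoking a chaining bound at the $L^p$ level with the entropy integral raised to the $p$-th power; if preferred, one can instead derive a tail bound for the supremum (of the form $\p(\sup \geq u) \lesssim \exp(-(u/M)^{\exptb}/\const)$ with $M = K \cdot (1 + |\log t_0|^{1/\exptb})$) and then integrate to get the $L^p$ bound, which is equivalent and perhaps cleaner.
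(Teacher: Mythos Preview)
Your proposal is correct and follows essentially the same route as the paper. The paper rescales to a process $X_{t,x} = \const^{-1/\exptb}\, t_0^{(1/4)(d-\alpha)} \rconn^1(t,x)$, reads off the $\psi_{\exptb}$-type pointwise and increment tail bounds from Lemma \ref{lemma:A-1-tail-bounds}, and then applies a packaged chaining result (Theorem \ref{thm:dirksen-exp-sup}, from Dirksen) together with Lemma \ref{lemma:entropy-integral-bound}; this is exactly your strategy of ``reference point $+$ chaining on increments,'' with the only cosmetic difference being that the paper invokes Dirksen's $L^p$ chaining bound directly rather than the van der Vaart--Wellner or Ledoux--Talagrand versions you cite.
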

\begin{proof}
Fix $p \geq 1$. Define the stochastic process $(X_{t, x}, (t, x) \in (t_0/2, t_0] \times \torus^d)$ by
\[ X_{t, x} := (\const^{-1/\exptb}) t^{(1/4)(d-\alpha)} \rconn^{1}(t, x), \]
where $\const$ is the constant from Lemma \ref{lemma:A-1-tail-bounds}. Then by Lemma \ref{lemma:A-1-tail-bounds}, we have that for $(r, x), (s, y) \in (t_0/2, t_0] \times \torus^d$, $u \geq 0$,
\[
\p(|X_{r, x}| > u) \leq \const \exp(-u^\exptb), \]
\[\p\big(|X_{r, x} - X_{s, y}| > u \dchain_t((r, x), (s, y))\big) \leq \const \exp(-u^\exptb). \]
By the first inequality and \cite[Lemma A.2]{Dirksen2015}, we have that 
\[ \sup_{(r, x) \in (t_0/2, t_0] \times \torus^d} \E\big[|X_{r, x}|^p] \leq \const. \]
Using this bound, together with the second inequality in the previous display and the tail bound from Lemma~\ref{lemma:entropy-integral-bound}, we can now apply Theorem \ref{thm:dirksen-exp-sup} to obtain the first desired result. The second desired result follows similarly.
\end{proof}

We can now finally prove Proposition \ref{prop:linear-part-in-Q-space}. 

\begin{proof}[Proof of Proposition \ref{prop:linear-part-in-Q-space}]
First, observe that for $t \in (0, 1]$, we have that (applying \eqref{eq:heat-semigroup-Cr-C-r-plus-half} with $u = 1$)
\[ \|\rconn^1(t)\|_{C^1} = \|e^{(t/2)\Delta} \rconn^1(t/2)\|_{C^1} \leq \const t^{-1/2} \|\rconn^1(t/2)\|_{C^0}.\]
It follows that
\[ \sup_{t \in (0, 1]} t^{(1/2) + \gamma_\varep} \|\rconn^1(t)\|_{C^1} \leq \const \sup_{t \in (0, 1]} t^{\gamma_\varep} \|\rconn^1(t)\|_{C^0}.\]
The same thing holds with $\rconn^1$ replaced by $\rconn^1_N$ for any $N \geq 0$. Thus for the first desired result, it suffices to show that
\[ \sup_{N \geq 0} \E\bigg[\sup_{t \in (0, 1]} t^{p\gamma_\varep} \|\rconn^1_N(t)\|_{C^0}^p \bigg], ~~ \E\bigg[\sup_{t \in (0, 1]} t^{p\gamma_\varep} \|\rconn^1(t)\|_{C^0}^p \bigg] < \infty. \]
Similarly, for the second desired result, it suffices to show that
\[ \lim_{N \toinf} \E \bigg[\sup_{t \in (0, 1]} t^{p\gamma_\varep} \|\rconn^1_N(t) - \rconn^1(t)\|_{C^0}^p \bigg] = 0. \]
The first result follows by combining Lemma \ref{lemma:A-1-exp-sup-local} with Lemma \ref{lemma:first-chaining-bound}. The second result follows by combining Lemma \ref{lemma:A-1-exp-sup-local} with Lemma \ref{lemma:chaining-bound-convergence-to-0}.
\end{proof}

\subsection{Nonlinear part}\label{section:technical-proofs-nonlinear-part} 

As in Section \ref{section:nonlinear-part}, we assume throughout this section that $\rconn^0$ is a random $\lalg^d$-valued distribution satisfying Assumptions \ref{assumption:l2-regularity}-\ref{assumption:four-product-assumption}.
For this section, we just assume that Assumption \ref{assumption:bounded-by-fractional-greens-function} holds with $\alpha \in (d/2, d)$.  These assumptions hold, even if they are not explicitly stated in the various lemmas, corollaries, or propositions. 

For many of the arguments in this section, we will work with the scalar quantities $\rconn^{2, a_0}_{N, j_0}$ instead of the vector quantity $\rconn^2_N$. Accordingly, recall the definitions of $I$ and $d$ in Definition \ref{def:I-d}, and recall that by Lemma~\ref{lemma:rho-A-1-smooth}, $\rconn^{2, a_0}_{N, j_0}$ has the following explicit form (in the following, $m = (n^1, n^2)$, $a = (a_0, a_1, a_2)$, $j = (j_0, j_1, j_2)$):
\beq\label{eq:A-2-N-explicit}\begin{split}
&\rconn^{2, a_0}_{N, j_0}(t_0, x_0) = \\
&\sum_{\substack{a_1, a_2 \in [\lalgdim] \\ j_1, j_2 \in [d]}} \sum_{\substack{n^1, n^2 \in \Z^d \\ |n^1|_\infty, |n^2|_\infty \leq N}} I(m, t_0) d(m, a, j) \hat{\rconn}^{0, a_1}_{N, j_1}(n^1) \hat{\rconn}^{0, a_2}_{N, j_2}(n^2) e_{n^1 + n^2}(x_0). \end{split}\eeq

\begin{definition}
For $N \geq 0$, $k \geq 0$, $t \geq 0$, define
\[\begin{split}
S(N&, k, t) := \\
&\sum_{\substack{n^1, n^2 \in \Z^d \\ \max\{|n^1|_\infty, |n^2|_\infty\} \geq N}} |n^1 + n^2|^k I((n^1, n^2), t)^2 (|n^1|^2 + |n^2|^2) \hat{\tau}(n^1) \hat{\tau}(n^2).
\end{split}\]
\end{definition}

We now state the following technical lemma, which is one of the key intermediate steps for proving the results of Section \ref{section:nonlinear-part}. The proof is in Appendix \ref{appendix:tau1-tau2-bound}.

\begin{lemma}\label{lemma:tau-1-tau-2-bound}
For all $k \geq 0$, there exists a sequence $\{\delta_{N, k}^{\ref{lemma:tau-1-tau-2-bound}}\}_{N \geq 0}$ of non-increasing functions $\delta_{N, k}^{\ref{lemma:tau-1-tau-2-bound}} : (0, 1] \ra [0, 1]$ such that the following hold. For any $t \in (0, 1]$, we have that $\lim_{N \toinf} \delta_{N, k}^{\ref{lemma:tau-1-tau-2-bound}}(t) = 0$. Also, for any $N \geq 0$, 
$t \in (0, 1]$, we have that
\[ S(N, k, t) \leq \const_k t^{-(d-1 + (k/2)-\alpha)} \delta_{N, k}^{\ref{lemma:tau-1-tau-2-bound}}(t).\]
Here, $\const_k$ depends only on $k, d$, and the constants $\alpha, \constgf$ from Assumption \ref{assumption:bounded-by-fractional-greens-function}.
\end{lemma}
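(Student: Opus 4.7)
The plan is to first establish the untruncated estimate $S(0, k, t) \leq \const_k t^{-(d-1+k/2-\alpha)}$, and then extract the vanishing factor $\delta_{N,k}^{\ref{lemma:tau-1-tau-2-bound}}(t)$ via a soft monotonicity argument. Writing $p = |n^1+n^2|^2$ and $q = |n^1|^2 + |n^2|^2$, a direct bound on the exponent of the integrand defining $I(m, t)$ gives the elementary estimate $I(m,t) \leq t\, e^{-4\pi^2 \min(p,q) t}$, and I will split the sum $S(0,k,t)$ into the regions $p < q$ and $q \leq p$.

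In the region $p < q$, I would use $I(m,t)^2 \leq t^2 e^{-8\pi^2 q t}$ together with $|n^1+n^2|^k q \leq \const_k (|n^1|^{k+2} + |n^2|^{k+2})$, which makes the double sum factor into a product of single sums in $n^1$ and $n^2$. Each factor has the form $\sum_n |n|^j e^{-c|n|^2 t} \hat{\tau}(n) \leq \const_j t^{-(d+j-\alpha)/2}$, which is precisely what Lemma \ref{lemma:heat-semigroup-covariance} supplies. The exponent arithmetic produces $\const_k t^{-(d-1+k/2-\alpha)}$.

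The region $q \leq p$ is the heart of the argument and the main obstacle: the only exponential decay available is $e^{-4\pi^2 p t}$, which does not factor in $n^1$ and $n^2$ separately, and using just $I(m,t) \leq t$ would lose a factor of $t^{-1}$ and produce only $\const_k t^{-(d-\alpha+k/2)}$, which is too weak when $\alpha > d/2$. The remedy is the integration-by-parts identity
\[ q\,I(m,t) = p\,I(m,t) + \frac{1}{4\pi^2}\bigl(e^{-4\pi^2 p t} - e^{-4\pi^2 q t}\bigr), \]
which in the regime $p \leq q$ gives the pointwise bound $I(m,t)^2 q \leq p\,I(m,t)^2 + (4\pi^2)^{-1} I(m,t) e^{-4\pi^2 p t} \leq t^2 p\, e^{-8\pi^2 p t} + (4\pi^2)^{-1} t\, e^{-8\pi^2 p t}$. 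After multiplying by $|n^1+n^2|^k$ and making the change of variables $m = n^1 + n^2$, both contributions become sums of the form $\sum_m |m|^j e^{-8\pi^2 |m|^2 t}\widehat{\tau^2}(m)$, which are controlled by $\|e^{ct\Delta}\tau^2\|_{C^j}$ (using $\widehat{\tau^2} \geq 0$, a consequence of $\hat{\tau} \geq 0$ from Lemma \ref{lemma:fourier-coeff-uncorrelated}). The decisive input is $\|e^{t\Delta}\tau^2\|_{C^0} \leq \const\, t^{-(d-\alpha)}$, which uses $\alpha > d/2$ in an essential way: since $|\tau| \leq \const(G^\alpha_0 + 1)$ by Assumption \ref{assumption:bounded-by-fractional-greens-function}, comparing asymptotics at the origin yields $(G^\alpha_0)^2 \leq \const(G^{2\alpha-d}_0 + 1)$ with $2\alpha - d \in (0, d)$, and Lemma \ref{lemma:fractional-greens-function-heat-kernel} produces the bound; standard heat-kernel smoothing then upgrades this to $\|e^{t\Delta}\tau^2\|_{C^j} \leq \const_j\, t^{-(d-\alpha) - j/2}$.

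With the untruncated bound in hand, I would define
\[ \delta_{N,k}^{\ref{lemma:tau-1-tau-2-bound}}(t_0) := \sup_{t \in [t_0, 1]} \frac{S(N,k,t)}{\max(1, S(0,k,t))}, \]
which is visibly non-increasing in $t_0$ and takes values in $[0,1]$ (since $\hat{\tau} \geq 0$ forces $S(N,k,\cdot) \leq S(0,k,\cdot)$). For each fixed $t$, monotone convergence gives $S(N,k,t) \to 0$ as $N \to \infty$ (the full sum is finite by the untruncated bound), and because $S(N,k,\cdot)$ is a monotone sequence of continuous functions on the compact set $[t_0, 1]$ converging pointwise to $0$, Dini's theorem upgrades this to uniform convergence, so $\delta_{N,k}^{\ref{lemma:tau-1-tau-2-bound}}(t_0) \to 0$ as $N \to \infty$. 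Combining this with the untruncated estimate yields the claimed inequality after absorbing the $\max(1,\cdot)$ into the constant $\const_k$.
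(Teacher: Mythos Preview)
Your region labels are swapped. With your own bound $I(m,t) \leq t\,e^{-4\pi^2 \min(p,q) t}$, the region where the decay factors as $e^{-c q t} = e^{-c|n^1|^2 t}e^{-c|n^2|^2 t}$ is the region $q \leq p$, not $p < q$; conversely, the region where only $e^{-c p t}$ is available (and the algebraic identity is needed) is $p < q$. You in fact use the identity ``in the regime $p \leq q$'', which is consistent with this, so the slip is purely in the two sentences describing the split. Once the labels are corrected, the argument is sound: the identity $qI = pI + (4\pi^2)^{-1}(e^{-4\pi^2 pt}-e^{-4\pi^2 qt})$ is exact, the resulting bounds $I^2 q \leq t^2 p\,e^{-8\pi^2 pt} + (4\pi^2)^{-1} t\,e^{-8\pi^2 pt}$ are correct in the region $p \leq q$, the change of variables $m = n^1+n^2$ gives $\sum_m |m|^j e^{-c|m|^2 t}\widehat{\tau^2}(m)$ with $\widehat{\tau^2} \geq 0$ (since $\hat{\tau} \geq 0$ and $\tau \in L^2$ by $\alpha > d/2$), and the pointwise comparison $(G_0^\alpha + C)^2 \leq C'(G_0^{2\alpha - d} + C')$ together with heat-kernel monotonicity and Lemma~\ref{lemma:fractional-greens-function-heat-kernel} gives the crucial $\|e^{t\Delta}\tau^2\|_{C^0} \leq C t^{-(d-\alpha)}$.

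This is a genuinely different route from the paper's. The paper writes $I(m,t)^2$ as an integral $\int_0^{2t} h_t(u)\,e^{-4\pi^2 p(2t-u)} e^{-4\pi^2 q u}\,du$, splits the $u$-integral at $u=t$, and absorbs the polynomial factors $|n^1+n^2|^k$ and $q$ into the two exponentials separately; both pieces reduce to bounding $F_{0,t}(u) = \big(e^{(2t-u)\Delta}((e^{(u/2)\Delta}\tau)^2)\big)(0)$, and the paper proves $F_{0,t}(u) \leq C t^{-(d-\alpha)}$ via the Fourier-side convolution estimate $\sum_{k \neq 0,n}|k|^{-\alpha}|n-k|^{-\alpha} \leq C\min\{1,|n|^{-(2\alpha-d)}\}$ (Lemma~\ref{lemma:technical-sum-bound}). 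Your real-space comparison of $(G_0^\alpha)^2$ with $G_0^{2\alpha-d}$ is the physical-space counterpart of that convolution estimate and sidesteps the integral representation entirely; the paper's approach, on the other hand, never needs the algebraic identity for $qI$. Both proofs hinge on the same $\alpha > d/2$ phenomenon (that the square of a $G_0^\alpha$-type singularity is still integrable and behaves like $G_0^{2\alpha-d}$), just expressed on opposite sides of the Fourier transform. Your Dini-theorem step for $\delta_{N,k}^{\ref{lemma:tau-1-tau-2-bound}}(t_0) \to 0$ is a clean alternative to the paper's hands-on dominated-convergence argument; note it requires continuity of $t \mapsto S(N,k,t)$ on $[t_0,1]$, which does hold (in both regions $\min(p,q) \geq q/2$, so the exponential decay is uniform on compacts of $(0,1]$), but is worth stating explicitly.
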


We proceed to use Lemma \ref{lemma:tau-1-tau-2-bound} to obtain moment bounds on $\rconn^2_N$.

\begin{lemma}\label{lemma:A-2-bounds}
For any $k \in \{0, 1, 2, 3\}$, $l_1, \ldots, l_k \in [d]$, $N \geq 0$, 
$t \in (0, 1]$, $x \in \torus^d$, we have that
\[ \E\big[|\ptl_{l_1 \cdots l_k} \rconn^{2}_{N}(t, x)|^2\big] \leq \const t^{-(d + k - 1-\alpha)}. \]
Here, $\const$ depends only on $d$ and the constants $\alpha, \constgf, \constfourp$ from Assumptions \ref{assumption:bounded-by-fractional-greens-function}, \ref{assumption:four-product-assumption}.
\end{lemma}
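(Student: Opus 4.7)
The plan is to plug in the explicit Fourier expansion from Lemma~\ref{lemma:rho-A-1-smooth}, differentiate termwise (which is justified by the rapid decay \eqref{eq:fourier-coefficients-rapid-decay-general-dim} combined with $|d(m,a,j)|\le C(|n^1|+|n^2|)$), and then estimate the resulting second moment by exploiting the four-point assumption. Concretely, each derivative pulls out a factor $\icomplex 2\pi(n^1+n^2)_{l_s}$, so $\ptl_{l_1\cdots l_k}\rconn^{2,a_0}_{N,j_0}(t,x)$ is a double sum over $(a_1,a_2,j_1,j_2,n^1,n^2)$ (truncated to $|n^i|_\infty\le N$) whose summand is bilinear in the Fourier coefficients $\hat{\rconn}^{0,a_i}_{j_i}(n^i)$.

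Squaring and taking expectation produces a quadruple sum whose probabilistic content is the four-point function $\E[\hat{\rconn}^{0,a_1}_{j_1}(n^1)\hat{\rconn}^{0,a_2}_{j_2}(n^2)\overline{\hat{\rconn}^{0,a'_1}_{j'_1}(n^{1\prime})\hat{\rconn}^{0,a'_2}_{j'_2}(n^{2\prime})}]$. By Remark~\ref{remark:I-and-d-properties}, $d(m,a,j)=0$ unless $a_0,a_1,a_2$ are pairwise distinct, so every non-vanishing term satisfies $a_1\neq a_2$ and $a'_1\neq a'_2$; this is precisely the hypothesis needed to invoke Assumption~\ref{assumption:four-product-assumption}. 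That assumption dominates the four-point function by a sum of products of two-point functions (in either the ``straight'' or ``crossed'' pairing), and each such two-point function is in turn bounded in absolute value by $\ind(n=n')\hat{\tau}(n)$ via Lemma~\ref{lemma:fourier-coeff-uncorrelated}. The Kronecker constraints force $\{n^{1\prime},n^{2\prime}\}=\{n^1,n^2\}$, collapsing the quadruple $n$-sum to a double sum weighted by $\hat{\tau}(n^1)\hat{\tau}(n^2)$.

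What remains is purely deterministic. Using $|d(m,a,j)|^2\le C(|n^1|^2+|n^2|^2)$, $|(n^1+n^2)_{l_s}|^2\le|n^1+n^2|^2$, and summing over the finitely many remaining Lie-algebra and coordinate indices converts the bound to
\[C\sum_{n^1,n^2\in\Z^d}|n^1+n^2|^{2k}I((n^1,n^2),t)^2(|n^1|^2+|n^2|^2)\hat{\tau}(n^1)\hat{\tau}(n^2)=C\cdot S(0,2k,t).\]
Applying Lemma~\ref{lemma:tau-1-tau-2-bound} with its parameter set to $2k$ gives $S(0,2k,t)\le C_k t^{-(d+k-1-\alpha)}$; summing over the components $(a_0,j_0)$ at the cost of a $(d,\lalgdim)$-dependent constant then yields the claimed bound on $\E[|\ptl_{l_1\cdots l_k}\rconn^2_N(t,x)|^2]$.

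The main obstacle will be the bookkeeping in step two, namely reorganizing the quadruple sum so that Assumption~\ref{assumption:four-product-assumption} genuinely applies to each surviving term---one must split cases according to how $(a_1,j_1),(a_2,j_2)$ coincide with $(a'_1,j'_1),(a'_2,j'_2)$. The crucial subtlety, flagged in the remark following Assumption~\ref{assumption:four-product-assumption}, is that the ``anomalous'' Wick pairing $|\E[Z_1Z_2]\E[\overline{Z_3}\,\overline{Z_4}]|$ is deliberately absent from the assumed bound: were such a term present, the pairings $n^{1\prime}=-n^{2\prime}$ and $n^1=-n^2$ would generate an $n^1+n^2=0$ diagonal whose weight would diverge, whereas its exclusion is exactly what keeps the final sum controlled by $S(0,2k,t)$, which is finite in the regime $\alpha>d/2$ assumed in this section.
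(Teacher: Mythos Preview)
Your proposal is correct and follows essentially the same route as the paper: expand via Lemma~\ref{lemma:rho-A-1-smooth}, use $a_1\neq a_2$ to invoke Assumption~\ref{assumption:four-product-assumption}, collapse the four-point function via Lemma~\ref{lemma:fourier-coeff-uncorrelated}, and finish with Lemma~\ref{lemma:tau-1-tau-2-bound} at parameter $2k$.

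The one organizational difference worth flagging is how the Lie-algebra/coordinate index bookkeeping is handled. You describe expanding the full square of $\rconn^{2,a_0}_{N,j_0}$ and then ``splitting cases according to how $(a_1,j_1),(a_2,j_2)$ coincide with $(a'_1,j'_1),(a'_2,j'_2)$''. But Assumption~\ref{assumption:four-product-assumption} as stated only applies when $Z_1,Z_3$ share the same $(a_1,j_1)$ and $Z_2,Z_4$ share the same $(a_2,j_2)$; it says nothing about genuine cross-terms with $(a_1,j_1)\neq(a'_1,j'_1)$. The paper sidesteps this entirely: it first fixes $(a_1,a_2,j_1,j_2)$, defines the corresponding piece $\mbf{B}(t,x)$, and bounds $\E[|\mbf{B}(t,x)|^2]$ directly (implicitly using $|\sum_i x_i|^2\le n\sum_i|x_i|^2$ on the finite index sum). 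This way only diagonal terms appear and Assumption~\ref{assumption:four-product-assumption} applies verbatim, so the ``main obstacle'' you anticipate simply does not arise.
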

\begin{proof}
We will work with the scalar quantities $\mbf{A}^{2, a_0}_{N, j_0}$. Fix $a = (a_0, a_1, a_2)$, $j = (j_0, j_1, j_2)$. Define
\[ \mbf{B}(t, x) := \sum_{\substack{n^1, n^2 \in \Z^d \\ |n_1|_\infty, |n_2|_\infty \leq N}} e_{n^1 + n^2}(x) I(m, t) d(m, a, j) \hat{\rconn}^{0, a_1}_{j_1}(n^1) \hat{\rconn}^{0, a_2}_{j_2}(n^2). \]
We first look at the $k = 0$ case. For this, it suffices to show that $\E[|\mbf{B}(t, x)|^2] \leq \const t^{-(d - 1 -\alpha)}$. Towards this end, note that
\[\begin{split}
\E[|\mbf{B}(t, x)|^2] &= \sum_{\substack{n^1, n^2 \in \Z^d \\ k^1, k^2 \in \Z^d \\ |n^i|_\infty, |k^i|_\infty \leq N \\ i = 1, 2}} I((n^1, n^2), t) I((k^1, k^2), t) d((n^1, n^2), a, j) ~\times \\
&\qquad \qquad \qquad \qquad \ovl{d((k^1, k^2), a, j)} e_{n^1 + n^1 - (k^1 + k^2)}(x) ~\times \\
&\qquad \qquad \qquad \qquad \E\big[\hat{\rconn}^{0, a_1}_{j_1}(n^1) \hat{\rconn}^{0, a_2}_{j_2}(n^2) \ovl{\hat{\rconn}^{0, a_1}_{j_1}(k^1)} \ovl{\hat{\rconn}^{0, a_2}_{j_2}(k^2)}\big].
\end{split}\]
We have that (recall Remark \ref{remark:I-and-d-properties}) 
\[ |d((n^1, n^2), a, j)| \leq \const (|n^1| + |n^2|). \]
Also by Remark \ref{remark:I-and-d-properties}, we can assume $a_1 \neq a_2$, since otherwise $d((n^1, n^2), a, j) = 0$.
Thus by Assumption \ref{assumption:four-product-assumption}, we have that
\[\begin{split}
\big| \E\big[\hat{\rconn}^{0, a_1}_{j_1}(n^1)& \hat{\rconn}^{0, a_2}_{j_2}(n^2) \ovl{\hat{\rconn}^{0, a_1}_{j_1}(k^1)} \ovl{\hat{\rconn}^{0, a_2}_{j_2}(k^2)}\big]\big| \leq \constfourp \big(|E_1 E_2| + |E_3 E_4| \big) \end{split}\]
where
\[ E_1 := \E\big[\hat{\rconn}^{0, a_1}_{j_1}(n^1) \ovl{\hat{\rconn}^{0, a_1}_{j_1}(k^1)}\big] ,~~ E_2 := \E \big[\hat{\rconn}^{0, a_2}_{j_2}(n^2) \ovl{\hat{\rconn}^{0, a_2}_{j_2}(k^2)}\big], \]
\[ E_3 := \E\big[\hat{\rconn}^{0, a_1}_{j_1}(n^1) \ovl{\hat{\rconn}^{0, a_2}_{j_2}(k^2)}\big], ~~ E_4 := \E\big[\hat{\rconn}^{0, a_2}_{j_2}(n^2) \ovl{\hat{\rconn}^{0, a_1}_{j_1}(k^1)}\big].\]
By Lemma \ref{lemma:fourier-coeff-uncorrelated}, we have that 
\[
|E_1| \leq  \ind(n^1 = k^1) \hat{\tau}(n^1),  ~~ |E_2| \leq \ind(n^2 = k^2) \hat{\tau}(n^2), \]
\[ |E_3| \leq \ind(n^1 = k^2) \hat{\tau}(n^1), ~~~ |E_4| \leq \ind(n^2 = k^1) \hat{\tau}(n^2). \]
Combining these bounds, we see that it suffices to bound (note that $I((n^1, n^2), t) = I((n^2, n^1), t)$)
\[ \sum_{n^1, n^2 \in \Z^d} I((n^1, n^2), t)^2  (|n^1|^2 + |n^2|^2) \hat{\tau}(n^1) \hat{\tau}(n^2). \]
The $k = 0$ case now follows by Lemma \ref{lemma:tau-1-tau-2-bound} with $N = 0, k = 0$.

For the case $k \in \{1, 2, 3\}$, note that following the same steps as before, we may reduce to bounding
\[ \sum_{n^1, n^2 \in \Z^d} |n^1 + n^2|^{2k} I((n^1, n^2), t)^2  (|n^1|^2 + |n^2|^2) \hat{\tau}(n^1) \hat{\tau}(n^2). \]
By Lemma \ref{lemma:tau-1-tau-2-bound}, this is bounded by $\const t^{-(d -1 + k -\alpha)}$, as desired.
\end{proof}

\begin{definition}\label{def:D-N-M}
For $N, M \geq 0$, 
$t \in (0, 1]$, $x \in \torus^d$, let 
\[ \mbf{D}^{2}_{N, M}(t, x) := \rconn^{2}_{N}(t, x) - \rconn^{2}_{M}(t, x). \]
\end{definition}

\begin{lemma}\label{lemma:nonlinear-part-derivative-difference-moment-bounds}
There exists a sequence $\{\delta_N^{\ref{lemma:nonlinear-part-derivative-difference-moment-bounds}}\}_{N \geq 0}$ of non-increasing functions $\delta_N^{\ref{lemma:nonlinear-part-derivative-difference-moment-bounds}} : (0, 1] \ra [0, 1]$ such that the following hold. For any $t \in (0, 1]$, we have that $\lim_{N \toinf} \delta_N^{\ref{lemma:nonlinear-part-derivative-difference-moment-bounds}}(t) = 0$. Also, for any $k \in \{0, 1, 2, 3\}$, $l_1, \ldots, l_k \in [d]$, $M \geq N \geq 0$, 
$t \in (0, 1]$, $x \in \torus^d$, we have that
\[ \E\big[|\ptl_{l_1 \cdots l_k} \mbf{D}^{2}_{N, M}(t, x)|^2\big] \leq \const t^{-(d + k - 1-\alpha)} \delta_N^{\ref{lemma:nonlinear-part-derivative-difference-moment-bounds}}(t). \]
Here, $\const$ depends only on $d$ and the constants $\alpha, \constgf, \constfourp$ from Assumptions \ref{assumption:bounded-by-fractional-greens-function}, \ref{assumption:four-product-assumption}.
\end{lemma}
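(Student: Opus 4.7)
The plan is to mimic the proof of Lemma \ref{lemma:A-2-bounds} step by step, but tracking the range of summation carefully so that one obtains the partial sum $S(N, 2k, t)$ instead of the full sum. First I would fix $a = (a_0, a_1, a_2)$, $j = (j_0, j_1, j_2)$, and use the explicit Fourier representation \eqref{eq:A-2-N-explicit} for both $\rconn^{2, a_0}_{N, j_0}$ and $\rconn^{2, a_0}_{M, j_0}$. Since $M \geq N$, subtracting gives an expression for the scalar components of $\mbf{D}^{2}_{N, M}(t, x)$ as a sum over pairs $(n^1, n^2)$ with $|n^1|_\infty, |n^2|_\infty \leq M$ and $\max\{|n^1|_\infty, |n^2|_\infty\} > N$. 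Differentiating in $x$ brings down an extra factor of $\prod_{i=1}^{k} 2\pi \icomplex (n^1 + n^2)_{l_i}$, whose modulus is bounded by $C|n^1 + n^2|^k$.

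Next, I would compute $\E[|\partial_{l_1 \cdots l_k} \mbf{D}^{2, a_0}_{N, M, j_0}(t, x)|^2]$ by expanding the squared modulus as a double sum indexed by $(n^1, n^2, k^1, k^2)$, with the constraint that $\max\{|n^1|_\infty, |n^2|_\infty\} > N$ (and likewise for $(k^1, k^2)$). As in the proof of Lemma \ref{lemma:A-2-bounds}, $d(m, a, j) = 0$ unless $a_1 \neq a_2$ (Remark \ref{remark:I-and-d-properties}), so Assumption \ref{assumption:four-product-assumption} applies to the resulting four-product of Fourier coefficients of $\rconn^0$. Combined with the two-point bound from Lemma \ref{lemma:fourier-coeff-uncorrelated}, this forces $k^1, k^2 \in \{n^1, n^2\}$ and collapses the double sum to a single sum over $(n^1, n^2)$, where now the range $\max\{|n^1|_\infty, |n^2|_\infty\} > N$ is preserved. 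Using $|d(m, a, j)|^2 \leq C(|n^1|^2 + |n^2|^2)$ and $|n^1 + n^2|^{2k}$ from the derivatives, the sum is bounded by $C \, S(N, 2k, t)$.

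Applying Lemma \ref{lemma:tau-1-tau-2-bound} with $k$ replaced by $2k$ then yields
\[
\E\big[|\partial_{l_1 \cdots l_k} \mbf{D}^{2, a_0}_{N, M, j_0}(t, x)|^2\big] \;\leq\; \const_k \, t^{-(d-1+k-\alpha)} \, \delta_{N, 2k}^{\ref{lemma:tau-1-tau-2-bound}}(t),
\]
for each $a_0 \in [\lalgdim]$, $j_0 \in [d]$. Summing over the finitely many components and taking the maximum over $k \in \{0,1,2,3\}$, I would set
\[
\delta_N^{\ref{lemma:nonlinear-part-derivative-difference-moment-bounds}}(t) \;:=\; \max_{k \in \{0,1,2,3\}} \delta_{N, 2k}^{\ref{lemma:tau-1-tau-2-bound}}(t),
\]
which is non-increasing in $t$ (as a finite max of non-increasing functions), takes values in $[0,1]$, and tends to $0$ as $N \to \infty$ for each fixed $t \in (0,1]$ by Lemma \ref{lemma:tau-1-tau-2-bound}. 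This gives the claimed bound.

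I expect the only subtle point to be the bookkeeping with the four-product bound: one must verify that after invoking Assumption \ref{assumption:four-product-assumption}, each of the four two-point expectation bounds from Lemma \ref{lemma:fourier-coeff-uncorrelated} forces one of the identifications $k^1 = n^1, k^2 = n^2$ or $k^1 = n^2, k^2 = n^1$, and that in both cases the restriction $\max\{|n^1|_\infty, |n^2|_\infty\} > N$ is inherited by the reduced single sum (which it is, by symmetry of the constraint in $n^1, n^2$). The analytical work is already done in Lemma \ref{lemma:tau-1-tau-2-bound}; here we just need to set up the reduction so that we apply it to $S(N, 2k, t)$ rather than $S(0, 2k, t)$.
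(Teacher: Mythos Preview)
Your proposal is correct and follows essentially the same approach as the paper: reduce to the scalar components, use the explicit Fourier formula \eqref{eq:A-2-N-explicit} together with Assumption \ref{assumption:four-product-assumption} and Lemma \ref{lemma:fourier-coeff-uncorrelated} to collapse the double sum to one over $(n^1,n^2)$ with $\max\{|n^1|_\infty,|n^2|_\infty\}>N$, and then apply Lemma \ref{lemma:tau-1-tau-2-bound}. Your definition $\delta_N(t)=\max_{k\in\{0,1,2,3\}}\delta_{N,2k}^{\ref{lemma:tau-1-tau-2-bound}}(t)$ is exactly the paper's choice $\max_{m\in\{0,2,4,6\}}\delta_{N,m}^{\ref{lemma:tau-1-tau-2-bound}}(t)$.
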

\begin{proof}
For $m \geq 0$, let $\{\delta_{N, m}^{\ref{lemma:tau-1-tau-2-bound}}\}_{N \geq 0}$ be the sequence of functions from Lemma \ref{lemma:tau-1-tau-2-bound}. Define
\[ \delta_N^{\ref{lemma:nonlinear-part-derivative-difference-moment-bounds}}(t) := \max_{m \in \{0, 2, 4, 6\}} \delta_{N, m}^{\ref{lemma:tau-1-tau-2-bound}}(t), ~~ t \in (0, 1].\]
We first look at the $k = 0$ case. We will work with the scalar quantities $\mbf{D}^{2, a_0}_{N, M, j_0}$. By arguing as in the proof of Lemma \ref{lemma:A-2-bounds}, we may bound
\begin{align*}
&\E\big[|\mbf{D}^{2, a_0}_{N, M j_0}(t, x)|^2\big]  \\
&\leq \const \sum_{\substack{a_1, a_2 \in [\lalgdim] \\ j_1, j_2 \in [d]}} \sum_{\tau_1, \tau_2 \in \{\tau^{a_1}_{j_1}, \tau^{a_2}_{j_2}\}} \sum_{\substack{n^1, n^2 \in \Z^d \\ N < \max\{|n^1|_\infty, |n^2|_\infty\} \leq M}} I((n^1, n^2), t)^2 ~\times\\
&\qquad \qquad \qquad \qquad \qquad \qquad \qquad \qquad \qquad  (|n^1|^2 + |n^2|^2) \hat{\tau}(n^1) \hat{\tau}(n^2).
\end{align*}
Note that we may obtain a further upper bound by replacing the sum over $n^1, n^2 \in \Z^d$ such that $N < \max\{|n^1|_\infty, |n^2|_\infty\} \leq M$ by a sum over $n^1, n^2 \in \Z^d$ such that $\max\{|n^1|_\infty, |n^2|_\infty\} > N$. The $k = 0$ case now follows by Lemma \ref{lemma:tau-1-tau-2-bound}. The case $k \in \{1, 2, 3\}$ may be argued similarly.
\end{proof}

\begin{proof}[Proof of Lemma \ref{lemma:B-1-cauchy}]
This is now a direct consequence of Lemma \ref{lemma:nonlinear-part-derivative-difference-moment-bounds}.
\end{proof}



We next prove various technical lemmas which will help in obtaining moment bounds on quantities such as $\rconn^2_N(t, x) - \rconn^2_N(s, y)$. 

\begin{definition}
Let $N \geq 0$, $a_0 \in [\lalgdim]$, $j_0 \in [d]$, $t \in (0, 1]$, $x \in \torus^d$. Define
\[ \begin{split}
\mbf{F}^{a_0}_{N, j_0}(t, x) := \sum_{\substack{a_1, a_2 \in [\lalgdim] \\ j_1, j_2 \in [d]}}&\sum_{\substack{n^1, n^2 \in \Z^d \\ |n_1|_\infty, |n_2|_\infty \leq N}} d(m, a, j) ~\times \\
&e^{-4\pi^2 |n^1|^2 t} \hat{\rconn}^{0, a_1}_{j_1}(n^1) e^{-4\pi^2 |n^2|^2 t}\hat{\rconn}^{0, a_2}_{j_2}(n^2) e_{n^1 + n^2}(x) ,
\end{split}\]
where $m = (n_1,n_2)$. Using the collection of $\R$-valued process $(\mbf{F}^a_{N, j}, a \in [\lalgdim], j \in [d])$ (as well as the relation \eqref{eq:one-form-R-valued-functions-relation}, we may define the $\lalg^d$-valued process $\mbf{F}_N = (\mbf{F}_N(t, x), t \in (0, 1], x \in \torus^d)$.
\end{definition}

By Lemma \ref{lemma:rho-A-1-smooth}, we have that for any $N \geq 0$, 
\beq\label{eq:A2-time-derivative} \ptl_t \rconn^{2}_{N}(t, x) = \Delta \rconn^{2}_{N}(t, x) + \mbf{F}_{N}(t, x), ~~ t \in (0, 1], x \in \torus^d. \eeq

\begin{lemma}\label{lemma:F-bound}
For any $N \geq 0$, 
$t \in (0, 1]$, $x \in \torus^d$, $l \in [d]$, we have that
\[ \E\big[|\mbf{F}_{N}(t, x)|^2\big] \leq \const t^{-(d+1-\alpha)}, ~~~ \E[|\ptl_l \mbf{F}_{N}(t, x)|^2] \leq \const t^{-(d+2 - \alpha)}.\]
Here, $\const$ depends only on $d$ and the constants $\alpha, \constgf, \constfourp$ from Assumptions \ref{assumption:bounded-by-fractional-greens-function}, \ref{assumption:four-product-assumption}.
\end{lemma}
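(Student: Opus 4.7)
The plan is to mirror the proof of Lemma \ref{lemma:A-2-bounds}, exploiting the fact that the time weights in $\mbf{F}_N$ are of the decoupled form $e^{-4\pi^2|n^1|^2 t}e^{-4\pi^2|n^2|^2 t}$ rather than the integrated kernel $I(m,t)$. First I would work coordinate-wise with the scalar process $\mbf{F}^{a_0}_{N,j_0}(t,x)$ and expand $\E[|\mbf{F}^{a_0}_{N,j_0}(t,x)|^2]$ as a quadruple sum over $(n^1,n^2,k^1,k^2)$ by writing $|\mbf{F}^{a_0}_{N,j_0}|^2 = \mbf{F}^{a_0}_{N,j_0}\ovl{\mbf{F}^{a_0}_{N,j_0}}$. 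Exactly as in the proof of Lemma \ref{lemma:A-2-bounds}, Remark \ref{remark:I-and-d-properties} lets me restrict to $a_1 \neq a_2$ (otherwise $d(m,a,j)=0$), so that Assumption \ref{assumption:four-product-assumption} applies to the fourth moment of the Fourier coefficients and gives a bound in terms of $|E_1 E_2| + |E_3 E_4|$, with each $E_i$ controlled by $\ind(\cdot=\cdot)\hat\tau(\cdot)$ via Lemma \ref{lemma:fourier-coeff-uncorrelated}.

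After the delta functions collapse the four-fold sum to a two-fold sum, and after using $|d(m,a,j)|\leq C(|n^1|+|n^2|)$ together with the elementary bound $(|n^1|+|n^2|)^2\leq 2(|n^1|^2+|n^2|^2)$, the problem reduces to bounding
\[
\sum_{n^1,n^2\in\Z^d} e^{-8\pi^2(|n^1|^2+|n^2|^2)t}\bigl(|n^1|^2+|n^2|^2\bigr)\hat\tau(n^1)\hat\tau(n^2).
\]
Because the exponential factors, this splits into a product (with a symmetric companion) of one-variable sums. Each one-variable sum is exactly of the form $(e^{2t\Delta}\tau)(0)$ or $(\Delta e^{2t\Delta}\tau)(0)$ evaluated at the origin, which by Lemma \ref{lemma:heat-semigroup-covariance} are bounded by $\const\, t^{-(d-\alpha)/2}$ and $\const\, t^{-(d+2-\alpha)/2}$ respectively. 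Multiplying these estimates yields $\const\, t^{-(d+1-\alpha)}$, which is precisely the first claimed bound.

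For $\ptl_l \mbf{F}_N$, differentiating $e_{n^1+n^2}$ pulls out an extra factor $\icomplex 2\pi(n^1+n^2)_l$, so the same procedure leaves an additional $|n^1+n^2|^2$ inside the final sum. Bounding $|n^1+n^2|^2\leq 2(|n^1|^2+|n^2|^2)$ and expanding $(|n^1|^2+|n^2|^2)^2$ yields factorized sums with an extra power of $|n^i|^2$ on one of the two factors; by Lemma \ref{lemma:heat-semigroup-covariance}, the worst case contributes a factor of $t^{-(d+4-\alpha)/2}\cdot t^{-(d-\alpha)/2} = t^{-(d+2-\alpha)}$, which matches the second claim.

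I do not expect a genuine obstacle: the argument is a direct adaptation of Lemma \ref{lemma:A-2-bounds} in which the non-trivial analytic input coming from Lemma \ref{lemma:tau-1-tau-2-bound} is replaced by the much cleaner Lemma \ref{lemma:heat-semigroup-covariance}, because the time weight here fully factorizes in $n^1,n^2$. The only point to be careful about is tracking which index receives the extra factor of $|n^i|^k$ so that the product of the two resulting heat-kernel bounds indeed produces the stated power of $t^{-1}$; this is just bookkeeping of the exponents $(d-\alpha)/2$ versus $(d+k-\alpha)/2$ from Lemma \ref{lemma:heat-semigroup-covariance}.
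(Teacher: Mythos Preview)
Your proposal is correct and follows essentially the same route as the paper's proof: work coordinate-wise, expand the squared modulus into a quadruple sum, use Remark \ref{remark:I-and-d-properties} to restrict to $a_1\neq a_2$, apply Assumption \ref{assumption:four-product-assumption} and Lemma \ref{lemma:fourier-coeff-uncorrelated} to collapse to a factorized two-fold sum, and then bound each factor via Lemma \ref{lemma:heat-semigroup-covariance}. The only cosmetic difference is that the paper peels off the weight $|n|^2$ (respectively $|n|^4$) via the elementary pointwise bound $|n|^{2k} e^{-4\pi^2|n|^2 t}\leq \const\,t^{-k}$ and then recognizes the remainder as $(e^{t\Delta}\tau)(0)$, whereas you identify the weighted sum directly as a derivative of $e^{2t\Delta}\tau$ at the origin and invoke the $k\geq 1$ case of Lemma \ref{lemma:heat-semigroup-covariance}; since that lemma is itself proved by iterating the same pointwise bound, the two packagings are equivalent.
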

\begin{proof}
We will work with the scalar quantities $\mbf{F}^{a_0}_{N, j_0}$. Fix $a = (a_0, a_1, a_2)$, $j = (j_0, j_1, j_2)$. Define
\[\begin{split}
\mbf{B}(t, x) := \sum_{\substack{n^1, n^2 \in \Z^d \\ |n_1|_\infty, |n_2|_\infty \leq N}} &e_{n^1 + n^2}(x) d(m, a, j) ~\times \\
&e^{-4\pi^2 |n^1|^2 t} \hat{\rconn}^{0, a_1}_{j_1}(n^1) e^{-4\pi^2 |n^2|^2 t}\hat{\rconn}^{0, a_2}_{j_2}(n^2),
\end{split}\]
where $m = (n^1, n^2)$, as usual. 
For the first inequality, it suffices to show that $\E\big[|\mbf{B}(t, x)|^2\big] \leq \const t^{-(d+1-\alpha)}$. Towards this end, we have that
\begin{align*}
&\E\big[|\mbf{B}(t, x)|^2\big] = \sum_{\substack{n^1, n^2 \in \Z^d \\ k^1, k^2 \in \Z^d \\ |n^i|_\infty, |k^i|_\infty \leq N \\ i = 1, 2}} e_{n^1 + n^2 - (k^1 + k^2)}(x) 
d((n^1, n^2), a, j) \ovl{d((k^1, k^2), a, j)} ~\times \\
& e^{-4\pi^2(|n^1|^2 + |n^2|^2 + |k^1|^2 + |k^2|^2) t} 
\E\big[\hat{\rconn}^{0, a_1}_{j_1}(n^1) \hat{\rconn}^{0, a_2}_{j_2}(n^2) \ovl{\hat{\rconn}^{0, a_1}_{j_1}(k^1)} \ovl{\hat{\rconn}^{0, a_2}_{j_2}(k^2)}\big].
\end{align*}
Arguing as in the proof of Lemma \ref{lemma:A-2-bounds}, we may reduce to bounding
\[ \sum_{n^1, n^2 \in \Z^d} (|n^1|^2 + |n^2|^2) e^{-8\pi^2 |n^1|^2 t} \hat{\tau}(n^1) e^{-8\pi^2 |n^2|^2 t} \hat{\tau}(n^2),  \]
Using that
\[ |n|^2 e^{-4\pi^2 |n|^2 t} \leq \sup_{x \geq 0} x e^{-4\pi^2 x t} \leq \const t^{-1}, \] 
we obtain the further upper bound
\[ \const t^{-1} \sum_{n^1 \in \Z^d} e^{-4\pi^1 |n^1|^2 t} \hat{\tau}(n^1) \sum_{n^2 \in \Z^d} e^{-4\pi^2 |n^2|^2 t} \hat{\tau}(n^2).  \]
Observe that the above is equal to
\[ \const t^{-1} (e^{t \Delta} \tau)(0) (e^{t \Delta} \tau)(0). \]
Using that $\|e^{t \Delta} \tau\|_{C^0}, \|e^{t \Delta} \tau\|_{C^0} \leq \const t^{-(1/2)(d-\alpha)}$ (by Lemma \ref{lemma:heat-semigroup-covariance}), the first desired result now follows.

For the second inequality, by arguing as before, we may reduce to bounding
\[ \sum_{n^1, n^2 \in \Z^d} |n^1 + n^2|^2 (|n^1|^2 + |n^2|^2) e^{-8\pi^2 |n^1|^2 t} \hat{\tau}(n^1) e^{-8\pi^2 |n^2|^2 t} \hat{\tau}(n^2). \]
We may bound
\[ |n^1 + n^2|^2 (|n^1|^2 + |n^2|^2) \leq \const (|n^1|^4 + |n^2|^4), \]
and so arguing as before, we obtain the further upper bound
\[ \const t^{-2} (e^{t \Delta} \tau)(0) (e^{t\Delta} \tau)(0), \]
which is bounded by $\const t^{-2} t^{-(d-\alpha)}$, as desired.
\end{proof}

\begin{lemma}\label{lemma:F-bound-difference}
There exists a sequence $\{\delta_N^{\ref{lemma:F-bound-difference}}\}_{N \geq 0}$ of non-increasing functions $\delta_N^{\ref{lemma:F-bound-difference}} : (0, 1] \ra [0, 1]$, such that the following hold. For any $t \in (0, 1]$, we have that $\lim_{N \toinf} \delta_N^{\ref{lemma:F-bound-difference}}(t) = 0$. Also, for any $M \geq N \geq 0$, 
$t \in (0, 1]$, $x \in \torus^d$, we have that
\[ \E\big[|\mbf{F}_{N}(t, x) - \mbf{F}_{M}(t, x)|^2\big] \leq \const t^{-(d+1-\alpha)} \delta_N^{\ref{lemma:F-bound-difference}}(t).\]
Additionally, for any $l \in [d]$, we have that
\[ \E\big[|\ptl_l \mbf{F}_{N}(t, x) - \ptl_l \mbf{F}_{M}(t, x)|^2\big] \leq \const t^{-(d+2 - \alpha)}\delta_N^{\ref{lemma:F-bound-difference}}(t).\] 
Here, $\const$ depends only on $d$ and the constants $\alpha, \constgf, \constfourp$ from Assumptions \ref{assumption:bounded-by-fractional-greens-function}, \ref{assumption:four-product-assumption}. 
\end{lemma}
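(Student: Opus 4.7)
The plan is to combine the four-point expansion used in the proof of Lemma \ref{lemma:F-bound} with the truncation bookkeeping developed in Lemmas \ref{lemma:linear-part-difference-moment-estimates} and \ref{lemma:nonlinear-part-derivative-difference-moment-bounds}. Working with the scalar components $\mbf{F}^{a_0}_{N, j_0}$, I would expand $\E[|\mbf{F}^{a_0}_{N, j_0}(t, x) - \mbf{F}^{a_0}_{M, j_0}(t, x)|^2]$ as a quadruple Fourier sum and observe that the difference $\mbf{F}_M - \mbf{F}_N$ restricts each of $(n^1, n^2)$ and $(k^1, k^2)$ to pairs with $N < \max\{|n^1|_\infty, |n^2|_\infty\} \leq M$. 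Since $d(m, a, j) = 0$ unless $a_1 \neq a_2$ and $|d(m, a, j)| \leq \const(|n^1| + |n^2|)$, Assumption \ref{assumption:four-product-assumption} followed by Lemma \ref{lemma:fourier-coeff-uncorrelated} collapses the four-point expectation to the diagonal contributions $\{(k^1, k^2) = (n^1, n^2)\}$ and $\{(k^1, k^2) = (n^2, n^1)\}$. After dropping the $\leq M$ constraint, this reduces the required bound to a constant multiple of
\[ R_k(N, t) := \sum_{\substack{n^1, n^2 \in \Z^d \\ \max\{|n^1|_\infty, |n^2|_\infty\} > N}} |n^1 + n^2|^{2k}(|n^1|^2 + |n^2|^2)\, e^{-8\pi^2(|n^1|^2 + |n^2|^2) t}\, \hat{\tau}(n^1)\hat{\tau}(n^2), \]
with $k = 0$ for the first bound and $k = 1$ for the derivative bound.

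Next, using $|n^1 + n^2|^{2k}(|n^1|^2 + |n^2|^2) \leq \const(|n^1|^{2+2k} + |n^2|^{2+2k})$ and splitting the restricted sum symmetrically according to which of $|n^1|_\infty, |n^2|_\infty$ exceeds $N$, one obtains $R_k(N, t) \leq \const[T_{2+2k}(N, t)\, T_0(0, t) + T_0(N, t)\, T_{2+2k}(0, t)]$, where
\[ T_m(N, t) := \sum_{\substack{n \in \Z^d \\ |n|_\infty > N}} |n|^m e^{-8\pi^2 |n|^2 t} \hat{\tau}(n). \]
The untruncated factors satisfy $T_m(0, t) \leq \|e^{2t\Delta}\tau\|_{C^m} \leq \const\, t^{-(d + m - \alpha)/2}$ by Lemma \ref{lemma:heat-semigroup-covariance}, and summing the relevant exponents $(d + (2+2k) - \alpha)/2 + (d - \alpha)/2 = d + 1 + k - \alpha$ reproduces exactly the target powers $t^{-(d+1-\alpha)}$ and $t^{-(d+2-\alpha)}$, each carrying an additional truncated factor that we absorb into $\delta_N$.

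Finally, to produce a single $\delta_N^{\ref{lemma:F-bound-difference}}$ serving both cases, I would set, in direct analogy with Lemma \ref{lemma:linear-part-difference-moment-estimates},
\[ \delta_N^{\ref{lemma:F-bound-difference}}(t_0) := \sup_{t \in [t_0, 1]} \max_{m \in \{0, 2, 4\}} \frac{T_m(N, t)}{\max\{1, T_m(0, t)\}}, \]
which is non-increasing in $t_0$ and lies in $[0, 1]$. Since $\hat{\tau}(n) \geq 0$ by Lemma \ref{lemma:fourier-coeff-uncorrelated}, $T_m(N, \cdot)$ is non-increasing, so the $t$-supremum is controlled by $T_m(N, t_0)$, and $T_m(N, t_0) \to 0$ as $N \toinf$ by dominated convergence against the finite summable envelope defining $T_m(0, t_0)$. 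The main bookkeeping step is the symmetric split of $R_k(N, t)$: every surviving product must pair one truncated factor (providing $\delta_N$) with one untruncated factor (providing the correct deterministic $t$-power), and a single choice of $\delta_N$ must dominate the ratios across $m \in \{0, 2, 4\}$. Beyond this, the argument is a mechanical adaptation of the proof of Lemma \ref{lemma:F-bound}.
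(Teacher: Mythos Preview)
Your proposal is correct and follows essentially the same route as the paper. The only cosmetic difference is in the bookkeeping of the polynomial factor: the paper absorbs $(|n^1|^2+|n^2|^2)$ (and, for the derivative, the extra $|n^1+n^2|^2$) into the exponential via $|n|^j e^{-c|n|^2 t}\le \const\,t^{-j/2}$ \emph{before} defining a single double-sum function $G_N(t)=\sum_{\max\{|n^1|_\infty,|n^2|_\infty\}\ge N} e^{-4\pi^2|n^1|^2 t}\hat\tau(n^1)e^{-4\pi^2|n^2|^2 t}\hat\tau(n^2)$ and sets $\delta_N(t_0)=\sup_{t\in[t_0,1]} G_N(t)/\max\{1,G_0(t)\}$, whereas you keep the polynomial, factor into single sums $T_m$, and take a max over $m\in\{0,2,4\}$. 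Both give the same exponents and the same convergence argument (split on which index exceeds $N$, then dominated convergence against the finite $N=0$ envelope).
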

\begin{proof}
We will work with the scalar quantities $\mbf{F}^{a_0}_{N, j_0}$. For $N \geq 0$, $t \in (0, 1]$, define
\[ G_N(t) := \sum_{\substack{a_1, a_2 \in [\lalgdim] \\ j_1, j_2 \in [d]}}\sum_{\substack{n^1, n^2 \in \Z^d \\ \max\{|n^1|_\infty, |n^2|_\infty\} \geq N}} e^{-4\pi^2 |n^1|^2 t} \hat{\tau}(n^1) e^{-4\pi^2 |n^2|^2 t} \hat{\tau}(n^2).\]
By arguing as in the proof of Lemma \ref{lemma:F-bound}, we may obtain
\[\begin{split} \E\big[|\mbf{F}^{a_0}_{N, j_0}(t, x) - \mbf{F}^{a_0}_{M, j_0}(t, x)|^2\big] &\leq \const t^{-1} G_N(t), \\
\E\big[|\ptl_l \mbf{F}^{a_0}_{N, j_0}(t, x) - \ptl_l \mbf{F}^{a_0}_{M, j_0}(t, x)|^2\big] &\leq \const t^{-2} G_N(t). \end{split}\]
Observe that $G_N(t) \leq G_0(t)$ for all $t \in (0, 1]$, and from the proof of Lemma \ref{lemma:F-bound}, we have that $G_0(t) \leq \const t^{-(d-\alpha)}$. Thus we may define 
\[ \delta_N^{\ref{lemma:F-bound-difference}}(t_0) := \sup_{t \in [t_0, 1]} \frac{G_N(t)}{\max\{1, G_0(t)\}}, ~~ t_0 \in (0, 1].\]
This ensures that $\delta_N^{\ref{lemma:F-bound-difference}}$ maps into $[0, 1]$, and that it is non-increasing. It remains to show that $\lim_{N \toinf} \delta_N^{\ref{lemma:F-bound-difference}}(t_0) = 0$ for all $t_0 \in (0, 1]$. Fix $t_0 \in (0, 1]$, $a_1, a_2 \in [\lalgdim]$, $j_1, j_2 \in [d]$. Since $\max\{|n^1|_\infty, |n^2|_\infty\} \geq N$ implies that at least one of $|n^1|_\infty, |n^2|_\infty$ is at least $N$, it suffices to show that
\[ \lim_{N \toinf} \sup_{t \in [t_0, 1]} \sum_{\substack{n^1, n^2 \in \Z^d \\ |n^1|_\infty > N}} e^{-4\pi^2 |n^1|^2 t} \hat{\tau}(n^1) e^{-4\pi^2 |n^2|^2 t} \hat{\tau}(n^2) = 0. \]
Note that without the limit, the left hand side above can be bounded by
\[ \sum_{\substack{n^1 \in \Z^d \\ |n^1|_\infty > N}}e^{-4\pi^2 |n^1|^2 t_0} \hat{\tau}(n^1) \sum_{n^2 \in \Z^d} e^{-4\pi^2 |n^2|^2 t_0} \hat{\tau}(n^2) . \]
Note that the second sum is $(e^{t_0 \Delta} \tau)(0)$, which is finite by Lemma \ref{lemma:heat-semigroup-covariance} (and the fact that $t_0 > 0$). For the same reason, we have that 
\[ \sum_{\substack{n^1 \in \Z^d}}e^{-4\pi^2 |n^1|^2 t_0} \hat{\tau}(n^1) < \infty, \] 
and thus by dominated convergence, we have that
\[ \lim_{N \toinf} \sum_{\substack{n^1 \in \Z^d \\ |n^1|_\infty > N}}e^{-4\pi^2 |n^1|^2 t_0} \hat{\tau}(n^1) = 0. \]
The desired result now follows.
\end{proof}

We next use the previous technical lemmas to control the $C^0$ norm of $\rconn^2_N(t)$, culminating in Proposition \ref{prop:nonlinear-part-c0-bound} below. After we control the $C^0$ norm, we will then move on to controlling the $C^1$ norm. In the following, recall the definition of $\dchain_t$ from Definition \ref{def:dchain}. 

\begin{lemma}\label{lemma:nonlinear-part-finite-N-bounds}
For any $N \geq 0$, the following holds. For 
$t \in (0, 1]$, $r, s \in (t/2, t]$, $x, y \in \torus^d$, we have that
\[\begin{split}
\|\rconn^{2}_{N}(t, x)\|_{L^2} &\leq \const t^{-(1/2)(d-1-\alpha)}, \\
\|\rconn^{2}_{N}(r, x) - \rconn^{2}_{N}(s, y)\|_{L^2} &\leq \const t^{-(1/2)(d-1-\alpha)} \dchain_t((r, x), (s, y)).
\end{split}\]
Here, $\const$ depends only on $d$ and the constants $\alpha, \constgf, \constfourp$ from Assumptions \ref{assumption:bounded-by-fractional-greens-function}, \ref{assumption:four-product-assumption}. 
\end{lemma}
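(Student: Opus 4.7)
The plan is to follow the same chaining template that was used in Lemma \ref{lemma:linear-part-entropy-estimate} for $\rconn^1$, with the moment estimates of Lemmas \ref{lemma:A-2-bounds} and \ref{lemma:F-bound} now playing the roles of Lemma \ref{lemma:linear-part-moment-estimates}. The first inequality is immediate: taking $k=0$ in Lemma \ref{lemma:A-2-bounds} gives $\E[|\rconn^2_N(t,x)|^2] \leq \const\, t^{-(d-1-\alpha)}$, and taking square roots yields the desired $t^{-(1/2)(d-1-\alpha)}$ bound.

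For the second inequality I would split via the triangle inequality into a spatial increment $\rconn^2_N(r,x)-\rconn^2_N(r,y)$ and a temporal increment $\rconn^2_N(r,y)-\rconn^2_N(s,y)$, and bound the two separately. For the spatial increment, let $\wloop : [0,1]\to\torus^d$ be a constant-speed geodesic from $y$ to $x$, write
\[
\rconn^2_N(r,x)-\rconn^2_N(r,y) \;=\; \int_0^1 \nabla \rconn^2_N(r,\wloop(u))\cdot \wloop'(u)\,du,
\]
apply Cauchy--Schwarz inside the integral, take expectations, and invoke Lemma \ref{lemma:A-2-bounds} with $k=1$ (so the $C^1$-moments are bounded by $\const\, r^{-(d-\alpha)} \leq \const\, t^{-(d-\alpha)}$ since $r\in(t/2,t]$). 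This yields $\|\rconn^2_N(r,x)-\rconn^2_N(r,y)\|_{L^2} \leq \const\, t^{-(1/2)(d-1-\alpha)}\cdot \metricspace(x,y)/\sqrt{t}$. Combining with the individual $L^2$ bound (the first inequality of the lemma) via the triangle inequality gives the same bound with the factor $\min\{\metricspace(x,y)/\sqrt{t},\,1\}$, matching the spatial component of $\dchain_t$.

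For the temporal increment I would use the PDE satisfied by $\rconn^2_N$ from \eqref{eq:A2-time-derivative}: for $s<r$,
\[
\rconn^2_N(r,y)-\rconn^2_N(s,y) \;=\; \int_s^r \bigl(\Delta \rconn^2_N(u,y) + \mbf{F}_N(u,y)\bigr)\,du.
\]
Cauchy--Schwarz in $u$, followed by the bounds $\E[|\Delta \rconn^2_N(u,y)|^2]\leq \const\, u^{-(d+1-\alpha)}$ (Lemma \ref{lemma:A-2-bounds} with $k=2$) and $\E[|\mbf{F}_N(u,y)|^2]\leq \const\, u^{-(d+1-\alpha)}$ (Lemma \ref{lemma:F-bound}), gives $\|\rconn^2_N(r,y)-\rconn^2_N(s,y)\|_{L^2}^2 \leq |r-s|^2 \cdot \const\, t^{-(d+1-\alpha)}$ because $u\geq t/2$. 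Taking square roots produces $\const\, t^{-(1/2)(d-1-\alpha)}\cdot |r-s|/t$, which is exactly the temporal component of $\dchain_t((r,x),(s,y))$. Summing the two contributions completes the proof; since all ingredients are already available, there is no real obstacle here, and the argument is essentially a parabolic Kolmogorov-type estimate tailored to the scaling on the annulus $(t/2,t]\times \torus^d$.
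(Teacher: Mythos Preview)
Your proposal is correct and follows essentially the same approach as the paper's own proof: split into spatial and temporal increments, handle the spatial part via the geodesic/gradient argument and Lemma~\ref{lemma:A-2-bounds} with $k=1$, and handle the temporal part via \eqref{eq:A2-time-derivative} together with Lemmas~\ref{lemma:A-2-bounds} ($k=2$) and~\ref{lemma:F-bound}. The exponents and the use of $u\ge t/2$ are exactly as in the paper.
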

\begin{proof}
The first inequality follows by Lemma \ref{lemma:A-2-bounds}. To show the second inequality, we will show that
\[\begin{split}
\|\rconn^{2}_{N}(r, x) - \rconn^{2}_{N}(r, y)\|_{L^2} &\leq \const t^{-(1/2)(d-1-\alpha)} \min \bigg\{ \frac{\metricspace(x, y)}{\sqrt{t}}, 1\bigg\}, \\
\|\rconn^{2}_{N}(r, y) - \rconn^{2}_{N}(s, y)\|_{L^2} &\leq \const t^{-(1/2)(d-1-\alpha)} \frac{|r-s|}{t}.
\end{split}\] 
Let $\wloop : [0, 1] \ra \torus^d$ be a geodesic from $y$ to $x$, so that $\wloop$ has constant speed $|\wloop'| \equiv \metricspace(x, y)$. Then
\[ \rconn^{2}_{N}(r, x) - \rconn^{2}_{N}(r, y) = \int_0^1 \nabla \rconn^{2}_{N}(r, \wloop(u)) \cdot \wloop'(u) du. \]
We thus have
\[\begin{split}
|\rconn^{2}_{N}(r, x) - \rconn^{2}_{N}(r, y)|^2 &\leq \int_0^1 |\nabla \rconn^{2}_{N}(r, \wloop(u)) \cdot \wloop'(u)|^2 du \\
&\leq \metricspace(x, y)^2 \int_0^1 |\nabla \rconn^{2}_{N}(r, \wloop(u))|^2 du. 
\end{split}\]
Applying  Lemma \ref{lemma:A-2-bounds}, we obtain
\[ \|\rconn^{2, a}_{N, j}(r, x) - \rconn^{2, a}_{N, j}(r, y)\|_{L^2} \leq \const t^{-(1/2)(d-1-\alpha)} t^{-1/2}\metricspace(x, y).\]
Combining this with the first inequality, the first claim follows.

For the second claim, assume without loss of generality that $s < r$. Recalling \eqref{eq:A2-time-derivative}, we have that
\[\begin{split}
\rconn^{2}_{N}(r, y) - \rconn^{2}_{N}(s, y) &= \int_s^r \ptl_u \rconn^{2}_{N}(u, y) du  \\
&= \int_s^r \Delta \rconn^{2}_{N}(u, y) du + \int_s^r \mbf{F}_{N}(u, y) du. \end{split}\]
We thus have that
\begin{align*}
&|\rconn^{2}_{N}(r, y) - \rconn^{2}_{N}(s, y)|^2 \leq \const (r-s) \int_s^r \big(|\Delta \rconn^{2}_{N}(u, y)|^2 + |\mbf{F}_{N}(u, y)|^2\big)du.
\end{align*}
By Lemmas \ref{lemma:A-2-bounds} and \ref{lemma:F-bound}, we obtain
\[ \|\rconn^{2}_{N}(r, y) - \rconn^{2}_{N}(s, y)\|_{L^2} \leq \const t^{-(1/2)(d-1-\alpha)}\frac{|r-s|}{t}, \]
as desired.
\end{proof}

In the following, recall the definition of $\mbf{D}^{2}_{N, M}$ from Definition \ref{def:D-N-M}. 

\begin{lemma}\label{lemma:nonlinear-part-finite-N-difference-bound}
There is a sequence $\{\delta_N^{\ref{lemma:nonlinear-part-finite-N-difference-bound}}\}_{N \geq 0}$ of functions $\delta_N^{\ref{lemma:nonlinear-part-finite-N-difference-bound}} : (0, 1] \ra [0, 1]$ such that the following hold. 
For any $t \in (0, 1]$, we have that $\lim_{N \toinf} \delta_N^{\ref{lemma:nonlinear-part-finite-N-difference-bound}}(t) = 0$. Also, for any $M \geq N \geq 0$,
$t \in (0, 1]$, $r, s \in (t/2, t]$, $x, y \in \torus^d$, we have that
\[ \|\mbf{D}^{2}_{N, M}(r, x)\|_{L^2} \leq \const t^{-(1/2)(d-1-\alpha)} \delta_N^{\ref{lemma:nonlinear-part-finite-N-difference-bound}}(t),\]
\[ \|\mbf{D}^{2}_{N, M}(r, x) - \mbf{D}^{2}_{N, M}(s, y)\|_{L^2} \leq \const t^{-(1/2)(d-1-\alpha)} \dchain_t((r, x), (s, y)) \delta_N^{\ref{lemma:nonlinear-part-finite-N-difference-bound}}(t). \]
Here, $\const$ depends only on $d$ and the constants $\alpha, \constgf, \constfourp$ from Assumptions \ref{assumption:bounded-by-fractional-greens-function}, \ref{assumption:four-product-assumption}. 
\end{lemma}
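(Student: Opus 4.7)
The plan is to run the proof of Lemma \ref{lemma:nonlinear-part-finite-N-bounds} almost verbatim, but with $\rconn^{2}_{N}$ replaced throughout by $\mbf{D}^{2}_{N, M}$, and with Lemmas \ref{lemma:A-2-bounds} and \ref{lemma:F-bound} replaced by their difference analogues, Lemmas \ref{lemma:nonlinear-part-derivative-difference-moment-bounds} and \ref{lemma:F-bound-difference}. The latter two lemmas give exactly the same moment bounds as the former, except for an additional factor of $\delta_N^{\ref{lemma:nonlinear-part-derivative-difference-moment-bounds}}(t)$ or $\delta_N^{\ref{lemma:F-bound-difference}}(t)$; after taking square roots at the end, this is precisely the source of the factor $\delta_N^{\ref{lemma:nonlinear-part-finite-N-difference-bound}}(t)$ in the conclusion. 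Accordingly I set
\[ \delta_N^{\ref{lemma:nonlinear-part-finite-N-difference-bound}}(t) := \max\bigl\{(\delta_N^{\ref{lemma:nonlinear-part-derivative-difference-moment-bounds}}(t/2))^{1/2}, (\delta_N^{\ref{lemma:F-bound-difference}}(t/2))^{1/2}\bigr\}, \qquad t \in (0,1], \]
which takes values in $[0,1]$ and, for each fixed $t$, tends to $0$ as $N \toinf$ by the corresponding properties of the two component sequences.

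The first inequality is immediate from Lemma \ref{lemma:nonlinear-part-derivative-difference-moment-bounds} applied with $k=0$ at time $r$, combined with the monotonicity of $\delta_N^{\ref{lemma:nonlinear-part-derivative-difference-moment-bounds}}$ (so that its value at $r \in (t/2, t]$ is dominated by its value at $t/2$). For the second inequality I split the increment as
\[ \mbf{D}^{2}_{N, M}(r,x) - \mbf{D}^{2}_{N, M}(s,y) = \bigl(\mbf{D}^{2}_{N, M}(r,x) - \mbf{D}^{2}_{N, M}(r,y)\bigr) + \bigl(\mbf{D}^{2}_{N, M}(r,y) - \mbf{D}^{2}_{N, M}(s,y)\bigr) \]
and handle each term as in Lemma \ref{lemma:nonlinear-part-finite-N-bounds}. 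For the spatial piece, I parametrize a constant-speed geodesic $\wloop$ from $y$ to $x$, write the increment as $\int_0^1 \nabla \mbf{D}^{2}_{N, M}(r, \wloop(u)) \cdot \wloop'(u)\,du$, apply Cauchy--Schwarz, and control the integrand using Lemma \ref{lemma:nonlinear-part-derivative-difference-moment-bounds} with $k=1$; combining with the first inequality of the present lemma produces the $\min\{\metricspace(x,y)/\sqrt{t}, 1\}$ form.

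For the temporal piece, subtracting \eqref{eq:A2-time-derivative} at levels $N$ and $M$ gives
\[ \ptl_t \mbf{D}^{2}_{N, M}(t,x) = \Delta \mbf{D}^{2}_{N, M}(t,x) + \bigl(\mbf{F}_{N}(t,x) - \mbf{F}_{M}(t,x)\bigr), \]
so assuming $s < r$ and applying Cauchy--Schwarz yields
\[ |\mbf{D}^{2}_{N, M}(r,y) - \mbf{D}^{2}_{N, M}(s,y)|^2 \leq \const\, |r-s| \int_s^r \bigl(|\Delta \mbf{D}^{2}_{N, M}(u,y)|^2 + |\mbf{F}_{N}(u,y) - \mbf{F}_{M}(u,y)|^2\bigr) du. \]
The two terms in the integrand are bounded by Lemma \ref{lemma:nonlinear-part-derivative-difference-moment-bounds} (with $k=2$) and Lemma \ref{lemma:F-bound-difference} respectively, each with the same $t^{-(d+1-\alpha)}$ rate; using monotonicity to replace the $\delta_N$'s at $u$ by their values at $t/2$ and taking square roots produces the required $|r-s|/t$ bound. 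Adding the spatial and temporal estimates gives the $\dchain_t$-bound. There is no substantive obstacle here beyond careful bookkeeping of the two $\delta_N$ factors; the real work was already done in establishing Lemmas \ref{lemma:nonlinear-part-derivative-difference-moment-bounds} and \ref{lemma:F-bound-difference}.
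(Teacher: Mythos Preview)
Your proposal is correct and follows essentially the same approach as the paper: define $\delta_N^{\ref{lemma:nonlinear-part-finite-N-difference-bound}}(t) = \max\{\delta_N^{\ref{lemma:nonlinear-part-derivative-difference-moment-bounds}}(t/2), \delta_N^{\ref{lemma:F-bound-difference}}(t/2)\}^{1/2}$ and rerun the proof of Lemma~\ref{lemma:nonlinear-part-finite-N-bounds} with Lemmas~\ref{lemma:nonlinear-part-derivative-difference-moment-bounds} and~\ref{lemma:F-bound-difference} in place of Lemmas~\ref{lemma:A-2-bounds} and~\ref{lemma:F-bound}, using monotonicity of the $\delta_N$'s to replace their values on $(t/2,t]$ by their values at $t/2$. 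The paper states this in a single sentence; your version simply spells out the details of the geodesic and time-derivative arguments, which is fine.
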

\begin{proof}
Let $\{\delta_N^{\ref{lemma:nonlinear-part-derivative-difference-moment-bounds}}\}_{N \geq 0}$, $\{\delta_N^{\ref{lemma:F-bound-difference}}\}_{N \geq 0}$ be as in Lemmas \ref{lemma:nonlinear-part-derivative-difference-moment-bounds} and \ref{lemma:F-bound-difference}. For $N \geq 0$, define 
\[ \delta_N^{\ref{lemma:nonlinear-part-finite-N-difference-bound}}(t) := \max\{\delta_N^{\ref{lemma:nonlinear-part-derivative-difference-moment-bounds}}(t/2), \delta_N^{\ref{lemma:F-bound-difference}}(t/2)\}^{1/2}, ~~ t \in (0, 1]. \]
The two inequalities then follow by the same argument as in the proof of Lemma \ref{lemma:nonlinear-part-finite-N-bounds}, using Lemmas \ref{lemma:nonlinear-part-derivative-difference-moment-bounds} and \ref{lemma:F-bound-difference} in place of Lemmas \ref{lemma:A-2-bounds} and \ref{lemma:F-bound} for the needed moment estimates. In the course of the argument, we also use that $\delta_N^{\ref{lemma:nonlinear-part-derivative-difference-moment-bounds}}$ and $\delta_N^{\ref{lemma:F-bound-difference}}$ are non-increasing, so that we may bound the values of these functions at $u \in (t/2, t]$ by their values at $t/2$.
\end{proof}

\begin{definition}
For $N \geq 0$, 
$t \in (0, 1]$, $x \in \torus^d$, let
\[ \mbf{D}^{2}_{N}(t, x) := \rconn^{2}_{N}(t, x)- \rconn^{2}(t, x). \]
\end{definition}

The following result is a direct consequence of Lemmas \ref{lemma:nonlinear-part-finite-N-bounds} and \ref{lemma:nonlinear-part-finite-N-difference-bound} and Definition \ref{def:nonlinear-part-l2-limit}. 

\begin{cor}\label{cor:D-N-l2-bound}
Let $\{\delta_N^{\ref{lemma:nonlinear-part-finite-N-difference-bound}}\}_{N \geq 0}$ be the sequence of functions from Lemma \ref{lemma:nonlinear-part-finite-N-difference-bound}. Then for any $N \geq 0$, 
$t \in (0, 1]$, $r \in (t/2, t]$, $x \in \torus^d$, we have that
\[ \|\mbf{D}^{2}_{N}(r, x)\|_{L^2} \leq \const t^{-(1/2)(d-1-\alpha)} \delta_N^{\ref{lemma:nonlinear-part-finite-N-difference-bound}}(t).\]
Here, $\const$ depends only on $d$ and the constants $\alpha, \constgf, \constfourp$ from Assumptions \ref{assumption:bounded-by-fractional-greens-function}, \ref{assumption:four-product-assumption}. 
\end{cor}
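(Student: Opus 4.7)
The plan is to pass to the limit in the bound already established for $\mbf{D}^2_{N,M}$ in Lemma \ref{lemma:nonlinear-part-finite-N-difference-bound}. By Definition \ref{def:nonlinear-part-l2-limit}, for each fixed $r \in (t/2,t]$ and $x \in \torus^d$ we have $\rconn^2_M(r,x) \to \rconn^2(r,x)$ in $L^2$ as $M \toinf$. Rewriting this in terms of differences gives
\[
\mbf{D}^2_{N,M}(r,x) = \rconn^2_N(r,x) - \rconn^2_M(r,x) \xrightarrow{L^2} \rconn^2_N(r,x) - \rconn^2(r,x) = \mbf{D}^2_N(r,x)
\]
as $M \toinf$, with $N$ fixed.

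Since convergence in $L^2$ implies convergence of $L^2$ norms, we obtain
\[
\|\mbf{D}^2_N(r,x)\|_{L^2} = \lim_{M \toinf} \|\mbf{D}^2_{N,M}(r,x)\|_{L^2}.
\]
(Alternatively, one may invoke Fatou's lemma applied to an a.s.\ subsequential limit and still arrive at the same upper bound.) Now the first inequality in Lemma \ref{lemma:nonlinear-part-finite-N-difference-bound} gives, uniformly in $M \geq N$,
\[
\|\mbf{D}^2_{N,M}(r,x)\|_{L^2} \leq \const\, t^{-(1/2)(d-1-\alpha)} \delta_N^{\ref{lemma:nonlinear-part-finite-N-difference-bound}}(t),
\]
with the constant $\const$ depending only on $d$ and the constants $\alpha, \constgf, \constfourp$. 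Passing to the limit $M \toinf$ in this inequality yields the claimed bound on $\|\mbf{D}^2_N(r,x)\|_{L^2}$.

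There is essentially no obstacle here: the technical content is entirely packaged in Lemma \ref{lemma:nonlinear-part-finite-N-difference-bound}, and the corollary is just the observation that the bound there is uniform in $M$, hence survives the $L^2$ limit defining $\rconn^2$. The only point to be slightly careful about is that the limit defining $\rconn^2(r,x)$ is taken pointwise in $(r,x)$, but since the bound we are passing to the limit in is also at the fixed point $(r,x)$, no joint continuity in $(r,x)$ is needed for this step.
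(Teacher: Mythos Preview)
Your proof is correct and matches the paper's approach exactly: the paper states that the corollary is a direct consequence of Lemma \ref{lemma:nonlinear-part-finite-N-difference-bound} and Definition \ref{def:nonlinear-part-l2-limit}, which is precisely the $L^2$ limit argument you spell out.
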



We can now use Assumption \ref{assumption:tail-bounds}, Lemma \ref{lemma:A-2-is-quadratic-form}, and the various moment estimates to obtain tail bounds for $\rconn^2$ and related quantities. 

\begin{lemma}\label{lemma:A-2-tail-bounds}
For any 
$t \in (0, 1]$, $r, s \in (t/2, t]$, $x, y \in \torus^d$, we have that for $u \geq 0$,
\[ \p(|\rconn^{2}(r, x)| > u) \leq \const \exp\big(-(u / {t^{-(1/2)(d-1-\alpha)}})^\exptb /C \big),  \]
\[ \begin{split}
\p(|\rconn^{2}(r, x) -& \rconn^{2}(s, y)| > u) \leq \\
&\const \exp\big(- \big(u / {(t^{-(1/2)(d-1-\alpha)} \dchain_t((r, x), (s, y))})\big)^\exptb /C \big).
\end{split}\]
The above inequalities also hold with $\rconn^{2}$ replaced by $\rconn^{2}_{N}$ for any $N \geq 0$. Additionally, let $\{\delta_N^{\ref{lemma:nonlinear-part-finite-N-difference-bound}}\}_{N \geq 0}$ be the sequence of functions from Lemma \ref{lemma:nonlinear-part-finite-N-difference-bound}. Then for any $N \geq 0$, we have that for $u \geq 0$,
\[ \p(|\mbf{D}^{2}_{N}(r, x)| > u) \leq \const \exp\big(- \big(u / {(t^{-(1/2)(d-1-\alpha)} \delta_N^{\ref{lemma:nonlinear-part-finite-N-difference-bound}}(t))}\big)^\exptb /C \big) ,\]
\[\begin{split}
\p(|\mbf{D}^{2}_{N}(r, x) &- \mbf{D}^{2}_{N}(s, y)| > u) \leq  \\
&\const \exp\big(- \big(u / {(t^{-(1/2)(d-1-\alpha)} \dchain_t((r, x), (s, y)) \delta_N^{\ref{lemma:nonlinear-part-finite-N-difference-bound}}(t))}\big)^\exptb /C \big) .
\end{split}\]
Here, $\const$ depends only on $d$ and the constants $\exptb, \consttb, \alpha, \constgf, \constfourp$ from Assumptions \ref{assumption:tail-bounds}, \ref{assumption:bounded-by-fractional-greens-function} and \ref{assumption:four-product-assumption}. 
\end{lemma}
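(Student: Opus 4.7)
The plan is to realize each of the four quantities as a quadratic form in $\rconn^0_M$ (for a suitable $M$) with the diagonal-vanishing property required by Assumption \ref{assumption:tail-bounds}, then combine the resulting sub-exponential tail bound with the $L^2$ estimates already established. It suffices to work component-wise, i.e., to prove the bounds for the scalar processes $\rconn^{2,a_0}_{N,j_0}$; the vector-valued bounds follow by a union bound over $a_0 \in [\lalgdim]$, $j_0 \in [d]$, at the cost of enlarging $\const$.

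First, I would use Lemma \ref{lemma:A-2-is-quadratic-form}, applied to the smooth one-form $\rconn^0_N$, to write
\[ \rconn^{2,a_0}_{N,j_0}(r, x) = (\rconn^0_N, K \rconn^0_N), \qquad \rconn^{2,a_0}_{N,j_0}(r, x) - \rconn^{2,a_0}_{N,j_0}(s, y) = (\rconn^0_N, K' \rconn^0_N) \]
for smooth kernels $K, K' \in C^\infty(\mathbb{I}^2, \mathbb{R})$ which vanish on pairs $((a, j_1, x), (a, j_2, y))$ with matching Lie-algebra index $a$ (the second identity being the difference of two applications of the lemma). By Assumption \ref{assumption:tail-bounds}, these quadratic forms satisfy sub-exponential tail bounds with scale parameters $\sigma_{N, K}$ and $\sigma_{N, K'}$ respectively; by definition these scale parameters are precisely the $L^2$ norms of $\rconn^{2,a_0}_{N,j_0}(r, x)$ and $\rconn^{2,a_0}_{N,j_0}(r, x) - \rconn^{2,a_0}_{N,j_0}(s, y)$. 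Lemma \ref{lemma:nonlinear-part-finite-N-bounds} bounds these $L^2$ norms by $\const t^{-(1/2)(d-1-\alpha)}$ and $\const t^{-(1/2)(d-1-\alpha)} \dchain_t((r,x),(s,y))$ respectively, so the claimed tail inequalities for $\rconn^{2}_{N}$ follow.

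For the limiting quantity $\rconn^{2}$, I would pass to the limit $N \to \infty$. Since $\rconn^{2}_{N}(r, x) \to \rconn^{2}(r, x)$ in $L^2$ (Definition \ref{def:nonlinear-part-l2-limit}) and hence in probability, Fatou's lemma (applied to the tail event $\{|\cdot| > u\}$ via the lower semicontinuity of probability under convergence in distribution of indicators, or more concretely via $\mathbb{P}(|Y| > u) \leq \liminf_N \mathbb{P}(|Y_N| > u - \varepsilon)$ for any $\varepsilon > 0$) transfers the uniform-in-$N$ tail bound to $\rconn^{2}$, giving the first two stated inequalities. For the difference $\mbf{D}^{2}_{N, M} = \rconn^{2}_{N} - \rconn^{2}_{M}$ with $M \geq N$, the explicit formula \eqref{eq:A-2-N-explicit} shows that $\mbf{D}^{2,a_0}_{N, M, j_0}(r, x)$ is again a quadratic form in $\rconn^0_M$, with kernel supported on pairs $(n^1, n^2)$ satisfying $\max\{|n^1|_\infty, |n^2|_\infty\} > N$, and the structure-constant factor in $d(m, a, j)$ forces the diagonal-vanishing property. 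So the argument above applies verbatim, with the scale now bounded (by Lemma \ref{lemma:nonlinear-part-finite-N-difference-bound}) by $\const t^{-(1/2)(d-1-\alpha)} \delta_N^{\ref{lemma:nonlinear-part-finite-N-difference-bound}}(t)$ (and an analogous bound for the space-time difference). Sending $M \to \infty$ and using Corollary \ref{cor:D-N-l2-bound} then yields the final two tail bounds for $\mbf{D}^{2}_{N}$.

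The only slightly delicate point is verifying the diagonal-vanishing hypothesis of Assumption \ref{assumption:tail-bounds} for the kernels that appear in the space-time differences; here one uses Remark \ref{remark:I-and-d-properties}, which tells us that $d(m, a, j) = 0$ whenever two of $a_0, a_1, a_2$ coincide, so the relevant terms in \eqref{eq:A-2-N-explicit} (and in the analogous formula for $\rconn^2_N(r,x) - \rconn^2_N(s,y)$, which follows from Lemma \ref{lemma:rho-A-1-smooth}) never contribute with $a_1 = a_2$. This is the only step with any real content; everything else is bookkeeping on top of the moment bounds proved earlier in this subsection.
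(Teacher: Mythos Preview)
Your proposal is correct and follows essentially the same approach as the paper: both arguments combine Lemma \ref{lemma:A-2-is-quadratic-form} (to realize the relevant quantities as quadratic forms with the diagonal-vanishing property) with Assumption \ref{assumption:tail-bounds} and the $L^2$ bounds from Lemmas \ref{lemma:nonlinear-part-finite-N-bounds} and \ref{lemma:nonlinear-part-finite-N-difference-bound}, then pass to the limit via the portmanteau lemma. Your treatment is in fact more explicit than the paper's (which simply says ``follows similarly'' for the last three inequalities), and your observation that $\mbf{D}^{2}_{N,M}$ is a quadratic form in $\rconn^0_M$ when $M \geq N$ is exactly the right way to handle the difference terms.
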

\begin{proof}
We have that $\rconn^{2}_{N}(r, x) \stackrel{L^2}{\ra} \rconn^{2}(r, x)$, and thus the convergence also happens in distribution. Thus by the portmanteau lemma, we have that for $u \geq 0$,
\[ \p(|\rconn^{2}(r, x)| > u) \leq \liminf_{N \toinf} \p(
|\rconn^{2}_{N}(r, x)| > u). \]
The first desired result now follows by combining Assumption \ref{assumption:tail-bounds}, Lemma \ref{lemma:A-2-is-quadratic-form}, and Lemma \ref{lemma:nonlinear-part-finite-N-bounds}. The second desired result follows similarly. The third and fourth desired results also follow similarly, where we use Lemma \ref{lemma:nonlinear-part-finite-N-difference-bound} in place of Lemma \ref{lemma:nonlinear-part-finite-N-bounds}.
\end{proof}

Lemma \ref{lemma:A-2-tail-bounds} may be used to obtain the following result.

\begin{lemma}\label{lemma:A-2-a-j-cont-mod}
The process $\rconn^2 = (\rconn^{2}(t, x), t \in (0, 1], x \in \torus^d)$ has a continuous modification.
\end{lemma}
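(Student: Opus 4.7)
The plan is to apply Kolmogorov's continuity theorem, using the tail bounds from Lemma \ref{lemma:A-2-tail-bounds} to extract the required moment bounds on increments. Since those tail bounds are only uniform on sets of the form $(t/2, t] \times \torus^d$, I will first produce a continuous modification on each dyadic slab and then patch.

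First, fix $t \in (0,1]$. By Lemma \ref{lemma:A-2-tail-bounds}, for all $r,s \in (t/2,t]$ and $x,y \in \torus^d$ the tail $\p(|\rconn^2(r,x)-\rconn^2(s,y)|>u)$ is bounded by a stretched exponential in $u/(t^{-(1/2)(d-1-\alpha)}\dchain_t((r,x),(s,y)))$. Integrating this tail (the standard layer cake formula) yields, for every $p\ge 1$,
\[
\E\bigl[|\rconn^2(r,x)-\rconn^2(s,y)|^p\bigr] \leq \const_{p,t}\, \dchain_t\bigl((r,x),(s,y)\bigr)^p,
\]
where $\const_{p,t}$ absorbs $t^{-(p/2)(d-1-\alpha)}$ and the constants from Lemma \ref{lemma:A-2-tail-bounds}. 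Observing that on $(t/2,t]\times\torus^d$ the metric $\dchain_t$ is comparable to the Euclidean metric (with comparability constants depending on $t$), this becomes
\[
\E\bigl[|\rconn^2(r,x)-\rconn^2(s,y)|^p\bigr]\leq \const_{p,t}\bigl(|r-s|+|x-y|\bigr)^p.
\]
Taking $p$ sufficiently large (larger than $d+1$, the dimension of the parameter space $(t/2,t]\times\torus^d$), Kolmogorov's continuity theorem yields a modification $\widetilde{\rconn}^{2,t}$ of $\rconn^2$ restricted to $(t/2,t]\times\torus^d$ which is a.s.\ continuous (in fact H\"older continuous).

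Next I patch: apply the above with $t=t_k := 2^{-k+1}$ for each $k\ge 0$ to obtain modifications $\widetilde{\rconn}^{2,t_k}$ on $(2^{-k},2^{-k+1}]\times\torus^d$. On each overlap, two such modifications agree almost surely on a countable dense set and hence, by continuity, agree a.s.\ everywhere on the overlap; thus on a single event of probability one all these modifications glue together. Define
\[
\widetilde{\rconn}^2(t,x) := \widetilde{\rconn}^{2,t_k}(t,x) \quad\text{whenever } t\in(2^{-k},2^{-k+1}],
\]
on the intersection event and $0$ otherwise. This is jointly continuous on $(0,1]\times\torus^d$ on a set of full probability, and is a modification of $\rconn^2$ on each slab, hence on all of $(0,1]\times\torus^d$. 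In particular $t\mapsto \widetilde{\rconn}^2(t)$ is continuous into $C^0(\torus^d,\lalg^d)$.

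The only point requiring care is the passage from tail bounds to moment bounds uniform enough on each slab to feed Kolmogorov; this is routine since the exponent $\exptb>0$ makes $\E[|\cdot|^p]<\infty$ with a bound controlled by the scale parameter of the stretched-exponential tail. I do not expect any genuine obstacle here. (The upgrade to continuity into $C^1(\torus^d,\lalg^d)$ that Lemma \ref{lemma:A-2-continuous-modification} eventually requires is not claimed in the present lemma and will come later using the additional tail bounds on spatial derivatives.)
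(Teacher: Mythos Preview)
Your approach is essentially the same as the paper's: tail bounds from Lemma~\ref{lemma:A-2-tail-bounds} on slabs $(t/2,t]\times\torus^d$, convert to moment bounds, apply a Kolmogorov-type criterion on each slab, then patch. The paper packages the first three steps into Theorem~\ref{thm:continuous-modification} and the patching into Lemma~\ref{lemma:continuous-modification}, but the content is identical.

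One small slip: with $t_k = 2^{-k+1}$ your slabs $(t_k/2,t_k] = (2^{-k},2^{-k+1}]$ form a \emph{partition} of $(0,1]$, not an overlapping cover, so there is no ``overlap'' on which to compare adjacent modifications and no way to check continuity at the junction points $2^{-k}$. The fix is trivial---choose $t_k$ with $t_{k+1}\in(t_k/2,t_k)$ so consecutive slabs genuinely overlap (this is exactly what Lemma~\ref{lemma:continuous-modification} does)---but as written the gluing step is not quite right.
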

\begin{proof}
Let $t_0 \in (0, 1]$. Define the process $X = (X_{t, x}, (t, x) \in (t_0/2, t_0] \times \torus^d)$ by
\[ X_{t, x} := (C^{-1/\exptb}) t_0^{(1/2)(d-1-\alpha)} \rconn^{2}(t, x). \]
Then by Lemma \ref{lemma:A-2-tail-bounds}, we have that for all $(r, x), (s, y) \in (t_0/2, t_0] \times \torus^d$,
\[ \p\big(|X_{r, x} - X_{s, y}| > u \dchain_t((r, x), (s, y))\big) \leq \const \exp(-u^\exptb), ~~ u \geq 0.\]
Then by Theorem \ref{thm:continuous-modification}, the process $X$ has a continuous modification, and thus so does $(\rconn^{2}(t, x), (t, x) \in (t_0/2, t_0] \times \torus^d)$. Since this holds for any $t_0 \in (0, 1]$, by Lemma \ref{lemma:continuous-modification}, we obtain that $(\rconn^{2}(t, x), t \in (0, 1], x \in \torus^d)$ has a continuous modification, as desired.
\end{proof}

Hereafter, we assume that (after a suitable modification) the process $\rconn^2 = (\rconn^{2}, t \in (0, 1], x \in \torus^d)$ has continuous sample paths. The following lemma is the analogue of Lemma \ref{lemma:A-1-exp-sup-local}.


\begin{lemma}\label{lemma:A-2-exp-sup-local}
For any $p \geq 1$, 
$t_0 \in (0, 1]$, we have that
\[ \E\bigg[ \sup_{(t, x) \in (t_0/2, t_0] \times \torus^d} |\rconn^{2}(t, x)|^p \bigg] \leq \const t_0^{-p(1/2)(d-1-\alpha)} (1 + |\log t_0|^{1/\exptb})^p. \]
The above inequality also holds with $\rconn^{2}$ replaced by $\rconn^{2}_{N}$ for any $N \geq 0$. 
Additionally, let $\{\delta_N^{\ref{lemma:nonlinear-part-finite-N-difference-bound}}\}_{N \geq 0}$ be the sequence of functions from Lemma \ref{lemma:nonlinear-part-finite-N-difference-bound}. Then for any $N \geq 0$, we have that
\[\begin{split}
\E\bigg[\sup_{(t, x) \in (t_0/2, t_0] \times \torus^d} |\mbf{D}^{2}_{N}&(t, x)|\bigg] \leq \const t_0^{-p(1/2)(d-1-\alpha)} (1 + |\log t_0|^{1/\exptb})^p (\delta_N^{\ref{lemma:nonlinear-part-finite-N-difference-bound}}(t_0))^p. 
\end{split}\]
Here, $\const$ depends only on $d, p,$ and the constants $\exptb, \consttb, \alpha, \constgf, \constfourp$ from Assumptions \ref{assumption:tail-bounds}, \ref{assumption:bounded-by-fractional-greens-function} and \ref{assumption:four-product-assumption}. 
\end{lemma}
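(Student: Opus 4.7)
The plan is to mimic the proof of Lemma \ref{lemma:A-1-exp-sup-local} essentially verbatim, replacing the linear-part tail bounds (Lemma \ref{lemma:A-1-tail-bounds}) with the nonlinear-part tail bounds (Lemma \ref{lemma:A-2-tail-bounds}). Concretely, fix $p \geq 1$ and $t_0 \in (0,1]$, and define the rescaled process
\[ X_{t,x} := \const^{-1/\exptb} t_0^{(1/2)(d-1-\alpha)} \rconn^2(t,x), \qquad (t,x) \in (t_0/2, t_0] \times \torus^d, \]
where $\const$ is the constant from Lemma \ref{lemma:A-2-tail-bounds}. Then that lemma immediately gives, for all $(r,x),(s,y) \in (t_0/2, t_0] \times \torus^d$ and $u \geq 0$,
\[ \p(|X_{r,x}| > u) \leq \const \exp(-u^{\exptb}), \]
\[ \p\bigl(|X_{r,x} - X_{s,y}| > u\,\dchain_{t_0}((r,x),(s,y))\bigr) \leq \const \exp(-u^{\exptb}). \]

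First I would apply \cite[Lemma A.2]{Dirksen2015} (exactly as in the proof of Lemma \ref{lemma:A-1-exp-sup-local}) to deduce the pointwise bound $\sup_{(r,x)} \E[|X_{r,x}|^p] \leq \const$. Then I would feed the tail bound on increments, together with the entropy bound from Lemma \ref{lemma:entropy-integral-bound} (applied with $\beta = 1/\exptb$), into Theorem \ref{thm:dirksen-exp-sup}. This yields
\[ \E\bigg[\sup_{(t,x) \in (t_0/2, t_0]\times \torus^d} |X_{t,x}|^p\bigg] \leq \const_{p}(1 + |\log t_0|^{1/\exptb})^p, \]
and unwinding the rescaling gives the first desired bound on $\rconn^2$. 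The bound for $\rconn^2_N$ follows by the same argument, since Lemma \ref{lemma:A-2-tail-bounds} provides the same tail estimates with constants uniform in $N$.

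For the $\mbf{D}^2_N$ bound, the rescaled process is instead
\[ Y^{N}_{t,x} := \const^{-1/\exptb} t_0^{(1/2)(d-1-\alpha)} (\delta_N^{\ref{lemma:nonlinear-part-finite-N-difference-bound}}(t_0))^{-1} \mbf{D}^2_N(t,x). \]
Since $\delta_N^{\ref{lemma:nonlinear-part-finite-N-difference-bound}}$ is non-increasing, for $(r,x),(s,y)\in (t_0/2, t_0] \times \torus^d$ we have $\delta_N^{\ref{lemma:nonlinear-part-finite-N-difference-bound}}(t) \leq \delta_N^{\ref{lemma:nonlinear-part-finite-N-difference-bound}}(t_0)$ with $t$ the larger of $r,s$, so the last two inequalities of Lemma \ref{lemma:A-2-tail-bounds} give the same Weibull tail bounds for $Y^N$ as for $X$. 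Repeating the Dirksen $L^p$ bound and chaining argument yields $\E[\sup |Y^N_{t,x}|^p] \leq \const_p (1+|\log t_0|^{1/\exptb})^p$, which rescales to the desired third bound.

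The only mildly non-routine point is the existence of a continuous version of $\rconn^2$ on which the sup is well-defined and measurable: this is already settled by Lemma \ref{lemma:A-2-a-j-cont-mod} (and for $\rconn^2_N$ by smoothness), and for $\mbf{D}^2_N$ we use the difference of the two continuous versions. Apart from this, the proof is a direct transcription of the linear-part argument, and no essentially new estimate is needed — all hard work has been absorbed into Lemmas \ref{lemma:A-2-tail-bounds} and \ref{lemma:nonlinear-part-finite-N-difference-bound} together with the chaining machinery.
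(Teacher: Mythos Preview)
Your proposal is correct and follows essentially the same approach as the paper: rescale, apply \cite[Lemma A.2]{Dirksen2015} for the pointwise $L^p$ bound, then feed the increment tail bound and the entropy estimate (Lemma \ref{lemma:entropy-integral-bound}) into Theorem \ref{thm:dirksen-exp-sup}. One small remark: your appeal to $\delta_N^{\ref{lemma:nonlinear-part-finite-N-difference-bound}}$ being non-increasing is unnecessary (and not actually asserted in Lemma \ref{lemma:nonlinear-part-finite-N-difference-bound}) --- the tail bounds in Lemma \ref{lemma:A-2-tail-bounds} are stated with a fixed outer parameter $t$, so taking $t = t_0$ already gives $\delta_N^{\ref{lemma:nonlinear-part-finite-N-difference-bound}}(t_0)$ directly.
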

\begin{proof}
Let $\const$ be as in Lemma \ref{lemma:A-2-tail-bounds}. Define the stochastic process $(X_{t, x}, (t, x) \in (t_0/2, t_0] \times \torus^d)$ by 
\[ X_{t, x} :=  (C^{-1/\exptb}) t_0^{(1/2)(d-1-\alpha)} \rconn^{2}(t, x). \]
Then by Lemma \ref{lemma:A-2-tail-bounds}, we have that for all $(r, x), (s, y) \in (t_0/2, t_0] \times \torus^d$,
\[ \p(|X_{r, x}| > u) \leq \const \exp(-u^\exptb), ~~ u \geq 0, \]
\[ \p\big(|X_{r, x} - X_{s, y}| > u \dchain_t((r, x), (s, y))\big) \leq \const \exp(-u^\exptb), ~~ u \geq 0.\]
By the first inequality and \cite[Lemma A.2]{Dirksen2015}, we have that
\[ \sup_{(r, x) \in (t_0/2, t_0] \times \torus^d} \E\big[|X_{r, x}|^p] \leq \const. \]
Thus by combining the above with Theorem \ref{thm:dirksen-exp-sup} and Lemma \ref{lemma:entropy-integral-bound}, we obtain
\[ \E\bigg[\sup_{(t, x) \in (t_0/2, t_0] \times \torus^d} |X_{t, x}|^p\bigg] \leq \const + \const(1 +  |\log t_0|^{1/\exptb})^p.\]
The first desired result now follows. The proof for $\rconn^{2}_{N}$ in place of $\rconn^{2}$ is essentially the same. 
The second desired result is argued similarly.
\end{proof}




\begin{prop}\label{prop:nonlinear-part-c0-bound}
For $\varep > 0$, let $\gamma_\varep := (1/2)(d-1-\alpha) + \varep$. For any $\varep > 0$, $p \geq 1$, we have that
\[ \sup_{N \geq 0} \E\bigg[\sup_{t \in (0, 1]} t^{p\gamma_\varep} \|\rconn^2_N(t)\|_{C^0}^p\bigg], ~~ \E\bigg[\sup_{t \in (0, 1]} t^{p\gamma_\varep} \|\rconn^2(t)\|_{C^0}^p\bigg] \leq \const_{\varep, p} < \infty. \]
Moreover, we have that
\[ \lim_{N \toinf} \E\bigg[\sup_{t \in (0, 1]} t^{p\gamma_\varep} \|\rconn^2_N(t) - \rconn^2(t)\|_{C^0}^p\bigg] = 0.\]
Here the constant $\const_{\varep, p}$ depends only on $\varep, p, d$, and the constants $\exptb, \consttb, \alpha, \constgf, \constfourp$ from Assumptions \ref{assumption:tail-bounds}, \ref{assumption:bounded-by-fractional-greens-function} and \ref{assumption:four-product-assumption}. 
\end{prop}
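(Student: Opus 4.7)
The plan is to use a dyadic decomposition of the time interval $(0,1]$ and apply the local sup bounds from Lemma \ref{lemma:A-2-exp-sup-local}. Specifically, set $t_k := 2^{-k}$ for $k \geq 0$, and on the dyadic slab $(t_{k+1}, t_k]$ use the crude bound $t^{p\gamma_\varep} \leq t_k^{p\gamma_\varep}$ together with Lemma \ref{lemma:A-2-exp-sup-local} applied with $t_0 = t_k$. Then write
\[ \sup_{t \in (0,1]} t^{p\gamma_\varep} \|\rconn^2(t)\|_{C^0}^p \leq \sum_{k \geq 0} \sup_{t \in (t_{k+1}, t_k]} t^{p\gamma_\varep} \|\rconn^2(t)\|_{C^0}^p, \]
and take expectation term by term.

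For the first claim (uniform boundedness), applying Lemma \ref{lemma:A-2-exp-sup-local} to the $k$-th slab gives
\[ \E\bigl[\sup_{t \in (t_{k+1}, t_k]} t^{p\gamma_\varep} \|\rconn^2(t)\|_{C^0}^p\bigr] \leq \const \, t_k^{p\gamma_\varep - p(1/2)(d-1-\alpha)} (1 + k^{1/\exptb})^p. \]
The key algebraic observation is that $p\gamma_\varep - p(1/2)(d-1-\alpha) = p\varep > 0$, so the $k$-th term is bounded by $\const\, 2^{-kp\varep}(1+k)^{p/\exptb}$, which is summable in $k$. This yields the uniform bound, and the same argument works with $\rconn^2$ replaced by $\rconn^2_N$ (uniformly in $N$), since Lemma \ref{lemma:A-2-exp-sup-local} applies equally to both.

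For the convergence statement, repeat the above with $\rconn^2_N - \rconn^2 = \mbf{D}^2_N$ in place of $\rconn^2$. The corresponding bound from Lemma \ref{lemma:A-2-exp-sup-local} produces
\[ \E\bigl[\sup_{t \in (t_{k+1}, t_k]} t^{p\gamma_\varep} \|\mbf{D}^2_N(t)\|_{C^0}^p\bigr] \leq \const \, 2^{-kp\varep} (1+k)^{p/\exptb} \bigl(\delta_N^{\ref{lemma:nonlinear-part-finite-N-difference-bound}}(t_k)\bigr)^p. \]
Since $\delta_N^{\ref{lemma:nonlinear-part-finite-N-difference-bound}} \in [0,1]$, each summand is dominated by the summable sequence $\const\, 2^{-kp\varep}(1+k)^{p/\exptb}$, while for each fixed $k$ we have $\delta_N^{\ref{lemma:nonlinear-part-finite-N-difference-bound}}(t_k) \to 0$ as $N \toinf$ by Lemma \ref{lemma:nonlinear-part-finite-N-difference-bound}. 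Dominated convergence applied to the counting measure on $k$ then yields the desired convergence to $0$.

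There is no real obstacle here beyond the routine bookkeeping of the dyadic sum; the entire argument is driven by the gain $\varep$ that $\gamma_\varep$ affords over the singular rate $(1/2)(d-1-\alpha)$ appearing in Lemma \ref{lemma:A-2-exp-sup-local}, which makes the geometric series absolutely convergent and absorbs the polylogarithmic factor.
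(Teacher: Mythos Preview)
Your proof is correct and follows essentially the same route as the paper: the paper invokes Lemma~\ref{lemma:A-2-exp-sup-local} together with Lemmas~\ref{lemma:first-chaining-bound} and~\ref{lemma:chaining-bound-convergence-to-0}, which are precisely the dyadic-in-time summation and dominated convergence arguments you wrote out explicitly. The only cosmetic difference is that the paper packages the dyadic sum into those two general auxiliary lemmas rather than performing it in place.
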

\begin{proof}
The first result follows by combining Lemma \ref{lemma:A-2-exp-sup-local} with Lemma \ref{lemma:first-chaining-bound}. The second result follows by combining Lemma \ref{lemma:A-2-exp-sup-local} with Lemma \ref{lemma:chaining-bound-convergence-to-0}.
\end{proof}

As previously mentioned, having controlled the $C^0$ norm, we now move on to controlling the $C^1$ norm. We first show the following preliminary result, which will also allow us to prove Lemmas \ref{lemma:A-2-continuous-modification} and \ref{lemma:B-1-is-nonlinear-part}.

\begin{lemma}\label{lemma:nonlinear-part-duhamel-probabilistic}
For all $t_0, t_1 \in (0, 1]$, $t_0 < t_1$, $x \in \torus^d$, we have that a.s.,
\begin{align}
\rconn^2(t_1, x) &= (e^{(t_1 - t_0) \Delta} \rconn^2(t_0))(x) \notag \\
&\qquad + \int_0^{t_1 - t_0} \big(e^{(t_1 - t_0 - s)\Delta} X^{(2)}(\rconn^1(t_0 + s))\big)(x) ds. \label{eq:A-2-integral-identity} 
\end{align}
\end{lemma}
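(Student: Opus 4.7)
The plan is to establish the identity first for the Fourier truncations $\rconn^2_N$, where all quantities are classical smooth functions, and then to pass to the limit $N \to \infty$ on both sides. At the truncated level, since $\rconn^1_N$ has smooth sample paths and $\rconn^2_N = \rho^{(2)}(\rconn^1_N)$ by definition, the identity is obtained by splitting
\[
\rconn^2_N(t_1) = \int_0^{t_0} e^{(t_1-s)\Delta} X^{(2)}(\rconn^1_N(s))\,ds + \int_{t_0}^{t_1} e^{(t_1-s)\Delta} X^{(2)}(\rconn^1_N(s))\,ds,
\]
using the semigroup property to pull $e^{(t_1-t_0)\Delta}$ out of the first integral (which then becomes $e^{(t_1-t_0)\Delta}\rconn^2_N(t_0)$), and performing a change of variables $s \mapsto t_0 + s$ in the second. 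This is exactly the same manipulation carried out in Remark~\ref{remark:first-nonlinear-part-c1-example}, restricted to the $\rho^{(2)}$ part of $\rho$.

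Next I would take the limit $N \to \infty$ separately on each term. The left-hand side $\rconn^2_N(t_1, x)$ converges in $L^2$ to $\rconn^2(t_1, x)$ by Definition~\ref{def:nonlinear-part-l2-limit}. For the first term on the right-hand side, I would write
\[
(e^{(t_1-t_0)\Delta}\rconn^2_N(t_0))(x) = \int_{\torus^d} \Phi(t_1-t_0,\, x-y)\, \rconn^2_N(t_0, y)\,dy,
\]
and use Minkowski's integral inequality together with Corollary~\ref{cor:D-N-l2-bound}, which provides a bound on $\|\rconn^2_N(t_0, y) - \rconn^2(t_0, y)\|_{L^2}$ that is uniform in $y \in \torus^d$ and tends to $0$ as $N \toinf$. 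This yields $L^2$-convergence to $(e^{(t_1-t_0)\Delta}\rconn^2(t_0))(x)$.

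For the second term on the right-hand side, the crucial observation is that $X^{(2)}$ is only evaluated at $\rconn^1(t_0+s)$ for $s \in [0, t_1-t_0]$, where the time argument is bounded below by $t_0 > 0$. By the smoothness of $\rconn^1$ on $(0,1] \times \torus^d$ and \eqref{eq:linear-part-convergence-positive-time}, on an event of probability $1$ we have $\sup_{s \in [0, t_1-t_0]} \|\rconn^1_N(t_0+s) - \rconn^1(t_0+s)\|_{C^2} \to 0$. Since $X^{(2)}$ depends quadratically on its input (with one derivative), this yields a.s.\ uniform convergence of the integrand to $X^{(2)}(\rconn^1(t_0+s))$ in $C^0$, and the outer convolution with $\Phi(t_1-t_0-s, x - \cdot)$ followed by integration over $s$ preserves this by dominated convergence. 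Matching the a.s.\ limit on the right with the $L^2$ limit on the left identifies the two sides almost surely at the fixed triple $(t_0, t_1, x)$.

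The main technical step is reconciling the distinct modes of convergence: the $L^2$-limit for $\rconn^2_N(t_0, \cdot)$, which is intrinsically probabilistic, against the a.s.\ uniform limit for $\rconn^1_N(t_0+s)$, which follows from deterministic smoothing at strictly positive times. No divergence issues arise at this stage, since the probabilistic definition of $\rconn^2$ is only needed to handle the initial condition $\rconn^2(t_0)$ at a strictly positive time, while the Duhamel contribution over $[t_0, t_1]$ is purely classical owing to the positive-time smoothness of $\rconn^1$.
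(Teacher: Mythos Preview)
Your proposal is correct and follows essentially the same approach as the paper's proof: establish the identity at the truncated level (the paper invokes Lemma~\ref{lemma:rho-later-time} for this), then pass to the limit termwise using the $L^2$-definition of $\rconn^2$ on the left, a convergence argument for $e^{(t_1-t_0)\Delta}\rconn^2_N(t_0)$, and the a.s.\ uniform convergence \eqref{eq:linear-part-convergence-positive-time} of $\rconn^1_N$ at strictly positive times for the Duhamel integral (the paper packages this last step via Lemma~\ref{lemma:rho-A-n-convergence}). The one minor difference is in the first term on the right: you use Minkowski's inequality with the pointwise $L^2$-bound from Corollary~\ref{cor:D-N-l2-bound}, whereas the paper invokes the stronger $C^0$-convergence in expectation from Proposition~\ref{prop:nonlinear-part-c0-bound} combined with the heat-semigroup contraction \eqref{eq:heat-semigroup-Cr-contraction}; both routes give convergence in probability, which is all that is needed.
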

\begin{proof}
By Lemma \ref{lemma:rho-later-time}, the result is true if we replace $\rconn^2, \rconn^1$ by $\rconn^2_N, \rconn^1_N$, since $\rconn^2_N = \rho(\rconn^1_N)$ by definition (recall Definition \ref{def:A-2-N}). Taking $N \toinf$, we have (recall Definition \ref{def:nonlinear-part-l2-limit}) that $\rconn^2_N(t_1, x) \stackrel{L^2}{\ra} \rconn^2(t_1, x)$. To finish, it suffices to show that 
\[
(e^{(t_1 - t_0)\Delta} \rconn^2_N(t_0))(x) \stackrel{P}{\ra} (e^{(t_1 - t_0)\Delta} \rconn^2(t_0))(x),
\]
and that
\[\begin{split}
\int_0^{t_1 - t_0} \big(e^{(t_1 - t_0 - s)\Delta} &X^{(2)}(\rconn^1_N(t_0 + s))\big)(x) ds \stackrel{P}{\ra} \\
&\int_0^{t_1 - t_0} \big(e^{(t_1 - t_0 - s)\Delta} X^{(2)}(\rconn^1(t_0 + s))\big)(x) ds.
\end{split}\]
Note that by Proposition \ref{prop:nonlinear-part-c0-bound}, $\E[ \|\rconn^2_N(t_0) - \rconn^2(t_0)\|_{C^0}] \ra 0$. The first claim now follows since (applying \eqref{eq:heat-semigroup-Cr-contraction} with $r = 0$) 
\[ \|e^{(t_1 - t_0)\Delta} (\rconn^2_N(t_0) - \rconn^2(t_0))\|_{C^0} \leq \|\rconn^2_N(t_0) - \rconn^2(t_0)\|_{C^0}. \]
For the second claim, define $\tilde{\rconn}^1_N(t) := \rconn^1_N(t + t_0)$ for $t \in [0, 1 - t_0]$, and analogously for $\tilde{\rconn}^1$. Since $\rconn_N^1(0)$ is smooth, we have that $\rconn^1_N \in \mc{P}_1^1$ for all $N \geq 0$, and thus $\tilde{\rconn}_N^1 \in \mc{P}_{1 - t_0}^1$ for all $N \geq 0$. We also have that $\tilde{\rconn}^1 \in \mc{P}_{1 - t_0}^1$, which follows since $\rconn^1$ is a solution to the heat equation (by Lemma \ref{lemma:A-1-modification}). Now by \eqref{eq:linear-part-convergence-positive-time}, we have that $\|\tilde{\rconn}^1_N - \tilde{\rconn}^1\|_{\mc{P}_{1 - t_0}^1} \ra 0$. The second claim follows by Lemma \ref{lemma:rho-A-n-convergence}.
\end{proof}

\begin{cor}\label{cor:A-2-integral-identity}
On an event of probability $1$, we have that \eqref{eq:A-2-integral-identity} holds for all $t_0, t_1 \in (0, 1]$, $t_0 < t_1$, $x \in \torus^d$.
\end{cor}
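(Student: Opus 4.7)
The plan is to upgrade the fixed-triple a.s. statement of Lemma \ref{lemma:nonlinear-part-duhamel-probabilistic} to a simultaneous-in-$(t_0,t_1,x)$ statement by combining a countable-dense-set argument with joint continuity of both sides of \eqref{eq:A-2-integral-identity} on the open domain
\[
\Omega := \{(t_0, t_1, x) : 0 < t_0 < t_1 \leq 1, \ x \in \torus^d\}.
\]
First I would fix a countable dense subset $D \subseteq \Omega$, and apply Lemma \ref{lemma:nonlinear-part-duhamel-probabilistic} to each triple in $D$. Intersecting the resulting countably many probability-$1$ events with the probability-$1$ event that $\|\rconn^1\|_{\mc{Q}_1^{\gamma_1}} < \infty$ (some fixed $\gamma_1 \in [0, 1/2)$, via Remark \ref{remark:A-1-surely-in-Q-space}) and the event that $t \mapsto \rconn^2(t)$ is continuous $(0, 1] \to C^1(\torus^d, \lalg^d)$ (Lemma \ref{lemma:A-2-continuous-modification}) yields a good event $E$ of probability $1$. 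On $E$, \eqref{eq:A-2-integral-identity} holds for all triples in $D$.

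Next I would show that on $E$, both sides of \eqref{eq:A-2-integral-identity} are continuous functions of $(t_0, t_1, x) \in \Omega$; by density, this will extend the identity from $D$ to all of $\Omega$. The LHS $\rconn^2(t_1, x)$ is jointly continuous in $(t_1, x)$ by Lemma \ref{lemma:A-2-continuous-modification}. For the first term $(e^{(t_1-t_0)\Delta}\rconn^2(t_0))(x)$ of the RHS, joint continuity in $(t_0, t_1, x)$ follows from the continuity of $t_0 \mapsto \rconn^2(t_0)$ into $C^0$ together with the standard joint continuity of the heat semigroup on continuous functions (i.e., $(u, f) \mapsto e^{u\Delta} f$ is continuous $[0,1] \times C^0(\torus^d, \lalg^d) \to C^0(\torus^d, \lalg^d)$, using the contraction bound \eqref{eq:heat-semigroup-Cr-contraction}).

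The main obstacle, and the step requiring the most care, is joint continuity of the integral term. Here I would exploit that $\rconn^1 \in \mc{Q}_1^{\gamma_1}$ gives the deterministic bound
\[
\|X^{(2)}(\rconn^1(r))\|_{C^0} \leq \const \|\rconn^1(r)\|_{C^0}\|\rconn^1(r)\|_{C^1} \leq \const R^2\, r^{-1/2 - 2\gamma_1},
\]
where $R := \|\rconn^1\|_{\mc{Q}_1^{\gamma_1}}$. For any sequence $(t_{0,n}, t_{1,n}, x_n) \to (t_0, t_1, x)$ in $\Omega$, eventually $t_{0,n} \geq t_0/2$, so for $s \in [0, t_{1,n} - t_{0,n}]$ the integrand is uniformly bounded by $\const R^2 (t_0/2)^{-1/2 - 2\gamma_1}$; meanwhile, the pointwise continuity of $(u, f) \mapsto (e^{u\Delta} f)(y)$ together with the continuity of $r \mapsto \rconn^1(r)$ into $C^1$ (on the closed subinterval $[t_0/2, 1]$, using Remark \ref{remark:A-1-surely-in-Q-space} and the heat-equation property) gives pointwise convergence of the integrand on $s \in [0, t_1 - t_0]$. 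Extending the integrands by zero past their endpoints and applying dominated convergence yields the desired continuity of the integral term, completing the density-extension argument.
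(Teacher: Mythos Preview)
Your approach is essentially the paper's: restrict to a countable dense set of triples, then extend by continuity of both sides. However, there is a circularity in your argument. You invoke Lemma \ref{lemma:A-2-continuous-modification} (continuity of $t \mapsto \rconn^2(t)$ into $C^1(\torus^d, \lalg^d)$) when defining your good event $E$, but in the paper's logical order that lemma is \emph{proved using} Corollary \ref{cor:A-2-integral-identity}. At the point where the corollary is established, all that is available about $\rconn^2$ is Lemma \ref{lemma:A-2-a-j-cont-mod}: $\rconn^2$ has a modification with jointly continuous sample paths on $(0,1]\times\torus^d$, not yet the stronger $C^1$-valued continuity in $t$. Fortunately, you only actually use Lemma \ref{lemma:A-2-continuous-modification} to get joint continuity of the left-hand side $\rconn^2(t_1,x)$, and for that Lemma \ref{lemma:A-2-a-j-cont-mod} already suffices; so the fix is simply to swap the citation.

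For the integral term, the paper takes a slightly more packaged route: it uses \eqref{eq:linear-part-positive-time} to place $\tilde{\rconn}^1 := \rconn^1(t_0+\cdot)$ in $\mc{P}_{1-t_0}^1$ and then invokes Lemma \ref{lemma:rho-in-P-T-0} to get $\rho^{(2)}(\tilde{\rconn}^1)\in\mc{P}_{1-t_0}^1$, which encodes the required continuity. Your explicit dominated-convergence argument, using the $\mc{Q}_1^{\gamma_1}$ bound on $\rconn^1$ to control $\|X^{(2)}(\rconn^1(r))\|_{C^0}$ uniformly once $r\geq t_0/2$, works as well and has the advantage of making the joint continuity in $(t_0,t_1,x)$ fully explicit rather than relying on a semigroup-splitting trick in the $t_0$ variable.
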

\begin{proof}
Let $E$ be the event that \eqref{eq:A-2-integral-identity} holds for all $t_0, t_1$ in a countable dense subset of $(0, 1]$, and all $x$ in a countable dense subset of $\torus^d$. By Lemma \ref{lemma:nonlinear-part-duhamel-probabilistic}, $\p(E) = 1$. Note that $\rconn^2$ has continuous sample paths (recall just after Lemma \ref{lemma:A-2-a-j-cont-mod}), and that $\sup_{t \in [t_0, 1]} \|\rconn^1(t)\|_{C^1} < \infty$ for all $t_0 \in (0, 1]$ (recall \eqref{eq:linear-part-positive-time}). The latter implies that for $t_0 \in (0, 1]$, if we define $\tilde{\rconn}^1(t) := \rconn^1(t_0 + t)$, then $\tilde{\rconn}^1 \in \mc{P}_{1-t_0}^1$, and thus by Lemma \ref{lemma:rho-in-P-T-0}, we have that $\rho^{(2)}(\tilde{\rconn}^1) \in \mc{P}_{1 - t_0}^1$ as well. By combining the previous few observations, we have that on the event $E$, the identity \eqref{eq:A-2-integral-identity} extends by continuity to all $t_0, t_1 \in (0, 1]$, $t_0  < t_1$, and $x \in \torus^d$.
\end{proof}

\begin{proof}[Proof of Lemma \ref{lemma:A-2-continuous-modification}]
Let $E$ be the probability $1$ event given by Corollary \ref{cor:A-2-integral-identity}. As in the proof of Corollary \ref{cor:A-2-integral-identity}, on the event $E$, we have that for all $t_0 \in (0, 1]$, $t_1 \in [t_0, 1]$,
\beq\label{eq:A-2-integral-equation} \rconn^2(t_1) = e^{(t_1 - t_0)\Delta} \rconn^2(t_0) + \rho^{(2)}(\tilde{\rconn}^1)(t_1 - t_0), \eeq
where $\tilde{\rconn}^1(t) := \rconn^1(t_0 + t)$ for $t \in [0, 1 - t_0]$. Moreover, as noted in that proof, we have that $\rho^{(2)}(\tilde{\rconn}^1) \in \mc{P}_{1 - t_0}^1$. Combining this with the fact that $\rconn^2(t) \in C^0(\torus^d, \lalg^d)$ for all $t \in (0, 1]$ (so that $s \mapsto e^{s \Delta} \rconn^2(t)$ is a continuous function $(0, \infty) \ra C^1(\torus^d, \lalg^d)$ for all $t \in (0, 1]$), we obtain that on $E$, the map $t \mapsto \rconn^2(t)$ is a continuous function from $(0, 1] \ra C^1(\torus^d, \lalg^d)$. We can modify $\rconn^2$ to be identically $0$ off $E$. 
\end{proof}

\begin{proof}[Proof of Lemma \ref{lemma:B-1-is-nonlinear-part}]
This follows by combining Corollary \ref{cor:A-2-integral-identity} with \eqref{eq:A-3-integral-identity}.
\end{proof}


We now turn to getting bounds on $\nabla \rconn^2(t)$. 
As done after Lemma \ref{lemma:A-2-continuous-modification}, we will assume that (after a suitable modification) $t \mapsto \rconn^2(t)$ is a continuous function from $(0, 1] \ra C^1(\torus^d, \lalg^d)$. Even more, from the the proof of Lemma \ref{lemma:A-2-continuous-modification}, we may assume that \eqref{eq:A-2-integral-equation} holds for all $t_0 \in (0, 1]$, $t_1 \in [t_0, 1]$. The next lemma is the analogue of Lemma \ref{lemma:nonlinear-part-finite-N-bounds}.

\begin{lemma}\label{lemma:nonlinear-part-derivative-entropy-estimates}
For any $N \geq 0$, the following holds. For $l \in [d]$,
$t \in (0, 1]$, $r, s \in (t/2, t]$, $x, y \in \torus^d$, we have that
\[\begin{split}\|\ptl_l \rconn^{2}_{N}(t, x)\|_{L^2} &\leq \const t^{-(1/2)(d-\alpha)}, \\
\|\ptl_l \rconn^{2}_{N}(r, x) - \ptl_l \rconn^{2}_{N}(s, y)\|_{L^2} &\leq \const t^{-(1/2)(d-\alpha)} \dchain_t((r, x), (s, y)). 
\end{split}\]
Here, $\const$ depends only on $d$ and the constants $\alpha, \constgf, \constfourp$ from Assumptions \ref{assumption:bounded-by-fractional-greens-function}, \ref{assumption:four-product-assumption}. 
\end{lemma}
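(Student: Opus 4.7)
The proof should follow exactly the same structure as the proof of Lemma \ref{lemma:nonlinear-part-finite-N-bounds}, but with one additional derivative everywhere, which is why the exponent goes from $-(1/2)(d-1-\alpha)$ to $-(1/2)(d-\alpha)$: passing one more spatial derivative through the Fourier series costs an extra factor of $t^{-1/2}$ in each moment bound.

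The first claim, $\|\ptl_l \rconn^2_N(t,x)\|_{L^2} \leq \const t^{-(1/2)(d-\alpha)}$, is immediate from Lemma \ref{lemma:A-2-bounds} applied with $k = 1$. For the second claim, I would split the increment into its spatial and temporal parts, exactly as in the proof of Lemma \ref{lemma:nonlinear-part-finite-N-bounds}. For the spatial part, take a constant-speed geodesic $\wloop:[0,1]\to \torus^d$ from $y$ to $x$ and write
\[
\ptl_l \rconn^2_N(r,x) - \ptl_l \rconn^2_N(r,y) = \int_0^1 \nabla \ptl_l \rconn^2_N(r,\wloop(u))\cdot \wloop'(u)\,du,
\]
square, apply Cauchy--Schwarz, take expectations, and invoke Lemma \ref{lemma:A-2-bounds} with $k=2$, which gives $\E[|\ptl_{l_1 l_2}\rconn^2_N|^2]\leq \const t^{-(d+1-\alpha)}$. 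This yields a bound of order $\const t^{-(1/2)(d-\alpha)}\cdot \metricspace(x,y)/\sqrt{t}$, and combining with the first claim produces the $\min\{\metricspace(x,y)/\sqrt{t},1\}$ factor in $\dchain_t$.

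For the temporal part, assume $s<r$ and use \eqref{eq:A2-time-derivative} to write
\[
\ptl_l \rconn^2_N(r,y) - \ptl_l \rconn^2_N(s,y) = \int_s^r \bigl[\Delta \ptl_l \rconn^2_N(u,y) + \ptl_l \mbf{F}_N(u,y)\bigr] du,
\]
interchanging $\ptl_l$ and $\ptl_u$ (justified because $\rconn^2_N$ is given by a finite Fourier sum, hence is smooth). Squaring and applying Cauchy--Schwarz, then taking expectations, the first term is controlled by Lemma \ref{lemma:A-2-bounds} with $k=3$, which gives $\E[|\ptl_{l_1 l_2 l_3}\rconn^2_N|^2]\leq \const t^{-(d+2-\alpha)}$, and the second term is controlled by the second bound of Lemma \ref{lemma:F-bound}, which gives $\E[|\ptl_l \mbf{F}_N(u,y)|^2]\leq \const t^{-(d+2-\alpha)}$ on $u\in(t/2,t]$. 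Both contributions are of order $\const |r-s|^2 t^{-(d+2-\alpha)}$, producing the desired $\const t^{-(1/2)(d-\alpha)}\cdot |r-s|/t$ bound.

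No step here is a genuine obstacle; the proof is essentially mechanical once one observes that the derivative $\ptl_l$ commutes with all operations in the argument for Lemma \ref{lemma:nonlinear-part-finite-N-bounds}, and that the higher derivative moment estimates in Lemmas \ref{lemma:A-2-bounds} and \ref{lemma:F-bound} have already been set up precisely to absorb this extra differentiation. The only mild care is in writing $\ptl_t \ptl_l \rconn^2_N = \ptl_l \ptl_t \rconn^2_N$ when applying \eqref{eq:A2-time-derivative}, which is fine because the Fourier truncation makes everything smooth in $(t,x)$.
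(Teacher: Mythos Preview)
Your proposal is correct and follows essentially the same approach as the paper: the first inequality via Lemma \ref{lemma:A-2-bounds} with $k=1$, the spatial increment via the geodesic argument and Lemma \ref{lemma:A-2-bounds} with $k=2$, and the temporal increment via \eqref{eq:A2-time-derivative}, Lemma \ref{lemma:A-2-bounds} with $k=3$, and the second bound of Lemma \ref{lemma:F-bound}. The paper's proof is identical in structure and in the lemmas invoked.
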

\begin{proof}
The first inequality follows by Lemma \ref{lemma:A-2-bounds}. To show the second inequality, we will show that
\[\begin{split}
\|\ptl_l \rconn^2_N(r, x) - \ptl_l \rconn^2_N(r, y)\|_{L^2} &\leq \const t^{-(1/2)(d-\alpha)} \min\bigg\{\frac{\metricspace(x, y)}{\sqrt{t}}, 1\bigg\}, \\
\|\ptl_l \rconn^2_N(r, y) - \ptl_l \rconn^2_N(s, y)\|_{L^2} &\leq \const t^{-(1/2)(d-\alpha)} \frac{|r-s|}{t}. \end{split}\]
For the first claim, let $\wloop : [0, 1] \ra \torus^d$ be a geodesic from $y$ to $x$, so that $\wloop$ has constant speed $|\wloop'| \equiv \metricspace(x, y)$. We have that
\[ \ptl_l \rconn^2_N(r, x) - \ptl_l \rconn^2_N(r, y) = \int_0^1 \nabla \ptl_l \rconn^2_N(r, \wloop(u)) \cdot \wloop'(u) du. \]
Thus
\[ |\ptl_l \rconn^2_N(r, x) - \ptl_l \rconn^2_N(r, y)|^2 \leq \metricspace(x, y)^2 \int_0^1 |\nabla \ptl_l \rconn^2_N(r, \wloop(u))|^2 du. \] 
The first claim now follows by Lemma \ref{lemma:A-2-bounds}, which gives
\[ \|\ptl_l \rconn^2_N(r, x) - \ptl_l \rconn^2_N(r, y)\|_{L^2} \leq \const \frac{\metricspace(x, y)}{\sqrt{t}} t^{-(d-\alpha)/2}, \]
as well as
\[\begin{split}
\|\ptl_l \rconn^2_N(r, x) - \ptl_l \rconn^2_N(r, y)\|_{L^2} &\leq \|\ptl_l \rconn^2_N(r, x) \|_{L^2} + \|\ptl_l \rconn^2_N(r, y)\|_{L^2} \\
&\leq \const t^{-(d-\alpha)/2}.
\end{split}\]
For the second bound, assume without loss of generality that $s < r$. Recalling \eqref{eq:A2-time-derivative}, we have that
\[\begin{split}
\ptl_l \rconn^2_N(r, y) - \ptl_l \rconn^2_N(s, y) &= \int_s^r \ptl_u \ptl_l \rconn^2_N(u, y) du  \\
&= \int_s^r \ptl_l \Delta \rconn^2_N(u, y) du + \int_s^r \ptl_l \mbf{F}^a_{N, j}(u, y) du. \end{split}\]
We thus have that
\[\begin{split}
|\ptl_l \rconn^2_N(r, y)& - \ptl_l \rconn^2_N(s, y)|^2 \leq \\
&\const (r-s) \int_s^r |\ptl_l \Delta \rconn^2_N(u, y)|^2 + |\ptl_l \mbf{F}^a_{N, j}(u, y)|^2 du.     
\end{split} \]
The second claim now follows by Lemmas \ref{lemma:A-2-bounds} and \ref{lemma:F-bound}.
\end{proof}

The next lemma is the analogue of Lemma \ref{lemma:nonlinear-part-finite-N-difference-bound}.

\begin{lemma}\label{lemma:nonlinear-part-derivative-finite-N-difference-bound}
There is a sequence $\{\delta_N^{\ref{lemma:nonlinear-part-derivative-finite-N-difference-bound}}\}_{N \geq 0}$ of maps $\delta_N^{\ref{lemma:nonlinear-part-derivative-finite-N-difference-bound}} : (0, 1] \ra [0, 1]$ such that the following hold. 
For any $t \in (0, 1]$, we have that $\lim_{N \toinf} \delta_N^{\ref{lemma:nonlinear-part-derivative-finite-N-difference-bound}}(t) = 0$. Also, for any $M \geq N \geq 0$, $l \in [d]$,
$t \in (0, 1]$, $r, s \in (t/2, t]$, $x, y \in \torus^d$, we have that
\[\begin{split}
\|\ptl_l \mbf{D}^{2}_{N, M}(r, x)\|_{L^2} &\leq \const t^{-(1/2)(d-\alpha)} \delta_N^{\ref{lemma:nonlinear-part-derivative-finite-N-difference-bound}}(t),\\
\|\ptl_l \mbf{D}^{2}_{N, M}(r, x) - \ptl_l \mbf{D}^{2}_{N, M}(s, y)\|_{L^2} &\leq \const t^{-(1/2)(d-\alpha)} \dchain_t((r, x), (s, y)) \delta_N^{\ref{lemma:nonlinear-part-derivative-finite-N-difference-bound}}(t). 
\end{split}\]
Here, $\const$ depends only on $d$ and the constants $\alpha, \constgf, \constfourp$ from Assumptions \ref{assumption:bounded-by-fractional-greens-function}, \ref{assumption:four-product-assumption}. 
\end{lemma}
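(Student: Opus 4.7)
The plan is to mirror the proof of Lemma \ref{lemma:nonlinear-part-derivative-entropy-estimates} exactly, but with the derivative moment estimates of Lemma \ref{lemma:A-2-bounds} and Lemma \ref{lemma:F-bound} replaced by their "difference" counterparts, namely Lemma \ref{lemma:nonlinear-part-derivative-difference-moment-bounds} and Lemma \ref{lemma:F-bound-difference}. Concretely, letting $\{\delta_N^{\ref{lemma:nonlinear-part-derivative-difference-moment-bounds}}\}$ and $\{\delta_N^{\ref{lemma:F-bound-difference}}\}$ be the sequences of non-increasing functions supplied by those two lemmas, I would set
\[
\delta_N^{\ref{lemma:nonlinear-part-derivative-finite-N-difference-bound}}(t) := \max\{\delta_N^{\ref{lemma:nonlinear-part-derivative-difference-moment-bounds}}(t/2),\, \delta_N^{\ref{lemma:F-bound-difference}}(t/2)\}^{1/2}, \quad t \in (0, 1].
\]
This is automatically non-increasing and $[0,1]$-valued, and since each factor tends to $0$ as $N \toinf$ at every fixed $t$, so does $\delta_N^{\ref{lemma:nonlinear-part-derivative-finite-N-difference-bound}}$. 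The first stated inequality is then the $k=1$ case of Lemma \ref{lemma:nonlinear-part-derivative-difference-moment-bounds}: it gives $\|\ptl_l \mbf{D}^2_{N,M}(r,x)\|_{L^2} \leq \const r^{-(d-\alpha)/2} (\delta_N^{\ref{lemma:nonlinear-part-derivative-difference-moment-bounds}}(r))^{1/2}$, and we absorb the factor from $r > t/2$ into the constant and use monotonicity of $\delta_N^{\ref{lemma:nonlinear-part-derivative-difference-moment-bounds}}$ to pass from $r$ to $t/2$.

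For the second inequality I would decompose as usual into a spatial increment and a temporal increment. For the spatial piece, parametrise a constant-speed geodesic $\wloop:[0,1] \to \torus^d$ from $y$ to $x$, write
\[
\ptl_l \mbf{D}^2_{N,M}(r,x) - \ptl_l \mbf{D}^2_{N,M}(r,y) = \int_0^1 \nabla \ptl_l \mbf{D}^2_{N,M}(r, \wloop(u)) \cdot \wloop'(u)\, du,
\]
apply Cauchy--Schwarz in $u$, and invoke the $k=2$ case of Lemma \ref{lemma:nonlinear-part-derivative-difference-moment-bounds} together with monotonicity of $\delta_N^{\ref{lemma:nonlinear-part-derivative-difference-moment-bounds}}$ to get a bound of $\const \metricspace(x,y)\, t^{-(d+1-\alpha)/2} (\delta_N^{\ref{lemma:nonlinear-part-derivative-difference-moment-bounds}}(t/2))^{1/2}$; this is $\const (\metricspace(x,y)/\sqrt t)\, t^{-(d-\alpha)/2} \delta_N^{\ref{lemma:nonlinear-part-derivative-finite-N-difference-bound}}(t)$. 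Combining with the first inequality to cap the spatial factor at $1$ recovers $\min\{\metricspace(x,y)/\sqrt t, 1\}$.

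For the temporal piece, differentiating \eqref{eq:A2-time-derivative} and subtracting the $N$ and $M$ versions gives
\[
\ptl_l \mbf{D}^2_{N,M}(r,y) - \ptl_l \mbf{D}^2_{N,M}(s,y) = \int_s^r \ptl_l \Delta \mbf{D}^2_{N,M}(u,y)\, du + \int_s^r \ptl_l(\mbf{F}_N - \mbf{F}_M)(u,y)\, du.
\]
Applying Cauchy--Schwarz in $u$ and then taking $L^2$, the first integrand's mean square is bounded by the $k=3$ case of Lemma \ref{lemma:nonlinear-part-derivative-difference-moment-bounds}, while the second integrand is controlled by the $\ptl_l$-estimate of Lemma \ref{lemma:F-bound-difference}; both contribute $\const u^{-(d+2-\alpha)} \delta$-factors, which by monotonicity are dominated by the values at $t/2$. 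This produces an $L^2$ bound of $\const (|r-s|/t)\, t^{-(d-\alpha)/2} \delta_N^{\ref{lemma:nonlinear-part-derivative-finite-N-difference-bound}}(t)$. Since $\dchain_t((r,x),(s,y)) = |r-s|/t + \min\{\metricspace(x,y)/\sqrt t, 1\}$, summing the spatial and temporal contributions yields the claim.

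I do not expect any genuine obstacle: the whole argument is the "with differences" transcription of Lemma \ref{lemma:nonlinear-part-derivative-entropy-estimates}, with all heavy lifting already done in the moment estimates of Lemmas \ref{lemma:nonlinear-part-derivative-difference-moment-bounds} and \ref{lemma:F-bound-difference}. The only point that merits a little bookkeeping is packaging the two auxiliary $\delta$-sequences into a single non-increasing function tending to $0$ (hence the $\max$ and square root above) and ensuring one can uniformly replace evaluations at $u \in (t/2, t]$ by evaluations at $t/2$ via the monotonicity properties.
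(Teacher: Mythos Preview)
Your proposal is correct and matches the paper's proof essentially line for line: the paper defines $\delta_N(t) := \max\{\delta_N^{\ref{lemma:nonlinear-part-derivative-difference-moment-bounds}}(t/2), \delta_N^{\ref{lemma:F-bound-difference}}(t/2)\}^{1/2}$ and then says the inequalities follow by the argument of Lemma \ref{lemma:nonlinear-part-derivative-entropy-estimates} with Lemmas \ref{lemma:nonlinear-part-derivative-difference-moment-bounds} and \ref{lemma:F-bound-difference} in place of Lemmas \ref{lemma:A-2-bounds} and \ref{lemma:F-bound}, using monotonicity to pass from $u \in (t/2, t]$ to $t/2$. You have reproduced this precisely.
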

\begin{proof}
Let $\{\delta_N^{\ref{lemma:nonlinear-part-derivative-difference-moment-bounds}}\}_{N \geq 0}$, $\{\delta_N^{\ref{lemma:F-bound-difference}}\}_{N \geq 0}$ be as in Lemmas \ref{lemma:nonlinear-part-derivative-difference-moment-bounds} and \ref{lemma:F-bound-difference}. For $N \geq 0$, define 
\[ \delta_N^{\ref{lemma:nonlinear-part-derivative-finite-N-difference-bound}}(t) := \max(\delta_N^{\ref{lemma:nonlinear-part-derivative-difference-moment-bounds}}(t/2), \delta_N^{\ref{lemma:F-bound-difference}}(t/2))^{1/2}, ~~ t \in (0, 1]. \]
The two inequalities then follow by the same argument as in the proof of Lemma \ref{lemma:nonlinear-part-derivative-entropy-estimates}, using Lemmas \ref{lemma:nonlinear-part-derivative-difference-moment-bounds} and \ref{lemma:F-bound-difference} in place of Lemmas \ref{lemma:A-2-bounds} and \ref{lemma:F-bound} for the needed moment estimates. In the course of the argument, we also use that $\delta_N^{\ref{lemma:nonlinear-part-derivative-difference-moment-bounds}}$ and $\delta_N^{\ref{lemma:F-bound-difference}}$ are non-increasing, so that we may bound the values of these functions at $u \in (t/2, t]$ by their values at $t/2$.
\end{proof}

\begin{lemma}\label{lemma:A2-derivative-l2-convergence}
For any $l \in [d]$, we have that $\ptl_l \rconn^{2}_{N}(t, x) \stackrel{L^2}{\ra} \ptl_l \rconn^{2}(t, x)$.
\end{lemma}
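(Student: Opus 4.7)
The plan has two steps: first identify an $L^2$-limit $Y(t,x)$ of the derivatives $\ptl_l \rconn^2_N(t,x)$, and then show $Y(t,x) = \ptl_l \rconn^2(t,x)$ almost surely at each fixed $(t,x)$. For the first step, I would apply Lemma \ref{lemma:nonlinear-part-derivative-finite-N-difference-bound} directly: for any $M \geq N \geq 0$,
\[ \|\ptl_l \rconn^2_N(t,x) - \ptl_l \rconn^2_M(t,x)\|_{L^2} \leq \const\, t^{-(1/2)(d-\alpha)} \delta_N^{\ref{lemma:nonlinear-part-derivative-finite-N-difference-bound}}(t),\]
with $\delta_N^{\ref{lemma:nonlinear-part-derivative-finite-N-difference-bound}}(t) \to 0$. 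This makes the sequence $L^2$-Cauchy, yielding a limit $Y(t,x)$; sending $M \toinf$ in the estimate also gives the corresponding bound $\|\ptl_l \rconn^2_N(t,x) - Y(t,x)\|_{L^2} \leq \const\, t^{-(1/2)(d-\alpha)} \delta_N^{\ref{lemma:nonlinear-part-derivative-finite-N-difference-bound}}(t)$ uniformly in $x \in \torus^d$.

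For the identification, I would argue weakly. Fix $\phi \in C^\infty(\torus^d, \R)$. Since $\rconn^2_N(t,\cdot)$ is smooth and $\rconn^2(t,\cdot) \in C^1$ a.s.\ (by the modification standing after Lemma \ref{lemma:A-2-continuous-modification}), integration by parts gives
\[ \int_{\torus^d} \ptl_l \rconn^2_N(t,x) \phi(x)\, dx = -\int_{\torus^d} \rconn^2_N(t,x) \ptl_l \phi(x)\, dx,\]
and the analogous identity holds for $\rconn^2$. Proposition \ref{prop:nonlinear-part-c0-bound} yields $\rconn^2_N(t,\cdot) \to \rconn^2(t,\cdot)$ in $C^0$ in $L^2$, so the right-hand side converges in $L^2$ to $\int \ptl_l \rconn^2(t,x) \phi(x) dx$. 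On the other hand, the uniform-in-$x$ $L^2$ convergence from the first step implies the left-hand side converges in $L^2$ to $\int Y(t,x) \phi(x)\, dx$. Uniqueness of $L^2$-limits gives, for each fixed $\phi$,
\[ \int_{\torus^d} Y(t,x) \phi(x)\, dx = \int_{\torus^d} \ptl_l \rconn^2(t,x) \phi(x)\, dx \quad \text{a.s.,}\]
and taking $\phi$ in a countable dense family produces a single event of probability $1$ on which this weak equality holds for all $\phi$.

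To convert weak equality into pointwise equality at each $x$, I would show $Y(t,\cdot)$ has a continuous modification and then invoke the continuity of $\ptl_l \rconn^2(t,\cdot)$. For continuity of $Y$, I mimic the chain leading to Lemmas \ref{lemma:A-2-tail-bounds} and \ref{lemma:A-2-a-j-cont-mod}: by Lemma \ref{lemma:A-2-is-quadratic-form}, $\ptl_l \rconn^2_N(t,x)$ is a quadratic form in $\rconn^0$ whose kernel vanishes on the diagonal in $a$, so Assumption \ref{assumption:tail-bounds} yields sub-$\exptb$ tail bounds for $\ptl_l \rconn^2_N(r,x) - \ptl_l \rconn^2_N(s,y)$, with $L^2$-normalization provided by Lemma \ref{lemma:nonlinear-part-derivative-entropy-estimates}. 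Passing to the $L^2$-limit $Y$ via Portmanteau transfers the same tail bounds to increments of $Y$, and Theorem \ref{thm:continuous-modification} supplies a continuous modification. Since $\ptl_l \rconn^2(t,\cdot)$ is continuous a.s., the weak identity above extends by density to pointwise identity for all $x$ simultaneously on an event of probability $1$; in particular, $Y(t,x) = \ptl_l \rconn^2(t,x)$ a.s.\ at each fixed $x$, which is the desired $L^2$ convergence.

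The main technical obstacle is the upgrade from distributional equality to pointwise equality in $x$: this is where Lemma \ref{lemma:A-2-is-quadratic-form} is essential, as it guarantees that $\ptl_l \rconn^2_N(t,x)$ falls under the scope of Assumption \ref{assumption:tail-bounds}, which in turn powers the Kolmogorov-type continuity argument for $Y$.
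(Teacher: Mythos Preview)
Your proposal is correct but takes a genuinely different route from the paper. Both arguments begin the same way: Lemma~\ref{lemma:nonlinear-part-derivative-finite-N-difference-bound} shows $\{\ptl_l \rconn^2_N(t,x)\}_N$ is $L^2$-Cauchy with limit $Y(t,x)$. From there, the paper identifies $Y$ with $\ptl_l \rconn^2$ by invoking the Duhamel-type identity~\eqref{eq:A-2-integral-equation}, which has already been established; differentiating that identity in $x$ splits $\ptl_l \rconn^2(t,x)$ into $\ptl_l e^{(t-t_0)\Delta} \rconn^2(t_0)$ plus $\ptl_l \rho^{(2)}(\tilde{\rconn}^1)(t-t_0)$, and both pieces are shown to be limits in probability of their $N$-truncated counterparts using Proposition~\ref{prop:nonlinear-part-c0-bound} (for the first) and \eqref{eq:linear-part-convergence-positive-time} with Lemma~\ref{lemma:rho-A-n-convergence} (for the second). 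This reduces the identification to convergence results already in hand.

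Your approach instead bypasses the Duhamel identity: you match $Y$ and $\ptl_l \rconn^2$ weakly in $x$ via integration by parts and Proposition~\ref{prop:nonlinear-part-c0-bound}, and then upgrade to pointwise equality by proving a continuous modification for $Y$ through tail bounds (Lemma~\ref{lemma:A-2-is-quadratic-form}, Assumption~\ref{assumption:tail-bounds}, Lemma~\ref{lemma:nonlinear-part-derivative-entropy-estimates}, Portmanteau, Theorem~\ref{thm:continuous-modification}). This is sound---the kernel $L$ in Lemma~\ref{lemma:A-2-is-quadratic-form} vanishes on the $a$-diagonal, so Assumption~\ref{assumption:tail-bounds} applies to differences $\ptl_l \rconn^2_N(r,x) - \ptl_l \rconn^2_N(s,y)$ as well---but it effectively proves a version of Lemma~\ref{lemma:A-2-derivative-tail-bounds} for $Y$ \emph{before} knowing $Y = \ptl_l \rconn^2$, whereas the paper proves that lemma \emph{after} the present one using it as input. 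Your route is more self-contained (no reliance on the integral identity) but longer; the paper's route is shorter precisely because it can cash in on~\eqref{eq:A-2-integral-equation}. One minor point: your Minkowski step implicitly needs a jointly measurable version of $Y$ to make $\int Y(t,x)\phi(x)\,dx$ a bona fide random variable; this is easily arranged (e.g., take $Y$ as a pointwise $\limsup$), but worth stating.
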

\begin{proof}
By Lemma \ref{lemma:nonlinear-part-derivative-finite-N-difference-bound}, the sequence $\{\ptl_l \rconn^{2}_{N}(t, x)\}_{N \geq 0}$ is Cauchy in $L^2$, and thus it converges in $L^2$ to some random variable, call it $Y$. To finish, it suffices to also show that $\ptl_l \rconn^{2}_{N}(t, x) \stackrel{P}{\ra} \ptl_l \rconn^{2}(t, x)$. Towards this end, let $t_0 \in (0, t)$, and let $\tilde{\rconn}^1(t) := \rconn^1(t_0 + t)$ for $t \in [0, 1 - t_0]$. We have that (by \eqref{eq:A-2-integral-equation})
\[ \begin{split}
\ptl_l \rconn^{2}(t, x) = (\ptl_l e^{(t-t_0)\Delta} &\rconn^{2}(t_0))(x) + \big(\ptl_l \rho^{(2)}(\tilde{\rconn}^1)(t - t_0)\big)(x).
\end{split}\]
By construction, the above identity is also true with $\rconn^2, \rconn^1$ replaced by $\rconn^2_N, \rconn^1_N$ for any $N \geq 0$. Thus it suffices to show that
\[ (\ptl_l e^{(t - t_0)\Delta} \rconn^{2}_{N}(t_0))(x) \stackrel{P}{\ra}  (\ptl_l e^{(t-t_0)\Delta} \rconn^{2}(t_0))(x),\]
\[ \|\rho^{(2)}(\tilde{\rconn}^1_N)(t - t_0) - \rho^{(2)} (\tilde{\rconn}^1)(t-t_0)\|_{C^1} \stackrel{P}{\ra} 0. \]
(Here $\tilde{\rconn}^1_N(t) := \rconn^1_N(t_0 + t)$ for $t \in [0, 1 - t_0]$.)
Note that by Proposition \ref{prop:nonlinear-part-c0-bound}, $\E\big[\|\rconn^{2}_{N}(t_0) - \rconn^{2}(t_0)\|_{C^0}\big] \ra 0$.
We have that 
\[\begin{split}
\|\ptl_l e^{(t - t_0)\Delta} \rconn^{2}_{N}(t_0) & - \ptl_l e^{(t - t_0)\Delta} \rconn^{2}(t_0)\|_{C^0} \leq \\
&\|e^{(t - t_0)\Delta} \rconn^{2}_{N}(t_0) - e^{(t - t_0)\Delta} \rconn^{2}(t_0)\|_{C^1}. \end{split}\]
Then applying \eqref{eq:heat-semigroup-Cr-C-r-plus-half} with $u = 1$, we obtain the further upper bound
\[\const (t- t_0)^{-1/2} \|\rconn^{2, a}_{N, j}(t_0) - \rconn^{2, a}_j(t_0)\|_{C^0}.\]
The first claim follows. For the second claim, note that by \eqref{eq:linear-part-convergence-positive-time},
\[
\|\tilde{\rconn}^1_N - \tilde{\rconn}^1\|_{\mc{P}_{1 - t_0}^1} \ra 0.
\]
The second claim then follows by Lemma \ref{lemma:rho-A-n-convergence}. 
\end{proof}



We now use Assumption \ref{assumption:tail-bounds} and the various moment estimates to obtain tail bounds for $\ptl_l \rconn^2$ and related quantities. The following lemma is the analogue of Lemma \ref{lemma:A-2-tail-bounds}. 

\begin{lemma}\label{lemma:A-2-derivative-tail-bounds}
For any $l \in [d]$, $t \in (0, 1]$, $r, s \in (t/2, t]$, $x, y \in \torus^d$, we have that for $u \geq 0$,
\[\p(|\ptl_l \rconn^{2}(r, x)| > u) \leq \const \exp\bigg(-\frac{1}{\const} \bigg(\frac{u}{t^{-(1/2)(d-\alpha)}}\bigg)^\exptb\bigg),\]
\[\begin{split}
\p(|\ptl_l \rconn^{2}(r, x)& - \ptl_l \rconn^{2}(s, y)| > u) \leq \\
&\const \exp\bigg(-\frac{1}{\const} \bigg(\frac{u}{t^{-(1/2)(d-\alpha)} \dchain_t((r, x), (s, y))}\bigg)^\exptb\bigg).
\end{split}\]
The above inequalities also hold with $\rconn^{2}$ replaced by $\rconn^{2}_{N}$ for any $N \geq 0$. Additionally, let $\{\delta_N^{\ref{lemma:nonlinear-part-derivative-finite-N-difference-bound}}\}_{N \geq 0}$ be the sequence of functions from Lemma \ref{lemma:nonlinear-part-derivative-finite-N-difference-bound}. Then for any $N \geq 0$, we have that for $u \geq 0$,
\[ \p(|\ptl_l \mbf{D}^{2}_{N}(r, x)| > u) \leq \const\exp\bigg(-\frac{1}{\const} \bigg(\frac{u}{t^{-(1/2)(d-\alpha)}\delta_N^{\ref{lemma:nonlinear-part-derivative-finite-N-difference-bound}}(t)}\bigg)^\exptb\bigg),  \]
\[\begin{split}
\p(|\ptl_l \mbf{D}^{2}_{N}(r, x) &- \ptl_l \mbf{D}^{2}_{N}(s, y)| > u) \leq \\
&\const \exp\bigg(-\frac{1}{\const} \bigg(\frac{u}{t^{-(1/2)(d-\alpha)} \dchain_t((r, x), (s, y)) \delta_N^{\ref{lemma:nonlinear-part-derivative-finite-N-difference-bound}}(t)}\bigg)^\exptb\bigg) .
\end{split}\]
Here, $\const$ depends only on $d$ and the constants $\exptb, \consttb, \alpha, \constgf, \constfourp$ from Assumptions \ref{assumption:tail-bounds}, \ref{assumption:bounded-by-fractional-greens-function} and \ref{assumption:four-product-assumption}.
\end{lemma}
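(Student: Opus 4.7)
The plan is to mirror the proof of Lemma \ref{lemma:A-2-tail-bounds}, replacing moment inputs on $\rconn^2_N$ with the corresponding moment inputs on $\partial_l \rconn^2_N$. The key inputs already in hand are: (i) Lemma \ref{lemma:A-2-is-quadratic-form}, which says that (for the Fourier-truncated initial data) $\partial_l \rconn^{2, a_0}_{N, j_0}(t_0, x_0)$ is a quadratic form $(\rconn^0_N, L \rconn^0_N)$ in $\rconn^0_N$ whose kernel $L$ vanishes on the diagonal $a_1 = a_2$; (ii) Assumption \ref{assumption:tail-bounds}, which provides stretched-exponential tails for such diagonal-free quadratic forms in terms of their $L^2$ norm; and (iii) the moment estimates of Lemmas \ref{lemma:nonlinear-part-derivative-entropy-estimates} and \ref{lemma:nonlinear-part-derivative-finite-N-difference-bound}, which provide the required bounds on $\sigma_{N, L}$ in each case.

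First I would fix $N \geq 0$, $l \in [d]$, and work scalar-by-scalar. Writing $\partial_l \rconn^{2, a_0}_{N, j_0}(r, x) = (\rconn^0_N, L_1 \rconn^0_N)$ and $\partial_l \rconn^{2, a_0}_{N, j_0}(r, x) - \partial_l \rconn^{2, a_0}_{N, j_0}(s, y) = (\rconn^0_N, L_2 \rconn^0_N)$ via Lemma \ref{lemma:A-2-is-quadratic-form} (the latter being the difference of two such quadratic forms, hence again a quadratic form with vanishing diagonal), Assumption \ref{assumption:tail-bounds} immediately yields the first two inequalities at the level of $\rconn^{2}_{N}$, with the quantity $\sigma_{N, L_1}$ in place of $t^{-(1/2)(d-\alpha)}$ and $\sigma_{N, L_2}$ in place of $t^{-(1/2)(d-\alpha)} \dchain_t((r,x),(s,y))$. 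Lemma \ref{lemma:nonlinear-part-derivative-entropy-estimates} identifies these $\sigma$'s with exactly the desired right-hand sides (absorbing constants into $\const$).

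Second, to pass from $\rconn^{2}_{N}$ to $\rconn^{2}$, I would use Lemma \ref{lemma:A2-derivative-l2-convergence}: $\partial_l \rconn^2_N(r, x) \stackrel{L^2}{\to} \partial_l \rconn^2(r, x)$, which implies convergence in distribution and, by a standard portmanteau argument (applied to the closed set $\{|z| \geq u\}$ and a small approximation to handle the boundary), gives
\[
\p(|\partial_l \rconn^2(r, x)| > u) \leq \liminf_{N \to \infty} \p(|\partial_l \rconn^2_N(r, x)| \geq u - \varep),
\]
for every $\varep > 0$, with an analogous statement for the increment. Sending $N \to \infty$ in the bounds from step one, noting that the constants are uniform in $N$, and then $\varep \downarrow 0$, yields the first two inequalities for $\rconn^2$.

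Third, for the bounds on $\partial_l \mbf{D}^2_N$, the same argument applies, with the only change being the moment input: now $\sigma_{M, L_1}$ (and $\sigma_{M, L_2}$) for the quadratic form representing $\partial_l \rconn^2_M - \partial_l \rconn^2_N$ is controlled by Lemma \ref{lemma:nonlinear-part-derivative-finite-N-difference-bound}, uniformly in $M \geq N$, producing the extra factor $\delta_N^{\ref{lemma:nonlinear-part-derivative-finite-N-difference-bound}}(t)$. Letting $M \to \infty$ via $L^2$-convergence of $\partial_l \rconn^2_M$ to $\partial_l \rconn^2$ and applying the portmanteau step again gives the corresponding bounds for $\partial_l \mbf{D}^2_N = \partial_l \rconn^2_N - \partial_l \rconn^2$. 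There is no serious obstacle here; the only mild subtlety is verifying that the differences of quadratic-form kernels $L_1, L_2$ still satisfy the vanishing-diagonal hypothesis of Assumption \ref{assumption:tail-bounds}, which is immediate from the final claim of Lemma \ref{lemma:A-2-is-quadratic-form} since that property is preserved under subtraction.
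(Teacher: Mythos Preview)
The proposal is correct and follows essentially the same route as the paper: represent $\partial_l \rconn^{2,a_0}_{N,j_0}$ (and its differences) as diagonal-free quadratic forms via Lemma \ref{lemma:A-2-is-quadratic-form}, invoke Assumption \ref{assumption:tail-bounds} with the $L^2$ bounds from Lemmas \ref{lemma:nonlinear-part-derivative-entropy-estimates} and \ref{lemma:nonlinear-part-derivative-finite-N-difference-bound}, and pass to $\rconn^2$ via Lemma \ref{lemma:A2-derivative-l2-convergence} and the portmanteau lemma. Your $\varepsilon$-approximation in the portmanteau step is harmless but unnecessary, since $\{|z|>u\}$ is already open and the portmanteau inequality $\p(|\partial_l \rconn^2(r,x)|>u)\le \liminf_N \p(|\partial_l \rconn^2_N(r,x)|>u)$ applies directly.
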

\begin{proof}
By Lemma \ref{lemma:A2-derivative-l2-convergence}, we have that $\ptl_l \rconn^{2}_{N}(r, x) \stackrel{L^2}{\ra} \ptl_l \rconn^{2}(r, x)$, and thus the convergence also happens in distribution. Thus by the portmanteau lemma, we have that for $u \geq 0$,
\[\begin{split}
\p(|\ptl_l\rconn^{2}(r, x)| > u) \leq \liminf_{N \toinf} \p(|\ptl_l\rconn^{2}_N(r, x)| > u).
\end{split}\]
The first desired result now follows by combining Assumption \ref{assumption:tail-bounds}, Lemma \ref{lemma:A-2-is-quadratic-form}, and Lemma \ref{lemma:nonlinear-part-derivative-entropy-estimates}. The second desired result follows similarly. The third and fourth desired results also follow similarly, where we use Lemma \ref{lemma:nonlinear-part-derivative-finite-N-difference-bound} in place of Lemma \ref{lemma:nonlinear-part-derivative-entropy-estimates}.
\end{proof}

The following lemma is the analogue of Lemma \ref{lemma:A-2-exp-sup-local}. 

\begin{lemma}\label{lemma:A-2-derivative-exp-sup-local}
For any $p \geq 1$, $l \in [d]$, $t_0 \in (0, 1]$, we have that
\[ \E\bigg[ \sup_{(t, x) \in (t_0/2, t_0] \times \torus^d} |\ptl_l \rconn^{2}(t, x)|^p\bigg] \leq \const t_0^{-p(1/2)(d-\alpha)} (1 + |\log t_0|^{1/\exptb})^p. \]
The above also holds with $\ptl_l \rconn^{2}$ replaced by $\ptl_l \rconn^{2}_{N}$ for any $N \geq 0$. Additionally, let $\{\delta_N^{\ref{lemma:nonlinear-part-derivative-finite-N-difference-bound}}\}_{N \geq 0}$ be the sequence of functions from Lemma \ref{lemma:nonlinear-part-derivative-finite-N-difference-bound}. Then for any $N \geq 0$, we have that
\[\begin{split}
\E\bigg[ \sup_{(t, x) \in (t_0/2, t_0] \times \torus^d} |\ptl_l \mbf{D}^{2}_{N}&(t, x)|^p \bigg] \leq  \\
&\const t_0^{-p(1/2)(d-\alpha)} (1 + |\log t_0|^{1/\exptb})^p(\delta_N^{\ref{lemma:nonlinear-part-derivative-finite-N-difference-bound}}(t_0))^p .
\end{split}\]
Here, $\const$ depends only on $d, p$, and the constants $\exptb, \consttb, \alpha, \constgf, \constfourp$ from Assumptions \ref{assumption:tail-bounds}, \ref{assumption:bounded-by-fractional-greens-function} and \ref{assumption:four-product-assumption}.
\end{lemma}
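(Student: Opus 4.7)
The plan is to repeat the chaining argument of Lemma \ref{lemma:A-2-exp-sup-local} verbatim, with the derivative tail bounds from Lemma \ref{lemma:A-2-derivative-tail-bounds} replacing their non-derivative counterparts. The new exponent $-p(1/2)(d-\alpha)$ (as opposed to $-p(1/2)(d-1-\alpha)$) reflects exactly the fact that differentiating in space costs one additional factor of $t_0^{-1/2}$ inside the $L^2$ moment bounds, and this is already baked into the tail bounds of Lemma \ref{lemma:A-2-derivative-tail-bounds}.

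Concretely, fix $p \geq 1$, $l \in [d]$, $t_0 \in (0, 1]$, and let $\const$ denote the constant appearing in Lemma \ref{lemma:A-2-derivative-tail-bounds}. Define the rescaled stochastic process
\[ X_{t, x} := \const^{-1/\exptb}\, t_0^{(1/2)(d-\alpha)}\, \ptl_l \rconn^{2}(t, x), \qquad (t, x) \in (t_0/2, t_0] \times \torus^d. \]
By Lemma \ref{lemma:A-2-derivative-tail-bounds} (and the fact that $t_0/2 < r \leq t_0$ implies the scaling factor $t^{-(1/2)(d-\alpha)}$ is comparable to $t_0^{-(1/2)(d-\alpha)}$ up to a universal constant), this process satisfies the marginal tail bound $\p(|X_{r, x}| > u) \leq \const \exp(-u^{\exptb})$ and the increment tail bound
\[ \p\!\left( |X_{r, x} - X_{s, y}| > u\, \dchain_{t_0}((r, x), (s, y)) \right) \leq \const \exp(-u^{\exptb}), \qquad u \geq 0. \]

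From the marginal tail bound and \cite[Lemma A.2]{Dirksen2015} we get $\sup_{(r, x)} \E[|X_{r, x}|^p] \leq \const$. Combining the increment tail bound with Theorem \ref{thm:dirksen-exp-sup} (taking exponent $\exptb$) and the entropy integral estimate of Lemma \ref{lemma:entropy-integral-bound}, we obtain
\[ \E\bigg[\sup_{(t, x) \in (t_0/2, t_0] \times \torus^d} |X_{t, x}|^p\bigg] \leq \const\bigl(1 + |\log t_0|^{1/\exptb}\bigr)^p. \]
Undoing the rescaling yields the first desired inequality. The bound for $\ptl_l \rconn^{2}_{N}$ is shown by exactly the same argument using the $\rconn^{2}_{N}$ case of Lemma \ref{lemma:A-2-derivative-tail-bounds}, and is uniform in $N$.

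For the final inequality concerning $\ptl_l \mbf{D}^{2}_{N}$, we apply the same strategy with the rescaled process
\[ Y_{t, x} := \const^{-1/\exptb}\, t_0^{(1/2)(d-\alpha)}\, \bigl(\delta_N^{\ref{lemma:nonlinear-part-derivative-finite-N-difference-bound}}(t_0)\bigr)^{-1} \ptl_l \mbf{D}^{2}_{N}(t, x), \]
using that $\delta_N^{\ref{lemma:nonlinear-part-derivative-finite-N-difference-bound}}$ is non-increasing (so its values on $(t_0/2, t_0]$ are comparable to $\delta_N^{\ref{lemma:nonlinear-part-derivative-finite-N-difference-bound}}(t_0)$) and the corresponding tail bounds from Lemma \ref{lemma:A-2-derivative-tail-bounds}. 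I expect no real obstacle: all the work was done in assembling the tail bounds of Lemma \ref{lemma:A-2-derivative-tail-bounds} and the generic chaining machinery of Theorem \ref{thm:dirksen-exp-sup}, so this lemma is essentially a rescaling-plus-chaining exercise. The only mild care needed is to check that on the dyadic window $(t_0/2, t_0]$ the powers of $t$ appearing in the tail bounds are comparable to the corresponding powers of $t_0$, which follows immediately from $t_0/2 < t \leq t_0$.
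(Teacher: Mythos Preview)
Your proposal is correct and follows exactly the same approach as the paper, which simply says the argument is identical to that of Lemma~\ref{lemma:A-2-exp-sup-local} with Lemma~\ref{lemma:A-2-derivative-tail-bounds} replacing Lemma~\ref{lemma:A-2-tail-bounds}. One small remark: the comments about comparability of $t^{-(1/2)(d-\alpha)}$ to $t_0^{-(1/2)(d-\alpha)}$ and about $\delta_N^{\ref{lemma:nonlinear-part-derivative-finite-N-difference-bound}}$ being non-increasing are unnecessary, since in Lemma~\ref{lemma:A-2-derivative-tail-bounds} the parameter $t$ is fixed (and equals your $t_0$), so these quantities already appear evaluated at $t_0$ in the tail bounds.
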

\begin{proof}
The argument is exactly the same as the proof of Lemma \ref{lemma:A-2-exp-sup-local}, where instead of using Lemma \ref{lemma:A-2-tail-bounds}, we use Lemma \ref{lemma:A-2-derivative-tail-bounds}.
\end{proof}

The following proposition is the analogue of Proposition \ref{prop:nonlinear-part-c0-bound}. 

\begin{prop}\label{prop:A-2-derivative}
For $\varep > 0$, let $\gamma_\varep := (1/2)(d-1-\alpha) + \varep$. For any $\varep > 0$, $p \geq 1$, we have that
\[\sup_{N \leq \infty} \E\bigg[\sup_{t \in (0, 1]} t^{p((1/2) + \gamma_\varep)} \|\nabla \rconn^2_N(t)\|_{C^0}^p\bigg] \leq \const_{\varep, p} < \infty, \]
where we use the notation $\nabla \rconn^2_\infty := \nabla \rconn^2$.
Moreover, we have that
\[ \lim_{N \toinf} \E\bigg[\sup_{t \in (0, 1]} t^{p((1/2) + \gamma_\varep)} \|\nabla \rconn^2_N(t) - \nabla \rconn^2(t)\|_{C^0}^p\bigg] = 0.\]
Here, $\const_{\varep, p}$ depends only on $\varep, p, d$, and the constants $\exptb, \consttb, \alpha, \constgf, \constfourp$ from Assumptions \ref{assumption:tail-bounds}, \ref{assumption:bounded-by-fractional-greens-function} and \ref{assumption:four-product-assumption}.
\end{prop}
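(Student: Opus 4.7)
The plan is to mirror the proof of Proposition \ref{prop:nonlinear-part-c0-bound} essentially verbatim, with Lemma \ref{lemma:A-2-derivative-exp-sup-local} taking the place of Lemma \ref{lemma:A-2-exp-sup-local}. The only real difference is that differentiating in space costs an extra $t^{-1/2}$: compare the weights $t_0^{-(1/2)(d-\alpha)}$ in Lemma \ref{lemma:A-2-derivative-exp-sup-local} versus $t_0^{-(1/2)(d-1-\alpha)}$ in Lemma \ref{lemma:A-2-exp-sup-local}. The weight $t^{p((1/2)+\gamma_\varep)}$ in the statement absorbs exactly this extra $t^{-1/2}$, so the chaining-over-dyadic-scales mechanism from the $C^0$ case carries over unchanged.

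Concretely, for the first bound I would decompose $(0,1]$ into the dyadic annuli $(t_0/2, t_0]$ with $t_0 = 2^{-k}$, $k \geq 0$, and apply Lemma \ref{lemma:A-2-derivative-exp-sup-local} (for $N \leq \infty$, writing $\ptl_l \rconn^2_\infty := \ptl_l \rconn^2$) on each. On the $k$-th annulus, the weight $t^{p((1/2)+\gamma_\varep)}$ is bounded by $t_0^{p((1/2) + \gamma_\varep)}$, and the annular supremum has $p$-th moment at most $\const\, t_0^{-p(1/2)(d-\alpha)}(1 + |\log t_0|^{1/\exptb})^p$. Multiplying, the $t_0$-powers cancel down to $t_0^{p\varep} = 2^{-kp\varep}$, which is geometrically summable against the polynomial factor $(1+k)^{p/\exptb}$. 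Bounding the overall supremum by the sum of the annular ones (which costs only a constant after Jensen) yields the first estimate, uniformly in $N$. This is exactly the content of what was invoked as Lemma \ref{lemma:first-chaining-bound} in the $C^0$ argument, and I would cite it again here.

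For the convergence statement, the same dyadic decomposition applies: on the $k$-th annulus, Lemma \ref{lemma:A-2-derivative-exp-sup-local} (the third inequality there) gives
\[
\E\Big[\sup_{(t,x)\in (t_0/2, t_0]\times\torus^d} |\ptl_l \mbf{D}^2_N(t,x)|^p\Big] \leq \const\, t_0^{-p(1/2)(d-\alpha)}(1 + |\log t_0|^{1/\exptb})^p \big(\delta_N^{\ref{lemma:nonlinear-part-derivative-finite-N-difference-bound}}(t_0)\big)^p.
\]
Again the powers of $t_0$ combine to a summable $2^{-kp\varep}$ (times a polynomial in $k$), and since $\delta_N^{\ref{lemma:nonlinear-part-derivative-finite-N-difference-bound}}(t_0) \in [0,1]$ with $\delta_N^{\ref{lemma:nonlinear-part-derivative-finite-N-difference-bound}}(t_0) \to 0$ for each fixed $t_0$, dominated convergence on the index $k$ produces the desired $N\toinf$ limit. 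This is precisely the role played by Lemma \ref{lemma:chaining-bound-convergence-to-0} in the proof of Proposition \ref{prop:nonlinear-part-c0-bound}, and I would invoke it in the same way.

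There is no real obstacle: all the heavy lifting has already been done in Lemmas \ref{lemma:nonlinear-part-derivative-entropy-estimates}--\ref{lemma:A-2-derivative-exp-sup-local}, and the chaining/summation bookkeeping has been set up in exactly the form needed. The only point that requires a little care is keeping track of the exponent balance $p((1/2)+\gamma_\varep) - p(1/2)(d-\alpha) = p\varep > 0$, which is what ensures summability over dyadic scales and is also why $\varep > 0$ (rather than $\varep = 0$) enters the statement.
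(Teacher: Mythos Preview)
Your proposal is correct and matches the paper's proof essentially verbatim: the paper simply says that the first result follows by combining Lemma~\ref{lemma:A-2-derivative-exp-sup-local} with Lemma~\ref{lemma:first-chaining-bound}, and the second by combining Lemma~\ref{lemma:A-2-derivative-exp-sup-local} with Lemma~\ref{lemma:chaining-bound-convergence-to-0}. Your explicit exponent bookkeeping $(1/2)+\gamma_\varep = (1/2)(d-\alpha)+\varep$ is exactly the point that makes those lemmas applicable.
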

\begin{proof}
The first result follows by combining Lemma \ref{lemma:A-2-derivative-exp-sup-local} with Lemma \ref{lemma:first-chaining-bound}. The second result follows by combining Lemma \ref{lemma:A-2-derivative-exp-sup-local} with Lemma \ref{lemma:chaining-bound-convergence-to-0}.
\end{proof}

\begin{proof}[Proof of Proposition \ref{prop:B-1}]
The first two claims follow by combining Propositions \ref{prop:nonlinear-part-c0-bound} and \ref{prop:A-2-derivative}. The final two claims follow by combining the first two claims with Lemma \ref{lemma:A-3-N-convergence}.
\end{proof}

\appendix 

\numberwithin{theorem}{section}

\section{Suprema of stochastic processes}\label{appendix:sup-of-stochastic-processes}

Let $(\rconn(t, x), t \in (0, 1], x \in \torus^d)$ be a $\lalg^d$-valued stochastic process with continuous sample paths.

\begin{lemma}\label{lemma:first-chaining-bound}
Let $p \geq 1$. Suppose that there is some $\const_0$, $\gamma$, $\beta$ such that the following holds. For all $t_0 \in (0, 1]$, we have that
\[ \E\bigg[\sup_{t \in (t_0/2, t_0], x \in \torus^d} |\rconn(t, x)|^p\bigg] \leq \const_0 t_0^{-p\gamma} (1 + |\log t_0|^\beta)^p. \]
Then for any $\varep > 0$, there is some non-increasing function $\delta : \N \ra [0, \infty)$ depending only on $p, \const_0, \beta, \varep$ such that for any integer $k_0 \geq 0$, we have that
\[ \E\bigg[ \sup_{t \in (0, 2^{-k_0}], x \in \torus^d} t^{p(\gamma + \varep)} |\rconn(t, x)|^p \bigg] \leq \delta(k_0). \]
Moreover, $\delta(k_0) \ra 0$ as $k_0 \toinf$.
\end{lemma}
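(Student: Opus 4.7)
The plan is to dyadically decompose the time interval $(0, 2^{-k_0}]$ into shells $(2^{-(k+1)}, 2^{-k}]$ for $k \geq k_0$, apply the hypothesis on each shell (with $t_0 = 2^{-k}$), and then sum the resulting geometric series to define $\delta(k_0)$ as the tail of a convergent sum.

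First I would note that for $t \in (2^{-(k+1)}, 2^{-k}]$ we have $t^{p(\gamma+\varep)} \leq 2^{-kp(\gamma+\varep)}$, so pulling out this deterministic factor and applying the hypothesis with $t_0 = 2^{-k}$ yields
\[
\E\bigg[\sup_{\substack{t \in (2^{-(k+1)}, 2^{-k}] \\ x \in \torus^d}} t^{p(\gamma+\varep)}|\rconn(t,x)|^p\bigg] \leq 2^{-kp(\gamma+\varep)} \cdot \const_0 \, 2^{kp\gamma}(1 + (k\log 2)^\beta)^p = \const_0 \, 2^{-kp\varep}(1 + (k\log 2)^\beta)^p.
\]
Since the sample paths are continuous, the supremum over $(0, 2^{-k_0}]\times\torus^d$ is the supremum of the per-shell suprema, so we may bound it by the sum over shells (which dominates the max). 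Thus
\[
\E\bigg[\sup_{\substack{t \in (0, 2^{-k_0}] \\ x \in \torus^d}} t^{p(\gamma+\varep)}|\rconn(t,x)|^p\bigg] \leq \sum_{k=k_0}^\infty \const_0 \, 2^{-kp\varep}(1 + (k \log 2)^\beta)^p =: \delta(k_0).
\]

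Now I would verify that $\delta$ has the required properties. Because $p\varep > 0$, the exponential factor $2^{-kp\varep}$ dominates the polynomial factor $(1+(k\log 2)^\beta)^p$, so the series defining $\delta(0)$ converges; its quantitative value depends only on $p, \const_0, \beta, \varep$. The function $\delta$ is clearly non-increasing in $k_0$ (it is a tail of a nonnegative series), and $\delta(k_0) \to 0$ as $k_0 \toinf$, again because it is the tail of a convergent series.

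There is no serious obstacle here: the content is purely bookkeeping with a geometric dyadic decomposition, and the hypothesis is exactly of the form that makes the telescoping sum convergent once the weight $t^{p\varep}$ is inserted. The one point worth being careful about is that the supremum over the open interval $(0, 2^{-k_0}]$ equals the supremum over the union of the half-open dyadic shells $(2^{-(k+1)}, 2^{-k}]$ for $k \geq k_0$, which is immediate, and that continuity of sample paths ensures measurability of the shell suprema so that the monotone convergence/Fubini swap implicit in the summation is justified.
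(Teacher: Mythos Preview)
Your proposal is correct and follows essentially the same approach as the paper's proof: dyadic decomposition of $(0,2^{-k_0}]$ into shells $(2^{-(k+1)},2^{-k}]$, bounding $t^{p(\gamma+\varep)}$ by $(2^{-k})^{p(\gamma+\varep)}$ on each shell, applying the hypothesis with $t_0=2^{-k}$, and defining $\delta(k_0)$ as the resulting tail sum $\const_0\sum_{k\ge k_0}2^{-kp\varep}(1+(k\log 2)^\beta)^p$. The paper's version is slightly terser but identical in substance.
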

\begin{proof}
Let $k_0 \geq 0$. We may bound
\[\begin{split}
\E\bigg[ \sup_{t \in (0, 2^{-k_0}], x \in \torus^d} &t^{p(\gamma + \varep)} |\rconn(t, x)|^p \bigg] \leq \sum_{k=k_0}^\infty \E\bigg[\sup_{t \in (2^{-(k + 1)}, 2^{-k}], x \in \torus^d} t^{p(\gamma + \varep)} |\rconn(t, x)|^p\bigg], \end{split}\]
which may be further bounded by
\[\const_0 \sum_{k=k_0}^\infty (2^{-k})^{p(\gamma + \varep)} 2^{pk\gamma} (1 + (k \log 2)^\beta)^p.\]
Thus we may set $\delta(k_0)$ to be the above. The fact that $\lim_{k_0 \toinf} \delta(k_0) = 0$ follows because $\delta(0) < \infty$ combined with dominated convergence.
\end{proof}

Now suppose we have a sequence $\{(\rconn_N(t, x), t \in (0, 1], x \in \torus^d)\}_{N \geq 0}$ of $\lalg^d$-valued stochastic processes with continuous sample paths.

\begin{lemma}\label{lemma:chaining-bound-convergence-to-0}
Let $p \geq 1$. Suppose there is a sequence $\{\delta_N\}_{N \geq 0}$ of functions $\delta_N : (0, 1] \ra [0, 1]$, and $\const_0, \gamma, \beta$, such that the following hold. For all $t \in (0, 1]$, we have that $\lim_{N \toinf} \delta_N(t) = 0$. Also, for all $t_0 \in (0, 1]$, $N \geq 0$, we have that 
\[ \E\bigg[\sup_{t \in (t_0/2, t_0], x \in \torus^d} |\rconn_N(t, x)|^p\bigg] \leq \const_0 t_0^{-p\gamma} (1 + |\log t_0|^\beta)^p \delta_N(t_0). \]
Then for any $\varep > 0$, we have that \[ \lim_{N \toinf} \E\bigg[ \sup_{t \in (0, 1], x \in \torus^d} t^{p(\gamma + \varep)} |\rconn_N(t, x)|^p \bigg] = 0. \]
\end{lemma}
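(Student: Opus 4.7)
The plan is to combine the dyadic decomposition idea from the proof of Lemma \ref{lemma:first-chaining-bound} with dominated convergence for series, exploiting the fact that $\delta_N(t) \to 0$ pointwise and $\delta_N$ takes values in $[0,1]$ uniformly.

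First, I would decompose $(0, 1] = \bigcup_{k \geq 0} (2^{-(k+1)}, 2^{-k}]$, so that
\[ \E\bigg[\sup_{t \in (0,1], x \in \torus^d} t^{p(\gamma+\varep)} |\rconn_N(t,x)|^p\bigg] \leq \sum_{k=0}^\infty \E\bigg[\sup_{t \in (2^{-(k+1)}, 2^{-k}], x \in \torus^d} t^{p(\gamma+\varep)} |\rconn_N(t,x)|^p\bigg]. \]
On each dyadic piece, bound $t^{p(\gamma+\varep)} \leq (2^{-k})^{p(\gamma+\varep)}$ pointwise and pull it out of the sup. Applying the hypothesis with $t_0 = 2^{-k}$ then yields the upper bound
\[ \sum_{k=0}^\infty (2^{-k})^{p(\gamma+\varep)} \cdot \const_0 (2^{-k})^{-p\gamma} (1 + (k\log 2)^\beta)^p \delta_N(2^{-k}) = \const_0 \sum_{k=0}^\infty a_k \, \delta_N(2^{-k}), \]
where $a_k := 2^{-pk\varep}(1 + (k\log 2)^\beta)^p$.

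Next, I would observe that $\sum_{k \geq 0} a_k < \infty$ since $\varep > 0$ gives geometric decay that dominates any polynomial growth in $k$. Since $\delta_N$ maps into $[0,1]$, the summand $a_k \delta_N(2^{-k})$ is dominated by the summable sequence $a_k$ uniformly in $N$. By hypothesis, for each fixed $k$, $\delta_N(2^{-k}) \to 0$ as $N \toinf$, so $a_k \delta_N(2^{-k}) \to 0$ pointwise in $k$. The dominated convergence theorem (for counting measure on $\N$) then gives $\sum_{k=0}^\infty a_k \delta_N(2^{-k}) \to 0$ as $N \toinf$, which is the desired conclusion.

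There is no real obstacle here beyond bookkeeping; the only thing to be careful about is that the constant $\const_0$ and the summable envelope $a_k$ depend only on $p, \gamma, \varep, \beta$ and not on $N$, so the dominated convergence step is legitimate. The essential input is the pointwise vanishing of $\delta_N$ combined with its uniform bound in $[0,1]$.
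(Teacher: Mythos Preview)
Your proof is correct and uses the same dyadic decomposition and per-block estimate as the paper. The only difference is in how you pass to the limit: the paper splits the sum at a cutoff $k_0$, bounds the tail $\sum_{k\ge k_0}$ uniformly in $N$ via Lemma~\ref{lemma:first-chaining-bound}, shows the finite head $\sum_{k<k_0}$ tends to $0$ for each fixed $k_0$, and then lets $k_0\to\infty$; you instead invoke dominated convergence for the counting measure directly, using the summable envelope $a_k$ and the uniform bound $\delta_N\in[0,1]$. Your route is slightly more streamlined and avoids the explicit appeal to Lemma~\ref{lemma:first-chaining-bound}, while the paper's argument is just the standard hand-proof of DCT in this special case; the underlying content is the same.
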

\begin{proof}
Fix $\varep > 0$. Let $k_0 \geq 0$. For $N \geq 0$, we may bound
\[
\E\bigg[ \sup_{t \in (0, 1], x \in \torus^d} t^{p(\gamma + \varep)} |\rconn_N(t, x)|^p \bigg] \leq I_{1, k_0, N} + I_{2, k_0, N}, \]
where
\[ I_{1, k_0, N} := \E\bigg[ \sup_{t \in (0, 2^{-k_0}], x \in \torus^d} t^{p(\gamma + \varep)} |\rconn_N(t, x)|^p \bigg], \]
\[ I_{2, k_0, N} := \E\bigg[ \sup_{t \in (2^{-k_0}, 1], x \in \torus^d} t^{p(\gamma + \varep)} |\rconn_N(t, x)|^p \bigg].\]
By Lemma \ref{lemma:first-chaining-bound}, we have some function $\delta : \N \ra [0, \infty)$ such that $\delta(k) \ra 0$ as $k \toinf$, and such that 
\[ \sup_{N \geq 0} I_{1, k_0, N} \leq \delta(k_0). \] 
Next, observe that
\[\begin{split}
I_{2, k_0, N} &\leq \sum_{k=0}^{k_0-1}\E\bigg[\sup_{t \in (2^{-(k + 1)}, 2^{-k}], x \in \torus^d} t^{p(\gamma + \varep)} |\rconn_N(t, x)|^p\bigg]  \\
&\leq \const_0 \sum_{k=0}^{k_0 - 1} 2^{-pk(\gamma + \varep)} (2^{-k})^{-p\gamma} (1 + |k \log 2|^\beta)^p \delta_N(2^{-k}).
\end{split}\]
Since $k_0$ is finite, and $\delta_N$ converges pointwise to $0$, we obtain that for any fixed $k_0$, $\lim_{N \toinf} I_{2, k_0, N} = 0$. We thus obtain for any fixed $k_0$,
\[ \limsup_{N \toinf} \E\bigg[ \sup_{t \in (0, 1], x \in \torus^d} t^{p(\gamma + \varep)} |\rconn_N(t, x)|^p \bigg] \leq \delta(k_0). \]
Using that $\delta(k_0) \ra 0$ as $k_0 \toinf$, the desired result now follows.
\end{proof}

In the following, recall the definition of $\dchain_t$ in Definition \ref{def:dchain}. 
For $t_0 \in (0, 1]$, let $T_{t_0} := (t_0/2, t_0] \times \torus^d$. 

\begin{theorem}\label{thm:continuous-modification}
Let $(V, |\cdot|)$ be a normed finite-dimensional vector space. Let $t_0 \in (0, 1]$. Let $(X_{t, x}, (t, x) \in T_{t_0})$ be a $V$-valued stochastic process. Suppose for some constants $\const_1 \geq 0$, $\beta > 0$, we have that for all $(r, x), (s, y) \in T_{t_0}$,
\[ \p\big(|X_{r, x} - X_{s, y}| > u \dchain_{t_0}((r, x), (s, y))\big) \leq \const_1 \exp(-u^{\beta}), ~~ u \geq 0.\]
Then $(X_{t, x}, (t, x) \in T_{t_0})$ has a continuous modification.
\end{theorem}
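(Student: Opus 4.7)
The plan is to apply a Kolmogorov--Chentsov-type continuity criterion using tools already developed in the paper. First, I would convert the sub-Weibull tail bound into $L^p$ estimates by integrating tails: for every $p \geq 1$,
\[ \E\bigl[|X_{r,x} - X_{s,y}|^p\bigr] \leq \const_{p, \beta, \const_1}\, \dchain_{t_0}((r,x),(s,y))^p, \]
which follows from $\E[|Z|^p] = p \int_0^\infty u^{p-1} \p(|Z| > u)\, du$ applied to the normalized increment $Z = |X_{r,x} - X_{s,y}| / \dchain_{t_0}((r,x),(s,y))$ (the case of equal points being trivial).

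Second, I would observe that $(T_{t_0}, \dchain_{t_0})$ is a bounded metric space of diameter at most $3/2$ whose metric entropy satisfies $N_{t_0, \varepsilon} \leq \const\, t_0^{-d/2}\, \varepsilon^{-(d+1)}$ by Lemma \ref{lemma:covering-number-bound}. Up to the rescalings $r \mapsto r/t_0$ and $x \mapsto x/\sqrt{t_0}$, the metric $\dchain_{t_0}$ is comparable to a standard Euclidean metric on a bounded subset of a $(d+1)$-dimensional space, so this falls squarely within the classical Kolmogorov--Chentsov setting. Taking $p > d+1$, the moment bound above then yields a modification of $X$ whose sample paths are almost surely H\"older continuous on $T_{t_0}$.

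Alternatively, staying closer to the machinery of the paper, I would choose a countable dense subset $D \subseteq T_{t_0}$ and apply Theorem \ref{thm:dirksen-exp-sup} (Dirksen's generic chaining bound) combined with the entropy integral bound of Lemma \ref{lemma:entropy-integral-bound} to conclude that, on an event $E$ of probability one, the restriction of $X$ to $D$ is uniformly continuous with respect to $\dchain_{t_0}$. On $E$, this restriction extends uniquely to a continuous function $\tilde X$ on all of $T_{t_0}$, which I set to be identically zero off $E$. Finally, for any fixed $(t, x) \in T_{t_0}$, I would approximate $(t, x)$ by a sequence $\{(t_n, x_n)\} \subseteq D$ and use the tail bound (which gives convergence in probability of $X_{t_n, x_n}$ to $X_{t,x}$) together with a.s.~convergence of $\tilde X_{t_n, x_n}$ to $\tilde X_{t, x}$ to conclude $\tilde X_{t,x} = X_{t,x}$ almost surely, so $\tilde X$ is indeed a modification.

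The main potential obstacle is not any probabilistic subtlety---the sub-Weibull tails give moments of every order, which is more than enough---but rather the bookkeeping to confirm that $(T_{t_0}, \dchain_{t_0})$ satisfies the hypotheses of whichever version of the continuity theorem one invokes. Since the covering number estimate in Lemma \ref{lemma:covering-number-bound} matches the $(d+1)$-dimensional Euclidean rate, this is routine, and the proof should be quite short.
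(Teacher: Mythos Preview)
Your first approach is essentially what the paper does: it converts the tail bound to an $L^p$ moment estimate (citing \cite[Lemma~A.2]{Dirksen2015}) and then applies a Kolmogorov--Chentsov criterion for metric spaces (\cite[Theorem~2.3.1]{Khosh2002}), verifying the entropy hypothesis via Lemma~\ref{lemma:covering-number-bound}. The only cosmetic difference is that the paper takes $p \geq 2(d+1)$ to match the specific integrability condition in Khoshnevisan's formulation; your second approach via chaining on a dense set is a legitimate alternative but not what the paper uses.
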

\begin{proof}
By \cite[Lemma A.2]{Dirksen2015}, for any $p \geq 1$, $(r, x), (s, y) \in T_{t_0}$, we have that
\[ \E[|X_{r, x} - X_{s, y}|^p] \leq \const \dchain_{t_0}((r, x), (s, y))^p, \]
where $\const$ depends only on $C_1$, $\beta$, $p$.
To obtain the continuous modification, we apply \cite[Theorem 2.3.1]{Khosh2002}. Fix $p \geq 2(d+1)$. We proceed to show that the conditions of the theorem are met. By Lemma \ref{lemma:covering-number-bound}, the metric space $(T_{t_0}, \dchain_{t_0})$ is totally bounded. Next, define the function $\Psi(x) := \const x^p$ for $x \geq 0$, where $\const$ is as in the above display. We then have by construction that
\[ \E[|X_{r, x} - X_{s, y}|^{p}] \leq \Psi(\dchain_t((r, x), (s, y))). \]
Finally, define $f(x) := x^{1/2}$ for $x \geq 0$. Observe that $\int_0^1 r^{-1} f(r) dr < \infty$. Also, by Lemma \ref{lemma:covering-number-bound}, we have that (using $p \geq 2(d+1)$ in the second inequality)
\[ \int_0^1 N(T_{t_0}, \dchain_{t_0}, r) \Psi(2r) f(r)^{-p} dr \leq \const t_0^{-(d/2)} \int_0^1 r^{-(d+1)} (2r)^{p} r^{-p/2} dr < \infty. \]
We have thus shown conditions (i) and (ii) of \cite[Theorem 2.3.1]{Khosh2002}, and thus the theorem gives the desired continuous modification.
\end{proof}

\begin{lemma}\label{lemma:continuous-modification}
Let $X = (X_{t, x}, (t, x) \in (0, 1] \times \torus^d)$ be an $\R$-valued stochastic process. Suppose that for all $t_0 \in (0, 1]$, the process $(X_{t, x}, (t, x) \in T_{t_0})$ has a continuous modification. Then $X$ has a continuous modification.
\end{lemma}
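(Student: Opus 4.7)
The plan is to cover the time interval $(0,1]$ by countably many overlapping windows, apply the hypothesis to get a continuous modification on each window, and then verify that these modifications are almost surely compatible on the overlaps, so that they can be glued into a single continuous process.

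Concretely, I would first choose a decreasing sequence $t_k \downarrow 0$ such that the sets $T_{t_k} = (t_k/2, t_k] \times \torus^d$ cover $(0,1] \times \torus^d$ with genuine overlap between consecutive windows (i.e.\ $t_{k+1} > t_k/2$). For instance, $t_k = (2/3)^k$ works, since then $t_{k+1} = (2/3) t_k > t_k/2$, so the intervals $(t_k/2, t_k]$ and $(t_{k+1}/2, t_{k+1}]$ overlap on a nonempty open interval. By assumption, for each $k$ there is a continuous modification $Y^{(k)} = (Y^{(k)}_{t,x}, (t,x) \in T_{t_k})$ of $(X_{t,x}, (t,x) \in T_{t_k})$.

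Next, for each $k$ fix a countable dense subset $D_k$ of the overlap $T_{t_k} \cap T_{t_{k+1}}$. Since $Y^{(k)}$ and $Y^{(k+1)}$ are both modifications of $X$ on their respective domains, for each $(t,x) \in D_k$ we have $Y^{(k)}_{t,x} = X_{t,x} = Y^{(k+1)}_{t,x}$ almost surely. Since $D_k$ is countable and $k$ ranges over a countable set, there is a single event $E$ of probability one on which $Y^{(k)}_{t,x} = Y^{(k+1)}_{t,x}$ for every $k$ and every $(t,x) \in D_k$. On $E$, the continuity of $Y^{(k)}$ and $Y^{(k+1)}$ lets us extend this equality from $D_k$ to the full overlap $T_{t_k} \cap T_{t_{k+1}}$, so the $Y^{(k)}$ agree on all overlaps.

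Finally, define $\tilde{X}_{t,x} := Y^{(k)}_{t,x}$ on $E$, where $k$ is any index with $(t,x) \in T_{t_k}$ (well-defined by the previous step), and $\tilde{X}_{t,x} := 0$ off $E$. Measurability of $\tilde{X}_{t,x}$ follows from the countable structure, continuity on $(0,1] \times \torus^d$ follows because every point has a neighborhood contained in some $T_{t_k}$ on which $\tilde{X}$ coincides with the continuous process $Y^{(k)}$, and the modification property follows because for any fixed $(t,x)$, choosing $k$ with $(t,x) \in T_{t_k}$ gives $\tilde{X}_{t,x} = Y^{(k)}_{t,x} = X_{t,x}$ almost surely. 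The main (mild) obstacle is making sure the overlap between consecutive windows is nontrivial, which is the reason for the choice $t_k = (2/3)^k$ rather than the more naive $t_k = 2^{-k}$; everything else is bookkeeping with a single probability-one event.
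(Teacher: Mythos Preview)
Your proof is correct and follows essentially the same approach as the paper's: cover $(0,1]\times\torus^d$ by overlapping windows $T_{t_k}$ with $t_{k+1}\in(t_k/2,t_k)$, take continuous modifications on each, use a countable dense set to show agreement on overlaps almost surely, and glue. The only cosmetic difference is that the paper checks agreement for all pairs $(m,n)$ rather than just consecutive ones, whereas your specific choice $t_k=(2/3)^k$ ensures only consecutive windows overlap, so checking consecutive pairs suffices.
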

\begin{proof}
Take a sequence $\{t_n\}_{n \geq 0}$ such that $t_0 = 1$, $t_n \downarrow 0$, and $t_{n+1} \in (t_n/2, t_n)$ for all $n \geq 0$. For each $n$, let $Y^n = (Y^n_{t, x}, (t, x) \in T_{t_n})$ be a continuous modification of $(X_{t, x}, (t, x) \in T_{t_n})$. For each $n, m \geq 0$, let $S_{m, n}$ be a countable dense subset of $T_{t_m} \cap T_{t_n}$. Let $E$ be the event that for all $m, n \geq 0$, we have that $Y^m_{t, x} = Y^n_{t, x}$ for all $(t, x) \in S_{m, n}$. Then $\p(E) = 1$, and on the event $E$, we have (by continuity) that for all $m, n \geq 0$ and all $(t, x) \in T_{t_m} \cap T_{t_n}$, $Y^m_{t, x} = Y^n_{t, x}$. We may thus define a continuous modification $Y = (Y_{t, x}, (t, x) \in (0, 1] \times \torus^d)$ as follows. Fix $(t, x) \in (0, 1] \times \torus^d$. Take $n \geq 0$ such that $(t, x) \in T_{t_n}$. On $E^c$, define $Y_{t, x} := 0$, and on $E$, define $Y_{t, x} := Y^n_{t, x}$. The fact that $Y$ has continuous sample paths follows because on the event $E$, for all $n \geq 0$, we have that $Y_{t, x} = Y^n_{t, x}$ for all $(t, x) \in T_{t_n}$.
\end{proof}

Recall the notation from Definition \ref{def:dchain}, in particular $\dchain_t$, $N_{t, \varep}$.

\begin{theorem}\label{thm:dirksen-exp-sup}
Let $(V, |\cdot|)$ be a normed finite-dimensional vector space. Let $t_0 \in (0, 1]$. Let $(X_{t, x}, (t, x) \in T_{t_0})$ be a $V$-valued stochastic process with continuous sample paths. Suppose for some constants $\const_1 \geq 0$, $\beta > 0$, we have that for all $(r, x), (s, y) \in T_{t_0}$,
\[ \p\big(|X_{r, x} - X_{s, y}| > u \dchain_{t_0}((r, x), (s, y))\big) \leq \const_1 \exp(-u^\beta), ~~ u \geq 0.\]
Then for any $p \geq 1$, we have that
\[\begin{split}
\E \bigg[\sup_{(r, x) \in T_{t_0}} |X_{r, x}|^p \bigg] \leq \const_2 &\sup_{(r, x) \in T_{t_0}} \E [|X_{r, x}|^p] + \const_2 \bigg( \int_0^\infty (\log N_{t_0, \varep})^{1/\beta} d\varep\bigg)^p .
\end{split}\]
Here $\const_2$ depends only on $V$, $\const_1$,  $\beta$, and $p$.
\end{theorem}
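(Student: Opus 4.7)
The plan is to apply a standard generic-chaining argument adapted to sub-Weibull increments; in fact Theorem \ref{thm:dirksen-exp-sup} is essentially a restatement of the main bound of \cite{Dirksen2015}, so one approach is to deduce it directly from that reference. To sketch the proof from scratch, I would first translate the tail hypothesis into moment bounds on increments using (for instance) the argument already used repeatedly in this paper (via \cite[Lemma A.2]{Dirksen2015}): for every $p \geq 1$ and every $(r,x), (s,y) \in T_{t_0}$,
\[ \E\big[|X_{r,x} - X_{s,y}|^p\big] \leq \const\, \dchain_{t_0}((r,x),(s,y))^p, \]
where $\const$ depends only on $\const_1, \beta, p$.

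Next, I would run a chaining argument. Let $\varep_k := 2^{-k+1}$ (so $\varep_0 \geq 3/2$, the diameter bound from Lemma \ref{lemma:covering-number-bound}), and pick nested $\varep_k$-nets $\mathcal{N}_k \subseteq T_{t_0}$ of cardinality at most $N_{t_0, \varep_k}$, with $\mathcal{N}_0$ a singleton $\{(r_0,x_0)\}$. For each $(r,x) \in T_{t_0}$, write $\pi_k(r,x)$ for the closest point in $\mathcal{N}_k$; by continuity of sample paths and telescoping,
\[ X_{r,x} - X_{r_0,x_0} = \sum_{k=0}^{\infty} \big(X_{\pi_{k+1}(r,x)} - X_{\pi_k(r,x)}\big), \]
where each consecutive pair has $\dchain_{t_0}$-distance at most $3\varep_{k+1}$. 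For each $k$, applying the tail hypothesis together with a union bound over the at most $N_{t_0, \varep_k} \cdot N_{t_0, \varep_{k+1}}$ such pairs gives, for all $u \geq 0$,
\[ \p\Big(\sup_{(r,x)} |X_{\pi_{k+1}} - X_{\pi_k}| > 3\varep_{k+1} u\Big) \leq \const_1\, N_{t_0,\varep_k} N_{t_0,\varep_{k+1}}\, e^{-u^\beta}. \]

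Third, I would allocate levels a budget $u_k = A\big((\log N_{t_0,\varep_{k+1}})^{1/\beta} + s\big)$ with $A$ a suitable absolute constant and $s > 0$ free, sum the above tail bounds over $k$ using $\sum_k \varep_{k+1}\,(\log N_{t_0,\varep_{k+1}})^{1/\beta} \lesssim \int_0^\infty (\log N_{t_0,\varep})^{1/\beta}\, d\varep$ (which is essentially a Riemann-sum comparison, possible because $N_{t_0,\varep}$ is non-increasing in $\varep$), and obtain a tail bound
\[ \p\Big(\sup_{(r,x)} |X_{r,x} - X_{r_0,x_0}| > \const_2\,I + \const_2\, s\Big) \leq \const\, e^{-s^\beta}, \qquad I := \int_0^\infty (\log N_{t_0,\varep})^{1/\beta}\, d\varep. \]
Integrating this tail via $\E[Z^p] = p\int_0^\infty u^{p-1}\,\p(Z > u)\,du$ yields
\[ \E\Big[\sup_{(r,x)} |X_{r,x} - X_{r_0,x_0}|^p\Big] \leq \const_2\, I^p. \]
The theorem then follows from the triangle inequality, since $|X_{r,x}|^p \leq 2^{p-1}\big(|X_{r,x} - X_{r_0,x_0}|^p + |X_{r_0,x_0}|^p\big)$, and the second term is bounded by $\sup_{(r,x)}\E[|X_{r,x}|^p]$.

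The only real technical obstacle is the bookkeeping in the $\beta < 1$ regime: the exponentials must be combined carefully using the elementary inequality $(a+b)^\beta \geq c_\beta(a^\beta + b^\beta)$ (and its reverse for the $1/\beta$-power), so that the union-bound logarithm $\log(N_{t_0,\varep_k} N_{t_0,\varep_{k+1}})$ can be absorbed into the next net's entropy at a controlled multiplicative cost. All other steps are routine and the constants produced depend only on $\const_1,\beta,p$ and on $\dim V$ (the latter entering only through the equivalence of norms on $V$ used to reduce to the scalar case).
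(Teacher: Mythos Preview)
Your proposal is correct, and your first suggested approach—deducing the result directly from \cite[Theorem 3.2]{Dirksen2015}—is exactly what the paper does. The paper's proof adds only one small ingredient you omit: Dirksen's theorem is stated with the tail constant equal to $2$, so when $\const_1 > 2$ the paper rescales by setting $Y_{t,x} := X_{t,x}/C_0^{1/\beta}$ with $C_0 \geq \log\const_1/\log 2$, and checks via a brief case split that $Y$ satisfies the increment tail bound with constant $2$; the result for $X$ then follows with the rescaling absorbed into $\const_2$.

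Your from-scratch chaining sketch is a valid alternative route that the paper does not pursue. It buys self-containment at the cost of reproducing the content of Dirksen's theorem; the paper simply outsources that work. Your sketch is essentially sound, though a couple of points would need care in a full write-up: the allocation $u_k = A((\log N_{t_0,\varep_{k+1}})^{1/\beta} + s)$ combined with the union bound needs the inequality $(a+b)^\beta \geq c_\beta a^\beta$ (trivial) rather than the stronger $(a+b)^\beta \geq c_\beta(a^\beta + b^\beta)$ you mention, and one must verify that the telescoping sum converges almost surely (which follows from the moment bound on increments together with the entropy integral being finite, as guaranteed here by Lemma \ref{lemma:entropy-integral-bound}).
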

\begin{proof}
If $C_1 \leq 2$, then this follows directly from \cite[Theorem 3.2]{Dirksen2015}. Thus, suppose that $C_1 > 2$. Take $C_0 \geq \frac{\log C_1}{\log 2}$. Note that $C_0 \geq 1$. Define a stochastic process $(Y_{t, x}, (t, x) \in T_{t_0})$ by
\[ Y_{t, x} := \frac{X_{t, x}}{C_0^{1/\beta}} , ~~ (t, x) \in T_{t_0}. \]
We claim that for any $(r, x), (s, y) \in T_{t_0}$, we have that
\[ \p(|Y_{r, x} - Y_{s, y}| > u) \leq 2 \exp(-u^\beta), ~~ u \geq 0.\]
Given this claim, the desired result then follows by applying \cite[Theorem 3.2]{Dirksen2015} to the process $(Y_{t, x}, (t, x) \in T_{t_0})$. 

To show the claim, take $u \geq 0$, and note that by assumption, we have that
\[ \p(|Y_{r, x} - Y_{s, y}| > u) \leq C_1 \exp(-C_0 u^\beta) = C_1 \exp(-(C_0 - 1) u^\beta) \exp(-u^\beta).\]
Let 
\[ a := \bigg(\frac{\log (C_1 / 2)}{C_0 - 1}\bigg)^{1/\beta}. \]
Observe that if $u \geq a$, then $C_1 \exp(-(C_0 - 1)u^\beta) \leq 2$. Now suppose that $u < a$. Using that $C_0 - 1 \geq (\log C_1 - \log 2) / \log 2$, we obtain $u^\beta \leq \log 2$, from which it follows that
\[ \p(|Y_{r, x} - Y_{s, y}| > u) \leq 1 \leq 2 \exp(-u^\beta). \]
The claim now follows.
\end{proof}

\section{Concentration of Gaussian quadratic forms}\label{appendix:quadratic-form-concentration}

\begin{lemma}\label{lemma:quadratic-form-gaussian-concentration}
Let $Q$ be a quadratic form in centered Gaussian random variables. That is, $Q$ is of the form 
\[ Q = X^T M X = \sum_{i, j = 1}^n X_i M_{ij} X_j, \]
where $n \geq 1$, $X = (X_1, \ldots, X_n)$ is a mean $0$ Gaussian random vector, and $M$ is an $n \times n$ matrix. Then for any $u \geq 0$, 
\[ \p(|Q - \E Q| > u) \leq 2 e^{3/2} \exp\bigg(-\frac{u}{2 \sqrt{\mathrm{Var}(Q)}}\bigg). \]
\end{lemma}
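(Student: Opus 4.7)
The plan is to follow the standard moment generating function (MGF) plus Chernoff approach for quadratic forms of Gaussians. First I would perform the usual reductions: since $X^T M X = X^T M^T X$, I may replace $M$ by the symmetric matrix $(M+M^T)/2$; then, writing $X = \Sigma^{1/2} Z$ with $\Sigma = \mathrm{Cov}(X)$ and $Z \sim N(0, I)$, and diagonalizing the symmetric matrix $\Sigma^{1/2} M \Sigma^{1/2}$ by an orthogonal transformation (whose action preserves the law of $Z$), the problem reduces to
\[
Q - \E Q \;=\; \sum_i \lambda_i (Z_i^2 - 1),
\]
with $Z_i$ i.i.d.\ $N(0,1)$ and $\lambda_i$ the eigenvalues of $\Sigma^{1/2} M \Sigma^{1/2}$. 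Since $\mathrm{Var}(Z_i^2 - 1) = 2$, this gives the crucial identity $\sigma^2 := \mathrm{Var}(Q) = 2\sum_i \lambda_i^2$, and in particular the inequality $\|\lambda\|_\infty \leq \sigma/\sqrt{2}$.

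Next, using $\E e^{sZ^2} = (1-2s)^{-1/2}$ for $s < 1/2$, the MGF takes the form
\[
\log \E\, e^{t(Q - \E Q)} \;=\; \tfrac{1}{2}\sum_i \bigl[-2t\lambda_i - \log(1 - 2t\lambda_i)\bigr],
\]
valid whenever $\max_i 2t\lambda_i < 1$. My plan is to evaluate this at the specific choice $t = 1/(2\sigma)$. The point of this choice is that it is the largest $t$ for which the slack provided by $\|\lambda\|_\infty \leq \sigma/\sqrt{2}$ still guarantees all logarithms are well-defined: we have $|2t\lambda_i| = |\lambda_i|/\sigma \leq 1/\sqrt{2}$. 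From the Taylor series $-x - \log(1-x) = \sum_{k\ge 2} x^k/k$, a comparison with a geometric series (for $x \geq 0$) and the alternating series bound (for $x < 0$) yields, for $|x| \leq 1/\sqrt{2}$, an estimate of the form $-x - \log(1-x) \leq C_0 \, x^2$ with some explicit $C_0 < 2$ (e.g.\ $C_0 = (2+\sqrt{2})/2$). Summing and using $\sum_i (2t\lambda_i)^2 = 4t^2 \sum_i \lambda_i^2 = 1/2$ gives
\[
\log \E\, e^{t(Q - \E Q)} \;\leq\; \tfrac{C_0}{2}\cdot\tfrac{1}{2} \;=\; \tfrac{C_0}{4} \;<\; \tfrac{3}{2}.
\]

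The Chernoff bound then immediately gives $\p(Q - \E Q > u) \leq \exp(-tu + C_0/4) \leq e^{3/2} \exp(-u/(2\sigma))$. Applying the same argument to $-M$ in place of $M$ (which only changes the signs of the $\lambda_i$) handles the lower tail, and a union bound produces the factor of $2$ in the statement. The degenerate case $\sigma = 0$ is trivial since then $Q$ is a.s.\ constant. The only real subtlety is the numerical bookkeeping in the previous paragraph: the choice $t = 1/(2\sigma)$ is essentially forced by the constraint $2t\lambda_i < 1$, and the remaining constant $C_0/4$ must be checked to fit within the $e^{3/2}$ allowance, which it comfortably does.
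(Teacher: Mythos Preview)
Your proposal is correct and follows essentially the same route as the paper: reduce to a diagonal quadratic form $\sum_i \lambda_i Z_i^2$ in i.i.d.\ standard Gaussians, note that $\sum_i \lambda_i^2 = \sigma^2/2$ forces $|\lambda_i|\le \sigma/\sqrt{2}$, bound the MGF at $t=1/(2\sigma)$ via a quadratic Taylor-type inequality for $-x-\log(1-x)$, and apply Chernoff to each tail. The only cosmetic difference is that the paper normalizes to $\mathrm{Var}(Q)=1$ and uses the cruder second-derivative bound $\log(1+x)\ge x-6x^2$ (giving exactly the constant $3/2$), whereas your geometric/alternating-series estimate with $C_0=(2+\sqrt{2})/2$ yields the sharper $C_0/4<1/2$, comfortably inside the stated $e^{3/2}$.
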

\begin{proof}
First, dividing $Q$ by $\sqrt{\mathrm{Var}(Q)}$, we can assume without loss of generality that $\mathrm{Var}(Q) = 1$. Next, note that since any centered Gaussian random vector is a linear function of a vector of independent standard Gaussian random variables, we can assume without loss of generality that $X$ is a vector of $n$ i.i.d. standard Gaussian random variables. Also, since
\[ x_i m_{ij} x_j + x_j m_{ji} x_i = x_i(m_{ij} + m_{ji}) x_j, \]
we can assume without loss of generality that $M$ is symmetric. Let us make all of these assumptions. Let $\lambda_1, \ldots, \lambda_n$ be the eigenvalues of $M$, repeated by multiplicity. Then by the spectral decomposition of $M$, we get that $Q = \sum_{i=1}^n \lambda_i Z_i^2$, where $Z_1, \ldots, Z_n$ are i.i.d. standard Gaussian random variables. This shows that
\[ \E(Q) = \sum_{i=1}^n \lambda_i, ~~ \mathrm{Var}(Q) = \sum_{i=1}^n 2 \lambda_i^2. \]
By our assumption that $\mathrm{Var}(Q) = 1$, this shows that $\sum_{i=1}^n \lambda_i^2 = 1/2$. In particular, $|\lambda_i| \leq 1 / \sqrt{2}$ for each $i$. Thus, 
\[ \E(e^{\frac{1}{2} \lambda_i Z_i^2}) = \int_{-\infty}^\infty \frac{e^{-\frac{1}{2}(1-\lambda_i) x^2}}{\sqrt{2\pi}} dx = \frac{1}{\sqrt{1 - \lambda_i}}. \]
Now note that the second derivative of the map $x \mapsto \log (1 + x)$ is $-1 / (1+x)^2$, which is bounded below by $-12$ when $|x| \leq 1 / \sqrt{2}$. Thus, for $|x| \leq 1 / \sqrt{2}$, Taylor expansion gives
\[ \log(1 + x) \geq x - 6x^2.\]
Multiplying both sides by $-1/2$ and exponentiating, we get
\[ \frac{1}{\sqrt{1 + x}} \leq e^{-\frac{1}{2} x + 3x^2}. \]
Since $|\lambda_i| \leq 1/\sqrt{2}$ for each $i$, and $\sum_{i=1}^n \lambda_i$  = $\E(Q)$ and $\sum_{i=1}^n \lambda_i^2 = 1/2$, this gives
\[\begin{split}
\E (e^{\frac{1}{2}Q}) &= \prod_{i=1}^n \E (e^{\frac{1}{2} \lambda_i Z_i^2}) \leq e^{\frac{1}{2} \sum_{i=1}^n \lambda_i + 3 \sum_{i=1}^n \lambda_i^2} \\
&= e^{\frac{1}{2} \E (Q) + \frac{3}{2}}.
\end{split}\]
Thus, 
\[\begin{split}
\p(Q - \E(Q) > u) &= \p(e^{\frac{1}{2} Q} > e^{\frac{1}{2} (\E(Q) + u)}) \\
&\leq e^{-(1/2) (\E (Q) + u)} \E(e^{\frac{1}{2} Q}) \leq e^{3/2} e^{-u/2}.
\end{split}\]
The left tail may be handled similarly, by first proceeding as before to obtain the bound
\[ \E (e^{-\frac{1}{2} Q}) \leq e^{-\frac{1}{2} \E (Q) + \frac{3}{2}}. \qedhere \] 
\end{proof}

\section{Deterministic PDE proofs}\label{appendix:deterministic-pde-proofs}


We first show that $\mc{Q}_T^\gamma$ (recall Definition \ref{def:Q-T-gamma}) is a Banach space. This will allow us to apply contraction mapping arguments on closed subsets of $\mc{Q}_T^\gamma$.

\begin{lemma}\label{lemma:Q-T-gamma-banach}
For any $\gamma \geq 0$, $T > 0$, $(\mc{Q}_T^\gamma, \|\cdot\|_{\mc{Q}_T^\gamma})$ is a Banach space.
\end{lemma}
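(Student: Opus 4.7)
The plan is as follows. The norm axioms (positive definiteness, homogeneity, triangle inequality) follow directly from the corresponding properties of the $C^0$ and $C^1$ norms and the fact that $\sup$ preserves these. For instance, if $\|A\|_{\mc{Q}_T^\gamma} = 0$, then $t^\gamma \|A(t)\|_{C^0} = 0$ for every $t \in (0, T]$, so $A(t) = 0$ for every $t$. These checks are routine and will be dispensed with quickly.

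The substantive point is completeness. Given a Cauchy sequence $\{A_n\}_{n \geq 1} \sse \mc{Q}_T^\gamma$, I would first observe that for any fixed $t \in (0, T]$,
\[ t^\gamma \|A_n(t) - A_m(t)\|_{C^0} + t^{(1/2)+\gamma} \|A_n(t) - A_m(t)\|_{C^1} \leq \|A_n - A_m\|_{\mc{Q}_T^\gamma}, \]
and since $t^{(1/2)+\gamma} > 0$, the sequence $\{A_n(t)\}_{n \geq 1}$ is Cauchy in the Banach space $C^1(\torus^d, \lalg^d)$. Define $A(t)$ as its limit. Passing to the limit $m \toinf$ in the Cauchy condition $\|A_n - A_m\|_{\mc{Q}_T^\gamma} \leq \varep$ (valid for $n, m \geq N(\varep)$) yields
\[ t^\gamma \|A_n(t) - A(t)\|_{C^0} + t^{(1/2)+\gamma} \|A_n(t) - A(t)\|_{C^1} \leq \varep \]
for each $t \in (0, T]$ and $n \geq N(\varep)$. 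Taking $\sup$ over $t$, this shows $\|A_n - A\|_{\mc{Q}_T^\gamma} \leq \varep$; combining with $\|A\|_{\mc{Q}_T^\gamma} \leq \|A - A_n\|_{\mc{Q}_T^\gamma} + \|A_n\|_{\mc{Q}_T^\gamma}$ and the fact that Cauchy sequences are bounded, we get $\|A\|_{\mc{Q}_T^\gamma} < \infty$ and $A_n \to A$ in $\|\cdot\|_{\mc{Q}_T^\gamma}$.

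The remaining item is to check that $A : (0, T] \to C^1(\torus^d, \lalg^d)$ is continuous. For any $\delta \in (0, T]$, on $[\delta, T]$ we have $t^{(1/2)+\gamma} \geq \delta^{(1/2)+\gamma}$, so
\[ \sup_{t \in [\delta, T]} \|A_n(t) - A(t)\|_{C^1} \leq \delta^{-((1/2)+\gamma)} \|A_n - A\|_{\mc{Q}_T^\gamma} \to 0 \]
as $n \toinf$. Thus $A_n \to A$ uniformly on $[\delta, T]$ as functions into $C^1(\torus^d, \lalg^d)$. Since each $A_n$ is continuous on $(0, T]$ into $C^1$ and uniform limits of continuous functions (valued in a Banach space) are continuous, $A$ is continuous on $[\delta, T]$ into $C^1$. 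As $\delta \in (0, T]$ was arbitrary, $A$ is continuous on $(0, T]$, so $A \in \mc{Q}_T^\gamma$.

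I do not anticipate any real obstacle here; the only mildly subtle point is remembering to exploit $t \geq \delta > 0$ to upgrade norm convergence to uniform-on-compacts convergence in $C^1$, which is needed to transfer continuity from the $A_n$'s to the limit $A$.
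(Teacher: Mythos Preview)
Your proposal is correct and follows essentially the same approach as the paper: both exploit that being Cauchy in $\mc{Q}_T^\gamma$ implies uniform-in-$C^1$ Cauchyness on each compact $[\delta,T]$, use completeness of $C^1$ to obtain the limit, and then pass to the limit in the Cauchy estimate to get $\|A_n - A\|_{\mc{Q}_T^\gamma} \to 0$ and $\|A\|_{\mc{Q}_T^\gamma} < \infty$. The only cosmetic difference is ordering: the paper first builds the continuous limit on $[T_0,T]$ via the Banach space $\mc{P}_{T-T_0}^1$ and then verifies the $\mc{Q}_T^\gamma$ bounds, whereas you construct the pointwise $C^1$ limit first and check continuity afterward via uniform convergence on $[\delta,T]$.
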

\begin{proof}
From the definition of $\|\cdot\|_{\mc{Q}_T^\gamma}$, we see that $\|\cdot\|_{\mc{Q}_T^\gamma}$ is a norm on $\mc{Q}_T^\gamma$. Thus we just need to show that the space is complete. Let $\{A_n\}_{n \geq 1} \sse \mc{Q}_T^\gamma$ be a Cauchy sequence, i.e.,
\[ \lim_{N \toinf} \sup_{m, n \geq N} \|A_n - A_m\|_{\mc{Q}_T^\gamma} = 0. \]
We will construct $A \in \mc{Q}_T^\gamma$ such that $\|A_n - A\|_{\mc{Q}_T^\gamma} \ra 0$, which will show completeness. Towards this end, first observe that for any $T_0 \in (0, T]$, we have that 
\[ \lim_{N \toinf} \sup_{m, n \geq N} \sup_{t \in [T_0, T]} \|A_n(t) - A_m(t)\|_{C^1} = 0 .\]
Thus, using the fact that $\mc{P}_{T - T_0}^1$ is a Banach space (recall Definition \ref{def:P-T-r}), there exists a continuous function $A : [T_0, T] \ra C^1(\torus^d, \lalg^d)$ such that 
\begin{align}\label{anconv}
\lim_{n \toinf} \sup_{t \in [T_0, T]} \|A_n(t) - A(t)\|_{C^1} = 0. 
\end{align}
By taking $T_0 \downarrow 0$, we obtain a continuous function $A : (0, T] \ra C^1(\torus^d, \lalg^d)$. We next show that $A \in \mc{Q}_T^\gamma$ and $\|A_n - A\|_{\mc{Q}_T^\gamma} \ra 0$. Fix $\varep > 0$. By the uniform convergence of $A_n$ to $A$ on $[\varep, T]$ (in the sense of \eqref{anconv}), we have that
\[\begin{split}
\sup_{t \in [\varep, T]} &t^\gamma \|A(t)\|_{C^0} + \sup_{t \in [\varep, T]} t^{(1/2) + \gamma} \|A(t)\|_{C^1} = \\
&\lim_{n \toinf} \biggl(\sup_{t \in [\varep, T]} t^\gamma \|A_n(t)\|_{C^0} + \sup_{t \in [\varep, T]} t^{(1/2) + \gamma} \|A_n(t)\|_{C^1}\biggr) \leq \sup_{n \geq 1} \|A_n\|_{\mc{Q}_T^\gamma}. 
\end{split}\]
Taking $\varep \downarrow 0$, we obtain
\[ \|A\|_{\mc{Q}_T^\gamma} \leq \sup_{n \geq 1} \|A_n\|_{\mc{Q}_T^\gamma} < \infty, \]
where the last inequality follows because Cauchy sequences in $\mc{Q}_T^\gamma$ are norm-bounded. Thus $A \in \mc{Q}_T^\gamma$. A similar argument shows that 
\[ \|A_n - A\|_{\mc{Q}_T^\gamma} \leq \sup_{m \geq n} \|A_n - A_m\|_{\mc{Q}_T^\gamma}.  \]
The fact that $\|A_n - A\|_{\mc{Q}_T^\gamma} \ra 0$ now follows by the assumption that $\{A_n\}_{n \geq 1}$ is Cauchy.
\end{proof}

The following lemma will allow us to obtain estimates on $\rho(A)$, which will be needed for the various contraction arguments that appear later on. Recall the definitions of $X^{(2)}, X^{(3)}$ from Definition \ref{def:X}.

\begin{lemma}\label{lemma:X-2-X-3-estimate}
Let $r \geq 0$ and let $A \in C^r(\torus^d, \lalg^d)$ be a {\oneform}. We have that
\[\begin{split} \|X^{(2)}(A)\|_{C^r} &\leq \const_{r, d} \|A\|_{C^r} \|A\|_{C^{r+1}}, \\ \|X^{(3)}(A)\|_{C^r} &\leq \const_{r, d} \|A\|_{C^r}^3.
\end{split}\]
Additionally, for $A_1, A_2 \in C^r(\torus^d, \lalg^d)$, we have that
\[\begin{split}
\|X^{(2)}(A_1) - X^{(2)}(A_2)\|_{C^r} \leq &~\const_{r, d} \max\{\|A_1\|_{C^r}, \|A_2\|_{C^r}\} \|A_1 - A_2\|_{C^{r+1}} ~+ \\
& \const_{r, d} \max\{\|A_1\|_{C^{r+1}}, \|A_2\|_{C^{r+1}}\} \|A_1 - A_2\|_{C^{r}}, \end{split}\]
\[ \|X^{(3)}(A_1) - X^{(3)}(A_2)\|_{C^r} \leq \const_{r, d} \max\{\|A_1\|_{C^r}^2, \|A_2\|_{C^r}^2\} \|A_1 - A_2\|_{C^r}. \]
\begin{proof}
The first two inequalities follow from the fact (to be proved below) 
that given functions $h, g : \torus^d \ra \lalg$, we have that 
\begin{align}\label{fact}
 \|[h, g]\|_{C^r} \leq \const \|h\|_{C^r} \|g\|_{C^r}.
\end{align}
The last two inequalities follow from this fact, combined with introducing a telescoping sum, i.e., for $i, j, k \in [d]$, we may write
\[ [A_{1, i}, \ptl_j A_{1, k}] - [A_{2, i}, \ptl_j A_{2, k}] = [A_{1, i}, \ptl_j (A_{1, k} - A_{2, k})] + [A_{1, i} - A_{2, i}, \ptl_j A_{2, k}],\]
and similarly for the difference $[A_{1, i},  [A_{1, j}, A_{1, k}]] - [A_{2, i}, [A_{2, j}, A_{2, k}]]$.

To see \eqref{fact}, recalling the notation introduced in Section \ref{section:useful-lemmas}, we may write $h = \sum_{a \in [\lalgdim]} h^a \onbasislalg^a$, $g = \sum_{b \in [\lalgdim]} g^b \onbasislalg^b$, so that
\[ [h, g] = \sum_{c \in [\lalgdim]} \onbasislalg^c \biggl(\sum_{a, b \in [\lalgdim]} f^{abc} h^a g^b\biggr) .\]
It follows that
\[ \|[h, g]\|_{C^r} \leq \const \sum_{a, b \in [\lalgdim]} \|h^a g^b\|_{C^r}, \]
where $h^a, g^b$ are $\R$-valued functions for all $a, b \in [\lalgdim]$. Using that $\|h^a g^b\|_{C^r} \leq \const_r \|h^a\|_{C^r} \|g^b\|_{C^r}$ (when $r$ is integer, this is easy to see; the case for general $r$ follows from combining the integer case with \eqref{eq:C-r-norm-product}), we then obtain the further upper bound
\[ \const_r \sum_{a \in [\lalgdim]} \|h^a\|_{C^r} \sum_{b \in [\lalgdim]} \|g^b\|_{C^r} \leq \const_r \|h\|_{C^r} \|g\|_{C^r},\]
as desired.
\end{proof}

\end{lemma}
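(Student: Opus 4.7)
The plan is to isolate the algebraic structure of $X^{(2)}$ and $X^{(3)}$ so that the estimates reduce to a single product estimate for brackets. Specifically, I would first establish the key multiplicativity bound
\[
\|[h,g]\|_{C^r} \leq \const_{r,d}\, \|h\|_{C^r}\, \|g\|_{C^r},
\qquad h,g : \torus^d \to \lalg,
\]
by expanding $h = \sum_a h^a S^a$ and $g = \sum_b g^b S^b$ in the orthonormal basis of $\lalg$ so that $[h,g] = \sum_c (\sum_{a,b} f^{abc} h^a g^b) S^c$, and then invoking the scalar product estimate $\|h^a g^b\|_{C^r} \leq \const_r \|h^a\|_{C^r}\|g^b\|_{C^r}$. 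For integer $r$ this follows from the Leibniz rule, and for non-integer $r$ it follows by combining the integer case with \eqref{eq:C-r-norm-product} applied to the top-order derivatives.

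Given this, the first two estimates are immediate: $X^{(2)}(A)$ is a finite sum of brackets of the form $[A_j, \partial_k A_l]$, so applying the bracket bound together with the elementary fact $\|\partial_k A_l\|_{C^r} \leq \|A\|_{C^{r+1}}$ gives $\|X^{(2)}(A)\|_{C^r} \leq \const_{r,d}\|A\|_{C^r}\|A\|_{C^{r+1}}$. Similarly $X^{(3)}(A)$ is a finite sum of iterated brackets $[A_j,[A_j,A_i]]$; two applications of the bracket bound yield $\|X^{(3)}(A)\|_{C^r}\leq \const_{r,d}\|A\|_{C^r}^3$.

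For the difference estimates, I would use a standard telescoping decomposition. For $X^{(2)}$, write
\[
[A_{1,j}, \partial_k A_{1,l}] - [A_{2,j}, \partial_k A_{2,l}] = [A_{1,j} - A_{2,j},\, \partial_k A_{1,l}] + [A_{2,j},\, \partial_k(A_{1,l} - A_{2,l})],
\]
and apply the bracket bound to each summand, which produces the two terms in the claimed estimate (one with $\|A_1 - A_2\|_{C^r}$ times the $C^{r+1}$-norm of the other, and one with $\|A_1 - A_2\|_{C^{r+1}}$ times the $C^r$-norm). For $X^{(3)}$, an analogous three-term telescoping of $[A_{1,i},[A_{1,j},A_{1,k}]]-[A_{2,i},[A_{2,j},A_{2,k}]]$ combined with the bracket bound yields $\|A_1-A_2\|_{C^r}$ times the square of the max of the $C^r$-norms.

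I expect no serious obstacle here; the only mildly delicate point is the scalar product estimate $\|fg\|_{C^r} \leq \const_r \|f\|_{C^r}\|g\|_{C^r}$ for non-integer $r$, which requires a small computation interpolating between the Leibniz rule for integer derivatives and the Hölder seminorm estimate \eqref{eq:C-r-norm-product}, but this is routine. Everything else is algebraic bookkeeping once the bracket bound is in place.
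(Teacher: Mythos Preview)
Your proposal is correct and follows essentially the same approach as the paper: establish the bracket product estimate $\|[h,g]\|_{C^r}\le\const\|h\|_{C^r}\|g\|_{C^r}$ via the basis expansion and the scalar product bound, then read off the first two inequalities directly and obtain the difference estimates by telescoping. The only cosmetic difference is the order of the two telescoping terms, which is immaterial.
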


We will first need several results for initial data in $C^r$ for some $r \geq 1$. Along the way, we will show Theorem \ref{thm:zdds-existence} and Lemma \ref{lemma:zdds-uniqueness}. To begin, recall that if $A^0 \in C^0(\torus^d, \lalg^d)$, then $t \mapsto e^{t \Delta} A^0$ is the solution to the heat equation with initial data $A^0$, i.e., $\ptl_t e^{t \Delta} A^0 = \Delta e^{t\Delta} A^0$ for all $t > 0$. Moreover, if $A^0$ is smooth, then the function $(t, x) \mapsto (e^{t \Delta} A^0)(x)$ is in $C^\infty([0, \infty) \times \torus^d, \lalg^d)$.

\begin{lemma}
Let $r \geq 0$, $f \in C^r(\torus^d, V)$, where $V = \lalg^d$ or $\R$. For any $t \geq 0$, we have
\beq\label{eq:heat-semigroup-Cr-contraction} \|e^{t \Delta} f\|_{C^r} \leq \|f\|_{C^r}. \eeq
For any $u > 0$, there is a constant $\const_u$ which depends only on $u$, such that for $t \in (0, 1]$, we have
\beq\label{eq:heat-semigroup-Cr-C-r-plus-half} \|e^{t \Delta} f\|_{C^{r+u}} \leq \const_u t^{-u/2} \|f\|_{C^r}. \eeq
\end{lemma}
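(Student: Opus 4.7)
The plan is to reduce both inequalities to standard convolution estimates with the heat kernel $\Phi(t, \cdot)$. As a preliminary, I would establish the classical derivative bounds $\|\partial^\beta \Phi(t, \cdot)\|_{L^1(\torus^d)} \leq C_\beta\, t^{-|\beta|_1 / 2}$ for $t \in (0, 1]$, together with the cancellation $\int_{\torus^d} \partial^\beta \Phi(t, z)\, dz = 0$ for $|\beta|_1 \geq 1$ (by integration by parts on the torus, which has no boundary). For short times these decay bounds follow from comparison with the Euclidean Gaussian kernel, e.g.\ using the Fourier series \eqref{eq:heat-kernel-def}.

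The proof of \eqref{eq:heat-semigroup-Cr-contraction} is short. Write $r = m + s$ with $m \in \N$ and $s \in [0, 1)$. Since partial derivatives commute with $e^{t\Delta}$ (as Fourier multipliers, cf.\ \eqref{eq:heat-semigroup-def}), it suffices to verify $\|e^{t\Delta} g\|_{C^s} \leq \|g\|_{C^s}$ for $g \in C^s$. The key is that $\Phi(t, \cdot)$ is a probability density, so $\|e^{t\Delta} g\|_{C^0} \leq \|g\|_{C^0}$, and
\[ |e^{t\Delta} g(x) - e^{t\Delta} g(y)| \leq \int \Phi(t, z)\, |g(x-z) - g(y-z)|\, dz \leq \|g\|_{C^s}\, d_{\torus^d}(x, y)^s. \]

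For \eqref{eq:heat-semigroup-Cr-C-r-plus-half}, I would first reduce to $u \in (0, 1]$: if the bound holds for all $u' \in (0, 1]$ at arbitrary $r$, then iterating $e^{t\Delta} = (e^{(t/(k+1))\Delta})^{k+1}$ and applying \eqref{eq:heat-semigroup-Cr-contraction} on the intermediate factors handles all $u \in (k, k+1]$ with the loss only in the constant. For $u \in (0, 1]$ and $r = m + s$, the integer part of $r + u$ is either $m$ (when $s + u < 1$) or $m + 1$ (when $s + u \geq 1$); in either case, for a top-order multi-index $\alpha$ of $r + u$, I would use the commutation identity to write $\partial^\alpha e^{t\Delta} f = \partial^\beta \Phi(t, \cdot) \ast \partial^{\alpha - \beta} f$ with $|\alpha - \beta|_1 = m$ and $|\beta|_1 \in \{0, 1\}$. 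This reduces everything to the core estimate
\[ \|\partial^\beta \Phi(t, \cdot) \ast g\|_{C^{\tilde s}} \leq C_u\, t^{-u/2}\, \|g\|_{C^s}, \qquad |\beta|_1 \in \{0, 1\}, \quad g \in C^s, \]
where $\tilde s \in [0, 1)$ is the fractional part of $r + u$.

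The core estimate I would prove using the cancellation from the first paragraph: when $|\beta|_1 = 1$,
\[ (\partial^\beta \Phi \ast g)(x) - (\partial^\beta \Phi \ast g)(y) = \int \partial^\beta \Phi(t, z)\, \big\{ [g(x - z) - g(x)] - [g(y - z) - g(y)] \big\}\, dz, \]
then bound the bracketed expression by $\|g\|_{C^s} \min(2 |z|^s, 2 d_{\torus^d}(x, y)^s)$ and split the $z$-integral according to $|z|$ versus $d_{\torus^d}(x, y)$, together with a secondary split $d_{\torus^d}(x, y) \gtrless \sqrt{t}$. The main obstacle I anticipate is the bookkeeping: both the primary case split ($s + u \gtrless 1$, governing $|\beta|_1$) and the secondary spatial split yield several sub-cases, and in each one must check that the resulting powers of $t$ and $d_{\torus^d}(x, y)$ collapse to exactly $t^{-u/2} d_{\torus^d}(x, y)^{\tilde s}$. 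The analytic content itself is elementary Gaussian integration at the scale $\sqrt{t}$.
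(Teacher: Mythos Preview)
Your proposal is correct; both the contraction and the smoothing estimate follow from the kernel arguments you outline, and the reduction to $u\in(0,1]$ by iterating the semigroup is fine (though note that at each step you are applying the smoothing bound, not \eqref{eq:heat-semigroup-Cr-contraction} as you wrote).

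The paper, however, does not actually prove this lemma: for \eqref{eq:heat-semigroup-Cr-contraction} it gives the one-line remark that $e^{t\Delta}f$ is convolution with a probability density, and for \eqref{eq:heat-semigroup-Cr-C-r-plus-half} it simply cites \cite[Equation (1.13)]{T2011c}. So your proposal is a genuinely different, self-contained argument where the paper defers to a reference. What you gain is independence from the cited source and an explicit mechanism (cancellation of $\partial^\beta\Phi$ plus the $|z|\gtrless d_{\torus^d}(x,y)$ and $d_{\torus^d}(x,y)\gtrless\sqrt t$ splits); what the paper gains is brevity, since the estimate is entirely standard parabolic smoothing and not specific to the Yang--Mills setting.
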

\begin{proof}
The claim \eqref{eq:heat-semigroup-Cr-contraction} follows because $e^{t \Delta} f$ can be explicitly written as the convolution of $f$ with a smooth density. The claim \eqref{eq:heat-semigroup-Cr-C-r-plus-half} is \cite[Equation (1.13)]{T2011c}.
\end{proof}

\begin{lemma}\label{lemma:heat-semigroup-time-continuity}
Let $f \in C^2(\torus^d, \lalg^d)$. For any $t \geq 0$, we have
\beq\label{eq:heat-semigroup-infinity-norm-time-continuity} \|e^{t \Delta} f - f\|_{C^0} \leq t d \|f\|_{C^2}.\eeq
Additionally, let $r \geq 0$, $t \in (0, 1]$, and $u \in (0, 2)$. For $f \in C^{r + 2 -u}(\torus^d, \lalg)$, we have
\beq\label{eq:heat-semigroup-general-time-continuity} \|e^{t\Delta} f - f\|_{C^{r}} \leq \const_{d, u} \|f\|_{C^{r + 2-u}} t^{1-u/2}. \eeq
\end{lemma}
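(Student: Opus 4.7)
The first bound \eqref{eq:heat-semigroup-infinity-norm-time-continuity} is a direct consequence of the fundamental theorem of calculus for the heat semigroup. Since $f \in C^2$, the function $s \mapsto e^{s\Delta} f$ is $C^2$-valued and continuously differentiable for $s \geq 0$ with $\ptl_s e^{s\Delta} f = \Delta e^{s\Delta} f = e^{s\Delta} \Delta f$ (the second equality holds because both $\Delta$ and $e^{s\Delta}$ are Fourier multipliers and therefore commute). Integrating from $0$ to $t$ gives
\[ e^{t\Delta} f - f = \int_0^t e^{s\Delta} \Delta f \, ds, \]
and taking $C^0$ norms, using \eqref{eq:heat-semigroup-Cr-contraction} and the pointwise bound $|\Delta f(x)| \leq d \|f\|_{C^2}$, yields $\|e^{t\Delta} f - f\|_{C^0} \leq t \|\Delta f\|_{C^0} \leq t d \|f\|_{C^2}$.

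For the second bound \eqref{eq:heat-semigroup-general-time-continuity}, the strategy is similar but we must avoid commuting $\Delta$ past $e^{s\Delta}$, since $f$ need not be twice differentiable. Instead, we exploit the smoothing estimate \eqref{eq:heat-semigroup-Cr-C-r-plus-half}. For any $0 < \varep < t$, the function $e^{\varep \Delta} f$ is smooth, so the argument above applied with $e^{\varep \Delta} f$ in place of $f$ gives
\[ e^{t\Delta} f - e^{\varep\Delta} f = \int_\varep^t \Delta e^{s\Delta} f \, ds. \]
Now in $C^r$ we bound $\|\Delta e^{s\Delta} f\|_{C^r} \leq \const_d \|e^{s\Delta} f\|_{C^{r+2}}$, and then apply \eqref{eq:heat-semigroup-Cr-C-r-plus-half} with regularity parameter $u$ to get
\[ \|e^{s\Delta} f\|_{C^{r+2}} = \|e^{s\Delta} f\|_{C^{(r+2-u) + u}} \leq \const_u s^{-u/2} \|f\|_{C^{r+2-u}}. \]
Since $u \in (0,2)$, the resulting $s^{-u/2}$ is integrable at the origin, so sending $\varep \downarrow 0$ makes the right-hand side converge absolutely in $C^r$ to an integral bounded in norm by $\const_{d,u}(1-u/2)^{-1} t^{1-u/2} \|f\|_{C^{r+2-u}}$. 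On the left, $e^{\varep \Delta} f \to f$ in $C^r$ as $\varep \downarrow 0$ (a standard continuity property of the heat semigroup on $C^r$, readily checked via the convolution representation \eqref{eq:heat-kernel-semigroup-convolution} and uniform continuity of derivatives of $f$ up to order $\lfloor r \rfloor$, together with the H\"older modulus of the top derivative). Combining these, we obtain \eqref{eq:heat-semigroup-general-time-continuity}.

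The main subtlety is the passage to the limit $\varep \downarrow 0$ in $C^r$, rather than merely $C^0$, since the hypothesis only gives $f \in C^{r+2-u}$. This is handled by the absolute convergence of the integral $\int_0^t s^{-u/2} ds$ combined with the continuity of $e^{\varep \Delta} f$ at $\varep = 0$ in the H\"older norm $C^r$, for which one can approximate $f$ by smooth functions in $C^{r+2-u}$ (e.g.\ Fourier truncations) to reduce to the smooth case. No deeper ingredient is required.
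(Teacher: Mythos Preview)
Your proof is correct and follows essentially the same approach as the paper: both arguments write $e^{t\Delta}f-f$ as the time integral of $\Delta e^{s\Delta}f$ (equivalently $e^{s\Delta}\Delta f$), then bound the integrand using the contraction estimate \eqref{eq:heat-semigroup-Cr-contraction} for the first part and the smoothing estimate \eqref{eq:heat-semigroup-Cr-C-r-plus-half} for the second. Your $\varep$-regularization in the second part is a careful justification of the integral identity when $f\notin C^2$, a step the paper simply writes down directly; otherwise the proofs are the same.
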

\begin{proof}
Since $e^{t\Delta} f$ is a solution to the heat equation, we have $\ptl_t e^{t \Delta} f = \Delta e^{t \Delta} f = e^{t \Delta} (\Delta f)$, and thus
\[ e^{t \Delta} f - f = \int_0^t e^{s \Delta} (\Delta f) ds.\]
Thus (using \eqref{eq:heat-semigroup-Cr-contraction} with $r = 0$ in the second inequality below),
\[ \|e^{t \Delta} f - f\|_{C^0} \leq \int_0^t \| e^{s \Delta}  (\Delta f)\|_{C^0} ds \leq t \|\Delta f\|_{C^0} \leq t d\|f\|_{C^2}. \]
For the second claim, note that we have (applying \eqref{eq:heat-semigroup-Cr-C-r-plus-half} in the third inequality below),
\begin{align*}
\|e^{t \Delta} f - f\|_{C^r} &\leq \int_0^t \|\Delta e^{s \Delta} f\|_{C^r} ds \leq d \int_0^t \|e^{s \Delta} f\|_{C^{r+2}} ds \\
&\leq d\const_u \|f\|_{C^{r +2 - u}} \int_0^t s^{-u/2} du  \leq \const_{d, u} \|f\|_{C^{r + 2 -u}} t^{1-u/2}. \qedhere
\end{align*}
\end{proof}

In the following, recall the space $(\mc{P}_T^r, \|\cdot\|_{\mc{P}_T^r})$ from Definition \ref{def:P-T-r}, as well as $\rho$ from Definition \ref{def:rho}.
The next lemma shows that if $A$ is in $\mc{P}_T^r$ for some $T, r$, then $\rho(A)$ is in $\mc{P}_T^{r+1/4}$. That is, we get a regularity improvement.

\begin{lemma}\label{lemma:contraction-estimates}
Let $j \in \{2, 3\}$. Let $r \geq 1$, $T \in (0, 1]$. Let $A \in \mc{P}_T^r$. Then for any $t \in [0, T]$, we have that
\[ \int_0^t \|e^{(t-s)\Delta} X^{(j)}(A(s))\|_{C^{r+1/4}} ds < \infty, \]
and thus $\rho^{(j)}(A) : [0, T] \ra  C^{r+1/4}(\torus^d, \lalg^d)$. Moreover, $\rho^{(j)}(A) \in \mc{P}_T^{r+1/4}$, and if for some $R \geq 0$, we have $\|A\|_{\mc{P}_T^r} \leq R$, then
\[ \|\rho^{(j)}(A)\|_{\mc{P}_T^{r + 1/4}} \leq \const_{r, d} T^{3/8} R^j. \]
Additionally, let $A_1, A_2\in \mc{P}_T^{r}$ be such that $\|A_1\|_{\mc{P}_T^r}, \|A_2\|_{\mc{P}_T^r} \leq R$. Then
\[ \|\rho^{(j)}(A_1) - \rho^{(j)}(A_2)\|_{\mc{P}_T^{r+1/4}} \leq C_{r, d} T^{3/8} R^{j-1} \|A_1 - A_2\|_{\mc{P}_T^r}.\]
Consequently, we have that
\beq\label{eq:rho-A-norm-bound} \|\rho(A)\|_{\mc{P}_T^{r + 1/4}} \leq \const_{r, d} T^{3/8}(R^3 + R^2). \eeq
\beq\label{eq:rho-A1-A2-difference-norm-bound} \|\rho(A_1) - \rho(A_2)\|_{\mc{P}_T^{r+1/4}} \leq C_{r, d} T^{3/8} (R^2 + R) \|A_1 - A_2\|_{\mc{P}_T^r}.\eeq
\end{lemma}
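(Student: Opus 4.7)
The core idea is to combine the Hölder estimates on $X^{(2)}, X^{(3)}$ from Lemma \ref{lemma:X-2-X-3-estimate} with the smoothing estimate \eqref{eq:heat-semigroup-Cr-C-r-plus-half} for the heat semigroup, tuning the smoothing exponent $u$ to each term so that the resulting time integral converges and produces the desired $T^{3/8}$ factor. More precisely, for $j = 2$: by Lemma \ref{lemma:X-2-X-3-estimate}, $\|X^{(2)}(A(s))\|_{C^{r-1}} \leq \const \|A(s)\|_{C^{r-1}} \|A(s)\|_{C^{r}} \leq \const R^2$. Applying \eqref{eq:heat-semigroup-Cr-C-r-plus-half} with $u = 5/4$ (which is $< 2$, and satisfies $r + 1/4 - u = r-1 \geq 0$ since $r \geq 1$),
\[
\|e^{(t-s)\Delta} X^{(2)}(A(s))\|_{C^{r + 1/4}} \leq \const (t-s)^{-5/8} R^2,
\]
whose integral from $0$ to $t$ is $\const R^2 t^{3/8} \leq \const R^2 T^{3/8}$. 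For $j = 3$: $\|X^{(3)}(A(s))\|_{C^r} \leq \const R^3$, and applying \eqref{eq:heat-semigroup-Cr-C-r-plus-half} with $u = 1/4$ gives $\|e^{(t-s)\Delta} X^{(3)}(A(s))\|_{C^{r+1/4}} \leq \const (t-s)^{-1/8} R^3$, integrating to $\const R^3 t^{7/8} \leq \const R^3 T^{3/8}$ since $T \leq 1$. This simultaneously proves that the $C^{r+1/4}$-integrand is finite (so $\rho^{(j)}(A)(t)$ is well-defined as a $C^{r+1/4}$-valued integral) and gives the stated uniform bound in $t$.

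Next, to upgrade from a bound on $\sup_t \|\rho^{(j)}(A)(t)\|_{C^{r+1/4}}$ to membership in $\mc{P}_T^{r + 1/4}$, I need to verify continuity of $t \mapsto \rho^{(j)}(A)(t)$ into $C^{r+1/4}$. For $0 \leq t_1 < t_2 \leq T$, I would split
\[
\rho^{(j)}(A)(t_2) - \rho^{(j)}(A)(t_1) = (e^{(t_2 - t_1)\Delta} - I) \rho^{(j)}(A)(t_1) + \int_{t_1}^{t_2} e^{(t_2 - s)\Delta} X^{(j)}(A(s)) ds.
\]
The second integral is bounded in $C^{r+1/4}$ by the same smoothing computation as above, yielding something that vanishes as $t_2 - t_1 \to 0$. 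For the first piece, observe that $\rho^{(j)}(A)(t_1) \in C^{r+1/4} \subseteq C^r$ (in fact, the previous step shows it lies in $C^{r+1/4}$ with a uniform bound), so one applies \eqref{eq:heat-semigroup-general-time-continuity} with a small $u$ to get continuity in $C^{r+1/4-\eta}$, combined with a uniform $C^{r+1/2}$ bound (obtained by redoing the smoothing argument with $u$ slightly larger) and interpolation to pass from $C^{r+1/4-\eta}$-convergence to $C^{r+1/4}$-convergence. Alternatively, one can redo the $e^{(t_2-t_1)\Delta} - I$ estimate directly: by iterating $\ptl_t e^{t\Delta} f = \Delta e^{t\Delta} f$ on the original integrand and using \eqref{eq:heat-semigroup-Cr-C-r-plus-half}, one sees the first piece tends to $0$ in $C^{r+1/4}$.

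For the difference estimates, the same strategy works verbatim, with Lemma \ref{lemma:X-2-X-3-estimate}'s Lipschitz bounds replacing the pointwise bounds. Concretely, for $j = 2$,
\[
\|X^{(2)}(A_1(s)) - X^{(2)}(A_2(s))\|_{C^{r-1}} \leq \const R \|A_1 - A_2\|_{\mc{P}_T^r},
\]
and smoothing with $u = 5/4$ then integrating produces the factor $\const_{r,d} T^{3/8} R \|A_1 - A_2\|_{\mc{P}_T^r}$. For $j = 3$, the bound $\|X^{(3)}(A_1(s)) - X^{(3)}(A_2(s))\|_{C^r} \leq \const R^2 \|A_1 - A_2\|_{\mc{P}_T^r}$ combined with $u = 1/4$ smoothing gives $\const_{r, d} T^{7/8} R^2 \|A_1 - A_2\|_{\mc{P}_T^r}$, again absorbed into $\const_{r,d} T^{3/8} R^2 \|A_1 - A_2\|_{\mc{P}_T^r}$ using $T \leq 1$. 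The estimates \eqref{eq:rho-A-norm-bound} and \eqref{eq:rho-A1-A2-difference-norm-bound} follow by adding the $j = 2$ and $j = 3$ contributions.

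The only non-routine step is the continuity verification, since the quantitative bounds are standard parabolic smoothing manipulations; the main subtlety is that the singular factor $(t-s)^{-5/8}$ for the quadratic term is just barely integrable, and one must be sure that the choice $u = 5/4$ does not violate $u < 2$ (needed for integrability) nor the requirement $r + 1/4 - u \geq 0$ (needed so that the $C^{r+1/4-u}$ norm of $X^{(2)}(A(s))$ is controlled by Lemma \ref{lemma:X-2-X-3-estimate}). The assumption $r \geq 1$ is exactly what makes this balancing feasible.
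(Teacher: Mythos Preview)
Your proposal is correct and follows essentially the same route as the paper: the same choices $u=5/4$ for $j=2$ and $u=1/4$ for $j=3$ in \eqref{eq:heat-semigroup-Cr-C-r-plus-half}, combined with Lemma~\ref{lemma:X-2-X-3-estimate} at regularity $r-1$ (resp.\ $r$), yield the $T^{3/8}$ bounds exactly as in the paper, and the difference estimates are handled identically. For continuity the paper uses your second alternative: it keeps the $(e^{(t_2-t_1)\Delta}-I)$ factor inside the integral, applies \eqref{eq:heat-semigroup-general-time-continuity} with $u=7/4$ to $e^{(t_0-s)\Delta}X^{(j)}(A(s))$, and then redoes the smoothing with $u=1/2,\,3/2$ to control the resulting $C^{r+1/2}$ norm---so your alternative (b) is exactly the paper's argument, while your alternative (a) via interpolation would also work but is not what the paper does.
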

\begin{proof}
The latter two inequalities are direct consequences of the second and third inequalities. We begin by showing the second inequality (and simultaneously, the first inequality). Let $t \in [0, T]$. We have
\[ \|\rho^{(2)}(A)(t)\|_{C^{r + 1/4}} \leq \int_0^t \|e^{(t-s)\Delta} X^{(2)}(A(s))\|_{C^{r+1/4}} ds =: I_1.\]
Applying \eqref{eq:heat-semigroup-Cr-C-r-plus-half} with $u = 5/4$, and then applying Lemma \ref{lemma:X-2-X-3-estimate}, and then using the fact that $\|A\|_{\mc{P}_T^r} \leq R$, we obtain
\[\begin{split}
I_1 &\leq \const \int_0^t (t-s)^{-5/8} \|X^{(2)}(A(s))\|_{C^{r-1}} ds \\
&\leq \const R^2 \int_0^t (t-s)^{-5/8} ds = \const R^2 t^{3/8}.
\end{split}\] 
Taking sup over $t \in [0, T]$, we obtain $\|\rho^{(2)}(A)\|_{\mc{P}_T^{r + 1/4}} \leq \const R^2 T^{3/8}$.
Next, for $j = 3$, note
\[ \|\rho^{(3)}(A)(t)\|_{C^{r+1/4}} \leq \int_0^t \|e^{(t-s)\Delta} X^{(3)}(A(s))\|_{C^{r+1/4}} ds =: I_2.\]
Applying \eqref{eq:heat-semigroup-Cr-C-r-plus-half} with $u = 1/4$, and then proceeding similar to before, we obtain
\[ I_2 \leq \const \int_0^t (t-s)^{-1/8} \|X^{(3)}(A(s))\|_{C^{r}} ds \leq \const R^3 t^{7/8}.\]
We thus obtain $\|\rho^{(3)}(A)\|_{\mc{P}_T^{r + 1/4}} \leq \const R^3 T^{3/8}$ (here we used that $T^{7/8} \leq T^{3/8}$ since $T \in (0, 1]$). We have thus shown the second inequality (as well as the first inequality). The third inequality follows by a similar argument.

Let $j \in \{2, 3\}$. It remains to show that $\rho^{(j)}(A) \in \mc{P}_T^{r+1/4}$, that is, the map $\rho^{(j)}(A) : [0, T] \ra C^{r+1/4}(\torus^d, \lalg^d)$ is continuous. Let $t_0, t_1 \in [0, T]$, $t_0 < t_1$. 
We may bound
\[\begin{split}
\|\rho^{(j)}&(A)(t_1) - \rho^{(j)}(A)(t_0)\|_{C^{r+1/4}} \leq \int_{t_0}^{t_1} \|e^{(t_1 - s) \Delta} X^{(j)}(A(s)) \|_{C^{r+1/4}} ds ~+ \\
&\int_0^{t_0} \|e^{(t_1 - t_0)\Delta} e^{(t_0 - s) \Delta} X^{(j)}(A(s)) - e^{(t_0 - s)\Delta} X^{(j)}(A(s))\|_{C^{r+1/4}} ds.
\end{split}\]
By arguing as in the proof of the second inequality, we obtain
\[ \int_{t_0}^{t_1} \|e^{(t_1 - s) \Delta} X^{(j)}(A(s))\|_{C^{r+1/4}} ds \leq \const R^j (t_1 - t_0)^{3/8}. \]
For the other term, first note that for $s \in [0, t_0]$, we have (applying \eqref{eq:heat-semigroup-general-time-continuity} with $u = 7/4$),
\[\begin{split}
\|e^{(t_1 - t_0)\Delta} e^{(t_0 - s) \Delta} X^{(j)}&(A(s)) - e^{(t_0 - s)\Delta} X^{(j)}(A(s))\|_{C^{r+1/4}} \leq \\
&\const (t_1 - t_0)^{1/8} \|e^{(t_0 - s)\Delta} X^{(j)}(A(s))\|_{C^{r+1/2}}, 
\end{split}\]
and thus we obtain the upper bound
\[ \const (t_1 - t_0)^{1/8} \int_0^{t_0} \|e^{(t_0 - s)\Delta} X^{(j)}(A(s))\|_{C^{r+1/2}} ds. \]
Arguing similar to before (where instead of applying \eqref{eq:heat-semigroup-Cr-C-r-plus-half} with $u = 1/4, 5/4$, we now apply the inequality with $u = 1/2, 3/2$), we obtain that the above is further upper bounded by
\[ \const t_0^{1/4} R^j (t_1 - t_0)^{1/8}. \]
Combining the previous estimates, we obtain that $\rho^{(j)}(A)$ is indeed continuous (in fact, it is H\"{o}lder continuous). 
\end{proof}

\begin{proof}[Proof of Lemma \ref{lemma:rho-in-P-T-0}]
This follows directly from Lemma \ref{lemma:contraction-estimates}.
\end{proof}

\begin{proof}[Proof of Lemma \ref{lemma:rho-A-n-convergence}]
This follows directly from Lemma \ref{lemma:contraction-estimates}.
\end{proof}

Let $T > 0$, $r \geq 1$. Given $A^0 \in C^r(\torus^d, \lalg^d)$, define $W : \mc{P}_T^r \ra \mc{P}_T^r$ by
\beq\label{eq:W-def} W(A)(t) := e^{t \Delta} A^0 + \rho(A)(t), ~~ A \in \mc{P}_T^r, t \in [0, T]. \eeq
For $T > 0$, $r \geq 1$, $R \geq 0$, define
\[ \mc{P}_{T, R}^r := \{A \in \mc{P}_T^r : \|A\|_{\mc{P}_T^r} \leq R\}. \]
Note that $\mc{P}_{T, R}^r$ is a complete metric space (with the metric arising from the norm $\|\cdot\|_{\mc{P}_T^r}$ on $\mc{P}_T^r$).
The following corollary of Lemma \ref{lemma:contraction-estimates} shows that if $W(A) = A$, then $A$ is smooth.

\begin{cor}\label{cor:fixed-point-higher-regularity}
Let $A^0$ be a smooth {\oneform}. Let $W$ be as defined in \eqref{eq:W-def} using $A^0$. Let $T \in (0, 1]$. Suppose $A \in \mc{P}_T^1$ is such that $W(A) = A$. Then $A \in \mc{P}_T^r$ for all $r \geq 1$. Consequently, $A(t)$ is smooth for all $t \in [0, T]$.
\end{cor}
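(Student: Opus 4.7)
The plan is to bootstrap regularity using the fixed-point identity $A = e^{t\Delta} A^0 + \rho(A)$ together with the quarter-derivative gain provided by Lemma \ref{lemma:contraction-estimates}.

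First I would observe that, since $A^0$ is smooth, the map $t \mapsto e^{t\Delta} A^0$ lies in $\mc{P}_T^r$ for every $r \geq 1$. Indeed, by \eqref{eq:heat-semigroup-Cr-contraction} we have $\|e^{t\Delta} A^0\|_{C^r} \leq \|A^0\|_{C^r} < \infty$ for every $r$, and continuity of $t \mapsto e^{t\Delta} A^0$ in $C^r$ follows from \eqref{eq:heat-semigroup-general-time-continuity} applied to $A^0 \in C^{r+2}$ (combined with the semigroup property to handle continuity at interior times $t_0 > 0$).

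Next I would set up the induction. Define the sequence $r_k := 1 + k/4$ for integer $k \geq 0$, and prove by induction on $k$ that $A \in \mc{P}_T^{r_k}$. The base case $k = 0$ is the hypothesis $A \in \mc{P}_T^1$. For the inductive step, suppose $A \in \mc{P}_T^{r_k}$ with $r_k \geq 1$. Lemma \ref{lemma:contraction-estimates}, applied at regularity level $r = r_k$, yields $\rho(A) \in \mc{P}_T^{r_k + 1/4} = \mc{P}_T^{r_{k+1}}$. Combining this with the first paragraph, the identity $A = W(A) = e^{t\Delta} A^0 + \rho(A)$ then places $A$ in $\mc{P}_T^{r_{k+1}}$, completing the induction. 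Since $r_k \to \infty$ and $\mc{P}_T^r \subseteq \mc{P}_T^{r'}$ whenever $r \geq r'$, this gives $A \in \mc{P}_T^r$ for every $r \geq 1$.

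Finally, to conclude that $A(t)$ is smooth for every $t \in [0, T]$, I would simply note that $A \in \mc{P}_T^r$ implies $A(t) \in C^r(\torus^d, \lalg^d)$ for each fixed $t$, and since this holds for every $r \geq 1$, $A(t) \in \bigcap_{r \geq 1} C^r(\torus^d, \lalg^d) = C^\infty(\torus^d, \lalg^d)$. There is no real obstacle here: the argument is a clean bootstrap, and the only care needed is to ensure that the inductive hypothesis $r_k \geq 1$ is preserved so that Lemma \ref{lemma:contraction-estimates} applies at each step, which is automatic from $r_0 = 1$.
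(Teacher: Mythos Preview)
Your proposal is correct and matches the paper's proof essentially line for line: both run induction on the regularity index $r_k = 1 + k/4$, invoking Lemma \ref{lemma:contraction-estimates} for the quarter-derivative gain on $\rho(A)$ and using smoothness of $A^0$ to put $e^{t\Delta}A^0$ in every $\mc{P}_T^r$. Your write-up supplies slightly more justification for the latter point than the paper does, but the argument is the same.
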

\begin{proof}
Let $A^1(t) := e^{t \Delta} A^0$ for $t \in (0, 1]$. Since $A^0$ is smooth, we have that $A^1 \in \mc{P}_T^r$ for all $r \geq 1$. We show that for all integer $m \geq 0$, $A \in \mc{P}_T^{1 + m/4}$. The base case $m = 0$ follows by assumption. Next, suppose that the $A \in \mc{P}_T^{1 + m/4}$ for some $m \geq 0$.
Then by Lemma \ref{lemma:contraction-estimates}, we have that $\rho(A) \in \mc{P}_T^{1 + (m+1)/4}$. Using that $A = W(A) = A^1 + \rho(A)$, we obtain that $A \in \mc{P}_T^{1 + (m+1)/4}$, and thus the inductive step is proven. The result now follows.
\end{proof}

\begin{lemma}\label{lemma:rho-later-time}
Let $j \in \{2, 3\}$. Let $A : (0, T] \ra C^1(\torus^d, \lalg^d)$ be a continuous function. Suppose that $\rho^{(j)}(A)$ is well-defined for $A$. Then for any $T_0 \in (0, T)$, the following holds. Define $\tilde{A} : [0, T - T_0] \ra C^1(\torus^d, \lalg^d)$ by $\tilde{A}(t) := A(T_0 + t)$. Then for $t \in [0, T - T_0]$, 
\[ \rho^{(j)}(A)(T_0 + t) = e^{t\Delta} (\rho^{(j)}(A)(T_0)) + \rho^{(j)}(\tilde{A})(t).\]
\end{lemma}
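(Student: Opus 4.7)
The plan is to expand the defining integral for $\rho^{(j)}(A)(T_0+t)$, split it at $T_0$, and use the semigroup property of $e^{t\Delta}$ together with a change of variables to identify each piece with one of the two terms on the right-hand side.

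First I would write
\[ \rho^{(j)}(A)(T_0+t) = \int_0^{T_0} e^{(T_0+t-s)\Delta} X^{(j)}(A(s))\,ds + \int_{T_0}^{T_0+t} e^{(T_0+t-s)\Delta} X^{(j)}(A(s))\,ds, \]
where the splitting is legitimate because the integrand is absolutely integrable in $C^1$ by the assumption that $\rho^{(j)}(A)$ is well-defined for $A$ (indeed, this is what ``well-defined'' means in Definition~\ref{def:rho}).

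For the first summand I would use the semigroup identity $e^{(T_0+t-s)\Delta} = e^{t\Delta} e^{(T_0-s)\Delta}$ (valid on $C^1$ and consistent with the Fourier representation \eqref{eq:heat-semigroup-def}) and pull the bounded linear operator $e^{t\Delta}$ outside the Bochner integral, which is justified since $s \mapsto e^{(T_0-s)\Delta} X^{(j)}(A(s))$ is absolutely integrable in $C^1$ and $e^{t\Delta}$ is continuous on $C^1$ (by \eqref{eq:heat-semigroup-Cr-contraction}). This yields exactly $e^{t\Delta}\rho^{(j)}(A)(T_0)$. For the second summand I would substitute $u = s - T_0$ to rewrite it as
\[ \int_0^t e^{(t-u)\Delta} X^{(j)}(A(T_0+u))\,du = \int_0^t e^{(t-u)\Delta} X^{(j)}(\tilde{A}(u))\,du. \]
Before concluding that this equals $\rho^{(j)}(\tilde{A})(t)$, I would briefly verify that $\rho^{(j)}(\tilde{A})$ is well-defined for $\tilde{A}$: the required finiteness $\int_0^{T-T_0}\|e^{(t-u)\Delta}X^{(j)}(\tilde{A}(u))\|_{C^1}du<\infty$ follows from the same change of variables applied to the well-definedness hypothesis on $A$.

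There is no serious obstacle here; the only things that need care are (i) the pointwise-in-$s$ use of the semigroup property together with the interchange of $e^{t\Delta}$ and a Bochner integral, and (ii) confirming the well-definedness transfers from $A$ to $\tilde{A}$ under translation. Both are routine given the assumed continuity of $A:(0,T]\to C^1(\torus^d,\lalg^d)$ and the standard mapping properties of $e^{t\Delta}$ on $C^1$.
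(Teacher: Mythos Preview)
Your proposal is correct and follows essentially the same approach as the paper's proof: split the integral at $T_0$, use the semigroup property together with continuity of $e^{t\Delta}$ on $C^1$ (the paper cites \cite[Chapter V.5, Corollary 2]{Yosida1995} for the interchange) to extract $e^{t\Delta}$ from the first piece, and change variables $u=s-T_0$ in the second. Your explicit check that $\rho^{(j)}(\tilde{A})$ is well-defined is a small addition the paper leaves implicit.
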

\begin{proof}
We have that
\[ \begin{split}
\rho^{(j)}(A)(T_0 + t) &= \int_0^{T_0 + t} e^{(T_0 + t - s)\Delta} X^{(j)}(A(s)) ds \end{split}. \]
The right hand side above is equal to
\[ \int_0^{T_0} e^{t \Delta} e^{(T_0 - s)\Delta} X^{(j)}(A(s)) ds + \int_{T_0}^{T_0 + t} e^{(T_0 + t - s)\Delta} X^{(j)}(A(s)) ds. \]
Since $\rho^{(j)}(A)$ is well-defined for $A$, we have that 
\[ \int_0^{T_0} \|e^{(T_0 - s)\Delta} X^{(j)}(A(s))\|_{C^1} ds < \infty. \]
Combining this with the fact that $e^{t\Delta} : C^1(\torus^d, \lalg^d) \ra C^1(\torus^d, \lalg^d)$ is continuous (by e.g. \eqref{eq:heat-semigroup-Cr-contraction} with $r = 1$), we have that (using \cite[Chapter V.5, Corollary 2]{Yosida1995})
\[ \int_0^{T_0} e^{t \Delta} e^{(T_0 - s)\Delta} X^{(j)}(A(s)) ds = e^{t \Delta} \int_0^{T_0} e^{(T_0 - s)\Delta} X^{(j)}(A(s)) ds. \]
We also have that (changing variables $u = s - T_0$)
\[ \int_{T_0}^{T_0 + t} e^{(T_0 + t - s)\Delta} X^{(j)}(A(s)) ds = \int_0^t e^{(t-u)\Delta} X^{(j)}(\tilde{A}(u)) du. \]
The desired result now follows by combining the previous chain of identities.
\end{proof}

\begin{lemma}\label{lemma:fixed-point-later-time}
Let $A^0$ be a smooth {\oneform}. Let $r \geq 1$, $T > 0$. Define $W$ as in~\eqref{eq:W-def} using $A^0$. Suppose $A \in \mc{P}_T^r$ satisfies $W(A) = A$. Let $T_0 \in (0, T)$. Define $\tilde{A} \in \mc{P}_{T - T_0}^r$ by $\tilde{A}(t) := A(T_0 + t)$. Define $\tilde{W} : \mc{P}_{T - T_0}^r \ra \mc{P}_{T - T_0}^r$ as in \eqref{eq:W-def} using $A(T_0)$ in place of $A^0$. Then $\tilde{W}(\tilde{A}) = \tilde{A}$.
\end{lemma}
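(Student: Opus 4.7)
The plan is to verify the fixed-point identity $\tilde W(\tilde A)(t) = \tilde A(t)$ for each $t \in [0, T-T_0]$ by directly unfolding the fixed-point condition $W(A) = A$ at times $T_0 + t$ and $T_0$, and then applying the semigroup-splitting identity from Lemma~\ref{lemma:rho-later-time}. Since $\tilde{A}(t) = A(T_0+t)$ by definition, and $\tilde{W}(\tilde{A})(t) = e^{t\Delta} A(T_0) + \rho(\tilde{A})(t)$ by the definition of $\tilde W$, the statement to prove reduces to
\[ A(T_0 + t) = e^{t\Delta} A(T_0) + \rho(\tilde{A})(t), \qquad t \in [0, T - T_0]. \]

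First, I would use $W(A) = A$ at time $T_0 + t$ to write $A(T_0 + t) = e^{(T_0 + t)\Delta} A^0 + \rho(A)(T_0 + t)$, and at time $T_0$ to write $A(T_0) = e^{T_0 \Delta} A^0 + \rho(A)(T_0)$. Applying $e^{t\Delta}$ to the latter and using the semigroup property $e^{t\Delta} e^{T_0 \Delta} = e^{(T_0 + t)\Delta}$ together with linearity of $e^{t\Delta}$, one obtains
\[ e^{t\Delta} A(T_0) = e^{(T_0 + t)\Delta} A^0 + e^{t\Delta} \rho(A)(T_0). \]
Subtracting this from the expression for $A(T_0+t)$ then shows that the identity we need is equivalent to
\[ \rho(A)(T_0 + t) = e^{t\Delta} \rho(A)(T_0) + \rho(\tilde{A})(t). \]

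Next, I would invoke Lemma~\ref{lemma:rho-later-time} applied separately to $\rho^{(2)}$ and $\rho^{(3)}$ (which is legitimate because $A \in \mc{P}_T^r \subseteq \mc{P}_T^1$ and hence $\rho^{(2)}(A), \rho^{(3)}(A)$ are both well-defined for $A$ by Lemma~\ref{lemma:rho-in-P-T-0}). This lemma gives precisely $\rho^{(j)}(A)(T_0 + t) = e^{t\Delta}(\rho^{(j)}(A)(T_0)) + \rho^{(j)}(\tilde{A})(t)$ for $j \in \{2,3\}$. Summing these two identities and using $\rho = \rho^{(2)} + \rho^{(3)}$, together with linearity of $e^{t\Delta}$, yields the required equation and completes the proof.

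There is no real obstacle here: everything is a bookkeeping exercise in the semigroup property and the definitions of $W$, $\tilde{W}$, and $\rho$. The only ``delicate'' point that I would make sure to mention explicitly is the justification for interchanging $e^{t\Delta}$ with the integral defining $\rho^{(j)}(A)(T_0)$, but this has already been handled inside Lemma~\ref{lemma:rho-later-time}, so no additional work is needed beyond invoking it.
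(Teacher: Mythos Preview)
Your proposal is correct and follows essentially the same approach as the paper's proof: both use the fixed-point identity $W(A)=A$ at time $T_0+t$, invoke Lemma~\ref{lemma:rho-later-time} for $j=2,3$ to split $\rho(A)(T_0+t)$, and then regroup terms using the semigroup property to recognize $e^{t\Delta}A(T_0)+\rho(\tilde A)(t)$. The only difference is cosmetic---you explicitly write out and subtract the $e^{t\Delta}$-image of the time-$T_0$ equation, whereas the paper combines the terms in a single display---but the logic is identical.
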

\begin{proof}
Since $W(A) = A$, we have that for $t \in [0, T - T_0]$, 
\[ A(T_0 + t) = e^{(T_0 + t) \Delta} A^0 + \rho(A)(T_0 + t). \]
By Lemma \ref{lemma:rho-later-time} (applied with both $j = 2, 3$), we have that
\[ \rho(A)(T_0 + t) = e^{t \Delta} \rho(A)(T_0) + \rho(\tilde{A})(t). \]
We thus obtain
\[\begin{split}
\tilde{A}(t) = A(T_0 + t) &= e^{t \Delta} \big(e^{T_0 \Delta} A^0 + \rho(A)(T_0)\big) + \rho(\tilde{A})(t) \\
&= e^{t \Delta} \tilde{A}(0) + \rho(\tilde{A})(t), 
\end{split}\]
as desired.
\end{proof}

\begin{lemma}\label{lemma:W-is-a-contraction}
There is a continuous non-increasing function $\tau : [0, \infty) \ra (0, 1]$ such that the following holds. Given a smooth {\oneform} $A^0$, let $T_0 = \tau(\|A^0\|_{C^1})$ and $R_0 = 2\|A^0\|_{C^1}$. Let $W$ be defined in terms of $A^0$ by \eqref{eq:W-def}. Then for any $T \leq T_0$, $W$ maps $\mc{P}_{T, R_0}^1$ to $\mc{P}_{T, R_0}^1$, and moreover, it is a $(1/2)$-contraction:
\[ \|W(A) - W(\tilde{A})\|_{\mc{P}_{T}^1} \leq \frac{1}{2} \|A - \tilde{A}\|_{\mc{P}_{T}^1}\ \ \textup{ for all $A, \tilde{A} \in \mc{P}_{T, R_0}^1$}. \]
\end{lemma}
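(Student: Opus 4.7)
The plan is to apply Lemma \ref{lemma:contraction-estimates} with $r = 1$ and combine it with the contraction property \eqref{eq:heat-semigroup-Cr-contraction} of the heat semigroup on $C^1$ to set up a standard small-time contraction argument. Writing $C$ for the constant $\const_{1,d}$ appearing in \eqref{eq:rho-A-norm-bound} and \eqref{eq:rho-A1-A2-difference-norm-bound}, and using that $\|\cdot\|_{\mc{P}_T^1} \leq \|\cdot\|_{\mc{P}_T^{5/4}}$, Lemma \ref{lemma:contraction-estimates} gives for any $A, \tilde A \in \mc{P}_{T, R_0}^1$
\[
\|\rho(A)\|_{\mc{P}_T^1} \leq C T^{3/8}(R_0^3 + R_0^2), \qquad \|\rho(A) - \rho(\tilde A)\|_{\mc{P}_T^1} \leq C T^{3/8}(R_0^2 + R_0)\|A - \tilde A\|_{\mc{P}_T^1}.
\]

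Next I would bound $W$ using these. By \eqref{eq:heat-semigroup-Cr-contraction} with $r = 1$, $\|e^{t\Delta}A^0\|_{C^1} \leq \|A^0\|_{C^1} = R_0/2$, so
\[
\|W(A)\|_{\mc{P}_T^1} \leq \tfrac{R_0}{2} + C T^{3/8}(R_0^3 + R_0^2),
\]
and to have $\|W(A)\|_{\mc{P}_T^1} \leq R_0$ it suffices that $C T^{3/8}(R_0^2 + R_0) \leq 1/2$. Since $W(A) - W(\tilde A) = \rho(A) - \rho(\tilde A)$, the same condition $C T^{3/8}(R_0^2 + R_0) \leq 1/2$ also yields the $1/2$-contraction bound $\|W(A)-W(\tilde A)\|_{\mc{P}_T^1} \leq \tfrac12 \|A - \tilde A\|_{\mc{P}_T^1}$. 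Both requirements thus reduce to a single time restriction of the form $T \leq (2C(R_0^2 + R_0))^{-8/3}$.

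Substituting $R_0 = 2R$, so that $R_0^2 + R_0 = 2R(2R+1)$, this suggests defining
\[
\tau(R) := \min\!\left\{ 1,\ \bigl(4CR(2R+1)\bigr)^{-8/3} \right\} \quad \text{for } R > 0, \qquad \tau(0) := 1.
\]
One checks easily that $\tau$ maps $[0,\infty)$ to $(0,1]$, is non-increasing, and is continuous (the only potential issue is continuity at $R=0$, where the second argument of the min tends to $+\infty$, so $\tau(R) \to 1 = \tau(0)$). With this choice, for any smooth $A^0$ with $R := \|A^0\|_{C^1}$ and any $T \leq \tau(R)$, both $\|W(A)\|_{\mc{P}_T^1} \leq R_0$ and the $1/2$-contraction inequality hold on $\mc{P}_{T, R_0}^1$, completing the proof.

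There is no real obstacle here; everything follows by plugging into the estimates of Lemma \ref{lemma:contraction-estimates} and choosing $\tau$ so that the coefficient $C T^{3/8}(R_0^2 + R_0)$ is at most $1/2$. The only mild point is ensuring that $\tau$ can be chosen continuous and non-increasing on all of $[0,\infty)$ (including at $R=0$, where $R_0 = 0$ forces $\mc{P}_{T,R_0}^1 = \{0\}$ and both conditions are trivial), which is handled by the explicit formula above.
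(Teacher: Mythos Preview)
Your proof is correct and follows essentially the same approach as the paper: apply Lemma \ref{lemma:contraction-estimates} with $r=1$ together with \eqref{eq:heat-semigroup-Cr-contraction}, then choose $T$ small enough to make the $T^{3/8}$-coefficients at most $1/2$. Your argument is in fact slightly cleaner, since you observe that the two smallness conditions (for the self-map and for the contraction) collapse into the single inequality $C T^{3/8}(R_0^2+R_0)\le 1/2$, whereas the paper states them separately; your explicit formula for $\tau$ is a concrete realization of the paper's implicit definition of $\tau(M)$ as the largest admissible $T$.
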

\begin{proof}
Let $\const_{1, d}$ be as in Lemma \ref{lemma:contraction-estimates}. 
By \eqref{eq:heat-semigroup-Cr-contraction} and Lemma \ref{lemma:contraction-estimates}, for any $T \in (0, 1]$, $A \in \mc{P}_{T, R_0}^1$, we have that
\[ \|W(A)\|_{\mc{P}_{T}^1} \leq \|A^0\|_{C^1} + \const_{1, d} T^{3/8}(R_0^3 + R_0^2). \]
Similarly, by Lemma \ref{lemma:contraction-estimates}, for any $A, \tilde{A} \in \mc{P}_{T, R_0}^1$, we have
\[ \|W(A) - W(\tilde{A})\|_{\mc{P}_T^1} \leq \const_{1, d} T^{3/8} (R_0^2 + R_0) \|A- \tilde{A}\|_{\mc{P}_T^1}. \]
We may thus define $\tau(M)$ as the largest $T \in (0, 1]$ such that
\[ \const_{1, d} T^{3/8} ((2 M)^3 + (2M)^2) \leq M, ~~ \const_{1, d} T^{3/8}((2M)^2 + (2M)) \leq \frac{1}{2}. \]
The desired result now follows.
\end{proof}

The next lemma shows that $W$ has a fixed point in $\mc{P}_T^1$ for small enough $T$. 

\begin{lemma}\label{lemma:local-existence-fixed-point}
Let $\tau$ be as in Lemma \ref{lemma:W-is-a-contraction}. Let $A^0$ be a smooth {\oneform}, and let $W$ be defined using $A^0$ as in \eqref{eq:W-def}. Let $T_0 = \tau(\|A^0\|_{C^1})$. Then there exists a unique element $A \in \mc{P}_{T_0}^1$ such that $W(A) = A$. Even more, if for some $T_1 \leq T_0$, we have some $B \in \mc{P}_{T_1}^1$ such that $W(B) = B$, then $A(t) = B(t)$ for all $t \in [0, T_1]$.
\end{lemma}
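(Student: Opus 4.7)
The plan is as follows. First, I would establish existence and in-ball uniqueness by a direct application of the Banach fixed point theorem. The space $\mc{P}_{T_0}^1$ is a Banach space (by the same argument as in Lemma \ref{lemma:Q-T-gamma-banach}, since $C^1(\torus^d, \lalg^d)$ is itself a Banach space), and $\mc{P}_{T_0, R_0}^1$ is a closed subset, hence a complete metric space. By Lemma \ref{lemma:W-is-a-contraction}, $W$ maps $\mc{P}_{T_0, R_0}^1$ into itself and is a $(1/2)$-contraction, so Banach's theorem yields a unique fixed point $A \in \mc{P}_{T_0, R_0}^1$.

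Next, to prove the stronger uniqueness statement, let $B \in \mc{P}_{T_1}^1$ be any fixed point with $T_1 \leq T_0$. First observe that $B(0) = e^{0\Delta} A^0 + \rho(B)(0) = A^0$, and similarly $A(0) = A^0$. Define
\[
T^* := \sup\{t \in [0, T_1] : A(s) = B(s) \text{ for all } s \in [0, t]\}.
\]
Since $t \mapsto A(t), B(t)$ are continuous into $C^1$, the set on the right is closed, so the sup is attained and $A(T^*) = B(T^*)$. To see $T^* > 0$, use that $B$ is continuous at $0$ with $\|B(0)\|_{C^1} = \|A^0\|_{C^1} = R_0/2 < R_0$, so there exists $\delta \in (0, T_1]$ with $\|B\|_{\mc{P}_\delta^1} \leq R_0$. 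Then $A\!\restriction_{[0, \delta]}$ and $B\!\restriction_{[0, \delta]}$ are both fixed points of $W$ (restricted to $\mc{P}_\delta^1$) that lie in $\mc{P}_{\delta, R_0}^1$, and Lemma \ref{lemma:W-is-a-contraction} (applied with $T = \delta \leq T_0$) forces them to coincide by Banach uniqueness, so $T^* \geq \delta$.

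Finally, I would derive a contradiction from $T^* < T_1$ by shifting in time. By Corollary \ref{cor:fixed-point-higher-regularity}, the common value $A(T^*) = B(T^*)$ is a smooth one-form. Let $\tilde{A}(t) := A(T^* + t)$ and $\tilde{B}(t) := B(T^* + t)$ on $[0, T_1 - T^*]$. By Lemma \ref{lemma:fixed-point-later-time}, both $\tilde{A}$ and $\tilde{B}$ are fixed points of the map $\tilde{W}$ defined by \eqref{eq:W-def} using the smooth initial datum $A(T^*)$. Repeating the argument of the previous paragraph for $\tilde{W}$ (continuity of $\tilde{B}$ at $0$ gives a small $\delta' > 0$ on which both $\tilde{A}, \tilde{B}$ lie in the corresponding $R_0$-ball, and Banach uniqueness forces them to agree on $[0, \delta']$) shows $A = B$ on $[T^*, T^* + \delta']$, contradicting the definition of $T^*$. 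Hence $T^* = T_1$, giving $A = B$ on $[0, T_1]$. The only delicate point is extending uniqueness from the small ball $\mc{P}_{T_0, R_0}^1$ (where Banach applies directly) to all of $\mc{P}_{T_0}^1$, and to sub-intervals $[0, T_1]$; the continuation argument above handles both simultaneously, relying crucially on the identification $B(0) = A^0$ and on the smoothness of intermediate values furnished by Corollary \ref{cor:fixed-point-higher-regularity} to re-apply Lemma \ref{lemma:W-is-a-contraction} after the time shift.
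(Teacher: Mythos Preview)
Your argument is correct and follows the same overall template as the paper --- Banach fixed point for existence, then propagate local uniqueness in time via Lemma~\ref{lemma:fixed-point-later-time} --- but the mechanics differ. The paper avoids your continuity/sup argument: given $B \in \mc{P}_{T_1}^1$, it simply enlarges the ball to radius $R_1 := \max\{2\|B\|_{\mc{P}_{T_1}^1}, R_0\}$ and picks a \emph{fixed} step $T$ (depending only on $R_1$) so that $W$ is a contraction on $\mc{P}_{T, R_1}^1$; since both $A$ and $B$ lie in this enlarged ball globally on $[0, T_1]$, one gets $A = B$ on $[0, T]$, and after shifting by $T$ the same $R_1$ and the same step $T$ work again (because $\|\tilde{B}(0)\|_{C^1} \leq R_1/2$), so finitely many iterations cover $[0, T_1]$. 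This sidesteps two things you used: the appeal to Corollary~\ref{cor:fixed-point-higher-regularity} for smoothness of the shifted datum (unnecessary once the ball is chosen large enough a priori), and the continuity-at-zero argument to squeeze $B$ into the small $R_0$-ball near each shift point. Your approach is perfectly valid and perhaps more in the spirit of a standard open--closed argument; the paper's is slightly leaner and makes the finite termination of the iteration explicit.
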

\begin{proof}
Let $R_0 = 2 \|A^0\|_{C^1}$. By Lemma \ref{lemma:W-is-a-contraction}, we have that $W$ is a strict contraction on $\mc{P}_{T_0, R_0}^1$. Thus by the contraction mapping theorem (\cite[Theorem 2.1]{Tes2012}), we obtain a fixed point $A \in \mc{P}_{T_0, R_0}^1$ of $W$, i.e. $W(A) = A$. This fixed point is unique in the sense that if $\tilde{A} \in \mc{P}_{T_0, R_0}^1$ is such that $W(\tilde{A}) = \tilde{A}$, then $A = \tilde{A}$.

Now suppose that for some $T_1 \leq T_0$, we have some $B \in \mc{P}_{T_1}^1$ such that $W(B) = B$. We proceed to show that $A(t) = B(t)$ for all $t \in [0, T_1]$. Let $R_1 = \max\{2\|B\|_{\mc{P}_{T_1}^1}, R_0\}$. Let $T \in (0, T_1]$ be such that (here $\const_{1, d}$ is as in Lemma~\ref{lemma:contraction-estimates})
\[ \const_{1, d} T^{3/8}(R_1^3 + R_1^2) \leq R_1 /2, ~~ \const_{1, d} T^{3/8} (R_1^2 + R_1) \leq \frac{1}{2}.\]
Then as in the proof of Lemma \ref{lemma:W-is-a-contraction}, by Lemma \ref{lemma:contraction-estimates}, we have for any $A \in \mc{P}_{T, R_1}^1$,
\[ \|W(A)\|_{\mc{P}_T^1} \leq \|A^0\|_{C^1} + \const_{1, d} T^{3/8}(R_1^3 + R_1^2) \leq \frac{R_1}{2} + \frac{R_2}{2} = R_1,\]
and for any $A, \tilde{A} \in \mc{P}_{T, R_1}^1$, we have that
\[ \|W(A) - W(\tilde{A})\|_{\mc{P}_T^1} \leq \frac{1}{2} \|A - \tilde{A}\|_{\mc{P}_T^1}.  \]
Thus $W : \mc{P}_{T, R_1}^1 \ra \mc{P}_{T, R_1}^1$ is a strict contraction.
Thus by the contraction mapping theorem, there is a unique fixed point of $W$ in $\mc{P}_{T, R_1}^1$. Note that both $A, B \in \mc{P}_{T, R_1}^1$ (by the definition of $R_1$). We thus obtain that $A(t) = B(t)$ for all $t \in [0, T]$. If $T = T_1$, then we are done, so let us suppose that $T < T_1$. Define $\tilde{A}(t) := A(T + t)$, $\tilde{B}(t) := B(T + t)$. Note $\tilde{A}, \tilde{B} \in \mc{P}_{T_1 - T, R_1}^1$. Let $\tilde{W}$ be defined as in \eqref{eq:W-def}, but using $\tilde{A}(0)$ ($= \tilde{B}(0)$) instead of $A^0$. By Lemma \ref{lemma:fixed-point-later-time}, we have that $\tilde{W}(\tilde{A}) = \tilde{A}$, $\tilde{W}(\tilde{B}) = \tilde{B}$. Moreover, observe that $\|\tilde{B}(0)\|_{C^1} = \|B(T)\|_{C^1} \leq R_1 /2$. It then follows by the same argument as before that for any $T_2 \leq T$, $\tilde{W} : \mc{P}_{T_2, R_1}^1 \ra \mc{P}_{T_2, R_1}^1$ is a strict contraction, and thus there is a unique fixed point of $\tilde{W}$ in $\mc{P}_{T_2, R_1}^1$. Applying this with $T_2 = \min\{T, T_1 - T\}$, we obtain that $\tilde{A} = \tilde{B}$ on $[0, \min\{T, T_1 - T\}]$, which implies that $A = B$ on $[0, \min\{2T, T_1\}]$. Now by iterating, we can extend this equality to the entire interval $[0, T_1]$.
\end{proof}

\begin{cor}\label{cor:fixed-point-uniqueness}
Let $A^0$ be a smooth {\oneform}, and let $W$ be defined using $A^0$ as in \eqref{eq:W-def}. Let $T > 0$. Suppose we have $A, B \in \mc{P}_T^1$ such that $W(A) = A$, $W(B) = B$. Then $A = B$.
\end{cor}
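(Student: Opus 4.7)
The plan is to extend the short-time uniqueness provided by Lemma \ref{lemma:local-existence-fixed-point} to the whole interval $[0, T]$ by a bootstrapping argument whose step size is controlled by a uniform a priori bound coming from the fact that $A, B \in \mc{P}_T^1$ already have finite $\mc{P}_T^1$-norm.

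First I would set $M := \max\{\|A\|_{\mc{P}_T^1}, \|B\|_{\mc{P}_T^1}, \|A^0\|_{C^1}\}$, which is finite by hypothesis, and let $T_* := \tau(M) > 0$, where $\tau$ is the continuous non-increasing function from Lemma \ref{lemma:W-is-a-contraction}. Since $\tau$ is non-increasing and $\|A(t)\|_{C^1}, \|B(t)\|_{C^1} \leq M$ for every $t \in [0,T]$, we will have $\tau(\|A(t)\|_{C^1}), \tau(\|B(t)\|_{C^1}) \geq T_*$ throughout the iteration, so the guaranteed short-time interval of uniqueness does not shrink as we march forward in time.

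For the base case, restrict $A$ and $B$ to $[0, T_1]$ with $T_1 := \min\{T, T_*\}$. Because the integral defining $\rho(A)(t)$ depends only on values of $A$ on $[0, t]$, both restrictions remain fixed points of $W$ in $\mc{P}_{T_1}^1$. Since $T_1 \leq T_* \leq \tau(\|A^0\|_{C^1})$ (as $\|A^0\|_{C^1} \leq M$ and $\tau$ is non-increasing), Lemma \ref{lemma:local-existence-fixed-point} immediately yields $A(t) = B(t)$ for all $t \in [0, T_1]$. If $T_1 = T$ we are done; otherwise $T_1 = T_* < T$.

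For the inductive step, assume $A = B$ on $[0, k T_*]$ for some $k \geq 1$ with $k T_* < T$, and define $\tilde{A}(t) := A(k T_* + t)$, $\tilde{B}(t) := B(k T_* + t)$ for $t \in [0, T - k T_*]$, both lying in $\mc{P}_{T - k T_*}^1$. By Corollary \ref{cor:fixed-point-higher-regularity}, $A(kT_*) = B(kT_*)$ is smooth, so \eqref{eq:W-def} defines an operator $\tilde{W}$ with this new smooth initial data; by Lemma \ref{lemma:fixed-point-later-time}, applied to $A$ and to $B$ with $T_0 = k T_*$, both $\tilde{A}$ and $\tilde{B}$ are fixed points of $\tilde{W}$. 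Since $\|A(k T_*)\|_{C^1} \leq M$ gives $\tau(\|A(k T_*)\|_{C^1}) \geq T_*$, the base case argument applied to $\tilde{A}, \tilde{B}, \tilde{W}$ produces $\tilde{A} = \tilde{B}$ on $[0, \min\{T_*, T - k T_*\}]$, which translates back to $A = B$ on $[0, \min\{(k+1) T_*, T\}]$. Because $T_* > 0$ is fixed, the induction terminates after at most $\lceil T / T_* \rceil$ steps, giving $A = B$ on all of $[0, T]$. The only substantive point is the bookkeeping of the iteration, which hinges on the uniform lower bound $T_*$ on the step size; this is precisely what the a priori bound $M$ delivers.
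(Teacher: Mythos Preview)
Your proof is correct and follows essentially the same approach as the paper: both set $M$ to be the maximum of the $\mc{P}_T^1$-norms, use $\tau(M)$ as a uniform step size, apply Lemma~\ref{lemma:local-existence-fixed-point} on the first subinterval, and then iterate via Lemma~\ref{lemma:fixed-point-later-time}. Your version is slightly more careful in explicitly invoking Corollary~\ref{cor:fixed-point-higher-regularity} to verify smoothness of the new initial data at each step, while the paper leaves this implicit.
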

\begin{proof}
Let $M := \max\{\|A\|_{\mc{P}_T^1}, \|B\|_{\mc{P}_T^1}\}$, and let $T_1 := \tau(M)$, where $\tau$ is as in Lemma \ref{lemma:local-existence-fixed-point}. By Lemma \ref{lemma:local-existence-fixed-point}, we have that $A = B$ on $[0, \min\{T_1, T\}]$. If $T_1 \geq T$, then we are done, so let us assume that $T_1 < T$. Let $\tilde{A} \in \mc{P}_{T - T_1}^1$ be defined by $\tilde{A}(t) := A(T_1 + t)$. Let $\tilde{B} \in \mc{P}_{T - T_1}^1$ be defined similarly. Note that $\tilde{A}(0) = \tilde{B}(0)$. Let $\tilde{W}$ be defined as in \eqref{eq:W-def}, using $\tilde{A}(0)$ in place of $A^0$. By Lemma \ref{lemma:fixed-point-later-time}, we have that $\tilde{W}(\tilde{A}) = \tilde{A}$, $\tilde{W}(\tilde{B}) = \tilde{B}$. Arguing as before, we can then obtain that $\tilde{A} = \tilde{B}$ on $[0, \min\{T_1, T - T_1\}]$, which thus gives $A = B$ on $[0, \min\{2T_1, T\}]$. By iterating, we can obtain that $A = B$ on $[0, T]$, as desired.
\end{proof}

The following shows that a solution to \eqref{eq:ZDDS} is indeed a fixed point of $W$.

\begin{lemma}\label{lemma:classical-solution-is-mild-solution}
Let $A^0$ be a smooth {\oneform}. Define $W$ as in \eqref{eq:W-def}. Let $T > 0$, and let $A \in C^\infty([0, T) \times \torus^d, \lalg^d)$ be a solution to \eqref{eq:ZDDS} on $[0, T)$ with initial data $A(0) = A^0$. Then for all $T_0 \in [0, T)$, we have that $W(A |_{T_0}) = A|_{T_0}$, where $A|_{T_0} \in \mc{P}_{T_0}^1$ is the restriction of $A$ to $[0, T_0]$.
\end{lemma}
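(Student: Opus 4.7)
The plan is to apply Duhamel's principle. Fix $T_0 \in [0, T)$ and $t \in [0, T_0]$; the goal is to show
\[
A(t) = e^{t\Delta} A^0 + \int_0^t e^{(t-s)\Delta} X(A(s))\, ds.
\]
The case $t=0$ is immediate (both sides equal $A^0$), so assume $t > 0$. I would introduce the auxiliary function $u : [0,t] \to C^\infty(\torus^d, \lalg^d)$ defined by $u(s) := e^{(t-s)\Delta} A(s)$. By construction $u(0) = e^{t\Delta} A^0$ and $u(t) = A(t)$, so once we show that $u$ is continuously differentiable with derivative $u'(s) = e^{(t-s)\Delta} X(A(s))$, the fundamental theorem of calculus (in, say, $C^1(\torus^d, \lalg^d)$) yields the desired identity $A(t) - e^{t\Delta} A^0 = \int_0^t e^{(t-s)\Delta} X(A(s))\, ds$, which is exactly $A(t) = W(A|_{T_0})(t)$.

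To compute $u'$ I would pass to Fourier series. Writing $A(s,x) = \sum_{n \in \Z^d} \hat{A}(s,n) e_n(x)$, we have $u(s) = \sum_n e^{-4\pi^2 |n|^2 (t-s)} \hat{A}(s,n) e_n$. Since $A$ solves \eqref{eq:ZDDS}, the Fourier coefficients satisfy
\[
\partial_s \hat{A}(s,n) = -4\pi^2 |n|^2 \hat{A}(s,n) + \widehat{X(A(s))}(n).
\]
Differentiating each term of the series in $s$, the factor $+4\pi^2|n|^2 e^{-4\pi^2|n|^2(t-s)}\hat{A}(s,n)$ coming from differentiating the exponential cancels exactly the $-4\pi^2|n|^2 \hat{A}(s,n)$ contribution from $\partial_s \hat{A}$, leaving
\[
u'(s) = \sum_{n} e^{-4\pi^2|n|^2(t-s)} \widehat{X(A(s))}(n) e_n = e^{(t-s)\Delta} X(A(s)).
\]
Since $A \in C^\infty([0,T) \times \torus^d)$, both $\hat{A}(s,n)$ and $\widehat{X(A(s))}(n)$ enjoy rapid decay in $n$ uniformly for $s$ in the compact interval $[0, T_0]$ (by \eqref{eq:fourier-coefficients-rapid-decay-general-dim} applied uniformly, since $A$ is jointly smooth), so the series and its termwise $s$-derivatives converge absolutely in $C^k$ for every $k$, justifying the termwise differentiation.

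The main obstacle is the endpoint $s = t$, where the smoothing factor $e^{-4\pi^2|n|^2(t-s)}$ degenerates to $1$, so one cannot rely on the smoothing properties of the heat semigroup there. However, the rapid decay of $\hat{A}(s,n)$ and $\widehat{X(A(s))}(n)$, which persists uniformly in $s \in [0, T_0]$ by the joint smoothness of $A$, is sufficient to make the termwise-differentiated series converge absolutely in every $C^k$ norm up to and including $s = t$. This allows one to legitimately integrate $u'$ over $[0, t]$ in $C^1$ and conclude. Finally, varying $t$ over $[0, T_0]$ yields $W(A|_{T_0}) = A|_{T_0}$ as elements of $\mc{P}_{T_0}^1$.
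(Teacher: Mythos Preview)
Your proposal is correct and follows essentially the same Duhamel argument as the paper: both differentiate $s \mapsto e^{(t-s)\Delta} A(s)$, observe the cancellation between the heat generator and the $\Delta A$ term from \eqref{eq:ZDDS}, and integrate. The paper carries out this differentiation abstractly via the semigroup identity $\partial_s(e^{(t-s)\Delta} A(s)) = e^{(t-s)\Delta}(\partial_s A(s)) - \Delta e^{(t-s)\Delta} A(s)$, while you compute it termwise in Fourier series and use the uniform rapid decay of $\hat{A}(s,n)$ on $[0,T_0]$ to justify the interchange; these are equivalent viewpoints on the torus.
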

\begin{proof}
The proof is essentially given at the end of Section 8.1 in \cite{CG2013}. We reproduce it here. For $t \in [0, T)$, observe that
\[ A(t) - e^{t \Delta} A(0) = \int_0^t \ptl_s \big(e^{(t-s)\Delta} A(s)\big) ds. \]
For any $s \in (0, t)$, we have
\[ \ptl_s \big(e^{(t-s)\Delta} A(s)\big) = e^{(t-s)\Delta} (\ptl_s A(s)) - \Delta e^{(t-s)\Delta} A(s). \]
Using that $\ptl_s A(s) = \Delta A(s) + X(A(s))$ by assumption, we obtain
\[ A(t) - e^{t \Delta} A(0) =  \int_0^t e^{(t-s)\Delta} X(A(s)) ds. \]
Since $A(0) = A^0$, the desired result now follows.
\end{proof}

We are now almost able to prove Theorem \ref{thm:zdds-existence}. Recalling Remark \ref{remark:integral-equation}, the only thing remaining is to show that a solution to the integral equation is a solution to \eqref{eq:ZDDS}. This is the next lemma.

\begin{lemma}\label{lemma:mild-solution-is-classical-solution}
Let $A^0$ be a smooth {\oneform}, and let $W$ be defined as in \eqref{eq:W-def} using $A^0$. Let $T \in (0, 1]$, and let $A \in \mc{P}_T^1$ be such that $W(A) = A$. Then $A \in C^\infty([0, T) \times \torus^d, \lalg^d)$, and it is a solution to \eqref{eq:ZDDS} on $[0, T)$ with initial data $A(0) = A^0$.
\end{lemma}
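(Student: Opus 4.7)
The initial-condition assertion is immediate: evaluating the fixed-point relation $A = W(A)$ at $t = 0$ gives $A(0) = e^{0 \cdot \Delta} A^0 + \rho(A)(0) = A^0$, since the Duhamel term vanishes at $t = 0$.

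My first substantive step is to upgrade regularity. By Corollary \ref{cor:fixed-point-higher-regularity}, the hypothesis $A \in \mc{P}_T^1$ with $W(A) = A$ forces $A \in \mc{P}_T^r$ for every $r \geq 1$. In particular, $A(t) \in C^\infty(\torus^d, \lalg^d)$ for each $t \in [0, T]$, the map $t \mapsto A(t)$ is continuous into every $C^r$, and consequently (by Lemma \ref{lemma:X-2-X-3-estimate}) $t \mapsto X(A(t))$ is continuous into every $C^r$ as well.

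Next, I verify \eqref{eq:ZDDS} pointwise on $(0, T) \times \torus^d$. Fix $t_0 \in [0, T)$ and apply Lemma \ref{lemma:rho-later-time} with starting time $t_0$ to get, for small $h > 0$,
\[
A(t_0 + h) = e^{h\Delta} A(t_0) + \int_0^h e^{(h-s)\Delta} X(A(t_0 + s))\, ds.
\]
Since $A(t_0)$ is smooth, I can rewrite the first difference quotient as
\[
\frac{e^{h\Delta} A(t_0) - A(t_0)}{h} = \frac{1}{h}\int_0^h e^{s\Delta} \Delta A(t_0)\, ds,
\]
which converges to $\Delta A(t_0)$ in $C^0$ by strong continuity of the heat semigroup on $C^0$. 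The Duhamel average $h^{-1}\int_0^h e^{(h-s)\Delta} X(A(t_0 + s))\, ds$ converges to $X(A(t_0))$ by the same reason, combined with continuity of $s \mapsto X(A(t_0 + s))$ in $C^0$. So the right-derivative $\partial_t^+ A(t_0) = \Delta A(t_0) + X(A(t_0))$ exists. When $t_0 \in (0, T)$, the analogous backward identity $A(t_0) = e^{h\Delta} A(t_0 - h) + \int_0^h e^{(h-s)\Delta} X(A(t_0 - h + s))\, ds$ (again from Lemma \ref{lemma:rho-later-time}, with starting time $t_0 - h$) yields the same value for the left-derivative, using continuity of $t \mapsto A(t)$ into $C^2$ to handle the first term. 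Hence $\partial_t A = \Delta A + X(A)$ classically on $(0, T) \times \torus^d$.

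Finally, joint $C^\infty$-regularity on $[0, T) \times \torus^d$ is obtained by bootstrap. The identity $\partial_t A = \Delta A + X(A)$ has a right-hand side that is smooth in $x$ at each $t$ and continuous in $t$ in every $C^r$ norm; hence $\partial_t A$ is continuous in $(t, x)$ and smooth in $x$. Differentiating the PDE in $t$ repeatedly expresses $\partial_t^k A$ as a polynomial in $A$, its spatial derivatives, and lower-order time derivatives of $A$, so induction on $k$ yields continuity of every mixed partial $\partial_t^k \partial^\alpha A$. The only place demanding real care is the right-derivative computation above: one must track the topology in which both limits converge, which is possible precisely because the spatial regularity provided by Corollary \ref{cor:fixed-point-higher-regularity} has already been established.
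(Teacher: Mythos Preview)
Your proposal is correct. The approach differs from the paper's in one respect: rather than differentiating the full Duhamel integral $\int_0^t e^{(t-s)\Delta}X(A(s))\,ds$ directly (which the paper does by writing the difference quotient as $D_{1,t_0,t_1}+D_{2,t_0,t_1}$, the first an integral over $[0,t_0]$ of a heat-kernel increment and the second an integral over $[t_0,t_1]$, and estimating each via \eqref{eq:heat-semigroup-infinity-norm-time-continuity} and \eqref{eq:heat-semigroup-Cr-contraction}), you first invoke the restart identity $A(t_0+h)=e^{h\Delta}A(t_0)+\int_0^h e^{(h-s)\Delta}X(A(t_0+s))\,ds$, which is exactly the content of Lemma \ref{lemma:fixed-point-later-time} (itself built on Lemma \ref{lemma:rho-later-time}). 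This localizes the Duhamel term to an interval of length $h$, so the limit $h\to0$ becomes elementary: the linear part converges by strong continuity of $e^{s\Delta}$ applied to the already-smooth $\Delta A(t_0)$, and the short Duhamel average converges by continuity of $s\mapsto X(A(s))$ together with the contractivity of $e^{(h-s)\Delta}$ on $C^0$. The paper's route avoids quoting the restart lemma but must track more explicit $C^r$ estimates on the long integral; your route trades that for one extra citation and a cleaner limit. Both proofs obtain joint $C^\infty$-regularity on $[0,T)\times\torus^d$ by the same bootstrap from $\partial_t A=\Delta A+X(A)$ once the pointwise PDE is established.
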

\begin{proof}
By Corollary \ref{cor:fixed-point-higher-regularity}, for any $T_0 \in [0, T)$ and $r \geq 1$, we have that $A \in \mc{P}_{T_0}^r$, and thus 
\beq\label{eq:A-finite-path-norm} \sup_{0 \leq t \leq T_0} \|A(t)\|_{C^r} < \infty, ~~ T_0 \in [0, T), r \geq 1. \eeq 
Thus in particular, $A(t)$ is smooth for all $t \in [0, T)$. We proceed to show that for any $t \in (0, T)$, we have
\[ \ptl_t A(t) = \Delta A(t) + X(A(t)). \]
Once we have shown this, we automatically obtain that $A \in C^\infty([0, T) \times \torus^d, \lalg^d)$, by combining the fact that $A$ is smooth in the spatial variables along with the above equation to obtain smoothness of $A$ in the time variable.

Fix $t \in (0, T)$. For brevity, let
\[ X(s) := X(A(s)), s \in [0, T). \]
Note we have that $\ptl_t e^{t \Delta} A^0 = \Delta e^{t \Delta} A^0$. Thus since $A(t) = W(A)(t) = e^{t\Delta} A^0 + \int_0^t e^{(t-s)\Delta} X(s) ds$, it remains to show
\[ \ptl_t  \int_0^t e^{(t-s)\Delta} X(s) ds = \Delta \int_0^t e^{(t-s)\Delta} X(s) ds + X(t). \]
Let $t_0 \leq t$, $t_1 \geq t$, $t_0 < t_1$. Define the function $D_{t_0, t_1} : \torus^d \ra \lalg^d$ by
\[ D_{t_0, t_1} := \frac{1}{t_1 - t_0} \bigg(\int_0^{t_1} e^{(t_1 - s) \Delta} X(s) ds - \int_0^{t_0} e^{(t_0 - s)\Delta} X(s) ds \bigg).\]
Split $D_{t_0, t_1} = D_{1, t_0, t_1} + D_{2, t_0, t_1}$, where
\[\begin{split} D_{1, t_0, t_1} &:= \frac{1}{t_1 - t_0} \int_0^{t_0} \big(e^{(t_1 - s)\Delta} X(s) - e^{(t_0 - s) \Delta} X(s)\big) ds, \\
D_{2, t_0, t_1} &:= \frac{1}{t_1 - t_0} \int_{t_0}^{t_1} e^{(t_1 - s)\Delta} X(s) ds. 
\end{split}\]
It suffices to show that as $t_1 - t_0 \downarrow 0$ (which implies $t_0, t_1 \ra t$, since $t_0 \leq t$ and $t_1 \geq t$), we have that $D_{1, t_0, t_1}$ converges to $\Delta \int_0^t e^{(t-s)\Delta} X(s) ds$ in $C^0$, and $D_{2, t_0, t_1}$ converges to $X(t)$ in $C^0$.

For the first term, note that for any $0 < s < u$, we have $\ptl_u e^{(u - s)\Delta} X(s) = \Delta e^{(u- s)\Delta} X(s) = e^{(u - s)\Delta} \Delta X(s)$, and thus
\[ \frac{1}{t_1 - t_0} \big( e^{(t_1 - s) \Delta} X(s) - e^{(t_0-s)\Delta} X(s)\big) = \frac{1}{t_1 - t_0} \int_{t_0}^{t_1} e^{(u - s) \Delta} \Delta X(s) du. \]
Thus we have that
\[ D_{1, t_0, t_1} = \frac{1}{t_1 - t_0} \int_{t_0}^{t_1} \int_0^{t_0} e^{(u-s)\Delta} \Delta X(s) ds du. \]
For any $t_0 \leq u \leq t_1$, we have (applying~\eqref{eq:heat-semigroup-infinity-norm-time-continuity} in the second inequality below and~\eqref{eq:heat-semigroup-Cr-contraction} in the third inequality)
\[\begin{split}
\bigg\| &\int_0^{t_0} e^{(u-s)\Delta} \Delta X(s) ds - \int_0^{t_0} e^{(t_0 - s)\Delta} \Delta X(s) ds \bigg\|_{C^0} \leq \\
& \int_0^{t_0} \big\| e^{(u - t_0) \Delta} e^{(t_0 - s)\Delta} \Delta X(s) - e^{(t_0 - s)\Delta} \Delta X(s) \big\|_{C^0} ds \leq \\
& \const \int_0^{t_0} (u - t_0) \|e^{(t_0 - s)\Delta} \Delta X(s) \|_{C^2} ds \leq \const (u - t_0) t_0 \sup_{0 \leq s \leq t_0} \|X(s)\|_{C^4}.
\end{split}\]
Using this estimate, we obtain that
\[\begin{split}
\bigg\| D_{1, t_0, t_1} &- \int_0^{t_0} e^{(t_0 - s)\Delta} \Delta X(s) ds \bigg\|_{C^0} \leq \\
&\const t_0 \sup_{0 \leq s \leq t_0} \|X(s)\|_{C^4} \frac{1}{t_1 - t_0} \int_{t_0}^{t_1} (u - t_0) du.
\end{split}\]
The right hand side above goes to $0$ as $t_1 - t_0 \downarrow 0$ (recall the definition of $X(s)$ and the inequality~\eqref{eq:A-finite-path-norm}). Thus, to show that $D_{1, t_0, t_1}$ converges uniformly to $\Delta \int_0^{t} e^{(t - s)\Delta} X(s) ds$, it suffices to show that
\[ \bigg\|\int_0^{t} e^{(t - s)\Delta} \Delta X(s) ds - \int_0^{t_0} e^{(t_0 - s)\Delta} \Delta X(s) ds \bigg\|_{C^0} \ra 0 \text{ as $t_1 - t_0 \downarrow 0$}. \]
The left hand side above can be bounded by
\begin{align*}
&\int_{t_0}^t \| e^{(t-s)\Delta} \Delta X(s)\|_{C^0} ds \\
&\qquad \qquad + \int_0^{t_0} \big\|e^{(t - t_0)\Delta} e^{(t_0 - s)\Delta} \Delta X(s) - e^{(t_0 - s)\Delta} \Delta X(s)\big\|_{C^0} ds. 
\end{align*}
By \eqref{eq:heat-semigroup-Cr-contraction}, the definition of $X(s)$, and \eqref{eq:A-finite-path-norm}, we have that the first term above goes to $0$ as $t_1 - t_0 \downarrow 0$ (since this implies $t_0 \ra t$). By \eqref{eq:heat-semigroup-infinity-norm-time-continuity}, \eqref{eq:heat-semigroup-Cr-contraction}, the definition of $X(s)$, and \eqref{eq:A-finite-path-norm}, we have that the second term above goes to $0$ as $t_1 - t_0 \downarrow 0$ (since this implies $t_0 \ra t$). Thus the desired result about $D_{1, t_0, t_1}$ follows.

We now move on to $D_{2, t_0, t_1}$. We have (applying \eqref{eq:heat-semigroup-infinity-norm-time-continuity} in the second inequality)
\begin{align*}
&\bigg\| D_{2, t_0, t_1} - \frac{1}{t_1 - t_0} \int_{t_0}^{t_1} X(s) ds \bigg\|_{C^0} \\
&\leq \frac{1}{t_1 - t_0} \int_{t_0}^{t_1} \|e^{(t_1 - s) \Delta} X(s) - X(s) \|_{C^0} ds \\
&\leq C \sup_{0 \leq s \leq t_1} \|X(s) \|_{C^2} \frac{1}{t_1 - t_0} \int_{t_0}^{t_1} (t_1 - s) ds. 
\end{align*}
As $t_1 - t_0 \downarrow 0$, the right hand side above goes to $0$ (recall the definition of $X(s)$ and \eqref{eq:A-finite-path-norm}). To finish, note that as $t_1 - t_0 \downarrow 0$, we have
\[ \bigg\|\frac{1}{t_1 - t_0} \int_{t_0}^{t_1} X(s) ds - X(t)\bigg\|_{C^0} \leq \frac{1}{t_1 - t_0} \int_{t_0}^{t_1} \|X(s) - X(t)\|_{C^0} ds \ra 0, \]
where the limit follows because $s \mapsto X(s)$ is a continuous function from $[0, T)$ into $C^0(\torus^d, \lalg)$ (recall the definition of $X(s)$ and the fact that $A : [0, T) \ra C^1(\torus^d, \lalg^d)$ is continuous by assumption), combined with the fact that $t_1 - t_0 \downarrow 0$ implies $t_0, t_1 \ra t$.
\end{proof}

\begin{proof}[Proof of Theorem \ref{thm:zdds-existence}]
This follows by Lemma \ref{lemma:local-existence-fixed-point} and Lemma \ref{lemma:mild-solution-is-classical-solution}.
\end{proof}

\begin{proof}[Proof of Lemma \ref{lemma:zdds-uniqueness}]
This follows by Corollary \ref{cor:fixed-point-uniqueness} and Lemma \ref{lemma:classical-solution-is-mild-solution}.
\end{proof}

We now begin to work towards the proof of Theorem \ref{thm:rough-distributional-zdds-local-existence}. For $\gamma \geq 0$, $T > 0$, $R \geq 0$, recall the definitions of $\mc{Q}_T^\gamma$ and $\mc{Q}_{T, R}^\gamma$ (Definition \ref{def:Q-T-gamma}). 
The next lemma shows that $\rho(A)$ is well-defined for $A \in \mc{Q}_T^\gamma$, for small enough $\gamma$. In what follows, we will use the following inequality:
\beq\label{eq:t-minus-sigma-sigma-integral-2} \int_0^t (t-s)^{-\beta} s^{-\gamma} ds \leq  \const_{\beta, \gamma} t^{1 - \beta - \gamma}, ~~~ 0 \leq \beta, \gamma < 1, \eeq
which can be obtained by splitting the integral $\int_0^t = \int_0^{t/2} + \int_{t/2}^t$. 

\begin{lemma}\label{lemma:distributional-contraction-bound}
Let $\gamma \in [0, 1/4)$, $T \in (0, 1]$, $R \geq 0$. Let $A \in \mc{Q}_{T, R}^\gamma$. Then for $j \in \{2, 3\}$, $\rho^{(j)}(A)$ is well-defined for $A$,
and moreover $\rho^{(j)}(A) \in \mc{Q}_T^\gamma$, and
\[ \|\rho^{(j)}(A)\|_{\mc{Q}_T^\gamma} \leq \const_{\gamma, d} T^{(1/2) - \gamma} R^j. \]
Consequently,
\[ \|\rho(A)\|_{\mc{Q}_T^\gamma} \leq \const_{\gamma, d} T^{(1/2) - \gamma} (R^2 + R^3).\]
Additionally, let $A_1, A_2 \in \mc{Q}_{T, R}^\gamma$. Then for $j \in \{2, 3\}$,
\[ \|\rho^{(j)}(A_1) - \rho^{(j)}(A_2)\|_{\mc{Q}_{T}^\gamma} \leq \const_{\gamma, d} T^{(1/2) - \gamma} R^{j-1} \|A_1 - A_2\|_{\mc{Q}_T^\gamma}, \]
and consequently,
\[\|\rho(A_1) - \rho(A_2)\|_{\mc{Q}_{T}^\gamma} \leq \const_{\gamma, d} T^{(1/2) - \gamma} (R + R^2) \|A_1 - A_2\|_{\mc{Q}_T^\gamma}. \]
\end{lemma}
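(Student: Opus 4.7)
The plan is to proceed analogously to Lemma \ref{lemma:contraction-estimates}, but now tracking the time-weighted norms that appear in $\mc{Q}_T^\gamma$, and using the pointwise nonlinear estimates from Lemma \ref{lemma:X-2-X-3-estimate} together with the parabolic smoothing bounds \eqref{eq:heat-semigroup-Cr-contraction} and \eqref{eq:heat-semigroup-Cr-C-r-plus-half}. The key is that membership in $\mc{Q}_{T,R}^\gamma$ yields the pointwise estimates $\|A(s)\|_{C^0} \leq R s^{-\gamma}$ and $\|A(s)\|_{C^1} \leq R s^{-(1/2)-\gamma}$ for $s \in (0,T]$, so that Lemma \ref{lemma:X-2-X-3-estimate} (with $r = 0$) gives
\[ \|X^{(2)}(A(s))\|_{C^0} \leq \const R^2 s^{-(1/2) - 2\gamma}, \qquad \|X^{(3)}(A(s))\|_{C^0} \leq \const R^3 s^{-3\gamma}. \]
Since $\gamma < 1/4 < 1/3$, both exponents are strictly greater than $-1$, which is exactly what is needed for integrability near $s=0$.

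First I would verify that $\rho^{(j)}(A)$ is well-defined by estimating, for each $t \in (0,T]$,
\[ \int_0^t \|e^{(t-s)\Delta} X^{(j)}(A(s))\|_{C^1} ds \leq \const \int_0^t (t-s)^{-1/2} \|X^{(j)}(A(s))\|_{C^0} ds, \]
using \eqref{eq:heat-semigroup-Cr-C-r-plus-half} with $u = 1$, $r = 0$. Plugging in the pointwise bounds above and invoking \eqref{eq:t-minus-sigma-sigma-integral-2} shows absolute convergence, so $\rho^{(j)}(A)(t) \in C^1(\torus^d, \lalg^d)$ is defined for every $t \in (0,T]$. To bound $\|\rho^{(j)}(A)(t)\|_{C^0}$ I would instead use \eqref{eq:heat-semigroup-Cr-contraction} to remove the heat semigroup and integrate directly: for $j=2$,
\[ \|\rho^{(2)}(A)(t)\|_{C^0} \leq \const R^2 \int_0^t s^{-(1/2) - 2\gamma} ds \leq \const_\gamma R^2 t^{(1/2) - 2\gamma}, \]
while for the $C^1$ bound I keep the factor $(t-s)^{-1/2}$ and apply \eqref{eq:t-minus-sigma-sigma-integral-2} to get $\|\rho^{(2)}(A)(t)\|_{C^1} \leq \const_\gamma R^2 t^{-2\gamma}$. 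Multiplying by the weights $t^{\gamma}$ and $t^{(1/2)+\gamma}$ respectively yields the common factor $t^{(1/2) - \gamma}$, which is bounded by $T^{(1/2)-\gamma}$ since $t \leq T \leq 1$ and $(1/2) - \gamma > 0$. The $j = 3$ case proceeds identically, giving a factor $t^{1 - 3\gamma}$ for the $C^0$ integral and $t^{(1/2) - 3\gamma}$ for the $C^1$ integral; after weighting, both bounds become $\const_\gamma R^3 t^{1 - 2\gamma}$, which is dominated by $T^{(1/2) - \gamma} R^3$ (because $1 - 2\gamma \geq (1/2) - \gamma$ for $\gamma \leq 1/2$ and $T \leq 1$).

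For the difference bounds I would use the Lipschitz parts of Lemma \ref{lemma:X-2-X-3-estimate}. For $A_1, A_2 \in \mc{Q}_{T,R}^\gamma$, the two terms in the bound on $\|X^{(2)}(A_1(s)) - X^{(2)}(A_2(s))\|_{C^0}$ each carry a factor of the form $R s^{-\gamma} \cdot \|A_1 - A_2\|_{\mc{Q}_T^\gamma} s^{-(1/2) - \gamma}$ or its symmetric counterpart, both of which combine to yield $\const R \|A_1 - A_2\|_{\mc{Q}_T^\gamma} s^{-(1/2) - 2\gamma}$; repeating the integration step gives the claimed bound with $R^{j-1}$. The $j = 3$ difference is handled similarly, producing the factor $s^{-3\gamma}$. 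Continuity in time of $\rho^{(j)}(A)$ as a map $(0,T] \to C^1$ can then be verified by splitting the integral as in the proof of Lemma \ref{lemma:contraction-estimates} and applying \eqref{eq:heat-semigroup-general-time-continuity}.

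The main obstacle, such as it is, is bookkeeping: one must choose the correct value of $u$ in \eqref{eq:heat-semigroup-Cr-C-r-plus-half} so that the combined singularities $(t-s)^{-\beta} s^{-\sigma}$ remain integrable, and then check that the time-weighted power that emerges is always at least $T^{(1/2)-\gamma}$ under the running assumption $T \leq 1$. The pinch point is exactly at $\gamma = 1/4$, where the exponent $-(1/2) - 2\gamma$ in the $X^{(2)}$ estimate hits $-1$, and this is why the hypothesis of the lemma restricts $\gamma \in [0, 1/4)$.
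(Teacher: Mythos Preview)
Your proposal is correct and follows essentially the same route as the paper's proof: both use Lemma \ref{lemma:X-2-X-3-estimate} at $r=0$ together with \eqref{eq:heat-semigroup-Cr-contraction}, \eqref{eq:heat-semigroup-Cr-C-r-plus-half} (with $u=1$), and the Beta-integral bound \eqref{eq:t-minus-sigma-sigma-integral-2}, yielding the same time exponents. The only minor divergence is in the continuity step: the paper invokes Lemma \ref{lemma:rho-later-time} to write $\rho^{(j)}(A)(T_0+t) = e^{t\Delta}\rho^{(j)}(A)(T_0) + \rho^{(j)}(\tilde A)(t)$ and then appeals to Lemma \ref{lemma:contraction-estimates} (since $\tilde A \in \mc{P}_{T-T_0}^1$), whereas you propose the direct splitting-plus-\eqref{eq:heat-semigroup-general-time-continuity} argument---which also works and is in fact what the paper does for $\eta$ in Lemma \ref{lemma:distributional-cross-term-bound}.
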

\begin{proof}
The second and fourth inequalities follow immediately from the first and third inequalities. Let $t \in (0, T]$. 
Define
\[ I_1 := \int_0^t \|e^{(t-s)\Delta} X^{(2)}(A(s))\|_{C^1} ds. \]
Using \eqref{eq:heat-semigroup-Cr-C-r-plus-half} with $u = 1$, and then applying Lemma \ref{lemma:X-2-X-3-estimate}, and then using that $A \in \mc{Q}_{T, R}^\gamma$, we obtain
\[\begin{split}
I_1 &\leq \const \int_0^t (t-s)^{-1/2} \|X^{(2)}(A(s))\|_{C^0} ds \\
&\leq \const \int_0^t (t-s)^{-1/2} \|A(s)\|_{C^0} \|A(s)\|_{C^1} ds \\
&\leq \const R^2 \int_0^t (t-s)^{-1/2} s^{-((1/2) + 2\gamma)} ds.
\end{split}\]
We thus obtain (using \eqref{eq:t-minus-sigma-sigma-integral-2} and the fact that $\gamma < 1/4$, so that $(1/2) + 2\gamma < 1$)
\[ t^{(1/2) + \gamma} I_1 \leq \const R^2 t^{(1/2) - \gamma}. \]
This shows that $\rho^{(2)}(A)$ is well-defined for $A$. For the $C^0$ norm, by applying \eqref{eq:heat-semigroup-Cr-contraction} and Lemma \ref{lemma:X-2-X-3-estimate} both with $r = 0$, using that $A \in \mc{Q}_{T, R}^{\gamma}$, and then using \eqref{eq:t-minus-sigma-sigma-integral-2}, we obtain
\[\begin{split}
t^\gamma \|\rho^{(2)}(A)(t)\|_{C^0} &\leq t^\gamma \int_0^t \|e^{(t-s)\Delta} X^{(2)}(A(s))\|_{C^0} ds \\
&\leq \const t^\gamma \int_0^t \|A(s)\|_{C^0} \|A(s)\|_{C^1} ds \\
&\leq \const t^\gamma R^2 \int_0^t s^{-((1/2) +2\gamma)} ds  \\
&\leq \const t^{(1/2) - \gamma} R^2.
\end{split}\]
By combining the previous few estimates, and taking sup over $t \in (0, T]$, we obtain $\|\rho^{(2)}(A)\|_{\mc{Q}_T^{\gamma}} \leq \const R^2 T^{(1/2) - \gamma}$.

Next, define
\[ I_2 := \int_0^t \|e^{(t-s)\Delta} X^{(3)}(A(s))\|_{C^1} ds. \]
Arguing similarly, we obtain for $t \in (0, T]$, (using \eqref{eq:t-minus-sigma-sigma-integral-2} as well as the fact that $\gamma < 1/4$, so that $3\gamma < 1$)
\[
t^{(1/2) + \gamma} I_2 \leq \const R^3 t^{(1/2) + \gamma} \int_0^t (t-s)^{-1/2} s^{-3\gamma} ds \leq \const R^3 t^{1-2\gamma}. \]
This shows that $\rho^{(3)}(A)$ is well-defined for $A$. For the $C^0$ norm, we may bound (applying \eqref{eq:heat-semigroup-Cr-contraction} and Lemma \ref{lemma:X-2-X-3-estimate} both with $r = 0$, using that $A \in \mc{Q}_{T, R}^{\gamma}$, and then using \eqref{eq:t-minus-sigma-sigma-integral-2})
\[ t^\gamma \|\rho^{(3)}(A)(t)\|_{C^0} \leq \const t^\gamma \int_0^t \|A(s)\|_{C^0}^3 ds \leq \const t^\gamma R^3 \int_0^t s^{-3\gamma} ds \leq \const t^{1 -2\gamma} R^3. \]
Combining the previous few estimates, and taking sup over $t \in (0, T]$, we obtain $\|\rho^{(3)}(A)\|_{\mc{Q}_T^\gamma} \leq \const R^3 T^{(1/2) - \gamma}$ (here we used that $T \in (0, 1]$, so that $T^{1 - 2\gamma} \leq T^{(1/2) - \gamma}$).


We move on to the third inequality. For $t \in (0, T]$, we may bound (using \eqref{eq:heat-semigroup-Cr-C-r-plus-half} with $u = 1$, then applying Lemma \ref{lemma:X-2-X-3-estimate}, then using that $A_1, A_2 \in \mc{Q}_{T, R}^\gamma$, and then using \eqref{eq:t-minus-sigma-sigma-integral-2})
\[\begin{split}
&\|\rho^{(2)}(A_1)(t) - \rho^{(2)}(A_2)(t)\|_{C^1} \\
&\leq  \int_0^t \|e^{(t-s)\Delta} (X^{(2)}(A_1(s)) - X^{(2)}(A_2(s)))\|_{C^1} ds \\
&\leq \const \int_0^t (t-s)^{-1/2} \|X^{(2)}(A_1(s)) - X^{(2)}(A_2(s))\|_{C^0} ds \\
&\leq \const R \|A_1 - A_2\|_{\mc{Q}_T^{\gamma}} \int_0^t (t-s)^{-1/2} s^{-((1/2) + 2\gamma)} ds \\
&\leq \const t^{-2\gamma} R \|A_1 - A_2\|_{\mc{Q}_T^\gamma}.
\end{split}\]
This implies that
\[ \sup_{t \in (0, T]} t^{(1/2) + \gamma} \|\rho^{(2)}(A_1)(t) - \rho^{(2)}(A_2)(t)\|_{C^1} \leq \const T^{(1/2) - \gamma} R \|A_1 - A_2\|_{\mc{Q}_T^\gamma}. \]
For the $C^0$ norm, applying \eqref{eq:heat-semigroup-Cr-contraction} and Lemma \ref{lemma:X-2-X-3-estimate} both with $r = 0$, using that $A_1, A_2 \in \mc{Q}_{T, R}^{\gamma}$, and then using \eqref{eq:t-minus-sigma-sigma-integral-2}, we obtain for $t \in (0, T]$
\[\begin{split}
\|\rho^{(2)}(A_1)(t) - \rho^{(2)}(A_2)(t)\|_{C^0} &\leq \const R \|A_1 - A_2\|_{\mc{Q}_T^\gamma} \int_0^t s^{-((1/2) + 2\gamma)} ds \\
&\leq \const t^{(1/2) - 2\gamma}.
\end{split}\]
By combining the previous few estimates, we obtain
\[ \|\rho^{(2)}(A_1) - \rho^{(2)}(A_2)\|_{\mc{Q}_T^\gamma} \leq \const T^{(1/2) - \gamma} R \|A_1 - A_2\|_{\mc{Q}_T^\gamma} .\]
For $j = 3$, again, for $t \in (0, T]$, we may bound (using \eqref{eq:heat-semigroup-Cr-C-r-plus-half} with $u = 1$, then applying Lemma \ref{lemma:X-2-X-3-estimate}, then using that $A_1, A_2 \in \mc{Q}_{T, R}^\gamma$, and then using \eqref{eq:t-minus-sigma-sigma-integral-2})
\[\begin{split}
\|\rho^{(3)}(A_1)(t) - \rho^{(3)}(A_2)(t)\|_{C^1} &\leq \const R^2 \|A_1 - A_2\|_{\mc{Q}_T^\gamma} \int_0^t (t-s)^{-1/2} s^{-3\gamma} ds \\
&\leq \const t^{(1/2) - 3\gamma} R^2 \|A_1 - A_2\|_{\mc{Q}_T^\gamma},
\end{split}\]
and applying \eqref{eq:heat-semigroup-Cr-contraction} and Lemma \ref{lemma:X-2-X-3-estimate} both with $r = 0$, using that $A_1, A_2 \in \mc{Q}_{T, R}^{\gamma}$, and then using \eqref{eq:t-minus-sigma-sigma-integral-2}, we obtain
\[\begin{split}
\|\rho^{(3)}(A_1)(t) - \rho^{(3)}(A_2)(t)\|_{C^0} &\leq \const R^2 \|A_1 - A_2\|_{\mc{Q}_T^\gamma} \int_0^t s^{-3\gamma} ds \\
&\leq \const t^{1 - 3\gamma} R^2 \|A_1 - A_2\|_{\mc{Q}_T^\gamma} .
\end{split} \]
Combining the previous few estimates, and recalling that $T^{1 - 2\gamma} \leq T^{(1/2) - \gamma}$, we obtain
\[ \|\rho^{(3)}(A_1) - \rho^{(3)}(A_2)\|_{\mc{Q}_T^\gamma} \leq \const T^{(1/2) - \gamma} R^2 \|A_1 - A_2\|_{\mc{Q}_T^\gamma}. \]
We have thus shown the third inequality. Let $j \in \{2, 3\}$. It remains to show that $\rho^{(j)}(A) : (0, T] \ra C^1(\torus^d, \lalg^d)$ is continuous. (Note that $\rho(A)$ maps into $C^1(\torus^d, \lalg^d)$ by our previous inequalities.) Fix $T_0 \in (0, T)$. Define $\tilde{A} : [0, T - T_0] \ra C^1(\torus^d, \lalg^d)$ by $\tilde{A}(t) := A(t + T_0)$ for $t \in [0, T - T_0]$. Since $A \in \mc{Q}_T^\gamma$, we have that $\tilde{A} \in \mc{P}_{T - T_0}^1$. Moreover, note that for $t \in [0, T-T_0]$, we have that (by Lemma \ref{lemma:rho-later-time})
\[ \rho^{(j)}(A)(t + T_0) = e^{t \Delta} (\rho^{(j)}(A)(T_0)) + \rho^{(j)}(\tilde{A})(t). \]
By Lemma \ref{lemma:contraction-estimates}, we have that $\rho^{(j)}(\tilde{A}) \in \mc{P}_{T - T_0}^{1 + 1/4} \sse \mc{P}_{T - T_0}^1$. The fact that $t \mapsto e^{t \Delta} (\rho^{(j)}(A)(T_0))$ is continuous on $[0, T - T_0]$ follows because $\rho^{(j)}(A)(T_0) \in C^1(\torus^d, \lalg^d)$. It follows that $\rho^{(j)}(A)$ is continuous on $[T_0, T]$. Since $T_0 \in (0, T)$ was arbitrary, the desired result now follows.
\end{proof}

\begin{proof}[Proof of Lemma \ref{lemma:rho-3-bounds}]
By following the same steps as in the proof of Lemma \ref{lemma:distributional-contraction-bound}, we have that if $A \in \mc{Q}_{T, R}^\gamma$, then for $t \in (0, T]$,
\[ t^{(1/2)+\gamma} \|\rho^{(3)}(A)(t)\|_{C^1} \leq  \const R^3 t^{(1/2) + \gamma} \int_0^t (t-s)^{-1/2} s^{-3\gamma} ds.\]
Multiplying both sides by $t^{-\gamma}$, using \eqref{eq:t-minus-sigma-sigma-integral-2} and the assumption that $\gamma < 1/3$, we obtain
\[ t^{1/2} \|\rho^{(3)}(A)(t)\|_{C^1} \leq \const t^{1 - 3\gamma} R^3.\]
This shows that $\rho^{(3)}(A)$ is well-defined for $A$. Moreover, as in same proof, we have
\[ t^\gamma \|\rho^{(3)}(A)(t)\|_{C^0} \leq \const t^{1 - 2\gamma} R^3, \]
which shows that $\|\rho^{(3)}(A)(t)\|_{C^0} \leq \const t^{1 -3\gamma} R^3$.
Taking sup over $t \in (0, T]$, we thus obtain $\|\rho^{(3)}(A)\|_{\mc{Q}_T^0} \leq \const T^{1 - 3\gamma} R^3$. The bound for $\|\rho^{(3)}(A_1) - \rho^{(3)}(A_2)\|_{\mc{Q}_T^0}$ may be similarly obtained by arguing as in the proof of Lemma \ref{lemma:distributional-contraction-bound}.
\end{proof}

\begin{lemma}\label{lemma:distributional-fixed-point-is-solution}
Let $T \in (0, 1]$, and let $A : (0, T] \ra C^1(\torus^d, \lalg^d)$ be a continuous function. Suppose that for all $T_0 \in (0, T)$, if we define $\tilde{A} : [0, T - T_0] \ra C^1(\torus^d, \lalg^d)$ as $\tilde{A}(t) := A(T_0 + t)$, then for all $t \in [0, T - T_0]$, $\tilde{A}(t) = e^{t \Delta} \tilde{A}(0) + \rho(\tilde{A})(t)$. Then $A \in C^\infty((0, T) \times \torus^d, \lalg^d)$, and moreover, $A$ is a solution to \eqref{eq:ZDDS} on $(0, T)$.
\end{lemma}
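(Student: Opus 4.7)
The plan is to first bootstrap regularity to show $A(s) \in C^\infty(\torus^d, \lalg^d)$ for every $s \in (0, T]$, and then apply Lemma~\ref{lemma:mild-solution-is-classical-solution} to a time-shift of $A$ with now-smooth initial data.

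For the bootstrap, I will prove by induction on $j \geq 0$ the claim that for every $\eta > 0$, the restriction $A|_{[\eta, T]}$ is continuous from $[\eta, T]$ into $C^{1 + j/4}(\torus^d, \lalg^d)$. The base case $j = 0$ is the hypothesis. For the inductive step, fix $\eta > 0$, set $T_0 := \eta/2 \in (0, T)$, and consider $\tilde{A}(t) := A(T_0 + t)$ on $[0, T - T_0]$, which by assumption satisfies
\[ \tilde{A}(t) = e^{t\Delta}\tilde{A}(0) + \rho(\tilde{A})(t). \]
The inductive hypothesis applied at scale $T_0 = \eta/2$ gives $\tilde{A} \in \mc{P}_{T - T_0}^{1 + j/4}$, so Lemma~\ref{lemma:contraction-estimates} yields $\rho(\tilde{A}) \in \mc{P}_{T - T_0}^{1 + (j+1)/4}$ (using $T - T_0 \leq 1$). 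On the sub-interval $[\eta/2, T - T_0]$, the first term $e^{t\Delta}\tilde{A}(0)$ is continuous into every $C^r$ (since $\tilde{A}(0) \in C^1$ and $t$ is bounded away from $0$; use~\eqref{eq:heat-semigroup-Cr-C-r-plus-half} and~\eqref{eq:heat-semigroup-general-time-continuity}), while the second is continuous into $C^{1 + (j+1)/4}$. Translating by $T_0$, this shows $A|_{[\eta, T]}$ is continuous into $C^{1 + (j+1)/4}$, closing the induction. In particular $A(s) \in C^{1 + j/4}$ for every $j$ and every $s \in (0, T]$, so $A(s)$ is smooth.

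To conclude, fix any $T_0 \in (0, T)$ and again set $\tilde{A}(t) := A(T_0 + t)$ on $[0, T - T_0]$. Then $\tilde{A} \in \mc{P}_{T - T_0}^1$ and, by hypothesis, $\tilde{A} = W(\tilde{A})$, where $W$ is defined as in~\eqref{eq:W-def} using the smooth initial datum $A(T_0)$. Since $T - T_0 \leq 1$, Lemma~\ref{lemma:mild-solution-is-classical-solution} applies and gives $\tilde{A} \in C^\infty([0, T - T_0) \times \torus^d, \lalg^d)$ together with the fact that $\tilde{A}$ solves~\eqref{eq:ZDDS} on $[0, T - T_0)$. Shifting back, $A \in C^\infty((T_0, T) \times \torus^d, \lalg^d)$ and $A$ satisfies~\eqref{eq:ZDDS} on $(T_0, T)$. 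Since $T_0 \in (0, T)$ was arbitrary, we obtain $A \in C^\infty((0, T) \times \torus^d, \lalg^d)$ solving~\eqref{eq:ZDDS} on $(0, T)$.

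The main obstacle is the bootstrap step: on a closed interval $[0, T - T_0]$, the first term $e^{t\Delta}\tilde{A}(0)$ only inherits the regularity of $\tilde{A}(0)$, so there is no way to iterate Lemma~\ref{lemma:contraction-estimates} on a single fixed interval. The key observation is that on any sub-interval bounded away from $t = 0$, the heat semigroup instantaneously turns $C^1$ data into $C^\infty$ data, so the entire regularity content of the iteration comes from the $1/4$ gain provided by $\rho$. Working on nested intervals $[\eta, T]$ with the shift chosen to be $T_0 = \eta/2 < \eta$ achieves exactly this separation.
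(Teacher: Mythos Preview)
Your proof is correct and follows essentially the same approach as the paper: both bootstrap spatial regularity by shifting the initial time by a positive amount (so the heat semigroup smooths the linear part) and invoking the $1/4$ gain from Lemma~\ref{lemma:contraction-estimates} on $\rho$, then apply Lemma~\ref{lemma:mild-solution-is-classical-solution} to the time-shifted problem with now-smooth data. The only cosmetic difference is that the paper fixes a target time $T_0$ and climbs the regularity ladder along a sequence $T_n \uparrow T_0$, whereas you phrase the induction uniformly in $\eta$ and halve at each step.
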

\begin{proof}
We claim that $A(T_0)$ is smooth for all $T_0 \in (0, T]$. Given this claim, for $T_0 \in (0, T)$, define $\tilde{A}(t) := A(T_0 + t)$ for $t \in [0, T - T_0]$. Then the assumptions of Lemma \ref{lemma:mild-solution-is-classical-solution} are satisfied, and thus $\tilde{A} \in C^\infty([0, T - T_0) \times \torus^d, \lalg^d)$ is a solution to \eqref{eq:ZDDS} on $(0, T - T_0)$, which implies that $A \in C^\infty((T_0, T) \times \torus^d, \lalg^d)$ is a solution to \eqref{eq:ZDDS} on $(T_0, T)$. Since $T_0$ was arbitrary, the desired result would then follow.

It remains to show the claim. Fix $T_0 \in (0, T]$. Take a sequence $0 < T_1 < T_2 < \cdots < T_0$ such that $T_n \uparrow T_0$. For $n \geq 1$, define $B_n(t) := A(T_n + t)$ for $t \in [0, T - T_n]$. We will show that for all $n \geq 1$, we have that $B_n \in \mc{P}_{T - T_n}^{1 + (n-1)/4}$. Given this, we would then have that for all $n \geq 1$, $A(T_0) = B^n(T_0 - T_n) \in C^{1 + (n-1)/4}(\torus^d, \lalg^d)$, and thus the desired result would follow.

We proceed by induction. The case $n = 1$ follows because $T_1 > 0$ and $A : (0, T] \ra C^1(\torus^d, \lalg^d)$ is a continuous function. Now suppose the claim is true for some $n$. First, note that for $t \in [0, T - T_n]$, we have by assumption that 
\[ B_n(t) = e^{t \Delta} B_n(0) + \rho(B_n)(t). \]
Thus, for $t \in [0, T - T_{n+1}]$, we have that
\[\begin{split}
B_{n+1}(t) &= B_n(t + (T_{n+1} - T_n)) \\
&= e^{t \Delta} e^{(T_{n+1} - T_n) \Delta} B_n(0) + \rho(B_n)(t + (T_{n+1} - T_n)). \end{split}\]
Note that $B_n(0) = A(T_n) \in C^1(\torus^d, \lalg^d)$. Thus, since $T_{n+1} - T_n > 0$, we have that $e^{(T_{n+1} - T_n) \Delta} B_n(0)$ is smooth. By Lemma \ref{lemma:contraction-estimates} and the inductive assumption (i.e., that $B_n \in \mc{P}_{T - T_n}^{1 + (n-1)/4}$), we have that $\rho(B_n) \in \mc{P}_{T - T_n}^{1 + n/4}$. Combining the previous few observations with the above display, the inductive step follows. 
\end{proof}

\begin{definition}\label{def:Y}
Let $A, B \in C^1(\torus^d, \lalg^d)$ be {\oneforms}. Recalling the definition of $X$ (Definition \ref{def:X}), define the {\oneform} $Y(A, B) \in C^0(\torus^d, \lalg^d)$ by:
\[ Y(A, B) := X(A + B) - X(A) - X(B). \]
The point here is that the following identity is satisfied:
\[ X(A + B) = X(A) + X(B) + Y(A, B), \]
i.e., $Y(A, B)$ contains all ``cross-terms" when expanding out $X(A + B)$. Note that $Y$ is a sum of terms like $[A_i, \ptl_j B_k]$, $[B_i, \ptl_j A_k]$, $[A_i, [A_j, B_k]]$, $[A_i, [B_j, B_k]]$, $[B_i, [B_j, A_k]]$, $[B_i, [A_j, A_k]]$, $[A_i, [B_j, A_k]]$, $[B_i, [A_j, B_k]]$ over particular values of $i, j, k \in [d]$.
\end{definition}

The following lemma is an analogue of Lemma \ref{lemma:X-2-X-3-estimate}.

\begin{lemma}\label{lemma:Y-estimates}
Let $A, B \in C^1(\torus^d, \lalg^d)$ be {\oneforms}. We have that
\[\begin{split}
\|Y(A, B)\|_{C^0} \leq \const_d \big(\|A\|_{C^0} &\|B\|_{C^1} + \|A\|_{C^1} \|B\|_{C^0} ~+ \\
&\|A\|_{C^0}^2 \|B\|_{C^0} + \|A\|_{C^0} \|B\|_{C^0}^2 \big). 
\end{split} \]
Additionally, let $A_1, A_2, B_1, B_2 \in C^1(\torus^d, \lalg^d)$. Suppose that 
\[ \|A_1\|_{C^0}, \|A_2\|_{C^0} \leq R_{0, A} ~\text{  and  }~ \|B_1\|_{C^0}, \|B_2\|_{C^0} \leq R_{0, B},\]
\[ \|A_1\|_{C^1}, \|A_2\|_{C^1} \leq R_{1, A} ~\text{  and  }~ \|B_1\|_{C^1}, \|B_2\|_{C^1} \leq R_{1, B}.\]
Then we have that 
\[ \begin{split}
\|Y(A_1, B_1) - Y(A_2, B_2)\|_{C^0} \leq  &~ \const_d \bigg(R_{1, B} \|A_1 - A_2\|_{C^0} + R_{0, A} \|B_1 - B_2\|_{C^1} + ~\\
& R_{1, A} \|B_1 - B_2\|_{C^0} + R_{0, B} \|A_1 - A_2\|_{C^1} +~ \\
& R_{0, A} R_{0, B} (\|A_1 - A_2\|_{C^0} + \|B_1 - B_2\|_{C^0}) ~+ \\
& R_{0, A}^2 \|B_1 - B_2\|_{C^0} + R_{0, B}^2 \|A_1 - A_2\|_{C^0}\bigg).
\end{split}\]
\end{lemma}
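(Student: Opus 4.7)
The plan is to expand $Y(A,B) = X(A+B) - X(A) - X(B)$ explicitly using the bilinearity of the Lie bracket. Since $X = X^{(2)} + X^{(3)}$ and each $X^{(j)}_i$ is a sum of terms that are bilinear or trilinear in the argument (together with one spatial derivative in the $X^{(2)}$ case), expansion shows that $Y(A,B)$ is a finite sum of ``mixed'' terms, each of which contains at least one $A$-factor and at least one $B$-factor. Concretely, the $X^{(2)}$ part contributes terms of the shape $[A_i, \partial_j B_k]$ or $[B_i, \partial_j A_k]$ (with fixed coefficients and indices), and the $X^{(3)}$ part contributes terms of the shape $[A_i, [A_j, B_k]]$, $[A_i, [B_j, A_k]]$, $[B_i, [A_j, A_k]]$, $[A_i, [B_j, B_k]]$, $[B_i, [A_j, B_k]]$, $[B_i, [B_j, A_k]]$.

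Then, for the first inequality I will bound each of these mixed terms in $C^0$ using the submultiplicative bracket estimate $\|[h,g]\|_{C^0} \leq \const \|h\|_{C^0} \|g\|_{C^0}$, which was already established in the proof of Lemma \ref{lemma:X-2-X-3-estimate} (applied to $\R$-valued functions via the structure-constant expansion). The $X^{(2)}$ mixed terms contribute $\|A\|_{C^0}\|B\|_{C^1}$ and $\|A\|_{C^1}\|B\|_{C^0}$ (one factor absorbs the derivative, increasing that factor's norm from $C^0$ to $C^1$), while the $X^{(3)}$ mixed terms contribute $\|A\|_{C^0}^2\|B\|_{C^0}$ or $\|A\|_{C^0}\|B\|_{C^0}^2$ depending on how many $A$'s vs $B$'s appear. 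Summing these bounds yields the claimed estimate.

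For the second inequality I will use a telescoping argument term by term. Each mixed term $T(A,B)$ in the expansion of $Y$ is multilinear in its $A$- and $B$-arguments, so the difference $T(A_1,B_1) - T(A_2,B_2)$ decomposes as a sum of expressions in which exactly one factor has been replaced by its difference $A_1 - A_2$ or $B_1 - B_2$, while the remaining factors are either $A_i$ or $B_i$ for $i \in \{1,2\}$. Applying the same bracket bound to each piece and using the hypotheses $\|A_i\|_{C^0} \leq R_{0,A}$, $\|A_i\|_{C^1}\leq R_{1,A}$, $\|B_i\|_{C^0} \leq R_{0,B}$, $\|B_i\|_{C^1}\leq R_{1,B}$ (with the $C^1$ bound applied exactly to the factor carrying the derivative, and only to factors that are not being differenced) yields the stated collection of terms.

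The main obstacle, if any, is purely bookkeeping: one has to carefully track which factor absorbs the spatial derivative (forcing $C^1$ on that factor), which factor is the differenced one, and distribute norms among the remaining factors. There is no genuine analytic difficulty beyond the bilinear bracket bound $\|[h,g]\|_{C^0} \leq \const \|h\|_{C^0}\|g\|_{C^0}$ already used earlier in the paper.
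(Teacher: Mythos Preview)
Your proposal is correct and is exactly the approach the paper takes: the paper's proof simply says ``The proof uses the same argument as in the proof of Lemma~\ref{lemma:X-2-X-3-estimate},'' i.e., expand $Y(A,B)$ into its mixed bracket terms (as already listed in Definition~\ref{def:Y}), apply the bracket bound $\|[h,g]\|_{C^0}\le C\|h\|_{C^0}\|g\|_{C^0}$ for the first inequality, and use telescoping sums for the second. Your bookkeeping of which factor carries the derivative and which is differenced matches the stated right-hand side term by term.
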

\begin{proof}
The proof uses the same argument as in the proof of Lemma \ref{lemma:X-2-X-3-estimate}.
\end{proof}

\begin{definition}\label{def:eta}
Let $T > 0$. Let $A, B : (0, T] \ra C^1(\torus^d, \lalg^d)$ be continuous functions. Suppose that
\[ \int_0^t \|e^{(t-s)\Delta} Y(A(s), B(s))\|_{C^1} ds < \infty \text{ for all $t \in (0, T]$.} \]
Define $\eta(A, B) : (0, T] \ra C^1(\torus^d, \lalg^d)$ by
\beq\label{eq:eta-def} \eta(A, B)(t) := \int_0^t e^{(t-s)\Delta} Y(A(s), B(s)) ds, ~~ t \in (0, T]. \eeq
In this case, we say that $\eta(A, B)$ is well-defined for $A, B$.
\end{definition}


The following lemma shows that for small enough $\gamma_1, \gamma_2$, and $A \in \mc{Q}_T^{\gamma_1}$, $B \in \mc{Q}_T^{\gamma_2}$, we have that $\eta(A, B)$ is well-defined for $A, B$, and moreover $\eta(A, B) \in \mc{Q}_T^{\gamma_2}$ (it will also be in $\mc{Q}_T^{\gamma_1}$, but we won't need this). 

\begin{lemma}\label{lemma:distributional-cross-term-bound}
Let $\gamma_1, \gamma_2 \in [0, 1/2)$, $\gamma_1 + \gamma_2 < 1/2$. Let $T \in (0, 1]$, $R_1, R_2 \in [0, \infty)$. For $A \in \mc{Q}_{T, R_1}^{\gamma_1}$, $B \in \mc{Q}_{T, R_2}^{\gamma_2}$, we have that $\eta(A, B)$ is well-defined for $A, B$, and moreover $\eta(A, B) \in \mc{Q}_T^{\gamma_2}$, and
\[ \|\eta(A, B)\|_{\mc{Q}_T^{\gamma_2}} \leq \const_{\gamma_1, \gamma_2, d} T^{(1/2) - \gamma_1}(R_1 R_2 + R_1^2 R_2 + R_1 R_2^2). \]
For $R \in [0, \infty)$ and $A^1, A^2 \in \mc{Q}_{T, R}^{\gamma_1}$, $B^1, B^2 \in \mc{Q}_{T, R}^{\gamma_2}$, we have that
\[\begin{split}
\|\eta(A^1, B^1) - &\eta(A^2, B^2)\|_{\mc{Q}_T^{\gamma_2}} \leq  \\
&\const_{\gamma_1, \gamma_2, d}  T^{(1/2) - \gamma_1} (R + R^2) \big(\|A^1 - A^2\|_{\mc{Q}_T^{\gamma_1}} + \|B^1 - B^2\|_{\mc{Q}_T^{\gamma_2}}
\big). 
\end{split}\]
\end{lemma}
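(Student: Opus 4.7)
The plan is to mimic the proof of Lemma \ref{lemma:distributional-contraction-bound}, substituting Lemma \ref{lemma:Y-estimates} in place of Lemma \ref{lemma:X-2-X-3-estimate}. For the first inequality, I would first apply the pointwise bound from Lemma \ref{lemma:Y-estimates} to the integrand of \eqref{eq:eta-def}: using $\|A(s)\|_{C^0} \leq R_1 s^{-\gamma_1}$, $\|A(s)\|_{C^1} \leq R_1 s^{-(1/2)-\gamma_1}$, and the analogous bounds for $B$, one obtains
\[
\|Y(A(s), B(s))\|_{C^0} \leq \const \big(R_1 R_2\, s^{-(1/2)-\gamma_1-\gamma_2} + R_1^2 R_2\, s^{-2\gamma_1-\gamma_2} + R_1 R_2^2\, s^{-\gamma_1 - 2\gamma_2}\big).
\]
For the $C^1$ component of the $\mc{Q}_T^{\gamma_2}$ norm, I apply \eqref{eq:heat-semigroup-Cr-C-r-plus-half} with $u = 1$ and use \eqref{eq:t-minus-sigma-sigma-integral-2} to compute the resulting integrals. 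The critical exponent is in the first term: $\int_0^t (t-s)^{-1/2} s^{-(1/2)-\gamma_1-\gamma_2} ds \leq \const\, t^{-\gamma_1-\gamma_2}$, which is integrable precisely because $\gamma_1 + \gamma_2 < 1/2$. Multiplying by $t^{(1/2) + \gamma_2}$ yields the claimed $T^{(1/2) - \gamma_1}$ factor. The cubic exponents $2\gamma_1 + \gamma_2$ and $\gamma_1 + 2\gamma_2$ are each strictly less than $1$ thanks to $\gamma_1, \gamma_2 \in [0, 1/2)$ together with $\gamma_1 + \gamma_2 < 1/2$, so the corresponding integrals converge and contribute strictly higher powers of $T$ that can be absorbed (using $T \leq 1$). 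The $C^0$ component is treated identically but with \eqref{eq:heat-semigroup-Cr-contraction} in place of \eqref{eq:heat-semigroup-Cr-C-r-plus-half}; taking the sup over $t \in (0, T]$ then yields the $\mc{Q}_T^{\gamma_2}$-norm bound.

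For the Lipschitz bound, I apply the second inequality of Lemma \ref{lemma:Y-estimates} pointwise, taking $R_{0, A}(s) = R s^{-\gamma_1}$, $R_{1, A}(s) = R s^{-(1/2) - \gamma_1}$ and analogously for $B$, and using $\|A^i(s) - A^j(s)\|_{C^0} \leq s^{-\gamma_1}\|A^i - A^j\|_{\mc{Q}_T^{\gamma_1}}$, $\|A^i(s) - A^j(s)\|_{C^1} \leq s^{-(1/2)-\gamma_1}\|A^i - A^j\|_{\mc{Q}_T^{\gamma_1}}$, and the analogous estimates for the $B^i - B^j$ differences. Each resulting term carries the same $s$-exponent as in the non-Lipschitz estimate, so the integrals produce the same $T^{(1/2) - \gamma_1}$ factor, now multiplying $\|A^1 - A^2\|_{\mc{Q}_T^{\gamma_1}} + \|B^1 - B^2\|_{\mc{Q}_T^{\gamma_2}}$ with coefficient $\const (R + R^2)$.

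Finally, to see that $\eta(A, B) : (0, T] \ra C^1(\torus^d, \lalg^d)$ is continuous, I mimic the argument at the end of the proof of Lemma \ref{lemma:distributional-contraction-bound}: fix $T_0 \in (0, T)$ and define the shifted paths $\tilde{A}(t) := A(T_0 + t)$, $\tilde{B}(t) := B(T_0 + t)$, which belong to $\mc{P}_{T-T_0}^1$ since $A \in \mc{Q}_T^{\gamma_1}$ and $B \in \mc{Q}_T^{\gamma_2}$. A change-of-variables argument analogous to Lemma \ref{lemma:rho-later-time} gives
\[
\eta(A, B)(T_0 + t) = e^{t\Delta} \eta(A, B)(T_0) + \int_0^t e^{(t-s)\Delta} Y(\tilde{A}(s), \tilde{B}(s)) ds, \quad t \in [0, T - T_0],
\]
and continuity of both terms on $[0, T - T_0]$ follows from the smoothing-in-time estimates used in the proof of Lemma \ref{lemma:contraction-estimates} (the integral term is controlled by the same machinery as $\rho^{(2)}$, using the cross-term analogue of Lemma \ref{lemma:X-2-X-3-estimate}). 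Since $T_0 \in (0, T)$ is arbitrary, continuity on $(0, T]$ follows.

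The main obstacle is nothing deep: it is the bookkeeping of the several $s$-exponents arising from the many terms composing $Y$, and verifying that the condition $\gamma_1 + \gamma_2 < 1/2$ is exactly what is required to keep the most singular cross-term $[A, \nabla B] + [\nabla A, B]$ integrable in time. All other exponents are strictly subcritical under the stated assumptions, and no new technique beyond what already appears for $\rho$ is required.
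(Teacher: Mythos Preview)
Your proposal is correct and follows essentially the same approach as the paper for the main norm and Lipschitz estimates: apply \eqref{eq:heat-semigroup-Cr-C-r-plus-half} with $u=1$ (resp.\ \eqref{eq:heat-semigroup-Cr-contraction}) to reduce to $\|Y(A(s),B(s))\|_{C^0}$, invoke Lemma~\ref{lemma:Y-estimates}, and then use \eqref{eq:t-minus-sigma-sigma-integral-2} with the observation that $\gamma_1+\gamma_2<1/2$ handles the critical exponent $(1/2)+\gamma_1+\gamma_2$ while the cubic exponents $2\gamma_1+\gamma_2$ and $\gamma_1+2\gamma_2$ are subcritical. The only minor divergence is in the continuity argument: you use the shift identity (the $\eta$-analogue of Lemma~\ref{lemma:rho-later-time}, which the paper later states as Lemma~\ref{lemma:eta-later-time}) together with $\mc{P}^1$-machinery, mirroring the proof of Lemma~\ref{lemma:distributional-contraction-bound}; the paper instead gives a direct H\"older estimate by splitting $\eta(A,B)(t_1)-\eta(A,B)(t_0)$ into an integral over $[t_0,t_1]$ and a time-increment of the heat semigroup over $[0,t_0]$, bounding the latter via \eqref{eq:heat-semigroup-general-time-continuity} and \eqref{eq:heat-semigroup-Cr-C-r-plus-half}. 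Both routes are valid and equally routine.
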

\begin{proof}
Let $t \in (0, 1]$. We have that
\[ \|\eta(A, B)(t)\|_{C^1} \leq \int_0^t \|e^{(t-s)\Delta} Y(A(s), B(s))\|_{C^1} ds. \]
Applying \eqref{eq:heat-semigroup-Cr-C-r-plus-half} with $u = 1$, and then applying Lemma \ref{lemma:Y-estimates}, and then using the fact that $A \in \mc{Q}_{T, R_1}^{\gamma_1}$, $B \in \mc{Q}_{T, R_2}^{\gamma_2}$, we obtain
\[\begin{split}
\|\eta(A, B)(t)\|_{C^1} \leq \const \int_0^t (t-s)^{-1/2}&(R_1 R_2 s^{-((1/2) + \gamma_1 + \gamma_2)} ~+ \\
&R_1^2 R_2 s^{-(2\gamma_1 + \gamma_2)} + R_1 R_2^2 s^{-(\gamma_1 + 2\gamma_2)}) ds. 
\end{split}\]
Using \eqref{eq:t-minus-sigma-sigma-integral-2} (along with the assumptions that $\gamma_1, \gamma_2 < 1/2$,  $\gamma_1 + \gamma_2 < 1/2$, so that $(1/2) + \gamma_1 + \gamma_2 < 1$, $2\gamma_1 + \gamma_2<1$, and $\gamma_1 + 2\gamma_2 < 1$), we obtain that $\eta(A, B)$ is well-defined for $A, B$, and moreover
\[ t^{(1/2) + \gamma_2} \|\eta(A, B)(t)\|_{C^1} \leq \const (R_1 R_2 t^{(1/2) - \gamma_1} + R_1^2 R_2 t^{1 - 2\gamma_1} + R_1 R_2^2 t^{1 - \gamma_1 - \gamma_2}). \]
Taking sup over $t \in (0, T]$ (and using that $T \in (0, 1]$, $\gamma_1, \gamma_2 < 1/2$, so that $T^{1 - 2\gamma_1} \leq T^{(1/2) - \gamma_1}$ and $T^{1 - \gamma_1 - \gamma_2} \leq T^{(1/2) - \gamma_1}$), we obtain
\[ \sup_{t \in (0, T]} t^{(1/2) + \gamma_2} \|\eta(A, B)(t)\|_{C^1} \leq \const T^{(1/2) - \gamma_1} (R_1 R_2 + R_1^2 R_2 + R_1 R_2^2) . \]
By a similar argument, we may bound the $C^0$ norm
\[\sup_{t \in (0, T]} t^{\gamma_2} \|\eta(A, B)(t)\|_{C^0} \leq \const T^{(1/2) - \gamma_1} (R_1 R_2 + R_1^2 R_2 + R_1 R_2^2). \]
The first desired inequality now follows. The second desired inequality follows by a similar argument.

It remains to show that $\eta(A, B) : (0, T] \ra C^1(\torus^d, \lalg^d)$ is continuous (since this is part of the definition of $\mc{Q}_T^{\gamma_2}$). Towards this end, let $R = \max\{R_1, R_2\}$, and fix $t_0, t_1 \in (0, T]$, $t_0 < t_1$. We have that
\[\begin{split}
\|\eta&(A, B)(t_1) - \eta(A, B)(t_0)\|_{C^1}  \leq \int_{t_0}^{t_1} \|e^{(t_1 - s)\Delta} Y(A(s), B(s))\|_{C^1} ds ~+ \\
&\int_0^{t_0} \|e^{(t_1 - t_0)\Delta} e^{(t_0 - s)\Delta} Y(A(s), B(s)) - e^{(t_0 - s)\Delta} Y(A(s), B(s))\|_{C^1}  ds.
\end{split}\]
Let the two terms on the right hand side be $J_1(t_0, t_1)$, $J_2(t_0, t_1$). 
Arguing similar to before, we bound
\[ J_1(t_0, t_1) \leq \const \big(R^2 t_0^{-((1/2) + \gamma_1 + \gamma_2)} + R^3 t_0^{-(2\gamma_1 + \gamma_2)} + R^3 t_0^{-(\gamma_1 + 2\gamma_2)}\big) (t_1 - t_0)^{1/2}.\]
To bound $J_2(t_0, t_1)$, first apply \eqref{eq:heat-semigroup-general-time-continuity} with $r = 1$, $u = 3/2$ to obtain
\[ J_2(t_0, t_1) \leq \const (t_1 - t_0)^{1/4} \int_0^{t_0} \|e^{(t_0 - s)\Delta} Y(A(s), B(s))\|_{C^{3/2}} ds. \]
Then, apply \eqref{eq:heat-semigroup-Cr-C-r-plus-half} with $r = 0$, $u = 3/2$ to obtain the further upper bound
\[J_2(t_0, t_1) \leq \const (t_1 - t_0)^{1/4} \int_{0}^{t_0} (t_0 - s)^{-3/4} \|Y(A(s), B(s))\|_{C^0} ds.   \]
By Lemma \ref{lemma:Y-estimates}, we may bound
\[ \|Y(A(s), B(s))\|_{C^0} \leq \const\big( R^2 s^{-((1/2) + \gamma_1 + \gamma_2)} + R^3 s^{-(2\gamma_1 + \gamma_2)} + R^3 s^{-(\gamma_1 + 2\gamma_2)}\big). \]
Combining the previous two displays, and using \eqref{eq:t-minus-sigma-sigma-integral-2}, we obtain the following upper bound on $J_2(t_0, t_1)$:
\[ \const \big(R^2 t_0^{-((1/4) + \gamma_1 + \gamma_2)} + R^3 t_0^{(1/4) - (2\gamma_1 + \gamma_2)} + R^3 t_0^{(1/4) - (\gamma_1 + 2\gamma_2)}\big) (t_1 - t_0)^{1/4} .\]
Combining the estimates on $J_1, J_2$, we thus see that $\eta(A, B)$ is continuous (in fact, it is locally H\"{o}lder continuous).
\end{proof}

Observe that (for $T \in (0, 1]$) if $A, B \in \mc{P}_T^1$, then we have that (recalling Definition \ref{def:rho}, Lemma \ref{lemma:rho-in-P-T-0}, equation \eqref{eq:eta-def}, and Definition \ref{def:Y}) $\eta(A, B)$ is well-defined for $A, B$, and moreover
\beq\label{eq:rho-eta-identity} \rho(A + B) = \rho(A) + \rho(B) + \eta(A, B). \eeq
Moreover, extending $\eta(A, B)$ to $[0, T]$ by setting $\eta(A, B)(0) := 0$, we have that $\eta(A, B) \in \mc{P}_T^1$.
We also have the following analogue of Lemma \ref{lemma:rho-later-time}.

\begin{lemma}\label{lemma:eta-later-time}
Let $T > 0$, and let $A, B : (0, T] \ra C^1(\torus^d, \lalg^d)$ be continuous functions. Suppose that $\eta(A, B)$ is well-defined for $A, B$. Then for any $T_0 \in (0, T)$, the following holds. Let $\tilde{A}, \tilde{B} : [0, T - T_0] \ra C^1(\torus^d, \lalg^d)$ be defined by $\tilde{A}(t) := A(T_0 + t)$, $\tilde{B}(t) := B(T_0 + t)$ for $t \in [0, T - T_0]$. Then for all $t \in [0, T - T_0]$, we have that
\[ \eta(A, B)(T_0 + t) = e^{t\Delta} (\eta(A, B)(T_0)) + \eta(\tilde{A}, \tilde{B})(t). \]
\end{lemma}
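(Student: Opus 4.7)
The plan is to mirror the proof of Lemma~\ref{lemma:rho-later-time} almost verbatim, with $Y(A(s),B(s))$ playing the role of $X^{(j)}(A(s))$. Concretely, starting from the definition, I would write
\[
\eta(A,B)(T_0+t) \;=\; \int_0^{T_0+t} e^{(T_0+t-s)\Delta} Y(A(s),B(s))\,ds
\]
and split the integral as $\int_0^{T_0} + \int_{T_0}^{T_0+t}$. In the first piece, use the semigroup identity $e^{(T_0+t-s)\Delta} = e^{t\Delta}\,e^{(T_0-s)\Delta}$ (valid for $s\in(0,T_0]$) to rewrite the integrand as $e^{t\Delta}\bigl(e^{(T_0-s)\Delta} Y(A(s),B(s))\bigr)$. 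In the second piece, substitute $u = s-T_0$, so $A(s) = \tilde A(u)$, $B(s)=\tilde B(u)$, and the integral becomes $\int_0^t e^{(t-u)\Delta} Y(\tilde A(u),\tilde B(u))\,du = \eta(\tilde A,\tilde B)(t)$.

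The only non-formal step is pulling $e^{t\Delta}$ out of the first integral; for this I would invoke \cite[Chapter~V.5, Corollary~2]{Yosida1995}, which requires (i) absolute convergence of $\int_0^{T_0} \|e^{(T_0-s)\Delta} Y(A(s),B(s))\|_{C^1}\,ds$, which holds by the assumption that $\eta(A,B)$ is well-defined for $A,B$ (this is exactly the condition in Definition~\ref{def:eta}), and (ii) the boundedness (in fact continuity) of the linear operator $e^{t\Delta} : C^1(\torus^d,\lalg^d)\to C^1(\torus^d,\lalg^d)$, which follows from \eqref{eq:heat-semigroup-Cr-contraction} with $r=1$. Once $e^{t\Delta}$ is outside, the first piece becomes $e^{t\Delta}\int_0^{T_0} e^{(T_0-s)\Delta} Y(A(s),B(s))\,ds = e^{t\Delta}(\eta(A,B)(T_0))$, and combining with the second piece gives the claim.

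There is no substantive obstacle: the argument is purely formal manipulation of the defining Duhamel integral, and the well-definedness hypothesis on $\eta(A,B)$ was tailored precisely to justify these manipulations. The only thing to be mildly careful about is that the assumption ``$\eta(A,B)$ is well-defined for $A,B$'' as stated in Definition~\ref{def:eta} gives absolute convergence of $\int_0^t\|e^{(t-s)\Delta}Y(A(s),B(s))\|_{C^1}\,ds$ for each $t\in(0,T]$; applying this at $t=T_0$ (with a trivial estimate to replace $e^{(T_0-s)\Delta}$ in place of $e^{(T_0+t-s)\Delta}$, using \eqref{eq:heat-semigroup-Cr-contraction} or \eqref{eq:heat-semigroup-Cr-C-r-plus-half}) gives exactly the integrability needed to invoke the Yosida result. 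No new estimates beyond those already used in Lemma~\ref{lemma:rho-later-time} are required.
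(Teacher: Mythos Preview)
Your proposal is correct and is precisely the approach the paper takes: the paper's proof of this lemma simply states that it follows by the same argument as in the proof of Lemma~\ref{lemma:rho-later-time}, and your write-up spells out exactly that argument with $Y(A(s),B(s))$ in place of $X^{(j)}(A(s))$.
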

\begin{proof}
This follows by the same argument as in the proof of Lemma \ref{lemma:rho-later-time}.
\end{proof}

\begin{proof}[Proof of Theorem \ref{thm:rough-distributional-zdds-local-existence}]
Let $\const_{\gamma_1, \gamma_2, d}$ be the maximum of the constants appearing in Lemma \ref{lemma:distributional-contraction-bound} (applied with $\gamma = \gamma_2$) and Lemma \ref{lemma:distributional-cross-term-bound}. For $M \geq 0$, define $\tau_{\gamma_1 \gamma_2}(M)$ to be the largest $T \leq 1$ such that the following inequalities are satisfied:
\beq\label{eq:tau-gamma-ineq-def}\begin{split} 
\const_{\gamma_1, \gamma_2, d}\max\{T^{(1/2) - \gamma_1}, T^{(1/2) - \gamma_2}\} ((3M)^2 + 2 (3M)^3) &\leq M, \\
\const_{\gamma_1, \gamma_2, d} \max\{T^{(1/2) - \gamma_1}, T^{(1/2) - \gamma_2}\} (3M + (3M)^2) &\leq \frac{1}{4}. \end{split}\eeq
Now, let $T = \tau_{\gamma_1 \gamma_2}(R)$, and define $V : \mc{Q}_T^{\gamma_2} \ra \mc{Q}_T^{\gamma_2}$ by $V(B) := B^1 + \rho(B) + \eta(A^1, B)$ ($V$ indeed maps into $\mc{Q}_T^{\gamma_2}$ by Lemmas \ref{lemma:distributional-contraction-bound} and \ref{lemma:distributional-cross-term-bound}). Moreover, by the definition of $\tau_{\gamma_1 \gamma_2}$, combined with Lemmas \ref{lemma:distributional-contraction-bound} and \ref{lemma:distributional-cross-term-bound}, we have that $V : \mc{Q}_{T, 3R}^{\gamma_2} \ra \mc{Q}_{T, 3R}^{\gamma_2}$ is a (1/2)-contraction. Thus by the contraction mapping theorem (using that $\mc{Q}_{T, 3R}^{\gamma_2}$ is nonempty, since $B^1 \in \mc{Q}_{T, 3R}^{\gamma_2}$, and that $\mc{Q}_{T, 3R}^{\gamma_2}$ is a complete metric space, which follows by Lemma \ref{lemma:Q-T-gamma-banach}),
we obtain a fixed point $B \in \mc{Q}_{T, 3R}^{\gamma_2}$ of $V$, i.e., $V(B) = B$.

Now let $A = A^1 + B$. For $T_0 \in (0, T)$, $t \in [0, T - T_0]$, let $\tilde{A}(t) := A(T_0 + t)$, $\tilde{A}^1(t) := A^1(T_0 + t)$, $\tilde{B}(t) := B(T_0 + t)$,  $\tilde{B}^1(t) := B^1(T_0 + t)$. One can show that for all $t \in [0, T - T_0]$, we have that 
\[ \tilde{A}(t) = e^{t\Delta} \tilde{A}(0) + \rho(\tilde{A})(t). \]
Now let $A = A^1 + B$. For $T_0 \in (0, T)$, $t \in [0, T - T_0]$, let $\tilde{A}(t) := A(T_0 + t)$, $\tilde{A}^1(t) := A^1(T_0 + t)$, $\tilde{B}(t) := B(T_0 + t)$,  $\tilde{B}^1(t) := B^1(T_0 + t)$. We claim that for all $t \in [0, T - T_0]$, we have that 
\[ \tilde{A}(t) = e^{t\Delta} \tilde{A}(0) + \rho(\tilde{A})(t). \]
Given this claim, we would have by Lemma \ref{lemma:distributional-fixed-point-is-solution} that $A \in C^\infty((0, T) \times \torus^d, \lalg^d)$ is a solution to \eqref{eq:ZDDS} on $(0, T)$.

Thus to complete the proof of the first part of the theorem, it remains to show the claim. The claim follows upon combining, for any $t \in [0, T - T_0]$, the following sequence of identities:
\[ \tilde{A}^1(t) = e^{t\Delta} (\tilde{A}^1(0)), \]
\[ \tilde{B}(t) = \tilde{B}^1(t) + \rho(B)(T_0 + t) + \eta(A^1, B)(T_0 + t),\]
\[ \tilde{B}^1(t) = e^{t\Delta} B^1(T_0) + \rho(\tilde{A}^1)(t), \]
\[ \rho(B)(T_0 + t) = e^{t\Delta}( \rho(B)(T_0)) + \rho(\tilde{B})(t), \]
\[ \eta(A^1, B)(T_0 + t) = e^{t\Delta} (\eta(A^1, B)(T_0)) + \eta(\tilde{A}^1, \tilde{B})(t), \]
\[ \tilde{B}(0) = B(T_0) = B^1(T_0) + \rho(B)(T_0) + \eta(A^1, B)(T_0), \]
\[ \rho(\tilde{A}^1)(t) + \rho(\tilde{B})(t) + \eta(\tilde{A}^1, \tilde{B})(t) = \rho(\tilde{A}^1 + \tilde{B})(t) = \rho(\tilde{A})(t).\]
The first identity follows by the assumption on $A^1$, the second identity follows because $B = V(B)$, the third identity follows because $B^1$ is by assumption a first nonlinear part for $A^1$, the fourth identity follows by Lemma \ref{lemma:rho-later-time}, the fifth identity follows by Lemma \ref{lemma:eta-later-time}, the sixth identity follows since $B = V(B)$, and the seventh identity follows by \eqref{eq:rho-eta-identity}. To see how these identities give the claim, note that using the first two identities, we have
\[\begin{split}
\tilde{A}(t) &= \tilde{A}^1(t) + \tilde{B}(t) \\
&= e^{t\Delta}(\tilde{A}^1(0)) + \tilde{B}^1(t) + \rho(B)(T_0 + t) + \eta(A^1, B)(T_0 + t). 
\end{split}\]
The third identity then gives that the above is equal to
\[ e^{t\Delta}(\tilde{A}^1(0) + B^1(T_0)) + \rho(\tilde{A}^1)(t) + \rho(B)(T_0 + t) + \eta(A^1, B)(T_0 + t) .\]
Combining the fourth, fifth, and sixth identities, we obtain that the above is equal to
\[ e^{t\Delta} (\tilde{A}^1(0) + \tilde{B}(0)) + \rho(\tilde{A}^1)(t) + \rho(\tilde{B})(t) + \eta(\tilde{A}^1, \tilde{B})(t).\]
Now using that $\tilde{A}(0) = \tilde{A}^1(0) + \tilde{B}(0)$ (because $A = A^1 + B$) and the seventh identity, we obtain that
\[ \tilde{A}(t) = e^{t\Delta} \tilde{A}(0) + \rho(\tilde{A})(t), \]
as desired.

For the second part of the theorem, for $n \geq 1$ let $V_n : \mc{Q}_{T_n, 3R_n}^{\gamma_2} \ra \mc{Q}_{T_n, 3R_n}^{\gamma_2}$ be defined by $V_n(B) := B^1_n + \rho(B) + \eta(A^1_n, B)$. By the first part of the theorem, for each $n \geq 1$, we obtain a fixed point $B_n \in \mc{Q}_{T_n, 3R_n}^{\gamma_2}$ of $V_n$, such that $A_n = A^1_n + B_n \in C^\infty((0, T_n) \times \torus^d, \lalg^d)$ is a solution to \eqref{eq:ZDDS} on $(0, T_n)$. Now fix $T_0 \in (0, T)$. For large enough $n$, we have that $T_n > T_0$ (since $T_n \ra T$). Thus by Lemmas \ref{lemma:distributional-contraction-bound} and \ref{lemma:distributional-cross-term-bound}, along with the definition of $\tau_{\gamma_1 \gamma_2}$, we obtain
\[ \|\rho(B_n) - \rho(B)\|_{\mc{Q}_{T_0}^{\gamma_2}} \leq \frac{1}{4} \|B_n - B\|_{\mc{Q}_{T_0}^{\gamma_2}}, \]
\[ \|\eta(A^1_n, B_n) - \eta(A^1, B)\|_{\mc{Q}_{T_0}^{\gamma_2}} \leq \frac{1}{4} (\|A^1_n - A^1\|_{\mc{Q}_{T_0}^{\gamma_1}} + \|B_n - B\|_{\mc{Q}_{T_0}^{\gamma_2}}). \]
From this, it follows that
\[\begin{split}
\|B_n - B\|_{\mc{Q}_{T_0}^{\gamma_2}} &= \|V_n(B_n) - V(B)\|_{\mc{Q}_{T_0}^{\gamma_2}} \\
&\leq \|B_n^1 - B^1\|_{\mc{Q}_{T_0}^{\gamma_2}} + \frac{1}{2}\|B_n - B\|_{\mc{Q}_{T_0}^{\gamma_2}} + \frac{1}{4} \|A_n^1 - A^1\|_{\mc{Q}_{T_0}^{\gamma_1}}, 
\end{split}\]
and thus we obtain that $\|B_n - B\|_{\mc{Q}_{T_0}^{\gamma_2}} \ra 0$, as desired. The convergence of $A_n$ to $A$ then follows upon noting that
\[ \|A_n - A\|_{\mc{Q}_{T_0}^{\max(\gamma_1, \gamma_2)}} \leq \|A_n^1 - A^1\|_{\mc{Q}_{T_0}^{\gamma_1}} + \|B_n - B\|_{\mc{Q}_{T_0}^{\gamma_2}}. \qedhere \]
\end{proof}

\begin{proof}[Proof of Lemma \ref{lemma:tau-gamma-bound}]
Let $\gamma = \max\{\gamma_1, \gamma_2\}$. From the definition of $\tau_{\gamma_1 \gamma_2}$ (recall the inequalities \eqref{eq:tau-gamma-ineq-def}), we have that for $R \geq 0$,
\[(\tau_{\gamma_1 \gamma_2}(R))^{(1/2) - \gamma} \geq \min\bigg\{1, \frac{1}{\const R^2}\bigg\}, \]
for some big enough $\const$ (which depends on $\gamma_1, \gamma_2, d$). From this, we obtain
\[ (\tau_{\gamma_1, \gamma_2}(R))^{-1} \leq \const + \const R^{2 / ((1/2) - \gamma)},\]
which gives the desired result.
\end{proof}



\begin{proof}[Proof of Lemma \ref{lemma:zdds-smooth-initial-data-distributional-local-existence}]
From the proof of Theorem \ref{thm:rough-distributional-zdds-local-existence},  we obtain $B \in \mc{Q}_{T, 3R}^{\gamma_2}$ such that $B = B^1 + \rho(B) + \eta(A^1, B)$, and moreover $A^1 + B \in C^\infty((0, T) \times \torus^d, \lalg^d)$ is a solution to \eqref{eq:ZDDS} on $(0, T)$. We claim that if we extend $B$ to $[0, T]$ by setting $B(0) := 0$, then we have that $B \in \mc{P}_T^1$. Given this claim, we would then be able to write (recall \eqref{eq:rho-eta-identity}):
\[ B = \rho(A^1) + \rho(B) + \eta(A^1, B) = \rho(A^1 + B), \]
which would imply
\[ A^1 + B = A^1 + \rho(A^1 + B). \]
The desired result would then follow by Lemma \ref{lemma:mild-solution-is-classical-solution}. 

Thus it just remains to show the claim. We need to show that $\lim_{t \downarrow 0} \|B(t)\|_{C^1} = 0$. Since $A^0$ is smooth, so is $A^1$, and thus by Lemma \ref{lemma:contraction-estimates}, we have that $B^1 = \rho(A^1)$ is such that $\lim_{t \downarrow 0} \|B^1(t)\|_{C^1} = 0$. Thus, it suffices to show that 
\[ \lim_{t \downarrow 0} \|\rho(B)(t)\|_{C^1} = 0, ~~~ \lim_{t \downarrow 0} \|\eta(A^1, B)(t)\|_{C^1} = 0. \]
Towards this end, first note that it follows from Lemma \ref{lemma:distributional-contraction-bound} (applied with $\gamma = \gamma_2$) that for any $s \in (0, T]$, we have that
\[ \begin{split}
s^{\gamma_2} \|\rho(B)(s)\|_{C^0} + s^{(1/2) + \gamma_2} \|\rho(B)(s)\|_{C^1} &\leq \|\rho(B)\|_{\mc{Q}_s^{\gamma_2}}\\
&\leq \const_{\gamma_2, d} s^{(1/2) - \gamma_2} ((3R)^2 + (3R)^3),
\end{split}\]
which implies
\[ \|\rho(B)(s)\|_{C^0} + s^{1/2} \|\rho(B)(s)\|_{C^1} \leq \const_{\gamma_2, d} s^{(1/2) - 2\gamma_2} ((3R)^2 + (3R)^3).\]
Let $t \in (0, T]$. Taking sup over $s \in (0, t]$, we obtain
\[ \|\rho(B)\|_{\mc{Q}_t^0} \leq \const_{\gamma_2, d} t^{(1/2) - 2\gamma_2} ((3R)^2 + (3R)^3).\]
Similarly, it follows from Lemma \ref{lemma:distributional-cross-term-bound}, applied with $\gamma_1 = 0$ and 
\[
\tilde{R} := \max\{R, \|A^1\|_{\mc{Q}_T^0}\}
\]
in place of $R$, that for any $s \in (0, T]$, we have that
\begin{align*}
&s^{\gamma_2} \|\eta(A^1, B)(s)\|_{C^0} + s^{(1/2) + \gamma_2} \|\eta(A^1, B)(s)\|_{C^1} \\
&\leq \const_{\gamma_2, d} s^{1/2} ((3\tilde{R})^2 + 2(3\tilde{R})^3), 
\end{align*}
which implies
\[ \|\eta(A^1, B)(s)\|_{C^0} + s^{1/2} \|\eta(A^1, B)(s)\|_{C^1} \leq \const_{\gamma_2, d} s^{(1/2) - \gamma_2} ((3\tilde{R})^2 + 2(3\tilde{R})^3).\]
Let $t \in (0, T]$. Taking sup over $s \in (0, t]$, we obtain
\[ \|\eta(A^1, B)\|_{\mc{Q}_t^0} \leq \const_{\gamma_2, d} t^{(1/2) - \gamma_2} ((3\tilde{R})^2 + 2(3\tilde{R})^3). \]
Using that $B = \rho(A^1) + \rho(B) + \eta(A^1, B)$ and the assumption that $\gamma_2 < 1/4$, we thus obtain that $B \in \mc{Q}_T^0$, and moreover $\lim_{t \downarrow 0} \|B\|_{\mc{Q}_t^0} = 0$. Now applying Lemma~\ref{lemma:distributional-contraction-bound} again (this time with $\gamma = 0$), we obtain
\[ t^{1/2} \|\rho(B)(t)\|_{C^1} \leq \|\rho(B)\|_{\mc{Q}_t^0} \leq \const_d t^{1/2} (\|B\|_{\mc{Q}_t^0}^2 + \|B\|_{\mc{Q}_t^0}^3), ~~ t \in (0, T], \]
from which we obtain
\[ \|\rho(B)(t)\|_{C^1} \leq \const_d (\|B\|_{\mc{Q}_t^0}^2 + \|B\|_{\mc{Q}_t^0}^3), ~~ t \in (0, T].\]
Thus we obtain $\lim_{t \downarrow 0} \|\rho(B)(t)\|_{C^1} = 0$. Similarly, applying Lemma \ref{lemma:distributional-cross-term-bound} (with $\gamma_1 = \gamma_2 = 0$), we obtain for $t \in (0, T]$,
\[ \|\eta(A^1, B)(t)\|_{C^1} \leq \const (\|A^1\|_{\mc{Q}_t^0} \|B\|_{\mc{Q}_t^0} + \|A^1\|_{\mc{Q}_t^0}^2 \|B\|_{\mc{Q}_t^0} + \|A^1\|_{\mc{Q}_t^0}\|B\|_{\mc{Q}_t^0}^2). \]
Using that $\sup_{t \in (0, T]} \|A^1\|_{\mc{Q}_t^0} = \|A^1\|_{\mc{Q}_T^0} < \infty$ (since $A^1$ is smooth) and that $\lim_{t \downarrow 0} \|B\|_{\mc{Q}_t^0} = 0$, we thus obtain that $\lim_{t \downarrow 0} \|\eta(A^1, B)(t)\|_{C^1} = 0$. The desired result follows. 
\end{proof}

\begin{proof}[Proof of Corollary \ref{cor:B-limit-B-n}]
This follows immediately from the proof of Theorem \ref{thm:rough-distributional-zdds-local-existence}, and in particular the fact that $B$ was constructed as a fixed point of the contraction map $V$. Thus the proof is omitted.
\end{proof}

\section{Proof of Lemma \ref{lemma:tau-1-tau-2-bound}}\label{appendix:tau1-tau2-bound}

\begin{lemma}\label{lemma:technical-sum-bound}
Let $\alpha > d/2$. For any $n \in \Z^d$, we have that
\[ \sum_{\substack{k \in \Z^d \\ k \neq 0, n}} \frac{1}{|k|^\alpha |n - k|^\alpha} \leq \const \min\{1, |n|^{-(2\alpha - d)}\}. \]
Here, $\const$ depends only on $d, \alpha$.
\end{lemma}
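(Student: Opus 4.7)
The plan is to establish the minimum by proving the uniform bound and the $|n|^{-(2\alpha - d)}$ bound separately. The uniform bound follows immediately from Cauchy--Schwarz applied to the sum:
\[ \sum_{k \neq 0, n} \frac{1}{|k|^\alpha |n-k|^\alpha} \leq \bigg(\sum_{k \neq 0} \frac{1}{|k|^{2\alpha}}\bigg)^{1/2} \bigg(\sum_{k \neq n} \frac{1}{|n-k|^{2\alpha}}\bigg)^{1/2} = \sum_{k \neq 0} \frac{1}{|k|^{2\alpha}}, \]
which is finite thanks to the assumption $\alpha > d/2$ (so $2\alpha > d$). This handles $|n| \leq 2$, so I may assume $|n| \geq 2$ henceforth.

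For the decay in $|n|$, I would split the index set $\Z^d \setminus \{0, n\}$ into three regions based on whether $|k|$ or $|n - k|$ is small compared to $|n|/2$. In Region I, where $0 < |k| \leq |n|/2$, the triangle inequality gives $|n-k| \geq |n|/2$, so the contribution is bounded by
\[ \const \, |n|^{-\alpha} \sum_{0 < |k| \leq |n|/2} |k|^{-\alpha} \leq \const \, |n|^{-\alpha} \cdot |n|^{d-\alpha} = \const \, |n|^{-(2\alpha - d)}, \]
where the elementary estimate $\sum_{0 < |k| \leq R} |k|^{-\alpha} \leq \const R^{d-\alpha}$ is what forces the regime $\alpha < d$ (the relevant range in the paper). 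Region II, where $0 < |n-k| \leq |n|/2$, is treated identically after the change of summation variable $k \mapsto n - k$.

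For Region III, where both $|k| > |n|/2$ and $|n-k| > |n|/2$, I would apply Cauchy--Schwarz exactly as in the uniform bound but with the sums now restricted to tails:
\[ \sum_{\substack{|k| > |n|/2 \\ |n-k| > |n|/2}} \frac{1}{|k|^\alpha |n-k|^\alpha} \leq \sum_{|k| > |n|/2} \frac{1}{|k|^{2\alpha}} \leq \const \, |n|^{d - 2\alpha}, \]
again using $2\alpha > d$ to make the tail sum comparable to an integral of the form $\int_{|n|/2}^\infty r^{d-1-2\alpha} dr$. Summing the three regional bounds produces the desired $|n|^{-(2\alpha-d)}$ decay.

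There is no genuine obstacle; the only care needed is to make sure the elementary lattice-sum estimates of the form $\sum_{|k| \leq R} |k|^{-\alpha} \leq \const R^{d-\alpha}$ and $\sum_{|k| > R} |k|^{-2\alpha} \leq \const R^{d - 2\alpha}$ are applied in the right regime (using $d/2 < \alpha < d$, which is the range in which the lemma is invoked in Section \ref{section:technical-proofs-nonlinear-part}). These reduce to Riemann-sum comparisons with the integrals $\int_1^R r^{d-1-\alpha} dr$ and $\int_R^\infty r^{d-1-2\alpha} dr$, respectively.
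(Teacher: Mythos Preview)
Your proof is correct and follows essentially the same region-splitting strategy as the paper's. The paper splits into four cases (tail $|k|>2|n|$; bounded annulus with both $|k|,|n-k|\gtrsim|n|$; and two symmetric ``small'' regions with thresholds at $|n|/4$), whereas you use three regions with thresholds at $|n|/2$ and handle the ``both large'' region and the uniform-in-$n$ bound via Cauchy--Schwarz rather than by direct counting. This Cauchy--Schwarz shortcut is a mild streamlining; otherwise the lattice-sum estimates and the logic are the same, and both arguments tacitly use $\alpha<d$ in the step $\sum_{0<|k|\le R}|k|^{-\alpha}\lesssim R^{d-\alpha}$, which is indeed the only regime in which the lemma is invoked.
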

\begin{proof}
Fix $n  \in \Z^d$. We split into four cases: (1) $|k| > 2|n|$, (2) $|k| \leq 2|n|$, $|k| > |n|/4$, $|n-k| > |n| /4$, (3) $|k| \leq 2|n|$, $|k| \leq |n| /4$, $|n-k| > |n|/4$, (4) $|k| \leq 2|n|$, $|k| > |n|/4$, $|n-k| \leq |n|/4$ (note that the case $|k|\le |n|/4$, $|n-k| \leq |n|/4$ is impossible). Let the contributions from these four cases be $I_1, I_2, I_3, I_4$, respectively. We proceed to bound $I_1, I_2, I_3, I_4$ individually. First, note that when $|k| > 2|n|$, we have that $|n-k| \geq |k|/2$, and thus
\[ I_1 \leq \const \sum_{\substack{k \in \Z^d \\ |k| > 2|n|}} |k|^{-2\alpha} \leq \const \sum_{r=2|n|}^\infty r^{d-1-2\alpha} \leq \const \min\{1, |n|^{-(2\alpha - d)}\}.\]
(Here we used that $\alpha > d/2$ to ensure that the infinite sum is summable.) Next, note that for $n = 0$, we have that $I_2 = I_3 = I_4 = 0$, due to the restriction $k \neq 0$. Thus, assume that $n \neq 0$ in what follows. We have that
\[ I_2 \leq \const |n|^{-2\alpha} \sum_{\substack{k \in \Z^d \\ |k| \leq 2|n|}} 1 \leq \const |n|^{d-2\alpha},\]
and that
\[ I_3 \leq \const |n|^{-\alpha} \sum_{\substack{k \in \Z^d \\ |k| \leq 2|n| \\ k \neq 0}} |k|^{-\alpha} \leq \const |n|^{-\alpha} \sum_{r=0}^{2|n|} r^{d-1-\alpha} \leq \const |n|^{d-2\alpha}.\]
The bound for $I_4$ may be handled similarly, by changing variables $k \mapsto n-k$. The desired result now follows by combining the previous few estimates.
\end{proof}

\begin{lemma}\label{lemma:greens-function-product-smooth-bound-sum}
Let $\alpha \in (d/2, d)$. Let $t \in (0, 1]$, $u \in [0, 2t]$. We have that
\[ \sum_{\substack{n^1, n^2 \in \Z^d \\ n^1, n^2 \neq 0}} e^{-\pi^2 |n^1 + n^2|^2 (2t-u)} e^{-\pi^2 (|n^1|^2 + |n^2|^2) u} \frac{1}{|n^1|^\alpha |n^2|^\alpha} \leq \const t^{-(d-\alpha)}. \]
Here, $\const$ depends only on $d, \alpha$.
\end{lemma}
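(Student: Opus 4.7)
The plan is to split the sum based on whether $u$ is small or large relative to $t$, which lets me decide which of the two exponential factors to exploit. More precisely, I would case on $u \geq t$ versus $u \leq t$, and in each case drop one of the exponentials after using monotonicity of $r \mapsto e^{-r}$ to transfer the remaining exponent into a clean ``$t$'' in the exponent.

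In the case $u \geq t$, since $|n^1|^2 + |n^2|^2$ is separated in $n^1, n^2$, I would bound $e^{-\pi^2(|n^1|^2+|n^2|^2)u} \leq e^{-\pi^2(|n^1|^2+|n^2|^2)t}$ and drop the other exponential, factoring the sum as
\[ \biggl(\sum_{\substack{n \in \Z^d \\ n \neq 0}} \frac{e^{-\pi^2 |n|^2 t}}{|n|^\alpha}\biggr)^2. \]
The inner sum I would estimate by splitting into $1 \leq |n| \leq 1/\sqrt{t}$ (where the exponential is $O(1)$ and the dyadic sum is $\sim \int_1^{1/\sqrt{t}} r^{d-1-\alpha} \, dr \sim t^{-(d-\alpha)/2}$, using $\alpha < d$) and $|n| > 1/\sqrt{t}$ (where the Gaussian decay dominates and yields an $O(1)$ contribution). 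Squaring gives the desired $t^{-(d-\alpha)}$.

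In the case $u \leq t$, I have $2t - u \geq t$, so I would bound $e^{-\pi^2|n^1+n^2|^2(2t-u)} \leq e^{-\pi^2|n^1+n^2|^2 t}$ and drop the other exponential. Then I change variables $k = n^1 + n^2$ (so $n^2 = k - n^1$) to write the sum as
\[ \sum_{k \in \Z^d} e^{-\pi^2|k|^2 t} \sum_{\substack{n^1 \in \Z^d \\ n^1 \neq 0, k}} \frac{1}{|n^1|^\alpha |k - n^1|^\alpha}. \]
The $k=0$ term is handled by noting $\sum_{n^1 \neq 0} |n^1|^{-2\alpha} < \infty$ (here I use $\alpha > d/2$). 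For $k \neq 0$, I apply Lemma~\ref{lemma:technical-sum-bound} to bound the inner sum by $\const \min\{1, |k|^{-(2\alpha-d)}\}$. The remaining outer sum is then
\[ \sum_{k \in \Z^d} e^{-\pi^2|k|^2 t} \min\{1, |k|^{-(2\alpha-d)}\}, \]
which I would control by the same split as before: the contribution from $1 \leq |k| \leq 1/\sqrt{t}$ is comparable to $\int_1^{1/\sqrt{t}} r^{2d-1-2\alpha} \, dr \sim t^{-(d-\alpha)}$ (again using $\alpha < d$ to keep this integral increasing), and the tail $|k| > 1/\sqrt{t}$ is absorbed by the Gaussian.

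The only real obstacle is the convolution-like singularity from $1/(|n^1|^\alpha |n^2|^\alpha)$ when one of $n^1, n^2$ is small relative to $k = n^1 + n^2$, but that is precisely what Lemma~\ref{lemma:technical-sum-bound} is designed to handle, and the hypothesis $\alpha \in (d/2, d)$ enters exactly to make both halves of the resulting integral test converge on the correct side.
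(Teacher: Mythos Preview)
Your argument is correct. Your case $u \leq t$ is essentially the paper's proof verbatim: change variables to $k = n^1+n^2$, apply Lemma~\ref{lemma:technical-sum-bound} to the inner convolution sum, and finish with the same dyadic estimate on $\sum_k e^{-\pi^2|k|^2 t}\min\{1,|k|^{-(2\alpha-d)}\}$.

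The only real difference is how the range $u\in[t,2t]$ is handled. You treat this as a separate case, dropping the $|n^1+n^2|^2$ exponential and factoring the remaining sum as a square, which is perfectly fine and perhaps the most transparent route. The paper instead avoids any case split by using the parallelogram inequality $|n^1|^2+|n^2|^2 \geq |n^1+n^2|^2/2$ to merge the two exponentials into $e^{-\pi^2|n^1+n^2|^2(2t-u/2)}$, and since $2t-u/2\geq t$ for all $u\in[0,2t]$, the rest of the argument proceeds exactly as in your second case. So the paper's route is a bit slicker (one case instead of two), while yours is slightly more elementary (no parallelogram trick) and makes the two regimes of $u$ more explicit.
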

\begin{proof}
We may rewrite the sum as
\[ \sum_{n \in \Z^d} e^{-\pi^2 |n|^2 (2t-u)}\bigg( \sum_{\substack{n^1 \in \Z^d \\n^1 \neq 0, n}} e^{-\pi^2 (|n^1|^2 + |n - n^1|^2)u} \frac{1}{|n^1|^\alpha |n - n^1|^\alpha}\bigg).  \]
Using that $|n|^2 / 2 \leq |n^1|^2 + |n-n^1|^2$ for all $n, n^1 \in \Z^d$, we obtain the upper bound
\[ \sum_{n \in \Z^d} e^{-\pi^2 |n|^2 (2t-(u/2))} \bigg(\sum_{\substack{n^1 \in \Z^d \\n^1 \neq 0, n}}  \frac{1}{|n^1|^\alpha |n - n^1|^\alpha}\bigg).\]
Applying Lemma \ref{lemma:technical-sum-bound}, we obtain the further upper bound (using $u \leq t$ in the first inequality)
\[\begin{split}
\const \sum_{n \in \Z^d} &e^{-\pi^2 |n|^2 (2t - (u/2))} \min\{1, |n|^{-(2\alpha - d)}\} \\
&\leq \const \sum_{n \in \Z^d} e^{-\pi^2 |n|^2 t} \min\{1, |n|^{-(2\alpha - d)}\}  \leq \const  + \const \sum_{r = 1}^\infty r^{2d - 1 -2\alpha} e^{-\pi^2 r^2 t}.
\end{split} \]
Now by arguing as in the proof of Lemma \ref{lemma:fractional-greens-function-heat-kernel}, we obtain (using that $\alpha < d$)
\[ \sum_{r = 1}^\infty r^{2d - 1 -2\alpha} e^{-\pi^2 r^2 t} \leq \const t^{-(d-\alpha)}.\]
The desired result now follows by combining the previous few estimates (and using that $t \in (0, 1]$ to bound $\const \leq \const t^{-(d-\alpha)}$).
\end{proof}

\begin{lemma}\label{lemma:greens-function-smooth-product-bound}
Let $\alpha \in (d/2, d)$. Let $t \in (0, 1]$ and $u \in [0, 2t]$. We have that
\[ \|e^{(2t - u)\Delta} \big((e^{(u/2) \Delta} G_0^\alpha)^2\big)\|_{C^0} \leq  \const t^{-(d-\alpha)}.\]
Here, $\const$ depends only on $d, \alpha$.
\end{lemma}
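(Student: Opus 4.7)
The plan is to reduce the $C^0$ estimate to the Fourier-coefficient sum bounded in Lemma~\ref{lemma:greens-function-product-smooth-bound-sum}.

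First, I would expand everything in the Fourier basis. Recalling that $G_0^\alpha = \sum_{n \neq 0} |n|^{-\alpha} e_n$ and that $e^{s\Delta} e_n = e^{-4\pi^2 |n|^2 s} e_n$, we have
\[ e^{(u/2)\Delta} G_0^\alpha = \sum_{n \neq 0} e^{-2\pi^2 |n|^2 u} \frac{1}{|n|^\alpha} e_n. \]
Next I would square this series. Using $e_{n^1} e_{n^2} = e_{n^1 + n^2}$ and collecting terms, one obtains
\[ \big(e^{(u/2)\Delta} G_0^\alpha\big)^2 = \sum_{m \in \Z^d} \Bigg(\sum_{\substack{n^1, n^2 \neq 0 \\ n^1 + n^2 = m}} e^{-2\pi^2(|n^1|^2 + |n^2|^2)u} \frac{1}{|n^1|^\alpha |n^2|^\alpha}\Bigg) e_m. \]
Applying $e^{(2t-u)\Delta}$ multiplies the $m$-th Fourier coefficient by $e^{-4\pi^2 |m|^2 (2t-u)}$, giving
\[ e^{(2t-u)\Delta}\big((e^{(u/2)\Delta} G_0^\alpha)^2\big) = \sum_{m} e^{-4\pi^2 |m|^2 (2t-u)} \Bigg(\sum_{\substack{n^1,n^2 \neq 0 \\ n^1+n^2 = m}} e^{-2\pi^2(|n^1|^2 + |n^2|^2)u} \frac{1}{|n^1|^\alpha |n^2|^\alpha}\Bigg) e_m. \]

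Then I would use the elementary bound $\|\sum_m c_m e_m\|_{C^0} \leq \sum_m |c_m|$ (which holds as soon as the right-hand side is finite). Since every summand above is non-negative, the inner absolute value is trivial, and after interchanging the order of summation we obtain
\[ \|e^{(2t-u)\Delta}\big((e^{(u/2)\Delta} G_0^\alpha)^2\big)\|_{C^0} \leq \sum_{\substack{n^1, n^2 \in \Z^d \\ n^1, n^2 \neq 0}} e^{-4\pi^2 |n^1 + n^2|^2 (2t-u)} e^{-2\pi^2 (|n^1|^2 + |n^2|^2)u} \frac{1}{|n^1|^\alpha |n^2|^\alpha}. \]
Using the trivial inequalities $e^{-4\pi^2 |n^1+n^2|^2 (2t-u)} \leq e^{-\pi^2 |n^1+n^2|^2 (2t-u)}$ and $e^{-2\pi^2(|n^1|^2 + |n^2|^2)u} \leq e^{-\pi^2(|n^1|^2 + |n^2|^2)u}$, this is at most the sum estimated in Lemma~\ref{lemma:greens-function-product-smooth-bound-sum}, which yields the bound $\const t^{-(d-\alpha)}$, as desired.

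There is no real obstacle here: everything is just Fourier bookkeeping once one has Lemma~\ref{lemma:greens-function-product-smooth-bound-sum} at hand. The only thing to check carefully is that the Fourier series for $(e^{(u/2)\Delta}G_0^\alpha)^2$ and its heat-flowed version converge absolutely so that the manipulations and the triangle inequality $\|\sum_m c_m e_m\|_{C^0} \leq \sum_m |c_m|$ are legitimate; this is immediate from the same estimate, since we just showed the total sum of absolute values is finite (bounded by $\const t^{-(d-\alpha)} < \infty$ for $t > 0$).
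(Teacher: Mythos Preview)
Your proof is correct and follows essentially the same approach as the paper: expand in Fourier series, square, apply the heat semigroup, bound the $C^0$ norm by the $\ell^1$ sum of Fourier coefficients, and reduce via the trivial exponential inequalities to Lemma~\ref{lemma:greens-function-product-smooth-bound-sum}.
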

\begin{proof}
Recall that $G_0^\alpha$ has Fourier coefficients $\hat{G}_0^\alpha(0) = 0$, $\hat{G}_0^\alpha(n) = |n|^{-\alpha}$, $n \in \Z^d \setminus \{0\}$. Thus, 
\[ (e^{(u/2)\Delta} G_0^\alpha)^2(x) = \sum_{\substack{n^1, n^2 \in \Z^d \\n^1, n^2 \neq 0}} e_{n^1 + n^2}(x) e^{-2\pi^2 (|n^1|^2 + |n^2|^2)u} \frac{1}{|n^1|^\alpha |n^2|^\alpha}. \]
Therefore, 
\[\begin{split}
\big(e^{(2t - u)\Delta}&\big((e^{(u/2)\Delta} G_0^\alpha)^2 \big)\big)(x) = \\
&\sum_{\substack{n^1, n^2 \in \Z^d \\n^1, n^2 \neq 0}} e_{n^1 + n^2}(x) e^{-4\pi^2 |n^1 + n^2|^2 (2t-u)} e^{-2\pi^2 (|n^1|^2 + |n^2|^2)u}  \frac{1}{|n^1|^\alpha |n^2|^\alpha}.
\end{split}\]
We thus have that
\[ \begin{split}
\|e^{(2t - u)\Delta}& \big((e^{(u/2) \Delta} G_0^\alpha)^2\big)\|_{C^0} \leq \\
&\sum_{\substack{n^1, n^2 \in \Z^d \\n^1, n^2 \neq 0}} e^{-4\pi^2 |n^1 + n^2|^2 (2t-u)} e^{-2\pi^2 (|n^1|^2 + |n^2|^2)u}  \frac{1}{|n^1|^\alpha |n^2|^\alpha}.
\end{split}\]
The desired result now follows by Lemma \ref{lemma:greens-function-product-smooth-bound-sum} (note that $e^{-4\pi^2 |n^1 + n^2|^2 (2t-u)} \leq e^{-\pi^2 |n^1 + n^2|^2 (2t-u)}$ and $e^{-2\pi^2 (|n^1|^2 + |n^2|^2)u} \leq e^{-\pi^2 (|n^1|^2 + |n^2|^2)u}$).
\end{proof}

\begin{lemma}\label{lemma:greens-function-smooth-product-bound-2}
Let $\alpha \in (d/2, d)$. Let $t \in (0, 1]$ and $u \in [0, 2t]$. For any $\const_1 \geq 0$, we have that
\[ \|e^{(2t - u)\Delta} \big(\big(e^{(u/2) \Delta} (G_0^\alpha + \const_1)\big)^2\big)\|_{C^0} \leq  \const_2 t^{-(d-\alpha)}.\]
Here, $\const_2$ depends only on $d$, $\alpha$, and $\const_1$.
\end{lemma}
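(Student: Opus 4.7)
The plan is to exploit the linearity of the heat semigroup together with the fact that constants are fixed: since $e^{s\Delta} 1 = 1$ for all $s \geq 0$, we have $e^{(u/2)\Delta}(G_0^\alpha + \const_1) = e^{(u/2)\Delta} G_0^\alpha + \const_1$. Squaring then gives three terms,
\[ \big(e^{(u/2)\Delta}(G_0^\alpha + \const_1)\big)^2 = \big(e^{(u/2)\Delta} G_0^\alpha\big)^2 + 2\const_1\, e^{(u/2)\Delta} G_0^\alpha + \const_1^2, \]
and the strategy is to apply $e^{(2t-u)\Delta}$ term by term and bound each in $C^0$, then combine by the triangle inequality.

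For the first term, Lemma \ref{lemma:greens-function-smooth-product-bound} gives the bound $\const\, t^{-(d-\alpha)}$ immediately. For the second term, the semigroup law collapses the exponent: $e^{(2t-u)\Delta} e^{(u/2)\Delta} G_0^\alpha = e^{(2t-u/2)\Delta} G_0^\alpha$, where $s := 2t - u/2$ lies in $[t, 2t] \subseteq (0,2]$. When $s \leq 1$, Lemma \ref{lemma:fractional-greens-function-heat-kernel} yields $\|e^{s\Delta} G_0^\alpha\|_{C^0} \leq \const\, s^{-(d-\alpha)/2} \leq \const\, t^{-(d-\alpha)/2}$, using $s \geq t$. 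When $s \in (1,2]$, decompose $e^{s\Delta} G_0^\alpha = e^{(s-1)\Delta}(e^{\Delta} G_0^\alpha)$; Lemma \ref{lemma:fractional-greens-function-heat-kernel} at time $1$ together with the $C^0$-contractivity \eqref{eq:heat-semigroup-Cr-contraction} gives a bound by a constant, which is at most $\const\, t^{-(d-\alpha)/2}$ because $t \leq 1$. Finally, $e^{(2t-u)\Delta} \const_1^2 = \const_1^2$.

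Collecting the three estimates, and using $t \in (0,1]$ together with $d > \alpha$ (so that both $t^{-(d-\alpha)/2} \leq t^{-(d-\alpha)}$ and $1 \leq t^{-(d-\alpha)}$), one obtains the desired inequality with a constant $\const_2$ that depends on $d$, $\alpha$, and $\const_1$ through the coefficients $2\const_1$ and $\const_1^2$ of the expansion. There is no real obstacle here: the lemma reduces, via bilinearity and a trivial bound on the constant term, to Lemmas \ref{lemma:fractional-greens-function-heat-kernel} and \ref{lemma:greens-function-smooth-product-bound} already proved above.
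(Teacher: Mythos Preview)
Your proof is correct and follows essentially the same approach as the paper, which simply cites Lemmas \ref{lemma:fractional-greens-function-heat-kernel} and \ref{lemma:greens-function-smooth-product-bound} together with $t \in (0,1]$; you have just filled in the details of the expansion and the term-by-term bounds that the paper leaves implicit.
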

\begin{proof}
This follows by Lemmas \ref{lemma:fractional-greens-function-heat-kernel} and \ref{lemma:greens-function-smooth-product-bound} and the fact that $t \in (0, 1]$.
\end{proof}

\begin{proof}[Proof of Lemma \ref{lemma:tau-1-tau-2-bound}]
Fix $k \geq 0$. Recall that $\hat{\tau}(n) \geq 0$ for all $n \in \Z^d$ (by Lemma \ref{lemma:fourier-coeff-uncorrelated}). Observe that
\[\begin{split}
I((n^1, n^2), t)^2 &= \int_0^t \int_0^t e^{-4\pi^2 |n^1 + n^2|^2(2t - (s_1 + s_2))} e^{-4\pi^2 (|n^1|^2 + |n^2|^2)(s_1 + s_2)} ds_1 ds_2  \\
&= \int_0^{2t} h_t(u) e^{-4\pi^2 |n^1 + n^2|^2 (2t - u)} e^{-4\pi^2 (|n^1|^2 + |n^2|^2) u} du,
\end{split}\]
where $h_t$ is the function given by
\[ h_t(u) = \begin{cases} u & u \in [0, t] \\ 2t - u & u \in [t, 2t] \end{cases}.\]
Thus it suffices to bound 
\[\begin{split}
J:=  \sum_{\substack{n^1, n^2 \in \Z^d \\ \max\{|n^1|_\infty, |n^2|_\infty\} \geq N}} &\int_0^{2t} h_t(u)  |n^1+n^2|^k e^{-4\pi^2 |n^1 + n^2|^2 (2t-u)}~\times \\
&e^{-4\pi^2 (|n^1|^2 + |n^2|^2) u} (|n^1|^2 + |n^2|^2) \hat{\tau}(n^1) \hat{\tau}(n^2) du. 
\end{split}\]
We may split the integral $\int_0^{2t} = \int_0^t + \int_t^{2t}$. Let the two contributions be $J_1, J_2$, respectively. In the following, we will use that for $\beta \geq 0$,
\beq\label{eq:sup-poly-exp} \sup_{x \geq 0} x^{\beta} e^{-2\pi^2 x^2 s} \leq \const_\beta s^{-\beta /2}, ~~ s > 0. \eeq
For $N \geq 0$, $u \in [0, 2t]$, define 
\begin{align}
F_{N, t}(u) &:= \sum_{\substack{n^1, n^2 \in \Z^d \\ \max\{|n^1|_\infty, |n^2|_\infty\} \geq N}}e^{-4\pi^2 |n^1 + n^2|^2 (2t-u)} \times \notag\\
&\qquad \qquad \qquad \qquad \qquad \qquad  e^{-2\pi^2 (|n^1|^2 + |n^2|^2)u} \hat{\tau}(n^1) \hat{\tau}(n^2). \label{eq:F-N-t-def}
\end{align}
(Note that the the exponent in the second exponential is $2\pi$ instead of $4\pi$, unlike for $J$.)
Applying \eqref{eq:sup-poly-exp} with $\beta = k$, $s = 2t-u$, as well as with $\beta = 2$, $s = u$, we may bound $J_1$ by 
\[\begin{split}
J_1 \leq \const_k \sum_{\substack{n^1, n^2 \in \Z^d \\ \max\{|n^1|_\infty, |n^2|_\infty\} \geq N}} \int_0^t &h_t(u)(2t-u)^{-k/2} e^{-2\pi^2 |n^1 + n^2|^2 (2t-u)} ~\times\\
& u^{-1} e^{-2\pi^2 (|n^1|^2 + |n^2|^2)u} \hat{\tau}(n^1) \hat{\tau}(n^2) du.
\end{split} \]
Using that $(2t-u)^{-k/2} \leq t^{-k/2}$ and $h_t(u) = u$ for $u \in [0, t]$, the definition of $F_{N, t}$, and that $e^{-2\pi^2 (|n^1|^2 + |n^2|^2)u} \leq e^{-\pi^2 (|n^1|^2 + |n^2|^2)u}$, we obtain
\[ J_1 \leq \const_k t^{-k/2} \int_0^t F_{N, t/2}(u/2) du .\]
To bound $J_2$, first note that we may bound
\[ |n^1 + n^2|^k \leq \const_k (|n^1|^k + |n^2|^k), \]
and thus we may bound (using Young's inequality, i.e. $xy \leq (x^p / p) + (y^q / q)$ for $x, y \geq 0$, $(1/p) + (1/q) = 1$),
\[ |n^1 + n^2|^k (|n^1|^2 + |n^2|^2) \leq \const_k (|n^1|^{k+2} + |n^2|^{k+2}). \]
Then applying \eqref{eq:sup-poly-exp} with $\beta = k+2$, $s = u$, we obtain
\[\begin{split}
J_2 \leq \const_k  \sum_{\substack{n^1, n^2 \in \Z^d \\ \max\{|n^1|_\infty, |n^2|_\infty\}}} \int_t^{2t} h_t(u) &e^{-4\pi^2 |n^1 + n^2|^2 (2t-u)} u^{-(k+2)/2} ~\times \\
&e^{-2\pi^2 (|n^1|^2 + |n^2|^2) u} \hat{\tau}(n^1) \hat{\tau}(n^2) du. \end{split} \]
Using that $u^{-(k+2)/2} \leq t^{-(k+2)/2}$ and $h_t(u) = (2t-u)$ for $u \in [t, 2t]$, as well as the definition of $F_{N, t}(u)$, we further obtain
\[ J_2 \leq \const_k t^{-(k+2)/2} \int_t^{2t} (2t-u) F_{N, t}(u) du. \]
We have thus shown
\beq\label{eq:S-N-k-bound}\begin{split}
S(N, k, a_1, a_2, j_1, j_2,& t) \leq \const_k t^{-k/2} \int_0^t F_{N, t/2}(u/2) du ~+ \\
&\const_k t^{-(k+2)/2} \int_t^{2t} (2t-u) F_{N, t}(u) du. 
\end{split} \eeq
Recalling the definition \eqref{eq:F-N-t-def} of $F_{N, t}$, note that for $u \in [0, 2t]$, we have that
\[\begin{split}
F_{0, t}(u) = \big(e^{(2t-u) \Delta} \big((e^{(u/2) \Delta} \tau)^2\big)\big)(0). \end{split}\]
Using that $|\tau| \leq \constgf (G_0^\alpha + \constgf)$ (by Assumption \ref{assumption:bounded-by-fractional-greens-function}) combined with \eqref{eq:heat-semigroup-monotonicity}, we obtain that $|e^{(u/2) \Delta} \tau| \leq \constgf e^{(u/2) \Delta}(G_0^\alpha + \constgf)$, and thus also (applying \eqref{eq:heat-semigroup-monotonicity} once more)
\[\big|e^{(2t-u) \Delta} \big((e^{(u/2) \Delta} \tau)^2\big)\big| \leq (\constgf)^2 e^{(2t-u) \Delta} \big(\big(e^{(u/2) \Delta} (G_0^\alpha + \constgf)\big)^2 \big). \]
Combining this with Lemma \ref{lemma:greens-function-smooth-product-bound-2}, we obtain that
\beq\label{eq:intermediate-sum-tau-product-bound} F_{0, t}(u) \leq \const t^{-(d-\alpha)}, ~~ t \in (0, 1], u \in [0, 2t].\eeq
It follows that (using \eqref{eq:S-N-k-bound})
\[ S(0, k, a_1, a_2, j_1, j_2, t) \leq \const_k t^{-(d-1 + (k/2) - \alpha)}. \]
Thus we may set $\delta_{0, k}^{\ref{lemma:tau-1-tau-2-bound}} \equiv 1$.

To define $\delta_{N, k}^{\ref{lemma:tau-1-tau-2-bound}}$ for general $N$, observe that for any $N \geq 0$, we have that
\[ S(N, k, a_1, a_2, j_1, j_2, t) \leq S(0, k, a_1, a_2, j_1, j_2, t).\]
Thus for $N > 0$, we may define
\[ \delta_{N, k}^{\ref{lemma:tau-1-tau-2-bound}}(t_0) := \sup_{t \in [t_0, 1]} \max_{\substack{a_1, a_2 \in [\lalgdim] \\ j_1, j_2 \in [d]}} \frac{S(N, k, a_1, a_2, j_1, j_2, t)}{\max\{1, S(0, k, a_1, a_2, j_1, j_2, t)\}}.\]
This ensures that $\delta_{N, k}^{\ref{lemma:tau-1-tau-2-bound}}$ maps into $[0, 1]$ and is non-increasing. To show the pointwise convergence to $0$, it suffices to show that for any $a_1, a_2 \in [\lalgdim]$, $j_1, j_2 \in [d]$, $t_0 \in (0, 1]$, we have that 
\[ \lim_{N \toinf} \sup_{t \in [t_0, 1]} S(N, k, a_1, a_2, j_1, j_2, t) = 0.\]
Fix $a_1, a_2, j_1, j_2$. Recalling \eqref{eq:S-N-k-bound}, it suffices to show that
\beq\label{eq:sup-int-0-t-limit} \lim_{N \toinf} \sup_{t \in [t_0, 1]} \int_0^t F_{N, t/2}(u/2) du = 0, \eeq
and 
\beq\label{eq:sup-int-t-2t-limit} \lim_{N \toinf} \sup_{t \in [t_0, 1]}\int_t^{2t}(2t-u) F_{N, t}(u) du = 0.\eeq
For $N \geq 0$, $u \geq 0$, define
\[ G_N(u) := \sum_{\substack{n \in \Z^d \\ |n|_\infty \geq N}} e^{-2\pi^2 |n|^2 u} \hat{\tau}(n).  \]
For $u > 0$, note that $G_N(u) \leq G_0(u) = (e^{(u/2) \Delta} \tau)(0) < \infty$ (the last inequality follows by Lemma \ref{lemma:heat-semigroup-covariance}), and thus by dominated convergence, we have that for $u > 0$, $\lim_{N \toinf} G_N(u) = 0$. 

We claim that for any $t \in (0, 1]$, $u \in (0, 2t]$, we have that
\[ F_{N, t}(u) \leq \const \min\big\{t^{-(d-\alpha)}, u^{-(1/2)(d-\alpha)} G_N(u) \big\}.\]
Given this claim, we may bound
\[ \sup_{t \in [t_0, 1]} \int_0^t F_{N, t/2}(u/2) du \leq \const  \int_0^1 \min\{t_0^{-(d-\alpha)}, u^{-(1/2) (d-\alpha)} G_N(u/2)\} du .\]
By dominated convergence and the fact that $\lim_{N \toinf} G_N(u) = 0$ for all $u \in (0, 1]$, we obtain \eqref{eq:sup-int-0-t-limit}. By a similar argument, we may obtain \eqref{eq:sup-int-t-2t-limit}.

It remains to show the claim. First, note that by \eqref{eq:intermediate-sum-tau-product-bound}, $F_{N, t}(u) \leq F_{0, t}(u) \leq \const t^{-(d-\alpha)}$. Next, recalling the definition \eqref{eq:F-N-t-def} of $F_{N, t}$, we have that (using that $u \in (0, 2t]$, so that $u \leq 2t$)
\[ F_{N, t}(u) \leq \sum_{\substack{n^1, n^2 \in \Z^d \\ \max\{|n^1|_\infty, |n^2|_\infty\} \geq N}} e^{-2\pi^2 |n^1|^2 u} \hat{\tau}(n^1) e^{-2\pi^2 |n^2|^2 u} \hat{\tau}(n^2). \]
Since $\max\{|n^1|_\infty, |n^2|_\infty\} \geq N$ implies that at least one of $|n^1|_\infty, |n^2|_\infty$ is at least $N$, we may further bound the above by
\[\begin{split}
F_{N, t}(u) &\leq 2 G_N(u) \sum_{n \in \Z^d} e^{-2\pi^2 |n|^2 u} \hat{\tau}(n) \\
&= 2 G_N(u) (e^{(u/2)\Delta} \tau)(0) \\
&\leq \const G_N(u) u^{-(1/2)(d-\alpha)},
\end{split}\]
where we used Lemma \ref{lemma:heat-semigroup-covariance} in the last inequality. The claim now follows.
\end{proof}

\bibliographystyle{plainnat}

\begin{thebibliography}{99}

\bibitem{AB1983}
    {\sc Atiyah, M.F. and Bott, R.} (1983).
    {The Yang-Mills equations over Riemann surfaces.}
    {\em Philos. Trans. Roy. Soc. London Ser. A} {\bf 308} no. 1505, 523-615.
    
\bibitem{BP2021}
    {\sc Berestycki, N. and Powell, E.} (2021).
    {Gaussian~free~field, Liouville quantum gravity and Gaussian multiplicative chaos.}
    {\em Preprint.} Available at \href{https://homepage.univie.ac.at/nathanael.berestycki/Articles/master.pdf}{https://homepage.univie.ac.at/nathanael.berestycki/Articles/master.pdf}.
    

\bibitem{Bourgain1996}
    {\sc Bourgain, J.} (1996).
    {Invariant measures for the 2D-defocusing nonlinear Schr\"{o}dinger equation.} 
    {\em Comm. Math. Phys.}, {\bf 176}, 421-445.
    
\bibitem{Bourgain1999}
    {\sc Bourgain, J.} (1999).
    {Global solutions for nonlinear Schr\"{o}dinger equations.}
    {Amer. Math. Soc. Colloq. Pub. 46}, Amer. Math. Soc. 
    
\bibitem{BT2008a}    
    {\sc Burq, N. and Tzvetkov, N.} (2008).
    {Random data Cauchy theory for supercritical wave equations I: Local theory.}
    {\em Invent. Math.}, {\bf 173}, 449–475.
    
\bibitem{BT2008b}
    {\sc Burq, N. and Tzvetkov, N.} (2008).
    {Random data Cauchy theory for supercritical wave equations II: a global existence result.}
    {\em Invent. Math.}, {\bf 173}, 477-496.
    
\bibitem{CaoCh2021}
    {\sc Cao, S. and Chatterjee, S.}
    {A state space for 3D Euclidean Yang-Mills theories.} {\it Preprint.} Available at \href{https://arxiv.org/abs/2111.12813}{arXiv:2111.12813}.
    
\bibitem{CCHS2020}
    {\sc Chandra, A., Chevyrev, I., Hairer, M.} and {\sc Shen, H.} (2020).
    {Langevin dynamic for the 2D Yang--Mills measure.}
    {\em Preprint.} Available at \href{https://arxiv.org/abs/2006.04987}{arXiv:2006.04987}.
    
\bibitem{CCHS2022}
    {\sc Chandra, A., Chevyrev, I., Hairer, M.} and {\sc Shen, H.} (2022).
    {Stochastic quantisation of Yang--Mills--Higgs in 3D.}
    {\em Preprint.} Available at \href{https://arxiv.org/abs/2201.03487}{arXiv:2201.03487}.

\bibitem{CG2013}
    {\sc Charalambous, N.} and {\sc Gross, L.} (2013).
    {The Yang--Mills heat semigroup on three-manifolds with boundary.}
    {\em Comm. Math. Phys.}, {\bf 317} no. 3, 727--785.

\bibitem{CG2015}
    {\sc Charalambous, N.} and {\sc Gross, L.} (2015).
    {Neumann domination for the Yang--Mills heat equation.}
    {\em J. Math. Phys.}, {\bf 56} no. 7, 073505, 21pp.
    
\bibitem{CG2017}
    {\sc Charalambous, N.} and {\sc Gross, L.} (2017).
    {Initial behavior of solutions to the Yang--Mills heat equation.}
    {\em J. Math. Anal. Appl.}, {\bf 451} no. 2, 873--905.
    
\bibitem{DaDeb2002}
    {\sc Da Prato, G. and Debussche, A.} (2002).
    {Two-dimensional Navier-Stokes equations driven by a space-time white noise.}
    {\em J. Funct. Anal.}, {\bf 196} no. 1, 180-210.
    
\bibitem{DaDeb2003}
    {\sc Da Prato, G. and Debussche, A.} (2003).
    {Strong solutions to the stochastic quantization equations.}
    {\em Ann. Probab.}, {\bf 31} no. 4, 1900-1916.
    
\bibitem{DNY2019}
    {\sc Deng, Y., Nahmod, A.R., and Yue, H.} (2019).
    {Invariant Gibbs measures and global strong solutions for nonlinear Schr\"{o}dinger equations in dimension two.}
    {\em Preprint.} Available at \href{https://arxiv.org/abs/1910.08492}{arXiv:1910.08492}.
    
\bibitem{DeT1983}
    {\sc DeTurck, D.~M.} (1983).
    {Deforming metrics in the direction of their Ricci tensors.}
    {\em J. Differential Geom.}, {\bf 18} no. 1, 157-162.
    
\bibitem{Dirksen2015}
    {\sc Dirksen, S.} (2015).
    {Tail bounds via generic chaining.}
    {\em Electron. J. Probab.}, {\bf 20}, 1-29.
    
\bibitem{Don1985}
    {\sc Donaldson, S.~K.} (1985).
    {Anti self dual Yang--Mills connections over complex algebraic surfaces and stable vector bundles.}
    {\em Proc. London Math. Soc.} (3) {\bf 50} no. 1, 1-26.


\bibitem{Feehan2016}
    {\sc Feehan, P.~M.~N.} (2016).
    {Global existence and convergence of solutions to gradient systems and applications to Yang--Mills gradient flow.}
    {\em Preprint.} Available at \href{https://arxiv.org/abs/1409.1525}{arXiv:1409.1525}.
    
\bibitem{Gross2016}
     {\sc Gross, L.} (2016).
     {The Yang--Mills heat equation with finite action.}
     {\em Preprint.} Available at \href{https://arxiv.org/abs/1606.04151}{arXiv:1606.04151}.
    
\bibitem{Gross2017}
     {\sc Gross, L.} (2017).
     {Stability of the Yang--Mills heat equation for finite action.}
     {\em Preprint.} Available at \href{https://arxiv.org/abs/1711.00114}{arXiv:1711.00114}.
     
\bibitem{Khosh2002}
    {Khoshnevisan, D.} (2002).
    {Multiparameter processes. An introduction to random fields.}
    {Springer Monographs in Mathematics.} {Springer, New York.}
     
\bibitem{LRS1988}  
    {\sc Lebowitz, J., Rose, R. and Speer, E.} (1988).
    {Statistical mechanics of the nonlinear Schr\"{o}dinger equation},
    {\em J. Statist. Phys.}, {\bf 50}, 657-687.
    
\bibitem{Luscher2010b}
    {\sc L\"{u}scher, M.} (2010).
    {Properties and uses of the Wilson flow in lattice QCD.}
    {\em J. High Energy Phys.}, {\bf 2010} no. 8, 071, 18pp. 
    
\bibitem{NPS2013}
    {\sc Nahmod, A. R., Pavlov\'{i}c, N. and Staffilani, G.} (2013).
    {Almost sure existence of global weak solutions for super-critical Navier-Stokes equations.}
    {\em SIAM J. Math. Anal.}, {\bf 45} no. 6, 3431-3452.
    
\bibitem{OST2021}
    {\sc Oh, T., Sosoe, P. and Tolomeo, L.} (2021).
    {Optimal integrability threshold for Gibbs measures associated with focusing NLS on the torus.}
    To appear in {\em Invent. Math.}
     
\bibitem{OT2017}
    {\sc Oh, S.J. and Tataru, D.} (2017).
    {The Threshold Theorem for the (4+1)-dimensional Yang--Mills equation: an overview of the proof.}
    {\em Preprint.} Available at \href{https://arxiv.org/abs/1709.09088}{arXiv:1709.09088}.
    
    
    
\bibitem{R1992}
    {\sc \rade, J.} (1992).
    {On the Yang--Mills heat flow in two and three dimensions.}
    {\em J. Reine Angew. Math.}, {\bf 431}, 123--163.
    
\bibitem{RW1994}
    {\sc Rogers, L.~C.~G.} and {\sc Williams, D.} (1994).
    {Diffusions, Markov processes, and martingales}, {2nd ed.} {\em Wiley series in Probability and Mathematical Statistics} {\bf 1}. Wiley, Chichester.
    
\bibitem{Sad1987}
    {\sc Sadun, L.~A.} (1987).
    {Continuum regularized Yang--Mills theory.}
    {Ph.D. Thesis}, Univ. of California, Berkeley. 67+ pages.
    
\bibitem{She2007}
    {\sc Sheffield, S.} (2007).
    {Gaussian free fields for mathematicians.}
    {\em Probab. Theory Related Fields}, {\bf 139}, 521-541.
    
\bibitem{SteinWeiss1971}
    {\sc Stein, E.M. and Weiss, G.} (1971).
    {\it Introduction to Fourier analysis on Euclidean spaces.}
    {Princeton Univ. Press.}
    
    
\bibitem{T2011a}
    {\sc Taylor, M.~E.} (2011).
    {\it Partial differential equations I. Basic Theory.} 2nd ed.
    {Texts in Applied Mathematics}, vol. 23. Springer, New York. 
    
\bibitem{T2011c}
    {\sc Taylor, M.~E.} (2011).
    {\it Partial differential equations III. Nonlinear equations.} 2nd ed.
    {Texts in Applied Mathematics}, vol. 23. Springer, New York. 
    
\bibitem{Tes2012}
    {\sc Teschl, G.} (2012).
    {\it Ordinary differential equations and dynamical systems.}
    {Amer. Math. Soc.}, {Graduate studies in Mathematics}, vol. 140, Providence.
    
\bibitem{Waldron2019}
    {\sc Waldron, A.} (2019).
    {Long-time existence for Yang--Mills flow.}
    {\em Invent. Math.} {\bf 217} (3), 1069-1147.
    
\bibitem{WP2020}
    {\sc Werner, W. and Powell, E.} (2020).
    {Lecture notes on the Gaussian free field.}
    {\em Preprint.} Available at \href{https://arxiv.org/abs/2004.04720}{arXiv:2004.04720}.
    
\bibitem{Yosida1995}
    {\sc Yosida, K.} (1995).
    {Functional analysis.} 6th ed.
    Springer, Berlin, Heidelberg.
    
\bibitem{Zwan1981}
    {\sc Zwanziger, D.} (1981).
    {Covariant quantization of gauge fields without Gribov ambiguity.}
    {\em Nuclear Phys. B}, {\bf 192} no. 1, 259-269.

\end{thebibliography}

\end{document}